\numberwithin{equation}{section}
\numberwithin{figure}{section}
  \theoremstyle{plain}
  \newtheorem*{thm*}{\protect\theoremname}
 \theoremstyle{definition}
 \newtheorem*{defn*}{\protect\definitionname}
\theoremstyle{plain}
\newtheorem{thm}{\protect\theoremname}[section]
  \theoremstyle{definition}
  \newtheorem{defn}[thm]{\protect\definitionname}
  \theoremstyle{definition}
  \newtheorem{example}[thm]{\protect\examplename}
  \theoremstyle{remark}
  \newtheorem{rem}[thm]{\protect\remarkname}
  \theoremstyle{plain}
  \newtheorem{lem}[thm]{\protect\lemmaname}
  \theoremstyle{plain}
  \newtheorem{prop}[thm]{\protect\propositionname}
  \theoremstyle{plain}
  \newtheorem{cor}[thm]{\protect\corollaryname}
  \theoremstyle{plain}
  \newtheorem{fact}[thm]{\protect\factname}
  \providecommand{\corollaryname}{Corollary}
  \providecommand{\definitionname}{Definition}
  \providecommand{\examplename}{Example}
  \providecommand{\factname}{Fact}
  \providecommand{\lemmaname}{Lemma}
  \providecommand{\propositionname}{Proposition}
  \providecommand{\remarkname}{Remark}
  \providecommand{\theoremname}{Theorem}
\providecommand{\theoremname}{Theorem}
\begin{document}

\title{Simultaneous monomialization.}

\author{Julie Decaup}
\begin{abstract}
We give a proof of the simultaneous monomialization Theorem in zero
characteristic for rings essentially of finite type over a field and
for quasi-excellent rings. The methods develop the key elements theory
that is a more subtle notion than the notion of key polynomials.
\end{abstract}

\address{Instituto de Matemáticas, Unidad Cuernavaca, Universidad Nacional
Autónoma de México.}

\email{julie.decaup@im.unam.mx}

\maketitle
\tableofcontents{}

\newpage{}

\part{Introduction.}

The resolution of singularities can be formulated in the following
way.

Let $V$ be a singular variety. The variety $V$ admits a resolution
of singularities if there exists a smooth variety $W$ and a proper
birational morphism $W\to V$. 

This problem has been solved in many cases but remains an open problem
in others. In characteristic zero Hironaka proved resolution of singularities
in all dimensions (\cite{H}) in 1964. Much work has been done since
1964 to simplify and better understand resolution of singularities
in characteristic zero. We mention \cite{BEV1}, \cite{B}, \cite{BDMV},
\cite{BGMW}, \cite{BM3}, \cite{BM4}, \cite{BM8}, \cite{BMT},
\cite{BV1}, \cite{BV2}, \cite{BrV1}, \cite{BrV2}, \cite{Cu1},
\cite{EV1}, \cite{Tem1}, \cite{Tem2}, \cite{Tem3}, \cite{Vi},
and \cite{Wl}. The problem remains open in positive characteristic.
The first proof for surfaces is due to S. Abhyankar in 1956 \cite{A1}
with subsequent strengthenings by H. Hironaka \cite{H3} and J. Lipman
\cite{L} to the case of more general 2-dimensional schemes, with
Lipman giving necessary and sufficient condition for a 2-dimensional
scheme to admit a resolution of singularities. See also \cite{CSch}.
Still, Abhyankar's proof is extremely technical and difficult and
comprises a total of 508 pages (\cite{A2}, \cite{A3}, \cite{A4},
\cite{A5}, \cite{A6}). For a more recent and more palatable proof
we refer the reader to \cite{Cu2}. It was not until much later that
V. Cossart and O. Piltant settled the problem of resolution of threefolds
in complete generality (their theorem holds for arbitrary quasi-excellent
noetherian schemes of dimension three, including the arithmetic case)
in a series of three long papers spanning the years 2008 to 2019 \cite{CP1},
\cite{CP2} and \cite{CP3}.\\
To try to solve the problem of resolution of singularities numerous
methods were introduced, in particular Zariski and Abhyankar used
the local uniformization. But it does not allow at the moment to solve
completely the problem. 

We are interested in a stronger problem than the local uniformization:
the monomialization problem. In this work we solve the monomialization
problem in characteristic zero. We hope that these methods, applicable
in positive characteristic, may help to attack the global problem
of resolution of singularities on a different point of view.

One of the essential tools to handle the monomialization or the local
uniformization is a valuation. Let us look on an example how valuations
naturally fit into the problem.

Let $V$ be a singular variety and $Z$ be an irreducible closed set
of $V$. 

If we knew how to resolve the singularities of $V$, we would have
a smooth variety $W$ and a proper birational morphism $W\to V$.
In $W$, we can consider a irreducible set $Z'$ whose image is $Z$.
And so the regular local ring $\mathcal{O}_{W,Z'}$ dominates the
non regular local ring $\mathcal{O}_{V,Z}$. It means that we have
an inclusion $\mathcal{O}_{V,Z}\subseteq\mathcal{O}_{W,Z'}$ and the
maximal ideal of $\mathcal{O}_{V,Z}$ is the intersection of those
of $\mathcal{O}_{W,Z'}$ with $\mathcal{O}_{V,Z}$. Up to a blow-up
$Z'$ is a hypersurface and so $\mathcal{O}_{W,Z'}$ is dominated
by a discrete valuation ring. In this case the valuation is the order
of vanishing along the hypersurface.

Before stating the local uniformization Theorem, we need a classical
notion that will be very important: the center of a valuation. For
details, we can read (\cite{Z2}) or (\cite[sections 2 and 3]{V}).

Let $K$ be a field and $\nu$ be a valuation defined over $K$. We
set 
\[
R_{\nu}:=\left\{ x\in K\text{ such that }\nu\left(x\right)\geq0\right\} ,
\]
 the valuation ring of $\nu$, and $\mathfrak{m}_{\nu}$ its maximal
ideal. 

We consider a subring $A$ of $K$ such that $A\subset R_{\nu}$.
Then the center of $\nu$ in $A$ is the ideal $\mathfrak{p}$ of
$A$ such that $\mathfrak{p}=A\cap\mathfrak{m}_{\nu}$.

Now we consider an algebraic variety $V$ over a field $k$ and $K$
its fractions field. Assume $V$ is an affine variety. Then $V=\mathrm{Spec}\left(A\right)$
where $A$ is a finite type integral $k$-algebra with $A\subseteq K$.
If $A\subseteq R_{\nu}$, then the center of $\nu$ over $V$ is the
point $\zeta$ of $V$ which corresponds to the prime ideal $A\cap\mathfrak{m}_{\nu}$
of $A$. 

The irreducible closed sub-scheme $Z$ of $V$ defined by $A\cap\mathfrak{m}_{\nu}$
(it means the image of the morphism $\mathrm{Spec}\left(\frac{A}{A\cap\mathfrak{m_{\nu}}}\right)\to\mathrm{Spec}\left(A\right)$)
has a generic point $\xi$. Equivalently $\zeta$ is the point associated
to the zero ideal. We say that $Z$ is the center of $\nu$ over $V$.
.\\
Now let us state the local uniformization Theorem. It has been proved
in characteristic zero but it is always a conjecture in positive characteristic.
\begin{thm*}[Zariski \cite{Z2}]
 Let $X=\mathrm{Spec}\left(A\right)$ be an affine variety of fractions
field $K$ over a field $k$. We consider $\nu$ a valuation over
$K$ of valuation ring $R_{\nu}$. 

Then $A$ can be embedded in a regular local sub-ring $A'$ essentially
of finite type over $k$ and dominated by $R_{\nu}$.
\end{thm*}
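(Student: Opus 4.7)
The plan is to prove the theorem by induction, using the key element theory announced in the abstract. First I would reduce to the local case: replacing $A$ by its localization $A_{\mathfrak{p}}$ at the center $\mathfrak{p}=A\cap\mathfrak{m}_{\nu}$ preserves both the essentially-finite-type hypothesis over $k$ and the property of being dominated by $R_{\nu}$. It then suffices to find a regular local ring $A'\supseteq A_{\mathfrak{p}}$, essentially of finite type over $k$ and dominated by $R_{\nu}$. The base case $\dim A_{\mathfrak{p}}=0$ is immediate, since $A_{\mathfrak{p}}$ is then a field.

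For the inductive step, I would pick generators $x_{1},\ldots,x_{n}$ of $A_{\mathfrak{p}}$ over $k$ and analyze the restriction of $\nu$ to $k(x_{1},\ldots,x_{n})$ via the key element machinery. This should produce a tower of elements of $K$ whose $\nu$-values and residues progressively generate the value group and the residue field of $\nu$. Using such a tower I would perform a Perron-type monomial transformation to pass from $A_{\mathfrak{p}}$ to a new $k$-subalgebra $A_{1}\subseteq R_{\nu}$ of $K$, essentially of finite type over $k$ and dominated by $R_{\nu}$, in which the center of $\nu$ is simpler than in $A_{\mathfrak{p}}$ --- for example, one of the new generators can be arranged to $\nu$-generate the maximal ideal of the center.

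The main obstacle will be termination: one must show that after finitely many iterations the local ring at the center becomes regular. The natural invariant is built from Abhyankar's inequality, which bounds the sum of the rational rank of $\nu$ and the transcendence degree of the residue field extension by $\dim A_{\mathfrak{p}}$. One arranges the key element transformations so that at each step either the dimension of the center drops, or the gap in Abhyankar's inequality decreases, or the truncation length of the key element tower needed to describe the center strictly decreases. The characteristic-zero hypothesis enters decisively here, since in equicharacteristic zero the key element towers describing an immediate algebraic extension of a valued field are finite and of controlled length, whereas defect phenomena in positive characteristic can render such towers infinite, which is precisely what obstructs extending the method beyond characteristic zero.

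Once termination is established, the target ring $A'$ has its maximal ideal generated by the final system of key elements, and these form a regular system of parameters, so $A'$ is regular. Essential finite-typeness over $k$ is preserved at every step of the construction, and domination by $R_{\nu}$ is built into the choice of each birational transformation, so the conclusion follows.
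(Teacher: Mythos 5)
Your outline is the classical Zariski--Abhyankar strategy: successive Perron-type birational transformations of the center, with termination controlled by an invariant built from Abhyankar's inequality. That is a genuinely different route from the one this paper takes, where the statement is obtained as a corollary of simultaneous monomialization: one reduces to a hypersurface $R/(f)$ with $R=k[u_{1},\dots,u_{n}]_{(u_{1},\dots,u_{n})}$, runs a single explicit sequence of framed blow-ups that eventually monomializes \emph{every} element of $R$, observes that the strict transform of $f$ becomes a coordinate $u_{n}^{(i)}$, and takes $A'=R_{i}/(u_{n}^{(i)})$. As written, however, your proposal has a genuine gap exactly where all the work lies: termination. You assert that one can arrange the transformations so that at each step the dimension of the center drops, or the gap in Abhyankar's inequality decreases, or the truncation length of the tower decreases, but you give no mechanism producing such transformations and no argument that this triple cannot stall. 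That claim \emph{is} local uniformization; it cannot be deferred as a detail.

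Moreover, the one concrete assertion you make about where characteristic zero enters is incorrect. It is not true that in equicharacteristic zero the key-polynomial towers are finite: limit key polynomials (immediate limit successors) do occur in characteristic zero, and the entire apparatus of limit immediate successors in this paper exists to handle them. What characteristic zero actually buys --- and the only place the hypothesis is used --- is Theorem \ref{thm: delta et pc limite}: if $Q'$ is a limit immediate successor of $Q$, then $\delta_{Q}(Q')=1$, i.e.\ the minimal-value terms of the $Q$-expansion of $Q'$ are exactly those of degrees $0$ and $1$. It is this fact, not finiteness of the tower, that makes limit key polynomials monomializable (Theorem \ref{thm: monomialiase pc lim de u_n}) and lets the induction close. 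To repair your argument you would need either to import Zariski's original Perron-algorithm termination proof wholesale (at which point the key-element language does no work), or to replace your invariant by the mechanism used here: cofinality of $\left(\epsilon(Q_{i})\right)_{i}$ in $\epsilon(\Lambda)$, non-degeneracy of each element with respect to some $Q_{i}$, and the strictly decreasing combinatorial invariant $\tau$ that governs the monomial blow-ups.
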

In this work we prove a stronger result: the simultaneous monomialization
Theorem. We are going to explain what is the monomialization and what
are the objects that we handle.

Let $k$ be a field of characteristic zero and $f\in k[u_{1},\dots,u_{n}]$
be a polynomial in $n$ variables, irreducible over $k$. We denote
by $V\left(f\right)$ the hypersurface defined by $f$ and we assume
that it has a singularity at the origin. Then we set $R:=k\left[u_{1},\dots,u_{n}\right]_{\left(u_{1},\dots,u_{n}\right)}$.
This is a regular local ring that is essentially of finite type over
the field $k$. The vector $u=\left(u_{1},\dots,u_{n}\right)$ is
a regular system of parameters of $R$. We use the notation $\left(R,u\right)$
to express the fact that $u$ is a regular system of parameters of
the regular local ring $R$.
\begin{defn*}[\ref{def:monom}]
The element $f$ is monomializable if there exists a map 
\[
\left(R,u\right)\to\left(R',u'=\left(u'_{1},\dots u'_{n}\right)\right)
\]
 that is a sequence of blow-ups such that the total transform of $f$
is a monomial. It means that in $R'$, the total transform of $f$
is $v\prod\limits _{i=1}^{n}\left(u'_{i}\right)^{\alpha_{i}}$, with
$v$ a unit of $R'$.
\end{defn*}
Now we can give a simplified version of one of the main theorems of
this work.
\begin{thm*}[\ref{thm:Monomialisons}]
 Let $\left(R,u\right)$ be a regular local ring that is essentially
of finite type over a field $k$ of characteristic zero.

Then there exists a countable sequence of blow-ups 
\[
\left(R,u\right)\to\cdots\to\left(R_{i},u^{\left(i\right)}\right)\to\cdots
\]

that monomializes simultaneously all the elements of $R$.
\end{thm*}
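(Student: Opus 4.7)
The plan is to reduce the global ``all elements at once'' assertion to the local theory of a single valuation dominating $R$, and then to run the blow-ups dictated by that valuation using the key elements theory as the main organizing tool.

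First, I would fix a valuation $\nu$ of the fraction field $K = \mathrm{Frac}(R)$ whose center in $R$ is the maximal ideal $\mathfrak{m}$; such $\nu$ always exists by standard arguments (compose the $\mathfrak{m}$-adic order with a valuation of the residue field, for instance). Each step of the constructed sequence will be determined by $\nu$: after blowing up an admissible regular center in $R_i$, one passes to the localization $R_{i+1}$ at the unique point where $\nu$ is centered. In this way the sequence $(R,u) \to (R_1, u^{(1)}) \to \cdots$ is entirely governed by $\nu$ once the centers at each stage are chosen.

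Second, I would use the key elements theory mentioned in the abstract to attach to $(R,\nu)$ a well-ordered family of key elements $\{Q_i\}$, which is countable thanks to the essentially-of-finite-type hypothesis. The sequence of blow-ups is then built inductively so that each finite block monomializes the next $Q_i$ without destroying the monomiality of $Q_1, \dots, Q_{i-1}$; concatenating these finite blocks gives the sought-after countable sequence. Characteristic zero intervenes here through a Tschirnhaus-type change of parameters that keeps the combinatorics of the key elements under control and allows the elimination of cross terms when passing from one key element to the next.

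Third, I would verify that this single countable sequence monomializes \emph{every} $f \in R$, not merely the key elements. Writing the standard expansion of $f$ in the key elements modulo higher $\nu$-value, the dominant part is a monomial in finitely many $Q_i$'s; once each of these $Q_i$'s has been monomialized, the dominant monomial becomes a monomial in the current regular system of parameters, and the remainder, of strictly greater $\nu$-value, is absorbed into the unit factor. Hence at some finite stage $N(f)$ the element $f$ is a monomial, which is the required simultaneous conclusion.

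The main obstacle is precisely this last passage, from ``each key element is eventually monomialized'' to ``every element of $R$ is eventually monomialized by a single countable sequence''. One has to argue, uniformly in $f$, that only finitely many blow-ups are needed and that their order can be interleaved consistently across all elements; this is where the uniformity of the key elements theory is essential, since it guarantees that monomializing the finite subfamily of key elements that expresses the dominant part of $f$ already forces the monomialization of $f$ itself. The quasi-excellent case would then be reduced to the essentially-of-finite-type case by a passage to a suitable Henselization together with Artin-type approximation, but this is a separate matter once the valuation-theoretic core above is in place.
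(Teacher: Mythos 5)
Your overall strategy coincides with the paper's: fix a valuation $\nu$ centered in $R$, build a cofinal sequence of key polynomials whose successive members are (possibly limit) immediate successors, monomialize them block by block using the key elements (which, unlike key polynomials, survive blow-ups), and then deduce that every $f\in R$ becomes a monomial because it is non-degenerate with respect to some member of the sequence. The gap lies in your third step. Non-degeneracy of $f$ with respect to a key polynomial $Q_i$ does not mean that the dominant part of the $Q_i$-expansion is a single monomial: it is in general a sum of several terms of equal minimal value, and after $Q_1,\dots,Q_i$ have become coordinates all one knows is that $\nu(f)$ equals the value of a monomial ideal $N=\left(w^{\epsilon_0},\dots,w^{\epsilon_b}\right)$ containing $f$. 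The remainder is not ``absorbed into the unit factor'' at that stage; to write $f=w^{\epsilon_0}\cdot(\text{unit})$ one must perform further blow-ups that principalize $N$, i.e.\ make the generator of minimal value divide all the others (Propositions \ref{prop:taudecroit}, \ref{prop:divaleur} and \ref{prop:idealprincipal}). Since this must be achieved for every $f$ by one and the same countable sequence, the paper interleaves into the sequence a countable family of divisibility conditions --- the finite sets $\mathscr{S}'_j$ of pairs of monomials in all the successive coordinate systems --- alongside the blocks that monomialize the key polynomials and their coefficients; this is the content of Theorems \ref{thm:existence suite les ensembles finis se divisent} and \ref{thm: La suite de la mort qui tue}. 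This is precisely the uniformity you single out as ``the main obstacle'', but your proposal does not supply the mechanism that resolves it.

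Two smaller points. The construction requires $k=k_\nu$ (it is used to get two-term expansions of optimal immediate successors in Proposition \ref{prop: binoooomme} and degree-one residual minimal polynomials in the Puiseux packages), so your choice of $\nu$ must guarantee this, which a generic composition with a valuation of the residue field need not do. And the reduction of the quasi-excellent case to the essentially-of-finite-type case via Henselization and Artin approximation is not how the paper proceeds --- there one passes to the completion, uses the implicit prime ideal and imposes a rank restriction on $\nu$ --- but that concerns a different theorem and does not affect the statement at hand.
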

Equivalently, it means that for each element $f$ in $R$, there exists
an index $i$ such that in $R_{i}$, $f$ is one monomial.

If $f$ is an irreducible polynomial of $k\left[u_{1},\dots,u_{n}\right]$,
then $A:=\frac{R}{\left(f\right)}$ is a local domain. We can find
a valuation $\nu$ over $\mathrm{Frac}\left(A\right)$ centered in
$R$. One consequence of Theorem \ref{thm:Monomialisons} is that
the total transform of $f$ in one of the $R_{i}$ is $v\prod\limits _{j=1}^{n}\left(u_{j}^{\left(i\right)}\right)^{\alpha_{j}}$
. By the irreducibility of $f$ its strict transform is exactly $u_{n}^{\left(i\right)}$. 

Hence there exists an embedding of $A$ into the ring $A'=\frac{R_{i}}{\left(u_{n}^{\left(i\right)}\right)}$
which is dominated by $R_{\nu}$. So a consequence of Theorem \ref{thm:Monomialisons}
is the Local Uniformization Theorem as announced.

And we obtain a stronger result here: the total transform is a normal
crossing divisor. We call this result the embedded local uniformization.
We will give a new proof of this theorem in this work.

Let us explain why simultaneous monomialization is a stronger result
than the embedded local uniformization Theorem. First we monomialize
all the elements of $R$ with the same sequence of blow-ups. Secondly,
this sequence is effective and at each step of the process we can
express the $u^{(i+1)}$ in terms of the $u^{(i)}$. Indeed, we consider
an essentially of finite type regular local ring $R$, and a valuation
centered in $R$. Thanks to this valuation we construct an effective
sequence of blow-ups that monomializes all the elements of $R$. One
more advantage of the proof we give here is that in the essentially
of finite type case, we prove the simultaneous embedded local uniformization
whatever is the valuation. In particular we do not need any hypothesis
on the rank of the valuation. 

One of the most important ingredient in the proof of this theorem
is the notion of key polynomial. We give here a new definition of
key polynomial, introduced by Spivakovsky and appearing for the first
time in (\cite{DMS} and \cite{NS}). Let $K$ be a field, $\nu$
be a valuation over $K$ and we denote by $\partial_{b}:=\frac{1}{b!}\frac{\partial^{b}}{\partial X^{b}}$
the formal derivative of the order $b$ on $K\left[X\right]$. For
every polynomial $P\in K[X]$, we set 
\[
\epsilon_{\nu}\left(P\right):=\max\limits _{b\in\mathbb{N^{\ast}}}\left\{ \frac{\nu\left(P\right)-\nu\left(\partial_{b}P\right)}{b}\right\} .
\]

\begin{defn*}[\ref{def:pc}]
Let $Q\in K[X]$ be a monic polynomial. The polynomial $Q$ is a
\emph{key polynomial for $\nu$ }if for every polynomial $P\in K[X]$:
\[
\epsilon_{\nu}\left(P\right)\geq\epsilon_{\nu}(Q)\Rightarrow\mathrm{deg_{X}}(P)\geq\mathrm{deg_{X}}(Q).
\]
\end{defn*}
One of the interests of this new definition is the following notion:
\begin{defn*}[\ref{def: successeur immediat}]
Let $Q_{1}$ and $Q_{2}$ be two key polynomials. We say that $Q_{2}$
is an \emph{immediate successor} of $Q_{1}$ if $\epsilon(Q_{1})<\epsilon(Q_{2})$
and if $Q_{2}$ is of minimal degree for this property. We denote
this by $Q_{1}<Q_{2}$.
\end{defn*}
We denote by $M_{Q_{1}}$ the set of immediate successors of $Q_{1}$.
We assume that they all have the same degree as $Q_{1}$ and that
$\epsilon\left(M_{Q_{1}}\right)$ does not have any maximal element.
\begin{defn*}[\ref{def:limitpc}]
We assume that there exists a key polynomial $Q'$ such that $\epsilon(Q')>\epsilon(M_{Q_{1}})$.
We call \emph{immediate limit successor }of $Q_{1}$ every polynomial
$Q_{2}$ of minimal degree satisfying $\epsilon(Q_{2})>\epsilon(M_{Q_{1}})$,
and we denote this by $Q_{1}<_{\lim}Q_{2}$.
\end{defn*}
Let $Q_{1}$ and $Q_{2}$ be two key polynomials. Let us write $Q_{2}$
according to the powers of $Q_{1}$, $Q_{2}=\sum\limits _{i=0}^{s}q_{i}Q_{1}^{i}$
where the $q_{i}$ are polynomials of degree strictly less than $Q_{1}$.
We call this expression the\emph{ $Q_{1}$-expansion of $Q_{2}$.}

An important result in this work, and the only one for which we need
the characteristic zero hypothesis, is the following Theorem.
\begin{thm*}[\ref{thm: delta et pc limite}]
 Let $Q_{2}$ be an immediate limit successor of $Q_{1}$. Then the
terms of the $Q_{1}$-expansion of $Q_{2}$ that minimize the valuation
are exactly those of degrees $0$ and $1$.
\end{thm*}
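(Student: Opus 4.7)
The plan is to exploit two complementary rigidities of $Q_2$: from below, every polynomial of degree strictly less than $\deg Q_2$ satisfies $\epsilon \leq \epsilon(Q)$ for some $Q \in M_{Q_1}$, hence strictly less than $\epsilon(Q_2)$; from above, $\epsilon(Q_2)$ is the maximum over $b \geq 1$ of $(\nu(Q_2)-\nu(\partial_b Q_2))/b$. Since each $\partial_b Q_2$ has degree strictly less than $\deg Q_2$, these two bounds apply simultaneously and force tight valuative control on the $Q_1$-expansion.

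First I would record that $\deg Q_2 > \deg Q_1$, so $s \geq 1$ in the expansion $Q_2 = \sum_{i=0}^{s} q_i Q_1^i$: elements of $M_{Q_1}$ have the same degree as $Q_1$ and $\epsilon(M_{Q_1})$ has no maximum, which rules out lower-degree candidates. The Leibniz rule then yields
\[
\partial_1 Q_2 \;=\; \sum_{i=0}^{s}(\partial_1 q_i)\,Q_1^i \;+\; \partial_1 Q_1 \cdot \sum_{i=1}^{s} i\,q_i\,Q_1^{i-1},
\]
and since $Q_1$ is a key polynomial, $\nu(\partial_1 Q_1) = \nu(Q_1) - \epsilon(Q_1)$. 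A crucial intermediate step is to prove that the maximum defining $\epsilon(Q_2)$ is attained at $b = 1$, i.e.\ $\epsilon(Q_2) = \nu(Q_2) - \nu(\partial_1 Q_2)$. In characteristic zero the higher Hasse derivatives $\partial_b$ can be expressed via Fa\`a di Bruno in terms of iterated $\partial_1$, with coefficients involving $1/b!$ that are invertible, so no $b \geq 2$ can beat $b = 1$.

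Granted this identity, I would compare both sides termwise in the $Q_1$-expansion. For $i \geq 1$ the summand $i\,q_i\,\partial_1 Q_1\,Q_1^{i-1}$ has valuation $\nu(q_i Q_1^i) - \epsilon(Q_1)$, while the summands $(\partial_1 q_i)\,Q_1^i$ contribute strictly larger valuations because $\deg q_i < \deg Q_1$ and $Q_1$ is itself a key polynomial. Combined with the strict gap $\epsilon(Q_2) > \epsilon(M_{Q_1}) \geq \epsilon(Q_1)$, the identity $\nu(\partial_1 Q_2) = \nu(Q_2) - \epsilon(Q_2)$ then forces on one hand $\nu(Q_2) > \min_i \nu(q_i Q_1^i)$, so the minimum is attained at least twice, and on the other hand that no $i \geq 2$ can be a minimizer, because a partial sum including such an index would yield a polynomial of degree $< \deg Q_2$ whose $\epsilon$ exceeds $\epsilon(M_{Q_1})$, contradicting the minimality of $\deg Q_2$. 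The minimum is therefore attained exactly at $i = 0$ and $i = 1$.

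The main obstacle is the Fa\`a di Bruno reduction to $b = 1$; everything else is relatively mechanical valuation bookkeeping once this is in hand. This is precisely where characteristic zero is essential, because in characteristic $p > 0$ the analogous reduction fails and indices $i = p^k$ would also be admitted as minimizers, producing the Artin--Schreier obstruction so familiar in the positive-characteristic resolution of singularities.
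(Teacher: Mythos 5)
Your overall strategy---differentiate once, compare the two halves of the Leibniz expansion, and play the resulting lower-degree polynomial against the minimality of $\deg Q_{2}$---is the same as the paper's, and several intermediate points are sound: in characteristic zero one does have $b(Q)=1$ for a key polynomial $Q$; the terms $(\partial_{1}q_{i})Q_{1}^{i}$ are indeed dominated by the terms $i\,q_{i}(\partial_{1}Q_{1})Q_{1}^{i-1}$ because $\epsilon\left(q_{i}\right)<\epsilon\left(Q_{1}\right)$; and the fact that the minimum is attained at least twice follows from Proposition \ref{prop: successeurs immediats et somme des formes initiales}. The gap is in your last step, and it sits exactly where the difficulty of the theorem lives.

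You assert that if some $i\geq2$ minimizes, one obtains a polynomial of degree $<\deg Q_{2}$ ``whose $\epsilon$ exceeds $\epsilon\left(M_{Q_{1}}\right)$'', contradicting the minimality of $\deg Q_{2}$. But what the Leibniz computation delivers (after the nontrivial extra step of ruling out cancellation, i.e.\ proving $\nu_{Q_{1}}\left(\partial_{1}Q_{2}\right)<\nu\left(\partial_{1}Q_{2}\right)$, which itself requires the factorization $\mathrm{in}_{\nu_{Q_{1}}}\left(\partial_{1}Q_{2}\right)=\delta\,\mathrm{in}_{\nu_{Q_{1}}}\left(q_{\delta}\partial_{1}Q_{1}\right)\mathrm{in}_{\nu_{Q_{1}}}\left(Q_{1}-a\right)^{\delta-1}$ of Corollary \ref{cor:forme initiale de tout poly} together with $\delta-1\geq1$) is only $\epsilon\left(\partial_{1}Q_{2}\right)>\epsilon\left(Q_{1}\right)$. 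That is strictly weaker than $\epsilon\left(\partial_{1}Q_{2}\right)>\epsilon\left(M_{Q_{1}}\right)$: a key polynomial $P$ of minimal degree with $\epsilon\left(P\right)\geq\epsilon\left(\partial_{1}Q_{2}\right)$ may perfectly well have degree equal to $\deg Q_{1}$, in which case it is just another element of $M_{Q_{1}}$ and contradicts nothing, since $Q_{2}$ is only required to be of minimal degree among polynomials whose $\epsilon$ dominates all of $\epsilon\left(M_{Q_{1}}\right)$, a set with no maximum. (The same objection applies if by ``partial sum'' you mean $\sum_{j\in S_{Q_{1}}\left(Q_{2}\right)}q_{j}Q_{1}^{j}$; moreover that sum has degree $<\deg Q_{2}$ only when $\delta<s$.) Closing this gap is the bulk of the paper's proof: one takes a counterexample with $\deg Q_{2}-\deg Q_{1}$ minimal, uses Proposition \ref{prop: l'un ou l'autre} to show that every key polynomial with $\epsilon$ strictly between $\epsilon\left(Q_{1}\right)$ and $\epsilon\left(Q_{2}\right)$ and degree $<\deg Q_{2}$ has degree exactly $\deg Q_{1}$, reruns the whole derivative computation with an arbitrary $\widetilde{Q}=Q_{1}-a$ in place of $Q_{1}$ (this is what Corollary \ref{cor:forme initiale de tout poly} is for) to obtain $\epsilon\left(\widetilde{Q}\right)<\epsilon\left(\partial_{1}Q_{2}\right)$ for \emph{every} key polynomial $\widetilde{Q}$ of degree $\deg Q_{1}$, and finally shows that $\partial_{1}Q_{2}$ is itself a key polynomial of degree $\deg Q_{1}$, whence $\epsilon\left(\partial_{1}Q_{2}\right)<\epsilon\left(\partial_{1}Q_{2}\right)$. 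Your Fa\`a di Bruno reduction to $b=1$ is correct but is not the main obstacle.
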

Then the hypothesis of characteristic zero is necessary also for the
results that follow from this theorem.\\

Here we give an idea of our proof of Theorem \ref{thm:Monomialisons}.
Let us consider a regular local ring $R$ essentially of finite type
over a field $k$ of characteristic zero. We fix $u=\left(u_{1},\dots,u_{n}\right)$
a regular system of parameters of $R$.

The first ingredient in the proof is the notion of non degeneration. 
\begin{defn*}[\ref{def:nond=0000E9g=0000E9}]
We say that an element $f$ of $R$ is non degenerated with respect
to $u$ if there exists an ideal $N$ of $R$, generated by monomials
in $u$, such that $\nu\left(f\right)=\min\limits _{x\in N}\left\{ \nu\left(x\right)\right\} $.
\end{defn*}
The first step is to monomialize all the elements that are non degenerated
with respect to a regular system of parameters of $R$. So let $f$
be an element of $R$ that is non degenerated with respect to $u$.
We construct a sequence of blow-ups 
\[
\left(R,u\right)\to\cdots\to\left(R',u'\right)
\]
 such that the strict transform of $f$ in $R'$ is a monomial in
$u'$. 

There exist elements $f$ of $R$ that are not non degenerated with
respect to $u$. So we wonder if we could find a sequence of blow-ups
\[
\left(R,u\right)\to\cdots\to\left(T,t\right)
\]
such that $f$ is non degenerated with respect to $t$. If we can,
after a new sequence of blow-ups, we monomialize $f$. Doing this
for all the elements of $R$ would be too complicated. So we would
want to find a sequence of blow-ups $\left(R,u\right)\to\cdots\to\left(R',u'\right)$
such that all the elements of $R$ are non degenerated with respect
to $u'$. It is a little optimistic and we need to do something more
subtle. We will find an infinite sequence of blow-ups 
\[
\left(R,u\right)\to\left(R_{1},u^{(1)}\right)\to\cdots\to\left(R_{i},u^{(i)}\right)\to\cdots
\]
 such that for each element $f$ of $R$, there exists $i$ such that
$f$ is non degenerated with respect to $u^{(i)}$.

For this, we need the second main ingredient: the key polynomials.

We construct a sequence of key polynomials $\left(Q_{i}\right)_{i}$
such that each element $f$ of $R$ is non degenerated with respect
to some $Q_{i}$. It means that:
\[
\forall f\in R\text{, }\exists i\text{ such that }\nu\left(f\right)=\nu_{Q_{i}}\left(f\right).
\]
We construct the sequence $\left(Q_{i}\right)_{i}$ step by step.
We require the following properties for this sequence: for every index
$i$, the polynomial $Q_{i+1}$ is an (eventually limit) immediate
successor of $Q_{i}$. Furthermore the sequence $\left(\epsilon\left(Q_{i}\right)\right)_{i}$
is cofinal in $\epsilon\left(\Lambda\right)$ where $\Lambda$ is
the set of key polynomials of the extension $k\left(u_{1},\dots,u_{n-1}\right)\left(u_{n}\right)$.\\

Equivalently it means:

\[
\begin{cases}
\forall i\text{, }Q_{i}<Q_{i+1}\text{ or }Q_{i}<_{\lim}Q_{i+1},\\
\forall Q\in\Lambda\text{ }\exists i\text{ such that }\epsilon(Q_{i})\geq\epsilon(Q).
\end{cases}
\]

Assume now that we can construct a sequence of blow-ups 
\[
\left(R,u\right)\to\cdots\to\left(R_{j},u^{(j)}\right)\to\cdots
\]
 such that all the $Q_{i}$ belong to a regular system of parameters.
It means that 
\[
\forall i\text{, }\exists j,k\text{ such that }Q_{i}^{\mathrm{strict},j}=u_{k}^{(j)},
\]
 where $Q_{i}^{\mathrm{strict},j}$ is the strict transform of $Q_{i}$
in $R_{j}$. Then every element $f$ of $R$ which is non degenerated
with respect to $Q_{i}$ is non degenerated with respect to $u^{(j)}$.
Thus it is monomializable. So the next step is to monomialize all
the $Q_{i}$. 

In order to do this once again we have to be subtle. The notion of
key polynomial is not stable by blow-up, so we need a better notion:
the notion of key element. Let $\left(Q_{i},Q_{i+1}\right)$ a couple
of (eventually limit) immediate successors of our sequence. We consider
$Q_{i+1}=\sum\limits _{j=0}^{s}q_{j}Q_{i}^{j}$ the $Q_{i}$-expansion
of $Q_{i+1}$. Then we associate to $Q_{i+1}$ a key element $Q'_{i+1}$
defined as follows.
\begin{defn*}[\ref{def:elmtclef}]
An element $Q'_{i+1}=\sum\limits _{j=0}^{s}a_{j}q_{j}Q_{i}^{j}$
where the $a_{j}$ are units is called \emph{a key element} associated
to $Q_{i+1}$.
\end{defn*}
In fact we also have a notion of (eventually limit) immediate successors
in this case.
\begin{defn*}[\ref{def:elmtsuccimm} and \ref{def:limitelmt}]
Let $P_{1}'$ and $P_{2}'$ be two key elements. We say that $P_{1}'$
and $P_{2}'$ are \emph{(eventually limit) immediate successors key
elements} if their respective associated key polynomials $P_{1}$
and $P_{2}$ are such that $P_{1}<P_{2}$ (eventually $P_{1}<_{\mathrm{lim}}P_{2}$).
\end{defn*}
After some blow ups we prove that (eventually limit) immediate successors
become (eventually limit) immediate successors key elements. So we
monomialize these key elements. For this we construct a sequence of
blow-ups
\[
\left(R,u\right)\overset{}{\to}\cdots\overset{}{\to}\left(R_{s},u^{(s)}\right)\overset{}{\to}\cdots
\]

that monomializes all the key polynomials $Q_{i}$. More precisely,
for every index $i$ there exists an index $s_{i}$ such that in $R_{s_{i}}$,
$Q_{i}$ is a monomial in $u^{\left(s_{i}\right)}$ up to a unit of
$R_{s_{i}}$.

So in the case of essentially of finite type regular local rings,
no matter the rank of the valuation is, we prove the embedded local
uniformization Theorem. And we do this using only a sequence of blow-ups
for all the elements of the ring, and in an effective way. It means
that every blow-up is effective and we know how to express all the
systems of coordinates.

~

Then we want to prove the same kind of result over more general rings,
even if it means adding conditions on the valuation. We work with
quasi excellent rings. Indeed, Grothendieck and Nagata showed that
there is no resolution of singularities for rings that are not quasi
excellent.

The second main result of this paper can be express in the following
simplified form.
\begin{thm*}[\ref{thm:BAMDANSTAFACE}]
Let $R$ be a noetherian quasi excellent complete regular local ring
and $\nu$ be a valuation centered in $R$.

Assume that $\nu$ is of rank $1$, or of rank $2$ but composed with
a discrete valuation, and that $\mathrm{car}\left(k_{\nu}\right)=0$. 

There exists a countable sequence of blow-ups 
\[
\left(R,u\right)\to\dots\to\left(R_{l},u^{\left(l\right)}\right)\to\dots
\]

that monomializes all the element of $R$.
\end{thm*}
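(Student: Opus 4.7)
The plan is to reduce Theorem \ref{thm:BAMDANSTAFACE} to Theorem \ref{thm:Monomialisons} by approximation, using the rank hypothesis on $\nu$. Fix a regular system of parameters $u = (u_1, \ldots, u_n)$ of $R$. Since $\nu$ is centered in $R$ with $\mathrm{car}(k_\nu) = 0$, the residue field $k = R/\mathfrak{m}$ also has characteristic zero, so by Cohen's structure theorem $R \cong k[[u_1, \ldots, u_n]]$ and contains the essentially of finite type subring $R^{\mathrm{alg}} := k[u_1, \ldots, u_n]_{(u_1, \ldots, u_n)}$. The restriction of $\nu$ to $R^{\mathrm{alg}}$ is a valuation centered there, and Theorem \ref{thm:Monomialisons} furnishes a countable sequence of blow-ups $(R_l^{\mathrm{alg}}, u^{(l)})$ monomializing every element of $R^{\mathrm{alg}}$. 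Because each blow-up is described monomially in the successive parameters, the same formulas yield a sequence $(R_l, u^{(l)})$ starting from $(R, u)$.

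The core step is to show that this sequence also monomializes every $f \in R$. Given $f \in R$, the idea is to find $\tilde{f} \in R^{\mathrm{alg}}$ with $\nu(f - \tilde{f}) > \nu(f)$: at the stage $l$ where $\tilde{f}$ is written as $v \prod_i (u_i^{(l)})^{\alpha_i}$ with $v$ a unit of $R_l^{\mathrm{alg}}$, the expansion in $R_l$ reads $f = (v + w) \prod_i (u_i^{(l)})^{\alpha_i}$ with $w \in \mathfrak{m}_{R_l}$, so $v + w$ remains a unit and $f$ is monomialized. The approximation $\tilde{f}$ is produced by truncating the power series expansion of $f$ at $\mathfrak{m}$-adic order $N$, giving a polynomial $f_N \in R^{\mathrm{alg}}$ with $f - f_N \in \mathfrak{m}^N$. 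When $\nu$ has rank $1$, the value group is Archimedean and $\nu(\mathfrak{m}^N) \to \infty$, so $f_N$ works for large $N$. When $\nu = \nu_1 \circ \nu_0$ has rank $2$ with $\nu_0$ discrete, one has $\nu_0(\mathfrak{m}^N) \to \infty$ and hence $\nu(f - f_N) > \nu(f)$ in the lexicographic order for large $N$.

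The main obstacle is ensuring that the approximation is compatible with the key polynomial data built in the proof of Theorem \ref{thm:Monomialisons}. One must verify that the sequence $(Q_i)$ of key polynomials constructed for $\nu$ restricted to $R^{\mathrm{alg}}$ is cofinal for $\nu$ on $R$ as well, and that non-degeneration with respect to some $Q_i$ transfers from the essentially of finite type approximation to the element of $R$ itself. This transfer works element by element thanks to the rank hypothesis: for each fixed $f \in R$, one may choose the truncation order $N$ large enough to inherit non-degeneration, even though no uniform $N$ works across all of $R$. The characteristic zero hypothesis on $k_\nu$ is inherited from Theorem \ref{thm:Monomialisons} and Theorem \ref{thm: delta et pc limite}, which govern the underlying key element theory and remain valid in the complete setting once the approximation step is in place.
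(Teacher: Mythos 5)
Your reduction-by-approximation breaks down exactly where the theorem is hard, namely in the rank $2$ case and, more generally, whenever the implicit prime ideal is nonzero. Write $\nu=\mu\circ\theta$ with $\theta$ the $(h)$-adic valuation for some irreducible $h\in R$, as in the paper's own proof of Theorem \ref{thm:BAMDANSTAFACE}. Your claim that the discrete component satisfies $\nu_{0}\left(\mathfrak{m}^{N}\right)\to\infty$ is false: since $\mathfrak{m}\not\subseteq\left(h\right)$ (some regular parameter is not an associate of $h$), one has $\theta\left(\mathfrak{m}^{N}\right)=0$ for all $N$, so for an element $f$ with $\theta\left(f\right)>0$ the truncation error $f-f_{N}\in\mathfrak{m}^{N}$ typically has $\theta\left(f-f_{N}\right)=0$ and hence $\nu\left(f-f_{N}\right)<\nu\left(f\right)$ in the lexicographic order, for every $N$. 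Worse, if $h$ itself is a power series with $\left(h\right)\cap R^{\mathrm{alg}}=\left(0\right)$ (e.g. $h=u_{2}-\xi\left(u_{1}\right)$ with $\xi$ a transcendental series), then every nonzero $p\in R^{\mathrm{alg}}$ satisfies $\nu\left(h-p\right)=\nu\left(p\right)\in\Gamma_{1}$ while $\nu\left(h\right)\notin\Gamma_{1}$, so $\nu\left(h-p\right)<\nu\left(h\right)$ always: the element $h$ admits no approximation from $R^{\mathrm{alg}}$ at all, and it is precisely the element that must be monomialized first. This is the phenomenon the implicit prime ideal is built to handle (it has height at most $1$, its generator is a key polynomial by Proposition \ref{prop:implicite et pc}, it is monomialized by Corollary \ref{cor: h est monomialisable}, and the quotient reduction is Theorem \ref{thm: reduction quotient}); your proposal offers no substitute for this machinery. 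The paper instead applies the rank-$1$ statement (Theorem \ref{thm:rang 1}) to $R/\left(h\right)$ with the valuation $\mu$ to monomialize $h$, writes every $g$ as $\left(u_{n}^{\left(i\right)}\right)^{a}h'$ with $h'$ prime to the strict transform of $h$, and applies the rank-$1$ statement a second time.

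Two further points arise even in rank $1$. First, Theorem \ref{thm:Monomialisons} rests on the construction of the cofinal sequence of key polynomials under the standing hypothesis $k=k_{\nu}$ (Remark \ref{rem: le poly min est de degre 1} and Proposition \ref{prop: binoooomme}); Theorem \ref{thm:BAMDANSTAFACE} only assumes $\mathrm{car}\left(k_{\nu}\right)=0$, and removing $k=k_{\nu}$ is exactly why Part $5$ reworks the Puiseux packages with a minimal polynomial $\lambda_{0}$ of degree possibly greater than $1$, or zero (Theorem \ref{thm:monomialise general}). Second, your step $f=\left(v+w\right)\prod_{i}\left(u_{i}^{\left(l\right)}\right)^{\alpha_{i}}$ requires the monomial to divide $f-f_{N}$ in $R_{l}$; the inequality $\nu\left(f-f_{N}\right)>\nu\left(f\right)$ does not by itself give divisibility, so you would additionally need to principalize $\mathfrak{m}^{N}R_{l}$ and invoke Propositions \ref{prop:existencesuitedivise} and \ref{prop:divaleur}. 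These last two issues are repairable with the tools already in the paper; the rank-$2$ gap is not, within the approximation approach as you have set it up.
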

So let $R$ be a quasi excellent local domain. This time $R$ is not
assume to be of finite type, so we cannot repeat what we did before.
We need to introduce one more ingredient: the implicit prime ideal.

Let $\nu$ be a valuation of the fractions field of $R$ centered
in $R$. We call implicit prime ideal of $R$ associated to $\nu$
the ideal of the completion $\widehat{R}$ of $R$ defined by:

\[
H:=\bigcap\limits _{\beta\in\nu\left(R\setminus\left\{ 0\right\} \right)}P_{\beta}\widehat{R}
\]
 where $P_{\beta}:=\left\{ f\in R\text{ such that }\nu\left(f\right)\geq\beta\right\} $. 

One can show that in this case desingularizing $R$ means desingularizing
$\widehat{R}$. In the last part of this work we also prove that to
desingularize $\widehat{R}$, we only need to desingularize $\widehat{R}_{H}$
and (up to one more sequence of blow-ups) $\frac{\widehat{R}}{H}$.
We prove that the implicit prime ideal satisfies the property that
$\widehat{R}_{H}$ is regular. So we only have to desingularize $\frac{\widehat{R}}{H}$
and this is done by Theorem \ref{thm: quotient reg}.

~\\

\subsection*{Acknowledgments.}

The author is really grateful to her PHD advisor Mark Spivakovsky
for all the helpful discussions.

\newpage{}

\part{Key polynomials.}

The notion of key polynomials was first introduced by Saunders Mac
Lane in 1936, in the case of discrete valuations of rank 1. The first
motivation to introduce this notion was to describe all the extensions
of a valuation to a field extension. Let $K\to L$ be an extension
of field and $\nu$ a valuation on $K$. We consider a valuation $\mu$
that extends $\nu$ to $L$. In the case where $\nu$ is of rank $1$
and where $L$ is a simple algebraic extension of $K$, Mac Lane created
the notion of key polynomial for $\mu$. He also created the notion
of augmented valuations. Given a valuation $\mu$ and $Q$ a key polynomial
of Mac Lane, we write $f=\sum\limits _{i=0}^{r}f_{j}Q^{j}$ the $Q$-expansion
of an element $f\in K\left[X\right]$. An augmented valuation $\mu'$
of $\mu$ is the one defined by $\mu'\left(f\right)=\min\limits _{0\leq j\leq r}\left\{ \mu\left(f_{j}\right)+j\delta\right\} $
where $\delta>\mu\left(Q\right)$. He proved that $\mu$ is the limit
of a family of augmented valuations over the ring $K[x]$. Michel
Vaquié extended this definition to arbitrary valued field $K$, without
assuming that $\nu$ is discrete. The most important difference between
these notions is the fact that those of Vaquié involves limit key
polynomials while those of Mac Lane not. 

More recently, the notion of key polynomials has been used by Spivakovsky
to study the local uniformization problem, and to do this he created
a new notion of key polynomials. It is the one we use here.

\section{Key polynomials of Spivakovsky et al.}

For some results of this part, we refer the reader to \cite{DMS},
but we recall the definitions and properties used in this work to
have a selfcontained manuscript.

First, recall the definition of a valuation.
\begin{defn}
Let $R$ be a commutative domain with a unit element, $K$ be a commutative
field and $\Gamma$ be a totally ordered abelian group. We set $\Gamma_{\infty}:=\Gamma\cup\left\{ +\infty\right\} $.

A \emph{valuation} \index{Valuation} of $R$ is a map 
\[
\nu\colon R\to\Gamma_{\infty}
\]
 such that:
\begin{enumerate}
\item $\forall x\in R$, $\nu(x)=+\infty\Leftrightarrow x=0$,
\item $\forall\left(x,y\right)\in R^{2}$, $\nu\left(xy\right)=\nu\left(x\right)+\nu\left(y\right)$,
\item $\forall\left(x,y\right)\in R^{2}$, $\nu\left(x+y\right)\geq\min\left\{ \nu\left(x\right),\nu\left(y\right)\right\} $.
\end{enumerate}
\end{defn}

Let us give three examples of valuations.
\begin{example}
\label{exa:valuation1}

The map $\nu_{1}:\mathbb{C}\left[x\right]\to\mathbb{Z}\cup\left\{ +\infty\right\} $
which sends a polynomial $P=\sum\limits _{i=0}^{d}p_{i}x^{i}$ to
$\min\left\{ i\text{ such that }p_{i}\neq0\right\} $ is a valuation.
\end{example}

~
\begin{example}
\label{exa:valuation2}We want to define a valuation $\nu_{2}$ on
$\mathbb{C}\left(x,y,z\right)$. The value of a quotient $\frac{P}{Q}$
is $\nu_{2}\left(P\right)-\nu_{2}\left(Q\right)$. 

And we define the value of a polynomial $P=\sum\limits _{i}p_{i}x^{i_{1}}y^{i_{2}}z^{i_{3}}$
as the minimal of the values of $p_{i}x^{i_{1}}y^{i_{2}}z^{i_{3}}$.

Then we only have to define the values of the generators $x$, $y$
and $z$.

Hence the map $\nu_{2}:\mathbb{C}\left(x,y,z\right)\to\mathbb{R}_{\infty}$
which sends $x$ to $1$, $y$ to $2\pi$ and $z$ to $1+\pi$ is
a valuation.
\end{example}

~
\begin{example}
\label{exa:valuation3}Let us set $Q=z^{2}-x^{2}y$. Every polynomial
$P\in\mathbb{C}\left[x,y,z\right]$ can be written according to the
powers of $Q$. We write $P=\sum\limits _{i}p_{i}Q^{i}$ with the
$p_{i}\in\mathbb{C}\left[x,y\right]\left[z\right]$ of degree in $z$
strictly less than $\deg_{z}\left(Q\right)=2$. Assume that the first
non zero $p_{i}$ is $p_{n}$.\\
Then the map $\nu_{3}:\mathbb{C}\left(x,y,z\right)\to\left(\mathbb{R}^{2},\mathrm{lex}\right)$
which sends $P$ to $\left(n,\nu_{2}\left(p_{n}\right)\right)$ defines
a valuation, with $\nu_{2}$ the valuation defined in Example \ref{exa:valuation2}.
\end{example}

Let $K$ be a field equipped with a valuation $\nu$ and consider
a simple transcendental extension
\[
K\hookrightarrow K(X)
\]
with a valuation $\nu$ that extends $\mu$ to $K(X)$. We still denote
by $\nu$ the restriction of $\nu$ to $K[X]$.

For every non zero integer $b,$ we set $\partial_{b}:=\frac{1}{b!}\frac{\partial^{b}}{\partial X^{b}}$.
This is called the formal derivative of order $b$.

For every polynomial $P\in K[X]$, we set
\[
\epsilon_{\nu}\left(P\right):=\max\limits _{b\in\mathbb{N^{\ast}}}\left\{ \frac{\nu\left(P\right)-\nu\left(\partial_{b}P\right)}{b}\right\} .
\]
\index{epsilon_{nu}left(Pright)@$\epsilon_{\nu}\left(P\right)$}
\begin{rem}
Most of the time we will note $\epsilon\left(P\right):=\epsilon_{\nu}\left(P\right)$.
\end{rem}

\begin{example}
\label{exa:calcul epsilon}We consider $\mathbb{C}\left(x,y\right)\left[z\right]$
and the valuation $\nu:=\nu_{3}$ defined in Example \ref{exa:valuation3}.

We have $\nu\left(z\right)=\left(0,1+\pi\right)$ and $\nu\left(\partial z\right)=\nu\left(1\right)=\left(0,0\right)$.
So 
\[
\epsilon\left(z\right)=\max\limits _{b\in\mathbb{N^{\ast}}}\left\{ \frac{\nu\left(z\right)-\nu\left(\partial_{b}z\right)}{b}\right\} =\frac{\nu\left(z\right)-\nu\left(\partial z\right)}{1}=\nu\left(z\right)=\left(0,1+\pi\right).
\]

Also we have $\nu\left(x\right)=\left(0,1\right)$ and $\nu\left(\partial x\right)=\nu\left(0\right)=\left(+\infty,+\infty\right)$
so $\epsilon\left(x\right)=\left(-\infty,-\infty\right)$. Furthermore
$\epsilon\left(y\right)=\left(-\infty,-\infty\right)$.

Finally, let us compute $\epsilon\left(Q\right)$. Recall that $Q=z^{2}-x^{2}y$.
We have $\nu\left(Q\right)=\left(1,0\right)$, $\nu\left(\partial Q\right)=\nu\left(2z\right)=\left(0,1+\pi\right)$
and $\nu\left(\partial_{2}Q\right)=\nu\left(2\right)=\left(0,0\right)$.

So $\epsilon\left(Q\right)=\max\left\{ \frac{\nu\left(Q\right)-\nu\left(\partial Q\right)}{1},\frac{\nu\left(Q\right)-\nu\left(\partial_{2}Q\right)}{2}\right\} =\max\left\{ \frac{\left(1,0\right)-\left(0,1+\pi\right)}{1},\frac{\left(1,0\right)-\left(0,0\right)}{2}\right\} =\left(1,-1-\pi\right)$.
\end{example}

\begin{defn}
\label{def:pc}Let $Q\in K[X]$ be a monic polynomial. We say that
$Q$ is a \emph{key polynomial\index{Key polynomial} for $\nu$}
if for every polynomial $P\in K[X]$, we have:

\[
\epsilon_{\nu}\left(P\right)\geq\epsilon_{\nu}(Q)\Rightarrow\mathrm{deg_{X}}(P)\geq\mathrm{deg_{X}}(Q).
\]
\end{defn}

\begin{example}
\label{exa:pc}We consider the same example as in example \ref{exa:calcul epsilon}.

Let us show that $z$ is a key polynomial. We do a proof by contrapositive.
Let $P$ be a polynomial of degree in $z$ strictly less than $\deg_{z}\left(z\right)=1$.
So $P$ does not depend on $z$. Then we saw that $\epsilon\left(P\right)=\left(-\infty,-\infty\right)$.
So $\epsilon\left(P\right)<\epsilon\left(z\right)$ and $z$ is a
key polynomial.

Now, let us show that $Q=z^{2}-x^{2}z$ is a key polynomial. So we
consider a polynomial $P$ such that $\epsilon\left(P\right)\geq\epsilon\left(Q\right)=\left(1,-1-\pi\right)$.

Then $\epsilon\left(P\right)=\left(n,\ast\right)$ where $n\geq1$
and $\ast$ is a scalar. So $\nu\left(P\right)=\left(m,\ast\right)$
where $m\geq1$. Hence $Q^{m}\mid P$ and so $\deg_{z}\left(P\right)\geq\deg_{z}\left(Q\right)$.
We proved that $Q$ is a key polynomial.

We have two key polynomials $z$ and $Q$ and we have $\epsilon\left(z\right)<\epsilon\left(Q\right)$.
One can show that $Q$ is of minimal degree for this property. In
this situation we will say that $Q$ is an immediate successor of
$z$.
\end{example}

For every polynomial $P\in K[X]$, we set 
\[
b_{\nu}\left(P\right):=\min I(P)
\]

where 
\[
I(P):=\left\{ b\in\mathbb{N}^{\ast}\text{ such that }\frac{\nu\left(P\right)-\nu\left(\partial_{b}P\right)}{b}=\epsilon_{\nu}\left(P\right)\right\} .
\]
Again, if there is no confusion, we will omit the index $\nu$.

Let $P$ and $Q$ be two polynomials such that $Q$ is monic. Then
$P$ can be written $\sum\limits _{j=1}^{n}p_{j}Q^{j}$ with $p_{j}$
polynomials of degree strictly less than the degree of $Q$. This
expression is unique and is called the $Q$-expansion of $P$.
\begin{defn}
Let $\left(P,Q\right)\in K[X]^{2}$ such that $Q$ is monic, and we
consider $P=\sum\limits _{j=1}^{n}p_{j}Q^{j}$ the $Q$-expansion
of the polynomial $P$. Then we set $\nu_{Q}\left(P\right):=\min\limits _{0\leq j\leq n}\nu\left(p_{j}Q^{j}\right)$.
The map $\nu_{Q}$ is called the \emph{$Q$-truncation\index{Truncated valuation}}
of $\nu$.

Also we set 
\[
S_{Q}\left(P\right):=\left\{ j\in\left\{ 0,\dots,n\right\} \text{ such that }\nu\left(p_{j}Q^{j}\right)=\nu_{Q}\left(P\right)\right\} 
\]
 and 
\[
\delta_{Q}\left(P\right):=\max\left\{ S_{Q}\left(P\right)\right\} .
\]
Now, we set 
\[
\widetilde{P}_{\nu,Q}:=\sum\limits _{j\in S_{Q}\left(P\right)}p_{j}Q^{j}.
\]
\end{defn}

\begin{rem}
\label{rem:valuation tronquee non valuation}In the general case,
$\nu_{Q}$ is not a valuation. But if $Q$ is a key polynomial, we
are going to show that $\nu_{Q}$ is a valuation.

In order to do that, we need the next result, which will also be needed
for a proof of the fundamental theorem \ref{thm: delta et pc limite}.
\end{rem}

\begin{lem}
\label{lem:paslechoixfaut le mettre}Let $t\in\mathbb{N}_{>1}$ and
$Q$ be a key polynomial. We consider $P_{1},\dots,P_{t}$ some polynomials
of $K[X]$ all of degree strictly less than $\deg\left(Q\right)$
and we set $\prod\limits _{i=1}^{t}P_{i}:=qQ+r$ the Euclidean division
of $\prod\limits _{i=1}^{t}P_{i}$ by $Q$ in $K[X]$. Then:
\[
\nu\left(r\right)=\nu\left(\prod\limits _{i=1}^{t}P_{i}\right)<\nu\left(qQ\right).
\]
\end{lem}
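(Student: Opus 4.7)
The plan is to induct on $t$, with the base case $t=2$ carrying all the substance. For $t=2$, the constraint $\deg(P_1),\deg(P_2)<\deg(Q)$ forces $\deg(P_1P_2)<2\deg(Q)$, so both $q$ and $r$ in the division $P_1P_2=qQ+r$ have $X$-degree strictly less than $\deg(Q)$. I would argue by contradiction: suppose $\nu(qQ)\leq\nu(r)$, so that the valuation inequality applied to $P_1P_2=qQ+r$ yields $\nu(P_1P_2)\geq\nu(qQ)$. Passing to the graded algebra $\mathrm{gr}_\nu K[X]$, the equation reads as a relation among initial forms in which $\mathrm{in}(Q)$ must appear either as a factor of $\mathrm{in}(P_1)\mathrm{in}(P_2)$ (when $\nu(qQ)<\nu(r)$) or as the initial form of a linear combination realized by polynomials of $X$-degree below $\deg(Q)$ (when $\nu(qQ)=\nu(r)$). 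Definition \ref{def:pc}, which requires $\deg(P)\geq\deg(Q)$ whenever $\epsilon(P)\geq\epsilon(Q)$, then provides the rigidity that rules out both alternatives.

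The induction step from $t-1$ to $t$ is a straightforward propagation. I apply the inductive hypothesis to $\prod_{i=2}^{t} P_i = q'Q+r'$ with $\deg(r')<\deg(Q)$, $\nu(r')=\nu(\prod_{i=2}^{t} P_i)$, and $\nu(q'Q)>\nu(r')$. Multiplying by $P_1$ gives $\prod_{i=1}^{t} P_i = P_1 q' Q + P_1 r'$. Since $\deg(P_1),\deg(r')<\deg(Q)$, the base case applied to $P_1 r' = q''Q + r$ yields $\nu(q''Q)>\nu(r)=\nu(P_1 r') = \nu(\prod_{i=1}^{t} P_i)$. Setting $q:=P_1 q' + q''$ produces the desired Euclidean division; and the two strict inequalities $\nu(P_1 q' Q) = \nu(P_1)+\nu(q'Q) > \nu(P_1)+\nu(r') = \nu(r)$ and $\nu(q''Q)>\nu(r)$ combine to give $\nu(qQ)>\nu(r)$.

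The main obstacle is the base case, which is really a rigidity statement about key polynomials: initial forms of polynomials of $X$-degree $<\deg(Q)$ cannot combine to reproduce $\mathrm{in}(Q)$ up to a factor coming from lower-degree polynomials. A parallel route would be to compute $\epsilon$ on both sides of $P_1 P_2 - qQ = r$: the contrapositive of Definition \ref{def:pc} gives $\epsilon(P_i),\epsilon(q),\epsilon(r)<\epsilon(Q)$, and a Leibniz-type bound for Hasse derivatives shows $\epsilon(P_1 P_2)\leq\max(\epsilon(P_1),\epsilon(P_2))<\epsilon(Q)$. The delicate step is then to extract a lower bound for $\epsilon(qQ)$ forcing $\epsilon(qQ)\geq\epsilon(Q)$, which would make the hypothetical $\nu(qQ)\leq\nu(r)$ inconsistent with the factored equation and complete the contradiction.
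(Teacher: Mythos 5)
Your induction step is correct and coincides with the paper's; the issue is the base case, which you do not actually close on either of the two routes you sketch. The first route asserts that Definition \ref{def:pc} ``provides the rigidity'' ruling out a relation such as $\mathrm{in}_{\nu}(q)\,\mathrm{in}_{\nu}(Q)=\mathrm{in}_{\nu}(P_{1})\mathrm{in}_{\nu}(P_{2})$ or $\mathrm{in}_{\nu}(q)\,\mathrm{in}_{\nu}(Q)=-\mathrm{in}_{\nu}(r)$ with $q,r,P_{1},P_{2}$ of degree below $\deg(Q)$. But Definition \ref{def:pc} is a statement about $\epsilon$, i.e.\ about Hasse derivatives, not about the graded algebra; the claim that $\mathrm{in}_{\nu}(Q)$ cannot be manufactured from initial forms of lower-degree polynomials is essentially equivalent to the lemma itself, so invoking it as known ``rigidity'' begs the question. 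The second route is the right one --- it is in substance the paper's proof --- but the step you explicitly leave open, the lower bound $\epsilon(qQ)\geq\epsilon(Q)$, is exactly where all the work is.

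The paper obtains that bound by computing $\nu\bigl(\partial_{b(Q)}(qQ)\bigr)$ exactly: in the Leibniz expansion $\partial_{b(Q)}(qQ)=\sum\limits_{j=0}^{b(Q)}\partial_{j}(q)\,\partial_{b(Q)-j}(Q)$, the term $j=0$, namely $q\,\partial_{b(Q)}(Q)$, has strictly minimal value. This uses two inputs: first, $\deg(P_{1}P_{2})<2\deg(Q)$ forces $\deg(q)<\deg(Q)$, hence $\epsilon(q)<\epsilon(Q)$, i.e.\ $\nu(\partial_{j}q)>\nu(q)-j\epsilon(Q)$ for $j\geq1$; second, the definition of $b(Q)$ gives $\nu(\partial_{b(Q)}Q)=\nu(Q)-b(Q)\epsilon(Q)$ and $\nu(\partial_{b(Q)-j}Q)\geq\nu(Q)-(b(Q)-j)\epsilon(Q)$. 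Together these yield $\nu\bigl(\partial_{b(Q)}(qQ)\bigr)=\nu(qQ)-b(Q)\epsilon(Q)$, hence $\epsilon(qQ)\geq\epsilon(Q)$. Combined with the upper bounds $\epsilon(P_{1}P_{2})\leq\max\left\{ \epsilon(P_{1}),\epsilon(P_{2})\right\} <\epsilon(Q)$ and $\epsilon(r)<\epsilon(Q)$, and with the equality $\nu(qQ)=\min\left\{ \nu(P_{1}P_{2}),\nu(r)\right\} $ that holds under the contradiction hypothesis, one gets $\epsilon(qQ)<\epsilon(Q)$, the desired contradiction. Until you supply this computation (or an equivalent one), the proposal has a genuine gap at its central point.
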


\begin{proof}
We use induction on $t$.

Base of the induction: $t=2$. So we want to show that $\nu\left(P_{1}P_{2}\right)<\nu\left(qQ\right)$. 

Indeed, if $\nu\left(P_{1}P_{2}\right)<\nu\left(qQ\right)$, then
\[
\begin{array}{ccc}
\nu(r) & = & \nu\left(P_{1}P_{2}-qQ\right)\\
 & = & \nu\left(P_{1}P_{2}\right)\\
 & < & \nu\left(qQ\right)
\end{array}
\]
and we have the result. 

Assume, aiming for contradiction, that $\nu\left(P_{1}P_{2}\right)\geq\nu\left(qQ\right)$
and so $\nu\left(r\right)\geq\nu\left(qQ\right)$. Since $Q$ is a
key polynomial, every polynomial $P$ of degree strictly less than
$\deg\left(Q\right)$ satisfies $\epsilon\left(P\right)<\epsilon\left(Q\right)$.
In particular, for every non-zero integer $j$, we have $\nu\left(P\right)-\nu\left(\partial_{j}P\right)<j\epsilon\left(Q\right)$.
So it is the case for $P_{1}$, $P_{2}$ and $r$. Since $P_{1}$
and $P_{2}$ are of degree strictly less than $\deg\left(Q\right)$,
we have
\[
\begin{array}{ccc}
\deg_{X}\left(P_{1}P_{2}\right) & = & \deg_{X}\left(P_{1}\right)+\deg_{X}\left(P_{2}\right)\\
 & < & 2\deg_{X}\left(Q\right).
\end{array}
\]
However, $\deg_{X}\left(P_{1}P_{2}\right)=\deg_{X}\left(qQ\right)=\deg_{X}\left(q\right)+\deg_{X}\left(Q\right)$.
So $q$ is of degree strictly less than $\deg\left(Q\right)$ too,
and then $q$ satisfies, for every non-zero integer $j$: $\nu\left(q\right)-\nu\left(\partial_{j}q\right)<j\epsilon\left(Q\right)$.
We are going to compute $\nu\left(\partial_{b(Q)}\left(qQ\right)\right)$
in two different ways to get a contradiction.

First, 
\[
\nu\left(\partial_{b(Q)}\left(qQ\right)\right)=\nu\left(\sum\limits _{j=0}^{b(Q)}\left(\partial_{b(Q)-j}\left(Q\right)\partial_{j}\left(q\right)\right)\right).
\]
 Look at the first term of the sum: $q\partial_{b(Q)}\left(Q\right)$,
and compute its value $\nu\left(q\partial_{b(Q)}\left(Q\right)\right)$.
We are going to show that its value is the smallest of the sum.

We have 
\[
\begin{array}{ccc}
\nu\left(q\partial_{b(Q)}\left(Q\right)\right) & = & \nu\left(q\right)+\nu\left(\partial_{b(Q)}\left(Q\right)\right)\\
 & = & \nu\left(q\right)+\nu\left(Q\right)-b\left(Q\right)\epsilon\left(Q\right)
\end{array}
\]
 by definition of $b\left(Q\right)$. But we know that for every non-zero
integer $j$, we have $\nu\left(q\right)<j\epsilon\left(Q\right)+\nu\left(\partial_{j}q\right)$,
so 
\[
\begin{array}{ccc}
\nu\left(q\partial_{b(Q)}\left(Q\right)\right) & < & \left(j-b\left(Q\right)\right)\epsilon\left(Q\right)+\nu\left(Q\right)+\nu\left(\partial_{j}q\right)\\
 & \leq & \nu\left(\partial_{j}q\right)+\nu\left(\partial_{b\left(Q\right)-j}Q\right).
\end{array}
\]

Then $q\partial_{b(Q)}\left(Q\right)$ is the term of smallest value
in the sum. In particular, 

\begin{equation}
\begin{array}{ccc}
\nu\left(\partial_{b(Q)}\left(qQ\right)\right) & = & \nu\left(q\partial_{b(Q)}\left(Q\right)\right)\\
 & = & \nu\left(q\right)+\nu\left(\partial_{b\left(Q\right)}\left(Q\right)\right)\\
 & = & \nu\left(qQ\right)-b\left(Q\right)\epsilon\left(Q\right).
\end{array}\label{eq:0}
\end{equation}

Now we compute this value in a different way. We have:

\[
\begin{array}{ccc}
\nu\left(\partial_{b(Q)}\left(qQ\right)\right) & = & \nu\left(\partial_{b\left(Q\right)}\left(P_{1}P_{2}-r\right)\right)\\
 & = & \nu\left(\partial_{b\left(Q\right)}\left(P_{1}P_{2}\right)-\partial_{b\left(Q\right)}\left(r\right)\right)\\
 & \geq & \min\left\{ \nu\left(\partial_{b\left(Q\right)}\left(P_{1}P_{2}\right)\right),\nu\left(\partial_{b\left(Q\right)}\left(r\right)\right)\right\} .
\end{array}
\]

But also:
\[
\begin{array}{ccc}
\nu\left(\partial_{b\left(Q\right)}\left(P_{1}P_{2}\right)\right) & = & \nu\left(\sum\limits _{j=0}^{b\left(Q\right)}\partial_{j}\left(P_{1}\right)\partial_{b\left(Q\right)-j}\left(P_{2}\right)\right)\\
 & \geq & \min\limits _{0\leq j\leq b\left(Q\right)}\left\{ \nu\left(\partial_{j}P_{1}\right)+\nu\left(\partial_{b\left(Q\right)-j}\left(P_{2}\right)\right)\right\} .
\end{array}
\]

If $j\neq0$, we have $\nu\left(P_{1}\right)<j\epsilon\left(Q\right)+\nu\left(\partial_{j}\left(P_{1}\right)\right)$
and so 
\[
\nu\left(\partial_{j}\left(P_{1}\right)\right)>\nu\left(P_{1}\right)-j\epsilon\left(Q\right)
\]
 since $\deg_{X}\left(P_{1}\right)<\deg_{X}\left(Q\right)$. If $0\leq j<b\left(Q\right)$,
we also have
\[
\nu\left(\partial_{b\left(Q\right)-j}\left(P_{2}\right)\right)>\nu\left(P_{2}\right)-\left(b\left(Q\right)-j\right)\epsilon\left(Q\right).
\]
So if $0<j<b\left(Q\right)$, we have 
\[
\nu\left(\partial_{j}P_{1}\right)+\nu\left(\partial_{b\left(Q\right)-j}\left(P_{2}\right)\right)>\nu\left(P_{1}P_{2}\right)-b\left(Q\right)\epsilon\left(Q\right).
\]
This inequality stays true if $j=0$ and $j=b\left(Q\right)$, so:

\[
\nu\left(\partial_{b\left(Q\right)}\left(P_{1}P_{2}\right)\right)>\nu\left(P_{1}P_{2}\right)-b\left(Q\right)\epsilon\left(Q\right).
\]

By hypothesis, $\nu\left(P_{1}P_{2}\right)\geq\nu\left(qQ\right)$,
so 
\[
\nu\left(\partial_{b\left(Q\right)}\left(P_{1}P_{2}\right)\right)>\nu\left(qQ\right)-b\left(Q\right)\epsilon\left(Q\right).
\]

But since $r$ is of degree strictly less than $\deg\left(Q\right)$,
we know that $\nu\left(\partial_{b\left(Q\right)}\left(r\right)\right)>\nu\left(r\right)-b\left(Q\right)\epsilon\left(Q\right)$,
and by hypothesis $\nu\left(r\right)\geq\nu\left(qQ\right)$. Then
$\nu\left(\partial_{b\left(Q\right)}\left(r\right)\right)>\nu\left(qQ\right)-b\left(Q\right)\epsilon\left(Q\right)$.

So
\[
\begin{array}{ccc}
\nu\left(\partial_{b(Q)}\left(qQ\right)\right) & \geq & \min\left\{ \nu\left(\partial_{b\left(Q\right)}\left(P_{1}P_{2}\right)\right),\nu\left(\partial_{b\left(Q\right)}\left(r\right)\right)\right\} \\
 & > & \nu\left(qQ\right)-b\left(Q\right)\epsilon\left(Q\right)
\end{array}
\]
 which contradicts (\ref{eq:0}). So we do have $\nu\left(r\right)=\nu\left(P_{1}P_{2}\right)<\nu\left(qQ\right)$,
and this completes the proof of the base of the induction.

We now assume the result true for $t-1\geq2$ and we are going to
show it for $t$. We set $P:=\prod\limits _{i=1}^{t-1}P_{i}$.

Let 
\[
P=q_{1}Q+r_{1}
\]
 be the Euclidean division of $P$ by $Q$ and 
\[
r_{1}P_{t}=q_{2}Q+r_{2}
\]
 be that of $r_{1}P_{t}$ by $Q$. Since $PP_{t}=qQ+r$, we have $r=r_{2}$
and $q=q_{1}P_{t}+q_{2}$.

By the induction hypothesis, $\nu\left(r_{1}\right)=\nu\left(P\right)<\nu\left(q_{1}Q\right)$.
In particular, 
\[
\nu\left(r_{1}P_{t}\right)=\nu\left(\prod\limits _{i=1}^{t}P_{i}\right)<\nu\left(q_{1}P_{t}Q\right).
\]
Since the polynomials $r_{1}$ and $P_{t}$ are both of degree strictly
less than $\deg\left(Q\right)$, we can apply the base of the induction
and so 
\[
\nu\left(r_{1}P_{t}\right)=\nu\left(r_{2}\right)<\nu\left(q_{2}Q\right).
\]

So $\nu\left(r\right)=\nu\left(r_{2}\right)=\nu\left(r_{1}P_{t}\right)=\nu\left(\prod\limits _{i=1}^{t}P_{i}\right)$
and furthermore this value is strictly less than both $\nu\left(q_{1}P_{t}Q\right)$
and than $\nu\left(q_{2}Q\right)$. So it is strictly less than the
minimum, which is less then or equal to $\nu\left(q_{1}P_{t}Q+q_{2}Q\right)$
by definition of a valuation. So 
\[
\begin{array}{ccc}
\nu\left(r\right) & = & \nu\left(\prod\limits _{i=1}^{t}P_{i}\right)\\
 & < & \nu\left(\left(q_{1}P_{t}+q_{2}\right)Q\right)\\
 & = & \nu\left(qQ\right)
\end{array}
\]
which completes the proof.
\end{proof}
\begin{thm}
Let $Q$ be a key polynomial. The map $\nu_{Q}$ is a valuation.
\end{thm}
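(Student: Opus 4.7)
The plan is to verify the three valuation axioms for $\nu_Q$. Axioms (1) and (3) are essentially formal: $\nu_Q(P)=+\infty$ iff each coefficient $p_j$ in the $Q$-expansion of $P$ vanishes, iff $P=0$; and for $P=\sum p_i Q^i$, $P'=\sum p'_i Q^i$ the sum $P+P'=\sum(p_i+p'_i)Q^i$ is again a $Q$-expansion (degrees are preserved under addition), so the ultrametric inequality for $\nu_Q$ follows termwise from that of $\nu$. No use of $Q$ being a key polynomial is needed for these two axioms.

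The substance of the theorem is multiplicativity, and this is precisely where Lemma \ref{lem:paslechoixfaut le mettre} enters. Given $P=\sum p_i Q^i$ and $P'=\sum p'_j Q^j$, I would perform, for each pair $(i,j)$, the Euclidean division $p_i p'_j = q_{ij}Q + r_{ij}$ with $\deg r_{ij}<\deg Q$; a degree count forces $\deg q_{ij}<\deg Q$ as well, so regrouping by power of $Q$ yields the genuine $Q$-expansion
\[
PP' \;=\; \sum_{k} c_k Q^k, \qquad c_k \;=\; \sum_{i+j=k} r_{ij} \;+\; \sum_{i+j=k-1} q_{ij}.
\]
Applying Lemma \ref{lem:paslechoixfaut le mettre} with $t=2$ to each pair gives $\nu(r_{ij})=\nu(p_i p'_j)<\nu(q_{ij}Q)$. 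Setting $m_P:=\nu_Q(P)$ and $m_{P'}:=\nu_Q(P')$, every $r$-contribution satisfies $\nu(r_{ij}Q^{i+j})\geq m_P+m_{P'}$ and every $q$-contribution satisfies $\nu(q_{ij}Q^{i+j+1})>m_P+m_{P'}$, so the ultrametric inequality on each $c_kQ^k$ already delivers $\nu_Q(PP')\geq m_P+m_{P'}$.

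For the reverse inequality I would pick $i_0=\min S_Q(P)$ and $j_0=\min S_Q(P')$ and examine $c_{i_0+j_0}$. For any pair $(i,j)\neq(i_0,j_0)$ with $i+j=i_0+j_0$, minimality forces either $i<i_0$, whence $i\notin S_Q(P)$ and $\nu(p_iQ^i)>m_P$, or $j<j_0$, whence $\nu(p'_jQ^j)>m_{P'}$; either way $\nu(r_{ij}Q^{i+j})>m_P+m_{P'}$. Combined with the strict inequality $\nu(q_{ij}Q)>\nu(r_{ij})$ from the lemma (applied to the indices with $i+j=i_0+j_0-1$), every summand contributing to $c_{i_0+j_0}Q^{i_0+j_0}$ is strictly greater in value than $\nu(r_{i_0j_0}Q^{i_0+j_0})=m_P+m_{P'}$ except the one coming from $(i_0,j_0)$ itself. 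Hence the minimum in that single term is attained and equal to $m_P+m_{P'}$, giving $\nu_Q(PP')\leq m_P+m_{P'}$.

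The main obstacle is this last bookkeeping step: showing that the minimum in $c_{i_0+j_0}Q^{i_0+j_0}$ is actually attained and not killed by cancellations. This rests on two strict inequalities, one furnished by the extremal choice of $(i_0,j_0)$ in the product set $S_Q(P)\times S_Q(P')$, and the other by the strict inequality $\nu(r_{ij})<\nu(q_{ij}Q)$ in Lemma \ref{lem:paslechoixfaut le mettre}; it is precisely at this point that the key polynomial hypothesis on $Q$ is used, through the lemma.
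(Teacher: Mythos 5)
Your proof is correct and follows essentially the same route as the paper: the whole weight rests on Lemma \ref{lem:paslechoixfaut le mettre} for the Euclidean divisions $p_ip'_j=q_{ij}Q+r_{ij}$, and the reverse inequality is obtained exactly as in the paper by exhibiting an uncancellable term via the minimal indices $i_0=\min S_Q(P)$ and $j_0=\min S_Q(P')$. If anything, your explicit bookkeeping of the coefficient $c_{i_0+j_0}=\sum_{i+j=i_0+j_0}r_{ij}+\sum_{i+j=i_0+j_0-1}q_{ij}$ spells out more carefully than the paper why no cancellation can destroy the term of minimal value.
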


\begin{proof}
The only thing we have to prove is that for every $\left(P_{1},P_{2}\right)\in K[X]^{2}$,
we have
\[
\nu_{Q}\left(P_{1}P_{2}\right)=\nu_{Q}\left(P_{1}\right)+\nu_{Q}\left(P_{2}\right).
\]
 First case: $P_{1}$ and $P_{2}$ are both of degree strictly less
than $\deg\left(Q\right)$. Then $\nu_{Q}\left(P_{1}\right)=\nu\left(P_{1}\right)$
and $\nu_{Q}\left(P_{2}\right)=\nu\left(P_{2}\right)$. Since $\nu$
is a valuation, we have $\nu\left(P_{1}P_{2}\right)=\nu\left(P_{1}\right)+\nu\left(P_{2}\right)$. 

Then, $\nu\left(P_{1}P_{2}\right)=\nu_{Q}\left(P_{1}\right)+\nu_{Q}\left(P_{2}\right)$.
Since $P_{1}$ and $P_{2}$ are both of degree strictly less than
$\deg\left(Q\right)$, by previous Lemma, we have $\nu_{Q}\left(P_{1}P_{2}\right)=\nu\left(P_{1}P_{2}\right)$
and we are done.

Second case: $P_{1}=p_{i}^{(1)}Q^{i}$ and $P_{2}=p_{j}^{(2)}Q^{j}$,
with $p_{i}^{(1)}$ and $p_{j}^{(2)}$ both of degree strictly less
than $\deg\left(Q\right)$. 

Let $p_{i}^{(1)}p_{j}^{(2)}=qQ+r$ be the Euclidean division of $p_{i}^{(1)}p_{j}^{(2)}$
by $Q$. Since $\deg_{X}\left(p_{i}^{(1)}p_{j}^{(2)}\right)<2\deg_{X}\left(Q\right)$,
we know that $\deg_{X}\left(q\right)<\deg_{X}\left(Q\right)$, and
by definition of the Euclidean division, we have $\deg_{X}\left(r\right)<\deg_{X}\left(Q\right)$.
So $P_{1}P_{2}=qQ^{i+j+1}+rQ^{i+j}$ is the $Q$-expansion of $P_{1}P_{2}$. 

We are going to prove that in this case we still have 
\[
\nu_{Q}\left(P_{1}P_{2}\right)=\nu\left(P_{1}P_{2}\right),
\]
and since $\nu$ is a valuation, we will have the result. We have:

\[
\begin{array}{ccc}
\nu_{Q}\left(P_{1}P_{2}\right) & = & \nu_{Q}\left(qQ^{i+j+1}+rQ^{i+j}\right)\\
 & = & \min\left\{ \nu\left(qQ^{i+j+1}\right),\nu\left(rQ^{i+j}\right)\right\} \\
 & = & \min\left\{ \nu\left(qQ\right)+\nu\left(Q^{i+j}\right),\nu\left(r\right)+\nu\left(Q^{i+j}\right)\right\} .
\end{array}
\]

However, we can apply thee previous Lemma to the product 
\[
p_{i}^{(1)}p_{j}^{(2)}=qQ+r
\]
and conclude that $\nu\left(r\right)=\nu\left(p_{i}^{(1)}p_{j}^{(2)}\right)<\nu\left(qQ\right)$.

Then 
\[
\begin{array}{ccc}
\nu_{Q}\left(P_{1}P_{2}\right) & = & \nu\left(r\right)+\nu\left(Q^{i+j}\right)\\
 & = & \nu\left(p_{i}^{(1)}p_{j}^{(2)}\right)+\nu\left(Q^{i+j}\right)\\
 & = & \nu\left(P_{1}P_{2}\right)
\end{array}
\]
and we have the result.

Last case: general case. Since we only look at the terms of smallest
value, we can replace $P_{1}$ by 
\[
\left(\widetilde{P}_{1}\right)_{\nu,Q}=\sum\limits _{j\in S_{Q}\left(P_{1}\right)}p_{j}^{(1)}Q^{j}
\]
 and $P_{2}$ by 
\[
\left(\widetilde{P}_{2}\right)_{\nu,Q}=\sum\limits _{i\in S_{Q}\left(P_{2}\right)}p_{i}^{(2)}Q^{i}.
\]
We know that 
\[
\nu_{Q}\left(P_{1}+P_{2}\right)\geq\min\left\{ \nu_{Q}\left(P_{1}\right),\nu_{Q}\left(P_{2}\right)\right\} 
\]
 and 
\[
\nu_{Q}\left(p_{j}^{(1)}Q^{j}p_{i}^{(2)}Q^{i}\right)=\nu_{Q}\left(p_{j}^{(1)}Q^{j}\right)+\nu_{Q}\left(p_{i}^{(2)}Q^{i}\right).
\]
So

\[
\begin{array}{ccc}
\nu_{Q}\left(P_{1}P_{2}\right) & = & \nu_{Q}\left(\sum p_{j}^{(1)}p_{i}^{(2)}Q^{j+i}\right)\\
 & \geq & \min\left\{ \nu_{Q}\left(p_{j}^{(1)}Q^{j}\right)+\nu_{Q}\left(p_{i}^{(2)}Q^{i}\right)\right\} .
\end{array}
\]

However
\[
\nu_{Q}\left(p_{j}^{(1)}Q^{j}\right)=\nu\left(p_{j}^{(1)}Q^{j}\right)=\nu_{Q}\left(P_{1}\right)
\]
 and 
\[
\nu_{Q}\left(p_{i}^{(2)}Q^{i}\right)=\nu\left(p_{i}^{(2)}Q^{i}\right)=\nu_{Q}\left(P_{2}\right).
\]
So $\nu_{Q}\left(P_{1}P_{2}\right)\geq\nu_{Q}\left(P_{1}\right)+\nu_{Q}\left(P_{2}\right)$,
and we only have to show that it is an equality. In order to do that,
it is enough to find a term in the $Q$-expansion of $P_{1}P_{2}$
whose value is exactly $\nu_{Q}\left(P_{1}\right)+\nu_{Q}\left(P_{2}\right)$.
Let us consider the term of smallest value in each $Q$-expansion,
so let us consider $p_{n_{1}}^{(1)}Q^{n_{1}}$ and $p_{m_{2}}^{(2)}Q^{m_{2}}$,
where $n_{1}=\min S_{Q}\left(P_{1}\right)$ and $m_{2}=\min S_{Q}\left(P_{2}\right)$.

Let $p_{n_{1}}^{(1)}p_{m_{2}}^{(2)}=qQ+r$ be the Euclidean division
of $p_{n_{1}}^{(1)}p_{m_{2}}^{(2)}$ by $Q$, which is its $Q$-expansion
too. 

By Lemma \ref{lem:paslechoixfaut le mettre}, we have $\nu(r)=\nu\left(p_{n_{1}}^{(1)}p_{m_{2}}^{(2)}\right)$.
In fact, in the $Q$-expansion of $P_{1}P_{2}$, there is the term
$rQ^{n_{1}+m_{2}}$, and we have:

\[
\begin{array}{ccc}
\nu_{Q}\left(rQ^{n_{1}+m_{2}}\right) & = & \nu\left(rQ^{n_{1}+m_{2}}\right)\\
 & = & \nu\left(p_{n_{1}}^{(1)}p_{m_{2}}^{(2)}Q^{n_{1}+m_{2}}\right)\\
 & = & \nu_{Q}\left(P_{1}\right)+\nu_{Q}\left(P_{2}\right).
\end{array}
\]

This completes the proof.
\end{proof}
\begin{rem}
For every polynomial $P\in K[X]$, we have 
\[
\nu_{Q}\left(P\right)\leq\nu\left(P\right).
\]
It will be very important to be able to determine when this inequality
is an equality. 

A key polynomial $P$ which satisfies the strict inequality and which
is of minimal degree for this property will be called an immediate
successor of $Q$ (Definition \ref{def: successeur immediat}). We
will study these polynomials in more details in this work. First,
let us concentrate on the equality case.
\end{rem}

\begin{defn}
Let $Q$ be a key polynomial and $P$ be a polynomial such that $\nu_{Q}\left(P\right)=\nu\left(P\right)$.
We say that $P$ is \emph{non-degenerate with respect to $Q$}.

Another very important thing is to be able to compare the $\epsilon$
of key polynomials. Indeed, if I have two key polynomials $Q_{1}$
and $Q_{2}$, do I have $\epsilon\left(Q_{1}\right)<\epsilon\left(Q_{2}\right)$,
or do I have $\epsilon\left(Q_{1}\right)=\epsilon\left(Q_{2}\right)$
? Being able to answer will be crucial. The next four results can
be found in \cite{DMS} but we recall them for more clarity.
\end{defn}

\begin{lem}
\label{lem: inegalit=0000E9 epsilon_Q(P)et epsilon(Q)}For every polynomial
$P\in K[X]$ and every stricly positive integer $d$, we have :

\[
\nu_{Q}\left(\partial_{d}P\right)\geq\nu_{Q}\left(P\right)-d\epsilon\left(Q\right)
\]
\end{lem}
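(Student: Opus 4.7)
The plan is to expand $P$ in its $Q$-expansion, apply $\partial_d$ termwise, and use the fact (just established) that $\nu_Q$ is a valuation, together with the definition of $\epsilon(Q)$ and the key-polynomial property of $Q$.

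\medskip

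\textbf{Step 1 (Reduction to monomials in $Q$).} Write $P = \sum_j p_j Q^j$ with $\deg p_j < \deg Q$. Since $\nu_Q$ is a valuation and $\partial_d$ is $K$-linear,
\[
\nu_Q(\partial_d P) \;\geq\; \min_j \nu_Q\bigl(\partial_d(p_j Q^j)\bigr).
\]
Because also $\nu_Q(P) = \min_j \nu_Q(p_j Q^j) = \min_j \nu(p_j Q^j)$, it suffices to show the pointwise bound
\[
\nu_Q\bigl(\partial_d(p_j Q^j)\bigr) \;\geq\; \nu(p_j Q^j) - d\,\epsilon(Q)
\]
for each $j$.

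\medskip

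\textbf{Step 2 (Leibniz and the factor $\partial_i p_j$).} Expand $\partial_d(p_j Q^j) = \sum_{i+k=d} \partial_i(p_j)\,\partial_k(Q^j)$. Since $\nu_Q$ is a valuation,
\[
\nu_Q\bigl(\partial_d(p_j Q^j)\bigr) \;\geq\; \min_{i+k=d}\bigl(\nu_Q(\partial_i p_j) + \nu_Q(\partial_k Q^j)\bigr).
\]
Because $\deg \partial_i p_j < \deg Q$, we have $\nu_Q(\partial_i p_j) = \nu(\partial_i p_j)$. For $i=0$ this is $\nu(p_j)$; for $i\geq 1$, the key-polynomial property forces $\epsilon(p_j) < \epsilon(Q)$ (apply Definition \ref{def:pc} contrapositively), so by the very definition of $\epsilon(p_j)$,
\[
\nu(\partial_i p_j) \;\geq\; \nu(p_j) - i\,\epsilon(p_j) \;>\; \nu(p_j) - i\,\epsilon(Q).
\]
Either way, $\nu_Q(\partial_i p_j) \geq \nu(p_j) - i\,\epsilon(Q)$.

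\medskip

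\textbf{Step 3 (The factor $\partial_k Q^j$).} This is the only delicate point. Applying Leibniz to $Q^j$ and grouping by the number $m$ of indices that are strictly positive gives
\[
\partial_k(Q^j) \;=\; \sum_{m=1}^{\min(j,k)} \binom{j}{m}\, Q^{j-m}\!\!\sum_{\substack{l_1+\cdots+l_m=k\\ l_i\geq 1}} \partial_{l_1}(Q)\cdots\partial_{l_m}(Q)
\]
(plus the single term $Q^j$ when $k=0$, which is trivial). Each $\partial_{l_i}Q$ has degree $<\deg Q$, so $\nu_Q(\partial_{l_i}Q) = \nu(\partial_{l_i}Q) \geq \nu(Q) - l_i\,\epsilon(Q)$ by the definition of $\epsilon(Q)$. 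Since $\nu_Q$ is multiplicative,
\[
\nu_Q\bigl(Q^{j-m}\partial_{l_1}(Q)\cdots\partial_{l_m}(Q)\bigr) \;\geq\; (j-m)\nu(Q) + m\nu(Q) - k\,\epsilon(Q) \;=\; j\,\nu(Q) - k\,\epsilon(Q).
\]
Taking the minimum over all such terms and invoking the triangle inequality for the valuation $\nu_Q$ yields
\[
\nu_Q(\partial_k Q^j) \;\geq\; \nu(Q^j) - k\,\epsilon(Q).
\]

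\medskip

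\textbf{Step 4 (Assembly).} Combining Steps 2 and 3, for every $(i,k)$ with $i+k=d$,
\[
\nu_Q(\partial_i p_j) + \nu_Q(\partial_k Q^j) \;\geq\; \nu(p_j) + \nu(Q^j) - (i+k)\epsilon(Q) \;=\; \nu(p_j Q^j) - d\,\epsilon(Q),
\]
which by Step 1 gives the desired inequality. The main obstacle is Step 3: $\partial_{l_1}Q\cdots\partial_{l_m}Q$ may have degree far exceeding $\deg Q$, so one cannot replace $\nu_Q$ by $\nu$ on it; working with $\nu_Q$ (now known to be a valuation) and applying the $\epsilon(Q)$-bound factor-by-factor before multiplying is precisely what makes the estimate go through.
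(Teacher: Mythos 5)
Your proof is correct and follows essentially the same route as the paper: reduce to the terms $p_jQ^j$ of the $Q$-expansion, bound $\nu(\partial_i p_j)$ via the key-polynomial property ($\epsilon(p_j)<\epsilon(Q)$ since $\deg p_j<\deg Q$), and handle derivatives of powers of $Q$ by exploiting the multiplicativity of $\nu_Q$ on each factor $\partial_{l_i}Q$. The only cosmetic difference is that the paper packages your Step 3 as a general closure-under-products step (if the bound holds for $P$ and $S$ it holds for $PS$, proved by Leibniz), whereas you unroll the multinomial expansion of $\partial_k(Q^j)$ explicitly; the ingredients and the delicate point you flag are identical.
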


\begin{proof}
We consider the $Q$-expansion $P=\sum\limits _{i=0}^{n}p_{i}Q^{i}$
of $P$.

Assume we have the result for $p_{i}Q^{i}$. It means that
\[
\nu_{Q}\left(\partial_{d}\left(p_{i}Q^{i}\right)\right)\geq\nu_{Q}\left(p_{i}Q^{i}\right)-d\epsilon\left(Q\right)
\]
 for every index $i$. Then:

\[
\begin{array}{ccc}
\nu_{Q}\left(\partial_{d}P\right) & = & \nu_{Q}\left(\partial_{d}\left(\sum\limits _{i=0}^{n}p_{i}Q^{i}\right)\right)\\
 & = & \nu_{Q}\left(\sum\limits _{i=0}^{n}\partial_{d}\left(p_{i}Q^{i}\right)\right)\\
 & \geq & \min\limits _{0\leq i\leq n}\nu_{Q}\left(\partial_{d}\left(p_{i}Q^{i}\right)\right)\\
 & \geq & \min\limits _{0\leq i\leq n}\left\{ \nu_{Q}\left(p_{i}Q^{i}\right)-d\epsilon\left(Q\right)\right\} \\
 & \geq & \min\limits _{0\leq i\leq n}\left\{ \nu_{Q}\left(p_{i}Q^{i}\right)\right\} -d\epsilon\left(Q\right)\\
 & \geq & \nu_{Q}\left(P\right)-d\epsilon\left(Q\right)
\end{array}
\]

and the proof is finished.

So we just have to prove the result for $P=p_{i}Q^{i}$. 

First, we know that $\nu_{Q}\left(\partial_{d}Q\right)\geq\nu_{Q}\left(Q\right)-d\epsilon\left(Q\right)$.
Now we will prove that we have the result for $P=p_{i}$. Then we
will conclude by showing that if we have the result for two polynomials,
we have the result for the product. 

So let us prove the result for $P=p_{i}$. 

Since $\deg_{X}\left(p_{i}\right)<\deg_{X}\left(Q\right)$ and since
$Q$ is a key polynomial, we have $\epsilon\left(p_{i}\right)<\epsilon\left(Q\right)$.
So, for every strictly positive integer $d$, we have:

\[
\begin{array}{ccc}
\nu_{Q}\left(\partial_{d}p_{i}\right) & = & \nu\left(\partial_{d}p_{i}\right)\\
 & \geq & \nu\left(p_{i}\right)-d\epsilon\left(p_{i}\right)\\
 & = & \nu_{Q}\left(p_{i}\right)-d\epsilon\left(p_{i}\right)\\
 & > & \nu_{Q}\left(p_{i}\right)-d\epsilon\left(Q\right).
\end{array}
\]

Now, it just remains to prove that if we have the result for two polynomials
$P$ and $S$, then we have it for $PS$. Assume the result proven
for $P$ and $S$. Then:

\[
\begin{array}{ccc}
\nu_{Q}\left(\partial_{d}\left(PS\right)\right) & = & \nu_{Q}\left(\sum\limits _{r=0}^{d}\partial_{r}\left(P\right)\partial_{d-r}\left(S\right)\right)\\
 & \geq & \min\limits _{0\leq r\leq d}\left\{ \nu\left(\partial_{r}\left(P\right)\right)+\nu\left(\partial_{d-r}\left(S\right)\right)\right\} \\
 & \geq & \min\limits _{0\leq r\leq d}\left\{ \nu_{Q}\left(P\right)-r\epsilon\left(Q\right)+\nu_{Q}\left(S\right)-\left(d-r\right)\epsilon\left(Q\right)\right\} \\
 & \geq & \nu_{Q}\left(PS\right)-d\epsilon\left(Q\right)
\end{array}
\]

This completes the proof.
\end{proof}
\begin{prop}
\label{prop:S_Q(P)neq0}Let $Q$ be a key polynomial and $P\in K[X]$
a polynomial such that $S_{Q}\left(P\right)\neq\left\{ 0\right\} $.

Then there exists a strictly positive integer $b$ such that 
\[
\frac{\nu_{Q}\left(P\right)-\nu_{Q}\left(\partial_{b}P\right)}{b}=\epsilon\left(Q\right).
\]
\end{prop}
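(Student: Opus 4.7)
The plan is to show that the integer $b := b(Q)$ (the minimal element of $I(Q)$) does the job. Lemma \ref{lem: inegalit=0000E9 epsilon_Q(P)et epsilon(Q)} already yields $\nu_{Q}(\partial_{b(Q)}P) \ge \nu_{Q}(P) - b(Q)\epsilon(Q)$, so everything reduces to producing the reverse inequality. The strategy is to isolate a ``leading'' piece of the $Q$-expansion of $\partial_{b(Q)}P$ whose $\nu_Q$-value is exactly $\nu_{Q}(P) - b(Q)\epsilon(Q)$, while bounding every remaining contribution strictly above.

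Concretely, I would write the $Q$-expansion $P = \sum_i p_i Q^i$, set $\beta := \nu_Q(P)$, apply Leibniz to each summand, and further expand
\[
\partial_{b(Q)}(Q^i) \;=\; \sum_{a_1+\cdots+a_i=b(Q)}\ \prod_k \partial_{a_k}(Q).
\]
Among these tuples, the $i$ tuples having a single nonzero entry equal to $b(Q)$ contribute $iQ^{i-1}\partial_{b(Q)}(Q)$; I claim that every other contribution has $\nu_Q$-value strictly greater than $\nu(p_iQ^i) - b(Q)\epsilon(Q)$. Two separate arguments are needed. First, for the cross terms $\partial_r(p_i)\,\partial_{b(Q)-r}(Q^i)$ with $r \ge 1$: the key polynomial property forces $\epsilon(p_i) < \epsilon(Q)$ since $\deg p_i < \deg Q$, so $\nu(\partial_r p_i) > \nu(p_i) - r\epsilon(Q)$; combining with Lemma \ref{lem: inegalit=0000E9 epsilon_Q(P)et epsilon(Q)} applied to $Q^i$ gives strictness. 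Second, for the multi-derivative products (tuples with at least two nonzero $a_k$'s): each such $a_k$ is strictly below $b(Q) = \min I(Q)$, hence $\nu(\partial_{a_k}Q) > \nu(Q) - a_k\epsilon(Q)$; since $\deg \partial_{a_k}Q < \deg Q$ this value coincides with $\nu_Q(\partial_{a_k}Q)$, and summing yields the strict inequality.

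Assembling the pieces gives
\[
\partial_{b(Q)}P \;=\; \bigg(\sum_{i\ge 1} i\, p_i\, Q^{i-1}\bigg)\,\partial_{b(Q)}(Q) \;+\; T, \qquad \nu_Q(T) > \beta - b(Q)\epsilon(Q).
\]
The bracketed sum is already in $Q$-expansion form (distinct $i$'s populate distinct $Q$-layers), and the hypothesis $S_Q(P) \ne \{0\}$ supplies at least one $i \ge 1$ in $S_Q(P)$; in characteristic zero the scalar $i$ is invertible in $K$, so $\nu_Q$ of this sum equals $\min_{i \in S_Q(P),\ i\ge 1} \nu(p_iQ^{i-1}) = \beta - \nu(Q)$. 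Since $\nu_Q$ is a valuation and $\nu_Q(\partial_{b(Q)}(Q)) = \nu(Q) - b(Q)\epsilon(Q)$, the main term has $\nu_Q$-value exactly $\beta - b(Q)\epsilon(Q)$, which wins against $T$. The expected main obstacle is the bookkeeping for the multi-derivative estimate: one has to argue precisely that any tuple $(a_1,\dots,a_i)$ with two or more positive entries produces a strict inequality inherited from the minimality of $b(Q)$ in $I(Q)$, and that this strictness is preserved when passing from $\nu$ to $\nu_Q$ on products such as $\prod_k\partial_{a_k}(Q)$.
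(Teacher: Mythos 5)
Your argument is correct, but it takes a genuinely different route from the paper's and trades generality for brevity. The paper also ends up differentiating to order (a multiple of) $b(Q)$, but it first replaces $P$ by $\widetilde{P}_{\nu,Q}$, picks $l:=\min\left(S_{Q}(P)\setminus\{0\}\right)$, writes $l=p^{e}u$ with $p\nmid u$, and takes $b=p^{e}b(Q)$; it then isolates the single Leibniz term indexed by $\alpha=(0,b(Q),\dots,b(Q))$, whose coefficient $\binom{l}{p^{e}}=u$ is a unit in any characteristic, and uses Lemma \ref{lem:paslechoixfaut le mettre} to show that after Euclidean division this term contributes $urQ^{l-p^{e}}$ to a $Q$-layer that no other contribution can reach, so cancellation is impossible. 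You instead keep all the single-derivative contributions at once, package them as $\bigl(\sum_{i}ip_{i}Q^{i-1}\bigr)\partial_{b(Q)}Q$, and evaluate this product using the multiplicativity of $\nu_{Q}$ (the theorem that $\nu_{Q}$ is a valuation), which elegantly sidesteps the layer-by-layer cancellation bookkeeping; your estimates for the cross terms and for the multi-derivative tuples are the right ones and do close. The price is the integer coefficient $i$: what you actually need is $\nu(i)=0$, which is the statement that the \emph{residue} characteristic is zero --- invertibility of $i$ in $K$ alone is not enough (think of $i=p$ over a field with $p$-adic residue characteristic), and if $\mathrm{char}(k_{\nu})=p>0$ with every nonzero element of $S_{Q}(P)$ divisible by $p$, your leading term degrades or vanishes. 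The paper's $l=p^{e}u$ device exists precisely to repair this, and the author explicitly reserves the characteristic-zero hypothesis for Theorem \ref{thm: delta et pc limite} alone; so your proof is valid in the paper's equicharacteristic-zero setting but does not establish the proposition in the characteristic-free generality in which the paper states and proves it.
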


\begin{proof}
First, by Lemma \ref{lem: inegalit=0000E9 epsilon_Q(P)et epsilon(Q)},
we can replace $P$ by $\widetilde{P}_{\nu,Q}=\sum\limits _{i\in S_{Q}\left(P\right)}p_{i}Q^{i}$.

We want to show the existence of a strictly positive integer $b$
such that $\nu_{Q}\left(P\right)-\nu_{Q}\left(\partial_{b}P\right)=b\epsilon\left(Q\right)$.

Since $S_{Q}\left(P\right)\neq\left\{ 0\right\} $, we can consider
the smallest non-zero element $l$ of $S_{Q}\left(P\right)$. We write
$l=p^{e}u$, with $p\nmid u$. 

We are going to prove that we have the desired equality for the integer
$b:=p^{e}b\left(Q\right)>0$. To do this, we need to compute $\partial_{b}\left(P\right)$,
it is the objective of the following technical lemma.
\begin{lem}
We have $\partial_{b}\left(P\right)=urQ^{l-p^{e}}+Q^{l-p^{e}+1}R+S$,
where:
\begin{enumerate}
\item The polynomial $r$ is the remainder of the Euclidean division of
$p_{l}\left(\partial_{b\left(Q\right)}Q\right)^{p^{e}}$ by $Q$,
\item The polynomials $R$ and $S$ satisfy
\[
\nu_{Q}\left(S\right)>\nu_{Q}\left(P\right)-b\epsilon\left(Q\right).
\]
\end{enumerate}
\end{lem}

\begin{proof}
First let us show that the Lemma is true for $P=p_{l}Q^{l}$ and that
for every $j\in S_{Q}\left(P\right)\setminus\left\{ l\right\} $,
we have 
\[
\partial_{b}\left(p_{j}Q^{j}\right)=Q^{l-p^{e}+1}R_{j}+S_{j},
\]
where $R_{j}$ and $S_{j}$ are two polynomials, and where $\nu_{Q}\left(S_{j}\right)>\nu_{Q}\left(P\right)-b\epsilon\left(Q\right)$.

So we consider  $j\in S_{Q}\left(P\right)$. We set 
\[
M_{j}:=\left\{ B_{s}=\left(b_{0},\dots,b_{s}\right)\in\mathbb{N}^{s+1}\text{ such that }\sum\limits _{i=0}^{s}b_{i}=b\text{ and }s\leq j\right\} .
\]

The generalized Leibniz rule tells us that:

\[
\partial_{b}\left(p_{j}Q^{j}\right)=\sum\limits _{B_{s}\in M_{j}}\left(T\left(B_{s}\right)\right)
\]

where 
\[
\begin{array}{ccc}
T\left(B_{s}\right) & = & T\left(\left(b_{0},\dots,b_{s}\right)\right)\\
 & = & C\left(B_{s}\right)\partial_{b_{0}}\left(p_{j}\right)\left(\prod\limits _{i=1}^{s}\partial_{b_{i}}\left(Q\right)\right)Q^{j-s}
\end{array}
\]
 with $C\left(B_{s}\right)$ some elements of $K$ whose exact value
can be found in \cite{HMOS}. We set 
\[
\alpha:=\left(0,b\left(Q\right),\dots,b\left(Q\right)\right)\in\mathbb{N}^{p^{e}+1}.
\]

Recall that $I\left(Q\right)=\left\{ d\in\mathbb{N}^{\ast}\text{ such that }\frac{\nu\left(Q\right)-\nu\left(\partial_{d}Q\right)}{d}=\epsilon\left(Q\right)\right\} $.
We set 

\[
N_{j}:=\left\{ B_{s}=\left(b_{0},\dots,b_{s}\right)\in M_{j}\text{ such that }b_{0}>0\text{ or }\left\{ b_{1},\dots,b_{s}\right\} \nsubseteq I\left(Q\right)\right\} ,
\]

\[
S_{j}:=\sum\limits _{B_{s}\in N_{j}}T\left(B_{s}\right)
\]
and finally we set 
\[
Q^{l-p^{e}+1}R_{j}:=\begin{cases}
\sum\limits _{B_{s}\in M_{j}\setminus N_{j}}T\left(B_{s}\right) & \text{if }j\neq l\\
\sum\limits _{B_{s}\in M_{j}\setminus\left(N_{j}\cup\left\{ \alpha\right\} \right)}T\left(B_{s}\right) & \text{if }j=l
\end{cases}.
\]

If $j=l$, the term $T\left(\alpha\right)$ appears $\binom{l}{p^{e}}=u$
times in $\partial_{b}\left(p_{l}Q^{l}\right)$. Equivalently, $C\left(\alpha\right)=u$
and so 
\[
\begin{array}{ccc}
T\left(\alpha\right) & = & up_{l}\left(\partial_{b\left(Q\right)}Q\right)^{p^{e}}Q^{l-p^{e}}\\
 & = & u\left(qQ+r\right)Q^{l-p^{e}}
\end{array}
\]
 where $qQ+r$ is the Euclidean division of $p_{l}\left(\partial_{b\left(Q\right)}Q\right)^{p^{e}}$
by $Q$.

In other words 
\[
T\left(\alpha\right)=\underset{:=R_{0}}{\underbrace{uq}}Q^{l-p^{e}+1}+urQ^{l-p^{e}}.
\]

So if $j\neq l$, then $\partial_{b}\left(p_{j}Q^{j}\right)=Q^{l-p^{e}+1}R_{j}+S_{j}$.
It remains to prove that $\nu_{Q}\left(S_{j}\right)>\nu_{Q}\left(p_{j}Q^{j}\right)-b\epsilon\left(Q\right)$. 

But:

\[
\begin{array}{ccc}
\nu_{Q}\left(S_{j}\right) & = & \nu_{Q}\left(\sum\limits _{B_{s}\in N_{j}}T\left(B_{s}\right)\right)\\
 & = & \nu_{Q}\left(\sum\limits _{B_{s}\in N_{j}}C\left(B_{s}\right)\partial_{b_{0}}\left(p_{j}\right)\left(\prod\limits _{i=1}^{s}\partial_{b_{i}}\left(Q\right)\right)Q^{j-s}\right)\\
 & \geq & \min\limits _{B_{s}\in N_{j}}\left\{ \nu\left(\partial_{b_{0}}\left(p_{j}\right)\right)+\sum\limits _{i=1}^{s}\nu\left(\partial_{b_{i}}\left(Q\right)\right)+\left(j-s\right)\nu\left(Q\right)\right\} .
\end{array}
\]

Since $B_{s}\in N_{j}$, we have two options. The first is $b_{0}=0$
and $\left\{ b_{1},\dots,b_{s}\right\} \nsubseteq I\left(Q\right)$.
In other words for every $i\in\left\{ 1,\dots,s\right\} $ we have
$\nu\left(\partial_{b_{i}}\left(Q\right)\right)\geq\nu\left(Q\right)-b_{i}\epsilon\left(Q\right)$.
And then the inequality is strict for at least one index $i\in\left\{ 1,\dots,s\right\} $.
The second option is $b_{0}>0$ and then 
\[
\frac{\nu\left(p_{j}\right)-\nu\left(\partial_{b_{0}}\left(p_{j}\right)\right)}{b_{0}}\leq\epsilon\left(p_{j}\right)<\epsilon\left(Q\right)
\]
 because $\deg_{X}\left(p_{j}\right)<\deg_{X}\left(Q\right)$ and
$Q$ is a key polynomial. Equivalently, 
\[
\nu\left(\partial_{b_{0}}\left(p_{j}\right)\right)>\nu\left(p_{j}\right)-b_{0}\epsilon\left(Q\right).
\]

So if $b_{0}=0$ and $\left\{ b_{1},\dots,b_{s}\right\} \nsubseteq I\left(Q\right)$,
we have 

\[
\begin{array}{c}
\nu\left(\partial_{b_{0}}\left(p_{j}\right)\right)+\sum\limits _{i=1}^{s}\nu\left(\partial_{b_{i}}\left(Q\right)\right)+\left(j-s\right)\nu\left(Q\right)\\
>\underset{\begin{array}{c}
\shortparallel\\
\nu\left(p_{j}Q^{j}\right)-b\epsilon\left(Q\right)
\end{array}}{\underbrace{\nu\left(p_{j}\right)+s\nu\left(Q\right)-b\epsilon\left(Q\right)+\left(j-s\right)\nu\left(Q\right).}}
\end{array}
\]

And if $b_{0}>0$, then 

\[
\begin{array}{c}
\nu\left(\partial_{b_{0}}\left(p_{j}\right)\right)+\sum\limits _{i=1}^{s}\nu\left(\partial_{b_{i}}\left(Q\right)\right)+\left(j-s\right)\nu\left(Q\right)\\
>\underset{\begin{array}{c}
\shortparallel\\
\nu\left(p_{j}Q^{j}\right)-b\epsilon\left(Q\right)
\end{array}}{\underbrace{\nu\left(p_{j}\right)-b_{0}\epsilon\left(Q\right)+s\nu\left(Q\right)-\sum\limits _{i=1}^{s}b_{i}\epsilon\left(Q\right)+\left(j-s\right)\nu\left(Q\right).}}
\end{array}
\]

So:

\[
\begin{array}{ccc}
\nu_{Q}\left(S_{j}\right) & > & \min\limits _{B_{s}\in N_{j}}\left\{ \nu\left(p_{j}Q^{j}\right)-b\epsilon\left(Q\right)\right\} \\
 & > & \nu_{Q}\left(P\right)-b\epsilon\left(Q\right)
\end{array}
\]
 as desired.

If $j=l$, then 
\[
\partial_{b}\left(p_{l}Q^{l}\right)=\left(R_{0}+R_{l}\right)Q^{l-p^{e}+1}+S_{l}+urQ^{l-p^{e}}
\]
 hand using the same argument as before, $\nu_{Q}\left(S_{l}\right)>\nu_{Q}\left(P\right)-b\epsilon\left(Q\right)$.

It remains to prove the general case. We have:

\[
\begin{array}{ccc}
\partial_{b}\left(P\right) & = & \partial_{b}\left(\sum\limits _{i\in S_{Q}\left(P\right)}p_{i}Q^{i}\right)\\
 & = & \partial_{b}\left(p_{l}Q^{l}\right)+\sum\limits _{j\in S_{Q}\left(P\right)\setminus\left\{ l\right\} }\partial_{b}\left(p_{j}Q^{j}\right).
\end{array}
\]

Then:

\[
\begin{array}{ccc}
\partial_{b}\left(P\right) & = & \left(R_{0}+R_{l}\right)Q^{l-p^{e}+1}+S_{l}+urQ^{l-p^{e}}+\sum\limits _{j\in S_{Q}\left(P\right)\setminus\left\{ l\right\} }\left(Q^{l-p^{e}+1}R_{j}+S_{j}\right)\\
 & = & urQ^{l-p^{e}}+Q^{l-p^{e}+1}R+S
\end{array}
\]

where 
\[
R:=R_{0}+\sum\limits _{j\in S_{Q}\left(P\right)}R_{j}
\]
 and 
\[
S:=\sum\limits _{j\in S_{Q}\left(P\right)}S_{j}.
\]
We have
\[
\nu_{Q}\left(S\right)\geq\min\limits _{j\in S_{Q}\left(P\right)}\left\{ \nu_{Q}\left(S_{j}\right)\right\} >\nu_{Q}\left(P\right)-b\epsilon\left(Q\right).
\]

This completes the proof of the Lemma.
\end{proof}
Recall that we want to prove that 
\[
\nu_{Q}\left(\partial_{b}P\right)=\nu_{Q}\left(P\right)-b\epsilon\left(Q\right).
\]
 We just saw that the $Q$-expansion of $\partial_{b}P$ contains
the term $urQ^{l-p^{e}}$, some terms divisible by $Q^{l-p^{e}+1}$
and others of value strictly higher than $\nu_{Q}\left(P\right)-b\epsilon\left(Q\right)$.
It is sufficient now to show that 
\[
\nu_{Q}\left(\partial_{b}P\right)\geq\nu_{Q}\left(P\right)-b\epsilon\left(Q\right)
\]
 and that 
\[
\nu_{Q}\left(urQ^{l-p^{e}}\right)=\nu_{Q}\left(P\right)-b\epsilon\left(Q\right).
\]

Let us compute $\nu_{Q}\left(urQ^{l-p^{e}}\right)$. 

Recall that $p_{l}\left(\partial_{b\left(Q\right)}Q\right)^{p^{e}}=qQ+r$.
By Lemma \ref{lem:paslechoixfaut le mettre}, we have $\nu\left(r\right)=\nu\left(p_{l}\left(\partial_{b\left(Q\right)}Q\right)^{p^{e}}\right)$.

So:

\[
\begin{array}{ccc}
\nu_{Q}\left(urQ^{l-p^{e}}\right) & = & \nu_{Q}\left(rQ^{l-p^{e}}\right)\\
 & = & \nu\left(rQ^{l-p^{e}}\right)\\
 & = & \nu\left(p_{l}\left(\partial_{b\left(Q\right)}Q\right)^{p^{e}}\right)+\nu\left(Q^{l-p^{e}}\right)\\
 & = & \nu\left(p_{l}Q^{l}\right)+p^{e}\nu\left(\partial_{b\left(Q\right)}Q\right)-p^{e}\nu\left(Q\right)\\
 & = & \nu_{Q}\left(P\right)+p^{e}\left(\nu\left(\partial_{b\left(Q\right)}Q\right)-\nu\left(Q\right)\right)\\
 & = & \nu_{Q}\left(P\right)+p^{e}\left(-b\left(Q\right)\epsilon\left(Q\right)\right)\\
 & = & \nu_{Q}\left(P\right)-b\epsilon\left(Q\right).
\end{array}
\]

The result now follows from Lemma \ref{lem: inegalit=0000E9 epsilon_Q(P)et epsilon(Q)}.
\end{proof}
\begin{rem}
One can show that the implication of the proposition is, in fact,
an equivalence.
\end{rem}

\begin{prop}
\label{cor:comparaison des epsilon}Let $Q$ be a key polynomial and
$P$ a polynomial such that there exists a strictly positive integer
$b$ such that
\[
\nu_{Q}\left(P\right)-\nu_{Q}\left(\partial_{b}P\right)=b\epsilon\left(Q\right)
\]
 and 
\[
\nu_{Q}\left(\partial_{b}P\right)=\nu\left(\partial_{b}P\right).
\]

Then $\epsilon\left(P\right)\geq\epsilon\left(Q\right)$.

If, in addition, $\nu\left(P\right)>\nu_{Q}\left(P\right)$, then
$\epsilon\left(P\right)>\epsilon\left(Q\right)$.
\end{prop}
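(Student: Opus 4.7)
The plan is to exhibit a single positive integer $d$ that witnesses $\epsilon(P)\geq\epsilon(Q)$ in the definition of $\epsilon(P)$, namely the integer $b$ provided in the hypothesis. Since $\epsilon(P)=\max_{d\in\mathbb{N}^{\ast}}\{(\nu(P)-\nu(\partial_{d}P))/d\}$, it suffices to produce one value of $d$ for which the corresponding quotient is at least $\epsilon(Q)$.

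Concretely, I would combine two basic facts. First, the general inequality $\nu_{Q}(P)\leq\nu(P)$ that was noted right after the previous theorem. Second, the hypothesis $\nu_{Q}(\partial_{b}P)=\nu(\partial_{b}P)$, which turns the inequality into an equality for $\partial_{b}P$. Subtracting and using the assumption $\nu_{Q}(P)-\nu_{Q}(\partial_{b}P)=b\epsilon(Q)$ gives
\[
\nu(P)-\nu(\partial_{b}P)\geq\nu_{Q}(P)-\nu_{Q}(\partial_{b}P)=b\epsilon(Q),
\]
so $(\nu(P)-\nu(\partial_{b}P))/b\geq\epsilon(Q)$, and taking the maximum over $d$ yields $\epsilon(P)\geq\epsilon(Q)$.

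For the strict part, I would repeat the exact same computation but observe that the additional hypothesis $\nu(P)>\nu_{Q}(P)$ makes the first inequality strict while the second step remains the equality given in the hypothesis. Hence
\[
\nu(P)-\nu(\partial_{b}P)>\nu_{Q}(P)-\nu_{Q}(\partial_{b}P)=b\epsilon(Q),
\]
which forces $\epsilon(P)>\epsilon(Q)$.

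There is essentially no obstacle here: the statement is a direct comparison built out of the hypotheses and the fact that $\nu_{Q}$ underestimates $\nu$ (which in turn relies on $Q$ being a key polynomial so that $\nu_{Q}$ is itself a valuation, as proved just above). The only care needed is not to apply $\nu(P)\geq\nu_{Q}(P)$ in the wrong direction when subtracting, which is handled by isolating $\partial_{b}P$ on the side where equality holds.
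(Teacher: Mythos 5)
Your proof is correct and is essentially the same argument as the paper's: bound $\epsilon(P)$ below by the quotient at $d=b$, replace $\nu(\partial_{b}P)$ by $\nu_{Q}(\partial_{b}P)$ using the hypothesis, and then use $\nu(P)\geq\nu_{Q}(P)$ (strict in the second case) together with $\nu_{Q}(P)-\nu_{Q}(\partial_{b}P)=b\epsilon(Q)$. The paper merely presents the same chain as $\epsilon(P)\geq\epsilon(Q)+\frac{\nu(P)-\nu_{Q}(P)}{b}$ rather than isolating $b\epsilon(Q)$ on the other side.
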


\begin{proof}
We have 

\[
\begin{array}{ccc}
\epsilon\left(P\right) & \geq & \frac{\nu\left(P\right)-\nu\left(\partial_{b}P\right)}{b}\\
 & = & \frac{\nu\left(P\right)-\nu_{Q}\left(\partial_{b}P\right)}{b}\\
 & = & \frac{\nu\left(P\right)+b\epsilon\left(Q\right)-\nu_{Q}\left(P\right)}{b}\\
 & = & \epsilon\left(Q\right)+\frac{\nu\left(P\right)-\nu_{Q}\left(P\right)}{b}.
\end{array}
\]

We know that for every polynomial $P$, we have $\nu\left(P\right)\geq\nu_{Q}\left(P\right)$
, so $\epsilon\left(P\right)\geq\epsilon\left(Q\right)$. And if $\nu\left(P\right)>\nu_{Q}\left(P\right)$,
we have the strict inequality $\epsilon\left(P\right)>\epsilon\left(Q\right)$.
\end{proof}
\begin{prop}
\label{prop: inegalite des valuations tronqu=0000E9es}Let $Q_{1}$
and $Q_{2}$ be two key polynomials such that 
\[
\epsilon\left(Q_{1}\right)\leq\epsilon\left(Q_{2}\right)
\]
 and let $P\in K[X]$ be a polynomial.

Then $\nu_{Q_{1}}\left(P\right)\leq\nu_{Q_{2}}\left(P\right)$.

Furthermore, if $\nu_{Q_{1}}\left(P\right)=\nu\left(P\right)$, then
$\nu_{Q_{2}}\left(P\right)=\nu\left(P\right)$.
\end{prop}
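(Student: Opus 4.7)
The plan is to prove the main inequality $\nu_{Q_1}(P)\leq\nu_{Q_2}(P)$ by induction on $\deg_X(P)$, and then deduce the additional statement for free. A preliminary observation is needed first: applying the defining property of the key polynomial $Q_1$ to the polynomial $Q_2$, together with the hypothesis $\epsilon(Q_1)\leq\epsilon(Q_2)$, gives $\deg_X(Q_2)\geq\deg_X(Q_1)$. This degree comparison is what makes the $Q_2$-expansion a useful tool in the induction.

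For the base case, if $\deg_X(P)<\deg_X(Q_2)$ then the $Q_2$-expansion of $P$ reduces to $P$ itself, so $\nu_{Q_2}(P)=\nu(P)\geq\nu_{Q_1}(P)$ and there is nothing more to prove. For the inductive step, assume $\deg_X(P)\geq\deg_X(Q_2)$ and distinguish two subcases according to whether the $Q_2$-expansion of $P$ exhibits cancellation at its minimal value. If $\nu(P)=\nu_{Q_2}(P)$, then the conclusion is again immediate since $\nu_{Q_1}(P)\leq\nu(P)=\nu_{Q_2}(P)$. Otherwise $\nu(P)>\nu_{Q_2}(P)$, which forces at least two indices in the $Q_2$-expansion to achieve the minimum value; in particular $S_{Q_2}(P)\neq\{0\}$.

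In this main subcase, Proposition \ref{prop:S_Q(P)neq0} provides a strictly positive integer $b$ such that
\[
\nu_{Q_2}(\partial_b P)=\nu_{Q_2}(P)-b\,\epsilon(Q_2).
\]
Since $\deg_X(\partial_b P)=\deg_X(P)-b<\deg_X(P)$, the induction hypothesis yields $\nu_{Q_1}(\partial_b P)\leq\nu_{Q_2}(\partial_b P)$. Combining this with Lemma \ref{lem: inegalit=0000E9 epsilon_Q(P)et epsilon(Q)} applied to $Q_1$, namely $\nu_{Q_1}(\partial_b P)\geq\nu_{Q_1}(P)-b\,\epsilon(Q_1)$, we obtain
\[
\nu_{Q_1}(P)-b\,\epsilon(Q_1)\leq\nu_{Q_2}(P)-b\,\epsilon(Q_2),
\]
so $\nu_{Q_1}(P)\leq\nu_{Q_2}(P)+b\bigl(\epsilon(Q_1)-\epsilon(Q_2)\bigr)\leq\nu_{Q_2}(P)$ by the hypothesis $\epsilon(Q_1)\leq\epsilon(Q_2)$, closing the induction.

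The second assertion is then immediate: if $\nu_{Q_1}(P)=\nu(P)$, the chain $\nu(P)=\nu_{Q_1}(P)\leq\nu_{Q_2}(P)\leq\nu(P)$ collapses to equalities. The only delicate point in the whole argument is the reduction to $\partial_b P$, where one must pair the \emph{sharp equality} for $\nu_{Q_2}$ coming from Proposition \ref{prop:S_Q(P)neq0} with the \emph{one-sided inequality} for $\nu_{Q_1}$ coming from Lemma \ref{lem: inegalit=0000E9 epsilon_Q(P)et epsilon(Q)}, so that the nonnegative slack $b\,\epsilon(Q_2)-b\,\epsilon(Q_1)$ is precisely what absorbs the loss.
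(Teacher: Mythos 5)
Your proof is correct, but it takes a genuinely different route from the paper's. The paper argues through the $Q_{1}$-expansion of $P$: it first establishes $\nu_{Q_{2}}\left(Q_{1}\right)=\nu\left(Q_{1}\right)$ (by contradiction, combining Proposition \ref{prop:S_Q(P)neq0} with Proposition \ref{cor:comparaison des epsilon}), then notes that each term $p_{i}Q_{1}^{i}$ of the $Q_{1}$-expansion satisfies $\nu_{Q_{2}}\left(p_{i}Q_{1}^{i}\right)=\nu\left(p_{i}Q_{1}^{i}\right)$ because $\deg_{X}\left(p_{i}\right)<\deg_{X}\left(Q_{1}\right)\leq\deg_{X}\left(Q_{2}\right)$, and concludes from $\nu_{Q_{2}}\left(P\right)\geq\min\limits_{i}\nu_{Q_{2}}\left(p_{i}Q_{1}^{i}\right)=\nu_{Q_{1}}\left(P\right)$. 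You instead induct on $\deg_{X}\left(P\right)$ and never expand $P$ in powers of $Q_{1}$: in the only nontrivial case $\nu\left(P\right)>\nu_{Q_{2}}\left(P\right)$ you correctly observe that the minimum in the $Q_{2}$-expansion must be attained at least twice, so $S_{Q_{2}}\left(P\right)\neq\left\{ 0\right\}$, and you play the sharp equality of Proposition \ref{prop:S_Q(P)neq0} for $Q_{2}$ against the one-sided bound of Lemma \ref{lem: inegalit=0000E9 epsilon_Q(P)et epsilon(Q)} for $Q_{1}$, the nonnegative slack $b\left(\epsilon\left(Q_{2}\right)-\epsilon\left(Q_{1}\right)\right)$ closing the induction. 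Both arguments rest on Proposition \ref{prop:S_Q(P)neq0}; yours dispenses with Proposition \ref{cor:comparaison des epsilon} and with the multiplicativity of $\nu_{Q_{2}}$, at the price of an induction, while the paper's yields as a by-product the term-by-term non-degeneracy of the $Q_{1}$-expansion with respect to $Q_{2}$, which is the form in which the result is reused later. Two cosmetic points: your preliminary observation $\deg_{X}\left(Q_{2}\right)\geq\deg_{X}\left(Q_{1}\right)$ is in fact never invoked in the argument, and $\deg_{X}\left(\partial_{b}P\right)=\deg_{X}\left(P\right)-b$ should be an inequality $\leq$, which is all the induction needs.
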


\begin{proof}
First, we show that $\nu_{Q_{2}}\left(Q_{1}\right)=\nu\left(Q_{1}\right)$.
If $\deg_{X}\left(Q_{1}\right)<\deg_{X}\left(Q_{2}\right)$, we do
have this equality. Otherwise we have $\deg_{X}\left(Q_{1}\right)=\deg_{X}\left(Q_{2}\right)$
since $\epsilon\left(Q_{1}\right)\leq\epsilon\left(Q_{2}\right)$
and since $Q_{1}$ is a key polynomial.

Assume, aiming for contradiction, that $\nu_{Q_{2}}\left(Q_{1}\right)<\nu\left(Q_{1}\right)$. 

So $S_{Q_{2}}\left(Q_{1}\right)\neq\left\{ 0\right\} $ and by Proposition
\ref{prop:S_Q(P)neq0}, there exists a non-zero integer $b$ such
that $\nu_{Q_{2}}\left(Q_{1}\right)-\nu_{Q_{2}}\left(\partial_{b}Q_{1}\right)=b\epsilon\left(Q_{2}\right)$.
However $\deg_{X}\left(\partial_{b}Q_{1}\right)<\deg_{X}\left(Q_{2}\right)$,
so $\nu_{Q_{2}}\left(\partial_{b}Q_{1}\right)=\nu\left(\partial_{b}Q_{1}\right)$
and by Proposition \ref{cor:comparaison des epsilon}, we have $\epsilon\left(Q_{1}\right)>\epsilon\left(Q_{2}\right)$.
This is a contradiction. So we do have $\nu_{Q_{2}}\left(Q_{1}\right)=\nu\left(Q_{1}\right)$.

Let $P=\sum\limits _{i=0}^{n}p_{i}Q_{1}^{i}$ be the $Q_{1}$-expansion
of $P$. 

For every $i\in\left\{ 0,\dots,n\right\} $, we have:

\[
\nu_{Q_{2}}\left(p_{i}Q_{1}^{i}\right)=\nu_{Q_{2}}\left(p_{i}\right)+i\nu_{Q_{2}}\left(Q_{1}\right)=\nu_{Q_{2}}\left(p_{i}\right)+i\nu\left(Q_{1}\right).
\]

But $\deg_{X}\left(p_{i}\right)<\deg_{X}\left(Q_{1}\right)\leq\deg_{X}\left(Q_{2}\right)$,
so $\nu_{Q_{2}}\left(p_{i}\right)=\nu\left(p_{i}\right)$ and $\nu_{Q_{2}}\left(p_{i}Q_{1}^{i}\right)=\nu\left(p_{i}Q_{1}^{i}\right)$.

Then 
\[
\begin{array}{ccc}
\nu_{Q_{2}}\left(P\right) & \geq & \min\limits _{0\leq i\leq n}\left\{ \nu_{Q_{2}}\left(p_{i}Q_{1}^{i}\right)\right\} \\
 & = & \min\limits _{0\leq i\leq n}\left\{ \nu\left(p_{i}Q_{1}^{i}\right)\right\} \\
 & = & \nu_{Q_{1}}\left(P\right).
\end{array}
\]

Assume that, in addition, $\nu_{Q_{1}}\left(P\right)=\nu\left(P\right)$.
Then $\nu\left(P\right)\leq\nu_{Q_{2}}\left(P\right)$. By definition
of $\nu_{Q_{2}}$, we have $\nu_{Q_{2}}\left(P\right)\leq\nu\left(P\right)$,
and hence the equality.
\end{proof}
\begin{prop}
\label{prop:non degeneresence de polyn=0000F4mes par rapport au m=0000EAme polyn=0000F4me clef}Let
$P_{1},\dots,P_{n}\in K[X]$ be polynomials and set $d:=\max\limits _{1\leq i\leq n}\left\{ \deg_{X}\left(P_{i}\right)\right\} $.

There exists a key polynomial $Q$ of degree less than or equal to
$d$ such that all the $P_{i}$ are non-degenerate with respect to
$Q$. In other words, there exists a key polynomial $Q$ such that
for every $i$, we have $\nu_{Q}\left(P_{i}\right)=\nu\left(P_{i}\right)$.
\end{prop}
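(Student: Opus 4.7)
The plan is to induct on $d := \max_{1 \leq i \leq n} \deg_X(P_i)$, reducing the statement for $(P_i)$ to the corresponding statement for the formal derivatives $\partial_b P_i$, and then applying the three preceding propositions as a sequence of transfer, contradiction, and comparison arguments.

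The case $d = 0$ is trivial, since each $P_i$ then lies in $K$ and is non-degenerate with respect to every key polynomial (in the degenerate sense appropriate to constants). For the inductive step, assume the statement for every finite family of polynomials of maximal degree strictly less than $d$, and consider $P_1, \ldots, P_n$ with $\max_i \deg_X(P_i) = d \geq 1$. The family of derivatives $\{\partial_b P_i : 1 \leq i \leq n,\ 1 \leq b \leq d\}$ consists of polynomials of degree at most $d-1$, so the inductive hypothesis furnishes a key polynomial $Q^{\dagger}$ with $\deg(Q^{\dagger}) \leq d-1$ such that $\nu_{Q^{\dagger}}(\partial_b P_i) = \nu(\partial_b P_i)$ for every admissible pair $(i,b)$.

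Put $\alpha := \max\{\epsilon(Q^{\dagger}),\ \max_i \epsilon(P_i)\}$ and let $Q$ be a monic polynomial of minimal degree subject to $\epsilon(Q) \geq \alpha$. A witness of degree at most $d$ always exists --- either $Q^{\dagger}$ itself, if $\alpha = \epsilon(Q^{\dagger})$, or the monic normalisation of any $P_i$ realising $\max_i \epsilon(P_i)$. Consequently $\deg(Q) \leq d$. A direct minimality argument, using that $\epsilon(P) = \epsilon(P/a)$ for every non-zero scalar $a$, shows that this minimal-degree $Q$ is automatically a key polynomial.

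It remains to verify $\nu_Q(P_i) = \nu(P_i)$ for every $i$. Since $\epsilon(Q) \geq \epsilon(Q^{\dagger})$, Proposition \ref{prop: inegalite des valuations tronqu=0000E9es} transfers the non-degeneracy of the derivatives from $Q^{\dagger}$ to $Q$, giving $\nu_Q(\partial_b P_i) = \nu(\partial_b P_i)$ for every $(i,b)$. If one had $\nu_Q(P_i) < \nu(P_i)$ for some $i$, then at least two distinct terms of the $Q$-expansion of $P_i$ would attain the minimum, whence $S_Q(P_i) \neq \{0\}$; Proposition \ref{prop:S_Q(P)neq0} would then produce $b > 0$ with $\nu_Q(P_i) - \nu_Q(\partial_b P_i) = b\, \epsilon(Q)$, and combined with $\nu_Q(\partial_b P_i) = \nu(\partial_b P_i)$, Proposition \ref{cor:comparaison des epsilon} would force the strict inequality $\epsilon(P_i) > \epsilon(Q) \geq \alpha \geq \epsilon(P_i)$, a contradiction.

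The main obstacle I anticipate is reconciling the two constraints on $Q$: that $\epsilon(Q)$ dominate simultaneously $\epsilon(Q^{\dagger})$ and each $\epsilon(P_i)$, while $\deg(Q)$ stays bounded by $d$. This is exactly the step where the minimality construction is essential, exploiting the fact that there is always a witness of degree at most $d$ among $Q^{\dagger}$ and the monic normalisations $P_i/\mathrm{lc}(P_i)$. Once $Q$ is fixed, the rest of the proof is a direct orchestration of the three preceding propositions.
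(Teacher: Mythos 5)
Your proof is correct and is essentially the paper's argument reorganized: the paper runs the same degree induction as a minimal-counterexample contradiction on a single polynomial (after reducing to $n=1$ via the maximal-$\epsilon$ trick of Proposition \ref{prop: inegalite des valuations tronqu=0000E9es}), gets non-degeneracy of all $\partial_b P$ from minimality of $\deg P$, derives $\epsilon(P)>\epsilon(Q)$ from Propositions \ref{prop:S_Q(P)neq0} and \ref{cor:comparaison des epsilon}, and contradicts this with the same minimal-degree-witness fact you use to build $Q$. The only point to patch is the bottom of your induction ($d\le 1$): the derivatives are then constants and no key polynomial of degree $0$ exists, so one should take $Q^{\dagger}=X$ (of degree $1\le d$, which still suffices for the witness bound) rather than appeal to a literal base case at degree $0$.
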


\begin{proof}
Assume the result for only one polynomial and let $n>1$. 

So we have $Q_{1},\dots,Q_{n}$ some key polynomials of degrees less
than or equal to $d$ such that for every $i\in\left\{ 1,\dots,n\right\} $,
the polynomial $P_{i}$ is non-degenerate with respect to $Q_{i}$.
In other words $\nu_{Q_{i}}\left(P_{i}\right)=\nu\left(P_{i}\right)$. 

We can assume 
\[
\epsilon\left(Q_{n}\right)=\max\limits _{1\leq i\leq n}\left\{ \epsilon\left(Q_{i}\right)\right\} .
\]
By Proposition \ref{prop: inegalite des valuations tronqu=0000E9es},
for every $i\in\left\{ 1,\dots,n\right\} $, we have $\nu_{Q_{i}}\left(P_{i}\right)=\nu\left(P_{i}\right)=\nu_{Q_{n}}\left(P_{i}\right)$.
So all the $P_{i}$ are non-degenerate with respect to $Q_{n}$. This
completes the proof.

It remains to show the result for $n=1$. We give a proof by contradiction.
Assume the existence of a polynomial $P$ such that for every key
polynomial $Q$ of degree less than or equal to $d$, we have $\nu_{Q}\left(P\right)<\nu\left(P\right)$.
We choose $P$ of minimal degree for this property.

Let us show that there exists a key polynomial $Q$, of degree less
than or equal to $d=\deg_{X}\left(P\right)$, such that for every
$b>0$, we have $\nu_{Q}\left(\partial_{b}P\right)=\nu\left(\partial_{b}P\right)$.

First, for every $b>d$, we have $\partial_{b}P=0$. Then, by minimality
of the degree of $P$, for every $b\in\left\{ 1,\dots,d\right\} $,
there exists a key polynomial $Q_{b}$ such that $\nu_{Q_{b}}\left(\partial_{b}P\right)=\nu\left(\partial_{b}P\right)$. 

Take an element $Q\in\left\{ Q_{1},\dots,Q_{d}\right\} $ such that
$\epsilon\left(Q\right)=\max\limits _{1\leq b\leq d}\left\{ \epsilon\left(Q_{b}\right)\right\} $.
By Proposition \ref{prop: inegalite des valuations tronqu=0000E9es},
we have $\nu_{Q}\left(\partial_{b}P\right)=\nu\left(\partial_{b}P\right)$,
for every $b>0$.

So we have $\nu_{Q}\left(P\right)<\nu\left(P\right)$. In particular,
$S_{Q}\left(P\right)\neq\left\{ 0\right\} $, and $\nu_{Q}\left(\partial_{b}P\right)=\nu\left(\partial_{b}P\right)$
for every $b>0$. By Proposition \ref{prop:S_Q(P)neq0} and Corollary
\ref{cor:comparaison des epsilon}, we conclude that $\epsilon\left(P\right)>\epsilon\left(Q\right)$.

Let us show that this last inequality is true for every key polynomial
of degree less than or equal than $\deg(P)$. Let $Q_{0}$ be such
a key polynomial. 

First case: $\epsilon\left(Q_{0}\right)\leq\epsilon\left(Q\right)$.
Then $\epsilon\left(Q_{0}\right)<\epsilon\left(P\right)$ since $\epsilon\left(Q\right)<\epsilon\left(P\right)$.

Last case: $\epsilon\left(Q_{0}\right)>\epsilon\left(Q\right)$. By
Proposition \ref{prop: inegalite des valuations tronqu=0000E9es},
we have $\nu\left(\partial_{b}P\right)=\nu_{Q}\left(\partial_{b}P\right)=\nu_{Q_{0}}\left(\partial_{b}P\right)$
for every $b>0$. By hypothesis we know that $\nu_{Q_{0}}\left(P\right)<\nu\left(P\right)$.
So by Proposition \ref{prop:S_Q(P)neq0} and Corollary \ref{cor:comparaison des epsilon},
we have $\epsilon\left(P\right)>\epsilon\left(Q_{0}\right)$ as desired.

So we know that for every key polynomial of degree less than or equan
than those of $P$, we have $\epsilon\left(P\right)>\epsilon\left(Q\right)$.
But by definition of key polynomials, there exists a key polynomial
$\widetilde{Q}$ of degree less than or equal than those of $P$ and
such that $\epsilon\left(P\right)\leq\epsilon\left(\widetilde{Q}\right)$.
Contradiction. This completes the proof.
\end{proof}
\newpage{}

\section{Immediate successors.}
\begin{defn}
\label{def: successeur immediat}Let $Q_{1}$ and $Q_{2}$ be two
key polynomials. We say that $Q_{2}$ is an \emph{immediate successor\index{Immediate successor key polynomial}}
of $Q_{1}$ and we write $Q_{1}<Q_{2}$ if $\epsilon(Q_{1})<\epsilon(Q_{2})$
and if $Q_{2}$ is of minimal degree for this property.
\end{defn}

\begin{rem}
We keep the hypotheses of Example \ref{exa:pc}. Then we have $z<z^{2}-x^{2}y$.
\end{rem}

\begin{defn}
It will be useful to have simpler ways to check if a key polynomial
is an immediate successor of another key polynomial. This is why we
give these two results.
\end{defn}

\begin{prop}
\label{prop: successeurs imm=0000E9diats et in=0000E9galit=0000E9 des valuations}Let
$Q_{1}$ and $Q_{2}$ be two key polynomials. The following are equivalent.
\begin{enumerate}
\item The polynomials $Q_{1}$ and $Q_{2}$ satisfy $Q_{1}<Q_{2}$.
\item We have $\nu_{Q_{1}}(Q_{2})<\nu(Q_{2})$ and $Q_{2}$ is of minimal
degree for this property.
\end{enumerate}
\end{prop}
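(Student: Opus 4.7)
The plan is to reduce both implications to the single pointwise equivalence
\[
\epsilon(Q_2) > \epsilon(Q_1) \iff \nu_{Q_1}(Q_2) < \nu(Q_2),
\]
valid for an arbitrary key polynomial $Q_2$. Once this is established, the property defining $(1)$ and the property defining $(2)$ coincide on the set of key polynomials, so the minimality statements are literally the same condition and the equivalence $(1)\Leftrightarrow(2)$ follows.

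For the direction $\nu_{Q_1}(Q_2) < \nu(Q_2) \Rightarrow \epsilon(Q_1) < \epsilon(Q_2)$, I would argue by contrapositive. If $\epsilon(Q_2) \leq \epsilon(Q_1)$, then Proposition~\ref{prop: inegalite des valuations tronqu=0000E9es}, applied with the roles of $Q_1$ and $Q_2$ interchanged, yields $\nu_{Q_2}(P) \leq \nu_{Q_1}(P)$ for every polynomial $P$. Taking $P = Q_2$ gives $\nu(Q_2) = \nu_{Q_2}(Q_2) \leq \nu_{Q_1}(Q_2) \leq \nu(Q_2)$, forcing $\nu_{Q_1}(Q_2) = \nu(Q_2)$, as required.

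For the harder direction $\epsilon(Q_1) < \epsilon(Q_2) \Rightarrow \nu_{Q_1}(Q_2) < \nu(Q_2)$, I again proceed by contrapositive. Suppose $\nu_{Q_1}(Q_2) = \nu(Q_2)$, and choose $b := b_\nu(Q_2)$, so that $\nu(Q_2) - \nu(\partial_b Q_2) = b\,\epsilon(Q_2)$. Combining this with the universal inequality $\nu_{Q_1}(\partial_b Q_2) \leq \nu(\partial_b Q_2)$ and the equality hypothesis, one obtains
\[
\nu_{Q_1}(Q_2) - \nu_{Q_1}(\partial_b Q_2) \;\geq\; \nu(Q_2) - \nu(\partial_b Q_2) \;=\; b\,\epsilon(Q_2).
\]
On the other hand, Lemma~\ref{lem: inegalit=0000E9 epsilon_Q(P)et epsilon(Q)} applied to $P = Q_2$ and the exponent $b$ gives $\nu_{Q_1}(Q_2) - \nu_{Q_1}(\partial_b Q_2) \leq b\,\epsilon(Q_1)$. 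Chaining the two inequalities yields $\epsilon(Q_2) \leq \epsilon(Q_1)$, which is the desired contrapositive.

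The main obstacle is this last paragraph: one must identify the correct integer $b$ and juggle the three inequalities between $\nu$, $\nu_{Q_1}$, and their values on $\partial_b Q_2$. Once the pointwise equivalence is in hand, the minimality claims match verbatim if ``minimal degree'' is understood among key polynomials; if one prefers minimality among arbitrary polynomials (a formally stronger conclusion in both directions), one reduces to the key polynomial case by invoking Proposition~\ref{prop:non degeneresence de polyn=0000F4mes par rapport au m=0000EAme polyn=0000F4me clef} to replace an alleged lower degree counterexample $P$ by a key polynomial $Q'$ with $\deg(Q') \leq \deg(P)$ and $\nu_{Q'}(P)=\nu(P)$, and then applying Proposition~\ref{prop: inegalite des valuations tronqu=0000E9es} to deduce $\epsilon(Q') > \epsilon(Q_1)$.
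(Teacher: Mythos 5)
Your proposal is correct and follows essentially the same route as the paper: the direction $\epsilon(Q_1)<\epsilon(Q_2)\Rightarrow\nu_{Q_1}(Q_2)<\nu(Q_2)$ via $b=b_\nu(Q_2)$ and Lemma~\ref{lem: inegalit=0000E9 epsilon_Q(P)et epsilon(Q)} is the paper's argument read contrapositively, and the converse via Proposition~\ref{prop: inegalite des valuations tronqu=0000E9es} applied to $P=Q_2$ is exactly where the paper's proof also lands (your version just skips the paper's intermediate degree case analysis, which is indeed unnecessary). The observation that the two minimality clauses then coincide pointwise is likewise how the paper concludes.
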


\begin{proof}
First let us show that 
\[
\epsilon\left(Q_{1}\right)<\epsilon\left(Q_{2}\right)\Rightarrow\nu_{Q_{1}}(Q_{2})<\nu(Q_{2}).
\]

We set $b:=b(Q_{2})=\min\left\{ b\in\mathbb{N}^{\ast}\text{ such that }\frac{\nu(Q_{2})-\nu(\partial_{b}Q_{2})}{b}=\epsilon(Q_{2})\right\} $.

We have 
\[
\begin{array}{ccc}
\epsilon(Q_{1})<\epsilon(Q_{2}) & \Leftrightarrow & b\epsilon(Q_{1})<\nu(Q_{2})-\nu(\partial_{b}Q_{2})\\
 & \Rightarrow & b\epsilon(Q_{1})<\nu(Q_{2})-\nu_{Q_{1}}(\partial_{b}Q_{2})
\end{array}
\]
because for every polynomial $g$, we have $\nu_{Q_{1}}(g)\leq\nu(g)$.

But by Lemma \ref{lem: inegalit=0000E9 epsilon_Q(P)et epsilon(Q)},
$\nu_{Q_{1}}(Q_{2})-\nu_{Q_{1}}(\partial_{b}Q_{2})\leq b\epsilon(Q_{1})$,
so 
\[
\nu_{Q_{1}}(Q_{2})-\nu_{Q_{1}}(\partial_{b}Q_{2})<\nu(Q_{2})-\nu_{Q_{1}}(\partial_{b}Q_{2}).
\]

Then $\nu_{Q_{1}}(Q_{2})<\nu(Q_{2})$.

Now let us show that $\nu_{Q_{1}}(Q_{2})<\nu(Q_{2})\Rightarrow\epsilon\left(Q_{1}\right)<\epsilon\left(Q_{2}\right)$.
Assume, aiming for contradiction, that $\epsilon(Q_{1})\geq\epsilon(Q_{2})$.
Then $\deg(Q_{1})\geq\deg(Q_{2}).$

If we have $\deg(Q_{1})>\deg(Q_{2})$, then $\nu_{Q_{1}}(Q_{2})=\nu(Q_{2})$
and this is a contradiction. Hence we assume that $Q_{1}$ and $Q_{2}$
have same degree.

Let $Q_{2}=Q_{1}+(Q_{2}-Q_{1})$ be the $Q_{1}$-expansion of $Q_{2}$. 

If $\nu(Q_{1})\neq\nu(Q_{2}-Q_{1})$, then
\[
\nu(Q_{2})=\min\left\{ \nu(Q_{1}),\nu(Q_{2}-Q_{1})\right\} =\nu_{Q_{1}}(Q_{2})
\]
 and again we have a contradiction.

So $\nu(Q_{1})=\nu(Q_{2}-Q_{1})=\nu_{Q_{1}}(Q_{2})<\nu(Q_{2})$.

But $\nu(Q_{2})=\nu_{Q_{2}}(Q_{2})\leq\nu_{Q_{1}}(Q_{2})$ by Proposition
\ref{prop: inegalite des valuations tronqu=0000E9es}. Again, this
is a contradiction.

So we showed that $\epsilon(Q_{1})<\epsilon(Q_{2})\Leftrightarrow\nu_{Q_{1}}(Q_{2})<\nu(Q_{2})$.

Let $Q_{2}$ be of minimal degree for the first property. 

Assume the existence of $Q_{3}$ of degree strictly less than $Q_{2}$
such that $\nu_{Q_{1}}(Q_{3})<\nu(Q_{3})$. So $\epsilon(Q_{1})<\epsilon(Q_{3})$,
which is in contradiction with the minimality of the degree of $Q_{2}$
for this property.

So we have $Q_{1}<Q_{2}\Rightarrow\nu_{Q_{1}}(Q_{2})<\nu(Q_{2})$
and $Q_{2}$ is of minimal degree for this property.

Take $Q_{2}$ such that $\nu_{Q_{1}}(Q_{2})<\nu(Q_{2})$ and $Q_{2}$
is of minimal degree for this property. Assume the existence of $Q_{3}$
of degree strictly less than $\deg\left(Q_{2}\right)$ and such that
$\epsilon(Q_{1})<\epsilon(Q_{3})$. By this last property, we have
$\nu_{Q_{1}}(Q_{3})<\nu(Q_{3})$, which is in contradiction with the
minimality of the degree of $Q_{2}$ for this property.

This completes the proof.
\end{proof}
\begin{prop}
\label{prop: successeurs immediats et somme des formes initiales}Let
$Q_{1}$ and $Q_{2}$ be two key polynomials, and let
\[
Q_{2}=\sum\limits _{j\in\Theta}q_{j}Q_{1}^{j}
\]
 be the $Q_{1}$-expansion of $Q_{2}$ . 

The following are equivalent: 
\begin{enumerate}
\item The polynomials $Q_{1}$ and $Q_{2}$ satisfy $Q_{1}<Q_{2}$.
\item We have $\sum\limits _{j\in S_{Q_{1}}\left(Q_{2}\right)}\mathrm{in}_{\nu}\left(q_{j}Q_{1}^{j}\right)=0$
with $Q_{2}$ of minimal degree for this property.
\end{enumerate}
\end{prop}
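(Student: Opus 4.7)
The plan is to reduce the statement to Proposition \ref{prop: successeurs imm=0000E9diats et in=0000E9galit=0000E9 des valuations} and then translate the inequality $\nu_{Q_{1}}(Q_{2})<\nu(Q_{2})$ into a cancellation statement on initial forms. By that earlier proposition, $Q_{1}<Q_{2}$ is equivalent to $\nu_{Q_{1}}(Q_{2})<\nu(Q_{2})$ together with the minimality of $\deg(Q_{2})$; since the minimality clause is the same on both sides of the equivalence I want to prove (once the valuation inequality is shown to match the initial form condition), I would focus the proof on the pure inequality $\nu_{Q_{1}}(Q_{2})<\nu(Q_{2})$ iff $\sum_{j\in S_{Q_{1}}(Q_{2})}\mathrm{in}_{\nu}(q_{j}Q_{1}^{j})=0$.

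To handle this, I would write $Q_{2}=\widetilde{Q_{2}}+R$ where $\widetilde{Q_{2}}=\sum_{j\in S_{Q_{1}}(Q_{2})}q_{j}Q_{1}^{j}$ collects precisely the terms of minimal valuation $\nu_{Q_{1}}(Q_{2})$, and $R=\sum_{j\notin S_{Q_{1}}(Q_{2})}q_{j}Q_{1}^{j}$ collects the rest. By the very definition of $S_{Q_{1}}(Q_{2})$, each summand in $R$ has valuation strictly greater than $\nu_{Q_{1}}(Q_{2})$, so $\nu(R)>\nu_{Q_{1}}(Q_{2})$. Therefore $\nu(Q_{2})=\nu(\widetilde{Q_{2}}+R)$ coincides with $\nu(\widetilde{Q_{2}})$ whenever $\nu(\widetilde{Q_{2}})=\nu_{Q_{1}}(Q_{2})$, and exceeds $\nu_{Q_{1}}(Q_{2})$ exactly when $\nu(\widetilde{Q_{2}})>\nu_{Q_{1}}(Q_{2})$.

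The key observation is now that all summands in $\widetilde{Q_{2}}$ live in the same graded piece of the filtration induced by $\nu$: each $q_{j}Q_{1}^{j}$ with $j\in S_{Q_{1}}(Q_{2})$ has value exactly $\nu_{Q_{1}}(Q_{2})$. Consequently $\nu(\widetilde{Q_{2}})>\nu_{Q_{1}}(Q_{2})$ is equivalent, in the associated graded ring of $\nu$, to the vanishing $\sum_{j\in S_{Q_{1}}(Q_{2})}\mathrm{in}_{\nu}(q_{j}Q_{1}^{j})=0$; and conversely if this sum is nonzero in the graded ring then $\nu(\widetilde{Q_{2}})$ is exactly $\nu_{Q_{1}}(Q_{2})$. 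Chaining these equivalences yields $\nu_{Q_{1}}(Q_{2})<\nu(Q_{2})$ iff $\sum_{j\in S_{Q_{1}}(Q_{2})}\mathrm{in}_{\nu}(q_{j}Q_{1}^{j})=0$.

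Finally, the minimality clauses match automatically: asking for $Q_{2}$ of minimal degree such that $\nu_{Q_{1}}(Q_{2})<\nu(Q_{2})$ is, by the equivalence just proved applied to all candidate polynomials, the same as asking for $Q_{2}$ of minimal degree such that $\sum_{j\in S_{Q_{1}}(Q_{2})}\mathrm{in}_{\nu}(q_{j}Q_{1}^{j})=0$. Combining with Proposition \ref{prop: successeurs imm=0000E9diats et in=0000E9galit=0000E9 des valuations} gives the statement. The only real subtlety, and the step I would write out with most care, is the translation between the strict inequality of valuations and the vanishing of a sum of initial forms in $\mathrm{gr}_{\nu}$; everything else is bookkeeping about the $Q_{1}$-expansion.
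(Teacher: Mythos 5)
Your proposal is correct and follows essentially the same route as the paper: both directions are reduced, via the preceding proposition characterizing $Q_{1}<Q_{2}$ by the inequality $\nu_{Q_{1}}(Q_{2})<\nu(Q_{2})$, to the observation that a sum of terms all of value $\nu_{Q_{1}}(Q_{2})$ (plus terms of strictly larger value) has value exceeding $\nu_{Q_{1}}(Q_{2})$ exactly when the corresponding sum of initial forms vanishes in $\mathrm{gr}_{\nu}$. The only difference is that you spell out this last translation (which the paper dismisses as holding ``by definition''), which is a reasonable elaboration rather than a new idea.
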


\begin{proof}
First, let us show that
\[
Q_{1}<Q_{2}\Rightarrow\sum\limits _{j\in S_{Q_{1}}\left(Q_{2}\right)}\mathrm{in}_{\nu}\left(q_{j}Q_{1}^{j}\right)=0.
\]

Assume $Q_{1}<Q_{2}$. By Proposition \ref{prop: successeurs imm=0000E9diats et in=0000E9galit=0000E9 des valuations},
we know that $\nu_{Q_{1}}(Q_{2})<\nu(Q_{2})$. So by definition 
\[
\sum\limits _{j\in S_{Q_{1}}\left(Q_{2}\right)}\mathrm{in}_{\nu}\left(q_{j}Q_{1}^{j}\right)=0.
\]

Furthermore, if $Q_{1}<Q_{2}$, we have that $Q_{2}$ is of minimal
degree for this property by definition of immediate successor.

Now let us show that if $\sum\limits _{j\in S_{Q_{1}}\left(Q_{2}\right)}\mathrm{in}_{\nu}\left(q_{j}Q_{1}^{j}\right)=0$
with $Q_{2}$ of minimal degree for this property, then $Q_{1}<Q_{2}$.

Assume $\sum\limits _{j\in S_{Q_{1}}\left(Q_{2}\right)}\mathrm{in}_{\nu}\left(q_{j}Q_{1}^{j}\right)=0$.
Then
\[
\nu(Q_{2})>\min\limits _{j\in\Theta}\nu(q_{j}Q_{1}^{j})=\nu_{Q_{1}}(Q_{2}),
\]
and so $Q_{2}>Q_{1}$ by Proposition \ref{prop: successeurs imm=0000E9diats et in=0000E9galit=0000E9 des valuations}.
\end{proof}
\begin{rem}
Let $Q_{1}$ and $Q_{2}$ be key polynomials such that $Q_{2}$ is
an immediate successor of $Q_{1}$ and let $Q_{2}=\sum\limits _{j\in\Theta}q_{j}Q_{1}^{j}$
be the $Q_{1}$-expansion of $Q_{2}$ . We set 
\[
\widetilde{Q}_{2}=\sum\limits _{j\in S_{Q_{1}}\left(Q_{2}\right)}q_{j}Q_{1}^{j}.
\]
 We will show that $\widetilde{Q}_{2}$ is an immediate successor
of $Q_{1}$. Then we will always consider ``optimal'' immediate
successor key polynomials. This means, by definition, that all the
terms in their expansion in the powers of the previous key polynomial
are of same value.
\end{rem}

\begin{prop}
\label{prop:on peut le prendre optimal}Let $Q_{1}$ and $Q_{2}$
be key polynomials such that $Q_{2}$ is an immediate successor of
$Q_{1}$ and let $Q_{2}=\sum\limits _{j\in\Theta}q_{j}Q_{1}^{j}$
be the $Q_{1}$-expansion of $Q_{2}$ . We set 
\[
\widetilde{Q}_{2}=\sum\limits _{j\in S_{Q_{1}}\left(Q_{2}\right)}q_{j}Q_{1}^{j}
\]
 Then $\widetilde{Q}_{2}$ is an immediate successor of $Q_{1}$.
\end{prop}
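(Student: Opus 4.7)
My plan is to apply the equivalent characterization of immediate successors given by Proposition \ref{prop: successeurs immediats et somme des formes initiales}: a monic polynomial $P$ satisfies $Q_1<P$ if and only if its $Q_1$-expansion has $\sum_{j\in S_{Q_1}(P)}\mathrm{in}_\nu(p_jQ_1^j)=0$ and $P$ is a key polynomial of minimal degree for this property. I will verify each ingredient for $\widetilde{Q}_2$.

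Since each coefficient $q_j$ in the $Q_1$-expansion of $Q_2$ has degree strictly less than $\deg(Q_1)$, the expression $\widetilde{Q}_2=\sum_{j\in S_{Q_1}(Q_2)}q_jQ_1^j$ is already the $Q_1$-expansion of $\widetilde{Q}_2$, and every one of its nonzero terms shares the common value $\nu_{Q_1}(Q_2)$. Hence $S_{Q_1}(\widetilde{Q}_2)=S_{Q_1}(Q_2)$, and the vanishing $\sum_{j\in S_{Q_1}(\widetilde{Q}_2)}\mathrm{in}_\nu(q_jQ_1^j)=0$ is inherited from the hypothesis $Q_1<Q_2$ through the same proposition.

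The crucial step is to show that $\widetilde{Q}_2$ is a key polynomial of the same degree as $Q_2$; this reduces to proving that the maximal index $s=\deg_{Q_1}(Q_2)$ lies in $S_{Q_1}(Q_2)$, so that $\widetilde{Q}_2$ inherits the monic leading term of $Q_2$. I would argue by contradiction: if $s\notin S_{Q_1}(Q_2)$, normalizing $\widetilde{Q}_2$ by the leading coefficient of its top remaining term yields a monic polynomial $\widetilde{Q}_2'$ of strictly smaller degree whose $Q_1$-expansion still has vanishing sum of initial forms. Using Proposition \ref{prop:S_Q(P)neq0} to produce an index realizing $\epsilon(Q_1)$ through the $Q_1$-truncated derivatives of $\widetilde{Q}_2'$, together with Proposition \ref{cor:comparaison des epsilon}, I obtain $\epsilon(\widetilde{Q}_2')>\epsilon(Q_1)$. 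Combined with the fact that $Q_2$ is a key polynomial --- which forces $\epsilon(P)<\epsilon(Q_2)$ for every $P$ of degree strictly less than $\deg(Q_2)$ --- this shows that $\widetilde{Q}_2'$ is itself a key polynomial, contradicting the minimality of $\deg(Q_2)$ asserted in Proposition \ref{prop: successeurs immediats et somme des formes initiales}.

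Once $s\in S_{Q_1}(Q_2)$ has been established, $\widetilde{Q}_2$ is monic of degree $\deg(Q_2)$, its key polynomial property transfers from $Q_2$ via equality of degrees together with the same inequality $\epsilon(\widetilde{Q}_2)\geq\epsilon(Q_1)$ obtained by the argument above, and the minimality of $\deg(\widetilde{Q}_2)$ for the vanishing-of-initial-forms property is then immediate from that of $\deg(Q_2)$. The step I expect to require the most care is precisely the verification that the hypothetical normalized truncation $\widetilde{Q}_2'$ is a key polynomial, since this argument must piece together the interplay between $\epsilon$, $\nu$, and $\nu_{Q_1}$ developed in the preceding propositions in order to force $s$ into $S_{Q_1}(Q_2)$.
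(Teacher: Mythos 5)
Your proposal follows essentially the same route as the paper's proof: show that the top index of the $Q_{1}$-expansion lies in $S_{Q_{1}}\left(Q_{2}\right)$ so that $\deg_{X}\left(\widetilde{Q}_{2}\right)=\deg_{X}\left(Q_{2}\right)$, derive $\epsilon\left(\widetilde{Q}_{2}\right)>\epsilon\left(Q_{1}\right)$ from Propositions \ref{prop:S_Q(P)neq0} and \ref{cor:comparaison des epsilon}, and then invoke the minimality of $\deg\left(Q_{2}\right)$ to obtain that $\widetilde{Q}_{2}$ is a key polynomial and an immediate successor.

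One inference, however, is stated incorrectly, and it is precisely the step you flag as delicate. Knowing $\epsilon\left(Q_{1}\right)<\epsilon\left(\widetilde{Q}_{2}'\right)<\epsilon\left(Q_{2}\right)$ does \emph{not} show that $\widetilde{Q}_{2}'$ is a key polynomial: a monic polynomial with $\epsilon$ in that range can perfectly well fail Definition \ref{def:pc}. The correct move, which the paper carries out for $\widetilde{Q}_{2}$ itself, is to choose a monic polynomial $P$ of minimal degree among those with $\epsilon\left(P\right)\geq\epsilon\left(\widetilde{Q}_{2}'\right)$; that minimality is what makes $P$ a key polynomial, and then $\epsilon\left(P\right)>\epsilon\left(Q_{1}\right)$ together with $\deg_{X}\left(P\right)\leq\deg_{X}\left(\widetilde{Q}_{2}'\right)<\deg_{X}\left(Q_{2}\right)$ contradicts the minimality of $\deg\left(Q_{2}\right)$ in the definition of immediate successor --- the contradiction is furnished by $P$, not by $\widetilde{Q}_{2}'$. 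The same repair is needed in your last paragraph, where you assert that the key polynomial property of $\widetilde{Q}_{2}$ ``transfers via equality of degrees'': it does not transfer formally, and the paper proves it by exactly this minimal-degree argument applied to a hypothetical counterexample of degree less than $\deg_{X}\left(\widetilde{Q}_{2}\right)$. Finally, note that you need the strict inequality $\epsilon\left(\widetilde{Q}_{2}\right)>\epsilon\left(Q_{1}\right)$ (not merely $\geq$) to run this contradiction; it does hold, via the second part of Proposition \ref{cor:comparaison des epsilon}, because $\nu_{Q_{1}}\left(\widetilde{Q}_{2}\right)<\nu\left(\widetilde{Q}_{2}\right)$.
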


\begin{proof}
First, by definition of $\widetilde{Q}_{2}$, we have $\deg\left(\widetilde{Q}_{2}\right)\leq\deg\left(Q_{2}\right)$.
We are going to show that this inequality is, in fact, an equality.

We have $\sum\limits _{j\in S_{Q_{1}}\left(Q_{2}\right)}\mathrm{in}_{\nu}\left(q_{j}Q_{1}^{j}\right)=\sum\limits _{j\in S_{Q_{1}}\left(\tilde{Q}_{2}\right)}\mathrm{in}_{\nu}\left(q_{j}Q_{1}^{j}\right)=0$.
Since $Q_{2}$ is of minimal degree for this property, we know that
its term of greatest degree appears in this sum. So $\deg_{X}\left(\widetilde{Q}_{2}\right)=\deg_{X}\left(Q_{2}\right)$.

Now let us show that $\epsilon\left(\widetilde{Q}_{2}\right)>\epsilon\left(Q_{1}\right)$. 

Since $\sum\limits _{j\in S_{Q_{1}}\left(\widetilde{Q}_{2}\right)}\mathrm{in}_{\nu}\left(q_{j}Q_{1}^{j}\right)=0$,
we have $\nu_{Q_{1}}\left(\widetilde{Q}_{2}\right)<\nu\left(\widetilde{Q}_{2}\right)$,
and $\widetilde{Q}_{2}$ is still of minimal degree for this property.
Then $S_{Q_{1}}\left(\widetilde{Q}_{2}\right)\neq\left\{ 0\right\} $
and for every non-zero integer $b$, we have $\nu_{Q_{1}}\left(\partial_{b}\widetilde{Q}_{2}\right)=\nu\left(\partial_{b}\widetilde{Q}_{2}\right)$.
By Proposition \ref{prop:S_Q(P)neq0}, there exists a strictly positive
integer $b$ such that $\nu_{Q}\left(P\right)-\nu_{Q}\left(\partial_{b}P\right)=b\epsilon\left(Q\right)$.
So we can use Corollary \ref{cor:comparaison des epsilon} to conclude
that 
\[
\epsilon\left(\widetilde{Q}_{2}\right)>\epsilon\left(Q_{1}\right).
\]

Assume that we already know that $\widetilde{Q}_{2}$ is a key polynomial.
Since $\deg\left(\widetilde{Q}_{2}\right)=\deg\left(Q_{2}\right)$,
we have that $\widetilde{Q}_{2}$ is of minimal degree for the property
$\epsilon\left(\widetilde{Q}_{2}\right)>\epsilon\left(Q_{1}\right)$,
and so $Q_{1}<\widetilde{Q}_{2}$.

It remains to prove that $\widetilde{Q}_{2}$ is a key polynomial.

Assume, aiming for contradiction, that $\widetilde{Q}_{2}$ is not
a key polynomial. Then there exists a polynomial $P\in K[X]$ such
that 
\[
\epsilon\left(P\right)\geq\epsilon\left(\widetilde{Q}_{2}\right)
\]
 and 
\[
\deg_{X}\left(P\right)<\deg_{X}\left(\widetilde{Q}_{2}\right).
\]
 We take $P$ of minimal degree for this property. We can also assume
that $P$ is monic. Let us show that $P$ is a key polynomial.

Let $S\in K[X]$ be a polynomial such that $\epsilon\left(S\right)\geq\epsilon\left(P\right)$.
Then $\epsilon\left(S\right)\geq\epsilon\left(\tilde{Q}_{2}\right)$.
If $\deg_{X}\left(S\right)\geq\deg_{X}\left(\widetilde{Q}_{2}\right)$,
then $\deg_{X}\left(S\right)>\deg_{X}\left(P\right)$ and the proof
is finished. So let us assume that $\deg_{X}\left(S\right)<\deg_{X}\left(\widetilde{Q}_{2}\right)$.

We have $\epsilon\left(S\right)\geq\epsilon\left(\widetilde{Q}_{2}\right)$
and $\deg_{X}\left(S\right)<\deg_{X}\left(\widetilde{Q}_{2}\right)$.
By minimality of the degree of $P$ for this property, we have $\deg_{X}\left(S\right)\geq\deg_{X}\left(P\right)$,
and hence $P$ is a key polynomial.

So there exists a key polynomial $P$ such that 
\[
\epsilon\left(P\right)\geq\epsilon\left(\widetilde{Q}_{2}\right)
\]
 and 
\[
\deg_{X}\left(P\right)<\deg_{X}\left(\widetilde{Q}_{2}\right).
\]

Since $\epsilon\left(\widetilde{Q}_{2}\right)>\epsilon\left(Q_{1}\right)$,
we also have $\epsilon\left(P\right)>\epsilon\left(Q_{1}\right)$.
By minimality of the degree of $Q_{2}$ among the key polynomials
satisfying this inequality, we have $\deg_{X}\left(Q_{2}\right)\leq\deg_{X}\left(P\right)<\deg_{X}\left(\widetilde{Q}_{2}\right)$
which is a contradiction by the equality of the degrees of $Q_{2}$
and $\widetilde{Q}_{2}$. Hence the polynomial $\widetilde{Q}_{2}$
is a key polynomial.
\end{proof}
\begin{defn}
\index{Optimal immediate successor key polynomial}Let $Q_{1}$ and
$Q_{2}$ be two key polynomials such that $Q_{1}<Q_{2}$. We say that
$Q_{2}$ is an \emph{optimal immediate successor} of $Q_{1}$ if all
the terms of its $Q_{1}$-expansion have same value.
\end{defn}

\begin{rem}
Proposition \ref{prop:on peut le prendre optimal} shows how to associate
to every immediate successor $Q_{2}$ of $Q_{1}$ an optimal immediate
successor $\widetilde{Q}_{2}$. 

Hence, if $Q_{1}$ is not maximal in the set of the key polynomials
$\Lambda$, it admits an optimal immediate successor.
\end{rem}

Let $Q\in\Lambda$ be a key polynomial. We note 
\[
M_{Q}:=\left\{ P\in\Lambda\text{ such that }Q<P\right\} .
\]

\begin{defn}
\label{def:limitpc}We assume that $M_{Q}$ does not have a maximal
element and that for every element $P\in M_{Q}$ we have $\deg_{X}\left(P\right)=\deg_{X}\left(Q\right)$.

We also assume that there exists a key polynomial $Q'\in\Lambda$
such that $\epsilon(Q')>\epsilon(M_{Q})$. 

We call a \emph{limit immediat successor\index{Limit immediat successor key polynomial}}
of $Q$ every polynomial $Q'$ of minimal degree which has this property,
and we write $Q<_{\lim}Q'$.
\end{defn}

\begin{prop}
\label{prop: l'un ou l'autre}Let $Q$ and $Q'$ be two key polynomials
such that $\epsilon\left(Q\right)<\epsilon\left(Q'\right)$. Then
there exists a sequence $Q_{1}=Q,\dots,Q_{h}=Q'$ where for every
index $i$, the polynomial $Q_{i+1}$ is either an immediate successor
of $Q_{i}$ or a limit immediate successor of $Q_{i}$.
\end{prop}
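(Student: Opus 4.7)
The plan is to build the finite sequence $(Q_i)_{1 \leq i \leq h}$ iteratively from $Q_1 := Q$. At each stage, given $Q_i$ with $\epsilon(Q_i) < \epsilon(Q')$, I produce $Q_{i+1}$ as an immediate or limit immediate successor of $Q_i$ with $\epsilon(Q_{i+1}) \leq \epsilon(Q')$, and I then argue that the process terminates with $Q_h = Q'$ after finitely many steps.

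For the step construction, I observe that the set $D_i := \{\deg P : P \in \Lambda,\ \epsilon(P) > \epsilon(Q_i)\}$ is a nonempty subset of $\mathbb{N}$ (it contains $\deg Q'$), and hence admits a minimum $d_i$; moreover $d_i \geq \deg(Q_i)$ by the key polynomial property of $Q_i$ applied to any element of this set. I claim that at least one key polynomial of degree $d_i$ with $\epsilon > \epsilon(Q_i)$ has $\epsilon$-value $\leq \epsilon(Q')$: otherwise the key polynomial property of $Q'$ would force every such polynomial to have degree $\geq \deg(Q') \geq d_i$, yielding $d_i = \deg(Q')$ and placing $Q'$ itself in $M_{Q_i}$, which contradicts $\epsilon(Q') < \epsilon(P)$ for every $P \in M_{Q_i}$. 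If the minimum of $\epsilon(M_{Q_i})$ is attained, I take $Q_{i+1}$ to be a polynomial realizing it; by the argument used in the proof of Proposition~\ref{prop: successeurs imm=0000E9diats et in=0000E9galit=0000E9 des valuations}, this $Q_{i+1}$ is of minimal degree with $\epsilon > \epsilon(Q_i)$, hence an immediate successor of $Q_i$. Otherwise, $M_{Q_i}$ has no maximum and the minimality of $d_i$ forces all elements of $M_{Q_i}$ to have degree $d_i = \deg(Q_i)$, so the hypotheses of Definition~\ref{def:limitpc} are met with $Q'$ as witness, and I take $Q_{i+1}$ to be a limit immediate successor of $Q_i$ with $\epsilon(Q_{i+1}) \leq \epsilon(Q')$.

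For termination, the degrees $(\deg Q_i)$ form a non-decreasing sequence bounded above by $\deg Q'$, so they stabilize at some $d^* \leq \deg(Q')$. Each limit-immediate-successor step strictly increases the degree, for otherwise the new $Q_{i+1}$ would belong to $M_{Q_i}$, contradicting $\epsilon(Q_{i+1}) > \epsilon(M_{Q_i})$. After stabilization only immediate-successor steps can occur, and at the terminal stage I take $Q_h := Q'$, using that once $\epsilon(Q_{h-1})$ is sufficiently close to $\epsilon(Q')$, the polynomial $Q'$ itself realizes the minimality condition defining an immediate (or limit immediate) successor of $Q_{h-1}$.

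The main obstacle I foresee is twofold. First, one must show that only finitely many immediate-successor steps at the stable degree $d^*$ are possible before the chain reaches $\epsilon(Q')$; since the $\epsilon$-values at a fixed degree need not be well-ordered a priori, this will require exploiting a discreteness property derived from the key polynomial structure (together with the bound $\epsilon(Q_{i+1}) \leq \epsilon(Q')$). Second, one must justify the terminal identification $Q_h = Q'$, ruling out being stuck at a different key polynomial sharing the same degree and $\epsilon$-value. Both points rely on a careful use of the key polynomial property of $Q'$ together with the degree and $\epsilon$ bookkeeping accumulated along the construction.
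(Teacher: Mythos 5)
Your step construction contains the right ingredients (the minimal degree $d_i$, and the observation that some element of $M_{Q_i}$ has $\epsilon\leq\epsilon(Q')$), but your choice of $Q_{i+1}$ is the wrong one, and it is exactly what makes the two obstacles you flag insurmountable rather than technical. You take $Q_{i+1}$ realizing the \emph{minimum} of $\epsilon(M_{Q_i})$ when that minimum is attained. First, ``the minimum of $\epsilon(M_{Q_i})$ is not attained'' and ``$M_{Q_i}$ has no maximal element'' are independent conditions, so your case split is not exhaustive. Second, and more seriously, with the minimum choice the process genuinely need not terminate: if $\epsilon(M_{Q_i})$ is order-isomorphic to $\mathbb{N}$ inside the value group --- which is precisely the situation in which limit key polynomials arise --- the minimum is attained at every stage, your algorithm produces an infinite strictly increasing chain of immediate successors all of the same degree, and it never enters the limit case. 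The ``discreteness property'' you hope to extract does not exist: $\epsilon$-values at a fixed degree need not be well-ordered from above or discretely spaced, and Definition \ref{def:limitpc} exists exactly to handle this failure.

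The paper's proof resolves both obstacles with one move: when $M_{Q_i}$ has a maximal element, take $Q_{i+1}:=\max M_{Q_i}$. Then every element of $M_{Q_{i+1}}$ must have degree strictly larger than $\deg(Q_{i+1})$ (an element of the same degree with $\epsilon>\epsilon(Q_{i+1})$ would lie in $M_{Q_i}$ and contradict maximality), so the degree strictly increases at every such step and the bound $\deg\leq\deg(Q')$ forces termination. When $M_{Q_i}$ has no maximal element, either some $g\in M_{Q_i}$ satisfies $\epsilon(g)\geq\epsilon(Q')$, in which case $\deg(g)\geq\deg(Q')$ and $Q'$ itself is an immediate successor of $Q_i$ (this settles your terminal identification directly), or $\epsilon(Q')>\epsilon(M_{Q_i})$ and one takes a limit immediate successor --- either $Q'$ itself if it has minimal degree for that property, or one of strictly smaller degree, after which one iterates with strictly increasing degrees. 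You should restructure your induction around the maximum of $M_{Q_i}$, not the minimum.
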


\begin{proof}
If $Q'$ is an immediate successor of $Q,$ we are done, so we assume
that $Q'$ is not an immediate successor of $Q$, and we write this
$Q\nless Q'$.

Let us first look at $M_{Q}=M_{Q_{1}}$. If this set has a maximum,
we denote this maximum by $Q_{2}$. We have:

\[
\begin{cases}
Q<Q_{2}\\
\epsilon\left(Q\right)<\epsilon\left(Q'\right)\\
Q\nless Q'
\end{cases}
\]
 and by minimality of the degree of $Q_{2}$ we know that $\deg_{X}\left(Q_{2}\right)<\deg_{X}\left(Q'\right)$.
But $Q'$ is a key polynomial, so $\epsilon\left(Q_{2}\right)<\epsilon\left(Q'\right)$.

Then we have 
\[
\begin{cases}
Q=Q_{1}<Q_{2}\\
\epsilon\left(Q\right)<\epsilon\left(Q_{2}\right)<\epsilon\left(Q'\right)
\end{cases}
\]
 and since $Q<Q_{2}$, we know that $\deg_{X}\left(Q\right)\leq\deg_{Q}\left(Q_{2}\right)$.

We iterate the process as long as $M_{Q_{i}}$ has a maximum. 

Assume that there exists an index $i$ such that $M_{Q_{i}}$ does
not have a maximum. 

Assume that $\epsilon\left(M_{Q_{i}}\right)\nless\epsilon\left(Q'\right)$.
So there exists $g_{i}\in M_{Q_{i}}$ such that $\epsilon\left(g_{i}\right)\geq\epsilon\left(Q'\right)$.
Since $Q'$ is a key polynomial, we know that $\deg_{X}\left(g_{i}\right)\geq\deg_{X}\left(Q'\right)$.

We have: 
\[
\begin{cases}
\epsilon\left(Q_{i}\right)<\epsilon\left(Q'\right)\\
Q_{i}<g_{i}\\
\deg_{X}\left(Q'\right)\leq\deg_{X}\left(g_{i}\right)
\end{cases}
\]
 By definition of immediate successors, we have $Q_{i}<Q'$ and we
set $Q_{i+1}=Q'$. This completes the proof.

Now assume that $\epsilon\left(Q'\right)>\epsilon\left(M_{Q_{i}}\right)$.

Since $\deg_{X}\left(Q\right)\leq\deg_{X}\left(Q_{i}\right)<\deg_{Q}\left(Q'\right)$
for every index $i$, there exists a natural number $N$ such that
for every index $j\ge N$ we have 
\[
\deg_{X}\left(Q_{j}\right)=\deg_{X}\left(Q_{j+1}\right)<\deg_{X}\left(Q'\right).
\]

Let $P\in M_{Q_{N}}$. By construction, $\epsilon\left(P\right)\leq\epsilon\left(Q_{N+1}\right)<\epsilon\left(Q'\right)$.
If $Q'$ is not of minimal degree for this property, then there exists
a key polynomial $P'$ limit immediate successor of $Q_{N}$, of degree
strictly less than the degree of $Q'$. So 
\[
\deg_{X}\left(Q_{N+1}\right)<\deg_{X}\left(P'\right)<\deg_{X}\left(Q'\right).
\]
 Then we replace $Q_{N+1}$ by $P'$ and iterate the process, which
ends because the sequence of the degrees increase strictly.

Otherwise, $Q'$ is of minimal degree among all the key polynomials
such that $\epsilon\left(M_{Q_{N}}\right)<\epsilon\left(Q'\right)$,
so $Q'$ is a limit immediate successor of $Q_{N}$ and the process
ends at $Q_{N+1}=Q'$.

In each case, we construct a family of key polynomials which begins
at $Q$, ends at $Q'$ and such that for every index $i$, the polynomial
$Q_{i+1}$ is either an immediate successor of $Q_{i}$, or a limit
immediate successor of $Q_{i}$. This completes the proof.
\end{proof}
\begin{prop}
\label{prop: l'un ou l'autre optimal}Let $Q$ and $Q'$ be two key
polynomials such that $\epsilon\left(Q\right)<\epsilon\left(Q'\right)$.
Then there exists a sequence $Q_{1}=Q,\dots,Q_{h}=Q'$ where for every
index $i$, the polynomial $Q_{i+1}$ is either an optimal immediate
successor of $Q_{i}$ or a limit immediate successor of $Q_{i}$.
\end{prop}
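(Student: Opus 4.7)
The plan is to combine the two prior propositions: Proposition \ref{prop: l'un ou l'autre} which yields a chain of (possibly limit) immediate successors, and Proposition \ref{prop:on peut le prendre optimal} which shows that any immediate successor admits an associated optimal immediate successor of the same degree. The strategy is to first construct a non-optimal chain and then upgrade each non-limit step to an optimal one, using the fact that the construction $Q_{i+1} \mapsto \widetilde{Q}_{i+1}$ preserves both the key polynomial property and the degree.

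First I would apply Proposition \ref{prop: l'un ou l'autre} to the pair $(Q,Q')$ to obtain a sequence $Q = Q_1, Q_2, \ldots, Q_h = Q'$ in which each step $Q_i \to Q_{i+1}$ is either an immediate successor step or a limit immediate successor step. Then I would walk through the chain and, at every non-limit step (with $i < h-1$), replace $Q_{i+1}$ by its associated optimal immediate successor $\widetilde{Q}_{i+1}$ from Proposition \ref{prop:on peut le prendre optimal}. Since $\deg_X(\widetilde{Q}_{i+1}) = \deg_X(Q_{i+1})$, the minimality-of-degree conditions that define the next successor (either $Q_i < Q_{i+1}$ or the limit variant, as well as the step $Q_{i+1} \to Q_{i+2}$) are unaffected by the substitution; both are controlled by degrees and by strict $\epsilon$-inequalities that remain valid since $\epsilon(\widetilde{Q}_{i+1}) > \epsilon(Q_i)$.

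For the terminal step $Q_{h-1} \to Q_h = Q'$, if it is already a limit immediate successor or if $Q'$ is already optimal with respect to $Q_{h-1}$, nothing remains to be done. Otherwise, I would insert the optimal associate $\widetilde{Q}'$ of $Q'$ (relative to $Q_{h-1}$) between $Q_{h-1}$ and $Q'$, extending the chain to $\ldots, Q_{h-1}, \widetilde{Q}', Q'$. By Proposition \ref{prop:on peut le prendre optimal}, $\widetilde{Q}'$ is an optimal immediate successor of $Q_{h-1}$, and since $\widetilde{Q}'$ is a key polynomial of the same degree as $Q'$, the definition of key polynomial forces any polynomial $P$ with $\epsilon(P) \geq \epsilon(\widetilde{Q}')$ to have $\deg_X(P) \geq \deg_X(\widetilde{Q}') = \deg_X(Q')$, so $Q'$ is of minimal degree in its $\epsilon$-class above $\widetilde{Q}'$, making the final step a valid (optimal or limit) immediate successor step.

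The main obstacle will be verifying carefully that replacing $Q_{i+1}$ by $\widetilde{Q}_{i+1}$ does not break the relation between the modified polynomial and the subsequent $Q_{i+2}$, since the $\epsilon$-value may change under the optimal-substitution operation. The key technical point is that although $\epsilon(\widetilde{Q}_{i+1})$ and $\epsilon(Q_{i+1})$ may differ, both lie in the open interval $(\epsilon(Q_i), \epsilon(Q_{i+2}))$: the strict lower bound follows from the proof of Proposition \ref{prop:on peut le prendre optimal}, while the strict upper bound follows from the key polynomial property of $Q_{i+2}$ applied to $\widetilde{Q}_{i+1}$ via the degree equality $\deg_X(\widetilde{Q}_{i+1}) = \deg_X(Q_{i+1}) < \deg_X(Q_{i+2})$ (the limit case being handled analogously by comparing with the bounded family $M_{Q_i}$).
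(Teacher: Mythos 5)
Your strategy (build the chain of Proposition \ref{prop: l'un ou l'autre} first, then optimize each link via Proposition \ref{prop:on peut le prendre optimal}) differs from the paper, which constructs the optimal chain directly: it takes an optimal immediate successor $Q_2$ of $Q$, observes that the maximum $P$ of $M_Q$ (when it exists) is an immediate successor of $Q_2$ \emph{of the same degree} and hence automatically optimal (its $Q_2$-expansion has exactly two terms of equal value), and only falls back on the argument of Proposition \ref{prop: l'un ou l'autre} when $M_{Q_i}$ has no maximum. Your post-hoc substitution, however, has a genuine gap exactly at the point you flag as ``the main obstacle''. Showing that $\epsilon(\widetilde{Q}_{i+1})$ lies in the open interval $\left(\epsilon(Q_i),\epsilon(Q_{i+2})\right)$ gives only the inequality $\epsilon(\widetilde{Q}_{i+1})<\epsilon(Q_{i+2})$; it does \emph{not} give the degree-minimality required for $Q_{i+2}$ to be an immediate successor of $\widetilde{Q}_{i+1}$. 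Concretely, if $\epsilon(\widetilde{Q}_{i+1})<\epsilon(Q_{i+1})$ (nothing in Proposition \ref{prop:on peut le prendre optimal} rules this out, since $\nu(\widetilde{Q}_{i+1})$ and $\nu(Q_{i+1})$ need not coincide) and the original chain jumps in degree at the next step, i.e. $\deg_X(Q_{i+1})<\deg_X(Q_{i+2})$, then $Q_{i+1}$ itself is a key polynomial with $\epsilon(Q_{i+1})>\epsilon(\widetilde{Q}_{i+1})$ and degree strictly smaller than $\deg_X(Q_{i+2})$, so $Q_{i+2}$ is \emph{not} of minimal degree among key polynomials whose $\epsilon$ exceeds $\epsilon(\widetilde{Q}_{i+1})$, and the link $\widetilde{Q}_{i+1}<Q_{i+2}$ fails. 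The same issue infects your terminal step: you verify that $Q'$ has minimal degree above $\widetilde{Q}'$, but you never establish $\epsilon(\widetilde{Q}')<\epsilon(Q')$, without which $Q'$ is not a successor of $\widetilde{Q}'$ at all.

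The repair is essentially what the paper's construction does implicitly. Since $Q_{i+1}$ is chosen in Proposition \ref{prop: l'un ou l'autre} as the maximum of $M_{Q_i}$ and $\widetilde{Q}_{i+1}\in M_{Q_i}$, one has $\epsilon(\widetilde{Q}_{i+1})\leq\epsilon(Q_{i+1})$; when the inequality is strict, do not delete $Q_{i+1}$ but insert $\widetilde{Q}_{i+1}$ \emph{before} it, noting that $Q_{i+1}$ is then an immediate successor of $\widetilde{Q}_{i+1}$ of the same degree, hence optimal because its two-term $\widetilde{Q}_{i+1}$-expansion $Q_{i+1}=\widetilde{Q}_{i+1}+(Q_{i+1}-\widetilde{Q}_{i+1})$ has terms of equal value by Proposition \ref{prop: successeurs imm=0000E9diats et in=0000E9galit=0000E9 des valuations}. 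With that insertion (and the analogous one at the terminal step, which also handles the missing inequality there) your argument closes; without it, the substitution breaks the chain.
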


\begin{proof}
Let $Q_{2}$ be an optimal immediate successor of $Q$. We look at
$M_{Q}=M_{Q_{1}}$. If this set has a maximum, we denote this maximum
by $P$. 

If $\epsilon\left(Q_{2}\right)=\epsilon\left(P\right)$, we set $P=Q_{2}$.
Otherwise, $\epsilon\left(Q_{2}\right)<\epsilon\left(P\right)$. Since
$P$ and $Q_{2}$ are both immediate successors of $Q$, they have
same degree.

Hence $P$ is an immediate successor of $Q_{2}$, of the same degree
as $Q_{2}$. The polynomial $P$ is then an optimal immediate successor
of $Q_{2}$.

So we set $Q_{3}=P$.

In fact, we have a finite sequence of optimal immediate successors
which begins at $Q$ and ends at $P=\max\left\{ M_{Q}\right\} $.

We iterate the process as long as $M_{Q_{i}}$ has a maximum. Assume
that there exists an index $i$ such that $M_{Q_{i}}$ does not have
a maximum. 

Then we do exactly the same thing that we did in the proof ofcProposition
\ref{prop: l'un ou l'autre} and this completes the proof.
\end{proof}
\begin{lem}
\label{lem:coeff dominant et succ imm}Let $Q$ and $Q'$ be two key
polynomials such that $Q<Q'$ and we denote by $Q'=\sum\limits _{j=0}^{m}q_{j}Q^{j}$
the $Q$-expansion of $Q'$. Then $q_{m}=1$.
\end{lem}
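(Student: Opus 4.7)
The first observation is that the conclusion $q_m=1$ is equivalent to $\deg(q_m)=0$. Indeed, since $Q$ and $Q'$ are both monic and all the coefficients of the $Q$-expansion satisfy $\deg(q_j)<\deg(Q)$, the unique term contributing to the leading monomial of $Q'$ is $q_mQ^m$; matching the leading coefficient of $Q'$ (which equals $1$) against that of $q_mQ^m$ (which equals the leading coefficient of $q_m$, since $Q^m$ is monic) forces $q_m$ to be monic. Hence the substance of the lemma is that $\deg(Q)$ divides $\deg(Q')$, i.e.\ $\deg(q_m)=0$.

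I would proceed by contradiction: suppose $d':=\deg(q_m)\geq 1$, and construct a polynomial $P$ with $\deg(P)<\deg(Q')$ and $\nu_Q(P)<\nu(P)$. The existence of such a $P$ contradicts the minimality condition in Proposition~\ref{prop: successeurs imm=0000E9diats et in=0000E9galit=0000E9 des valuations}, since it would force $\deg(Q')\leq\deg(P)$.

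To produce $P$, I would first invoke Proposition~\ref{prop:on peut le prendre optimal}: we may replace $Q'$ by its optimal associate $\widetilde{Q'}_{\nu,Q}=\sum_{j\in S_Q(Q')}q_jQ^j$ without changing either the degree or the leading coefficient $q_m$, because $m\in S_Q(Q')$ (again by the minimality of $\deg(Q')$, as used in the proof of that proposition). We may therefore assume $S_Q(Q')=\{j:q_j\neq 0\}$ and, by Proposition~\ref{prop: successeurs immediats et somme des formes initiales}, that the initial-form relation
\[
\sum_{j\in S_Q(Q')}\mathrm{in}_\nu\!\bigl(q_jQ^j\bigr)=0
\]
holds in the graded ring of $\nu$. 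Writing $q_m=X^{d'}+r_m$ with $\deg(r_m)<d'$, the goal is to manipulate this relation to obtain a shorter relation $\sum_j\mathrm{in}_\nu(p_jQ^j)=0$ in which $p_m$ is a constant and the $p_j$ are initial forms of polynomials of degree $<\deg(Q)$; the polynomial $P:=Q^m+\sum_{j<m}p_jQ^j$ would then have degree $m\deg(Q)<m\deg(Q)+d'=\deg(Q')$ and would satisfy $\nu_Q(P)<\nu(P)$, giving the desired contradiction.

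The main obstacle is precisely this last reduction step. The natural idea is to ``divide out'' the leading monomial of $q_m$ from the above relation---passing to the fraction field of the graded ring and multiplying by $\mathrm{in}_\nu(X)^{-d'}$, or equivalently by $\mathrm{in}_\nu(q_m)^{-1}$---and then lift the resulting coefficients back to honest polynomials of degree $<\deg(Q)$. The delicate point is the lifting: one must argue, by a case analysis according to whether $\nu(r_m)$ is greater than, equal to, or less than $d'\nu(X)$, that the reduced initial forms are in fact realized by such polynomials. The two key ingredients making this possible are (i) the fact that every polynomial of degree $<\deg(Q)$ is automatically non-degenerate with respect to $Q$, so its truncated valuation coincides with its valuation, and (ii) the rigidity imposed by $Q$ being a key polynomial, which bounds $\epsilon(q_m)<\epsilon(Q)$ and hence controls how $\mathrm{in}_\nu(q_m)$ sits inside the graded ring.
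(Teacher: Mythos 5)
Your proposal follows essentially the same route as the paper: both start from the initial-form relation $\sum_{j\in S_{Q}(Q')}\mathrm{in}_{\nu}(q_{j}Q^{j})=0$, divide by $\mathrm{in}_{\nu}(q_{m})$, lift the quotients $\mathrm{in}_{\nu}(q_{j})/\mathrm{in}_{\nu}(q_{m})$ to polynomials of degree strictly less than $\deg_{X}(Q)$, and form a polynomial $Q^{m}+\sum_{j<m}h_{j}Q^{j}$ of degree exactly $m\deg_{X}(Q)$ whose existence, by the minimality of $\deg_{X}(Q')$, forces $\deg_{X}(q_{m})=0$ and hence $q_{m}=1$. The one step you flag as the main obstacle---realizing the divided initial forms by polynomials of degree $<\deg_{X}(Q)$---is exactly what the paper settles by observing that the subalgebra $G_{<a}$ of $\mathrm{gr}_{\nu}(K[X])$ generated by initial forms of polynomials of degree $<a=\deg_{X}(Q)$ is saturated, so no case analysis on $\nu(r_{m})$ is needed; note also that the paper must (and does) verify that the lifted polynomial $\widetilde{Q}$ is itself a key polynomial before invoking the minimality of $\deg_{X}(Q')$, a point your contradiction argument would likewise need to supply.
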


\begin{proof}
Since $\epsilon\left(Q\right)<\epsilon\left(Q'\right)$, we know by
Proposition \ref{prop: successeurs immediats et somme des formes initiales}
that $\sum\limits _{j=0}^{m}\mathrm{in}_{\nu}\left(q_{j}Q^{j}\right)=0$.
In fact we have 
\[
\mathrm{in}_{\nu}\left(q_{m}\right)\mathrm{in}_{\nu}\left(Q\right)^{m}+\dots+\mathrm{in}_{\nu}\left(q_{1}\right)\mathrm{in}_{\nu}\left(Q\right)+\mathrm{in}_{\nu}\left(q_{0}\right)=0.
\]
Then, since $\mathrm{in}_{\nu}\left(q_{m}\right)\neq0$, we have
\begin{equation}
\mathrm{in}_{\nu}\left(Q\right)^{m}+\dots+\frac{\mathrm{in}_{\nu}\left(q_{1}\right)}{\mathrm{in}_{\nu}\left(q_{m}\right)}\mathrm{in}_{\nu}\left(Q\right)+\frac{\mathrm{in}_{\nu}\left(q_{0}\right)}{\mathrm{in}_{\nu}\left(q_{m}\right)}=0.\label{eq:premier coefficient egal a 1}
\end{equation}

We set $a:=\deg_{X}\left(Q\right)$ and we consider $G_{<a}$ the
subalgebra of $\mathrm{gr}_{\nu}\left(K\left[X\right]\right)$ generated
by the initial forms of all the polynomials of degree strictly less
than $a$.

Hence $G_{<a}$ is a saturated algebra, and all the coefficients of
the form $\frac{\mathrm{in}_{\nu}\left(q_{i}\right)}{\mathrm{in}_{\nu}\left(q_{m}\right)}$
of the equation $\left(\ref{eq:premier coefficient egal a 1}\right)$
can be represented by polynomials. We denote by $h_{i}$ some liftings,
of degrees strictly less than $a$.

The element $\mathrm{in}_{\nu}\left(Q\right)$ is hence a solution
of a homogeneous monic equation with coefficients in $G_{<a}$ and
whose leading coefficient is $1$.

We consider the polynomial $\widetilde{Q}=Q^{m}+\sum\limits _{j=0}^{m-1}h_{j}Q^{j}$,
with, by hypothesis, $\deg_{X}\left(\widetilde{Q}\right)\leq\deg_{X}\left(Q'\right)$.
By construction we have 
\[
\mathrm{in}_{\nu}\left(Q\right)^{m}+\sum\limits _{j=0}^{m-1}\mathrm{in}_{\nu}\left(h_{j}\right)\mathrm{in}_{\nu}\left(Q\right)^{j}=0
\]
 and by the proof of the proposition \ref{prop: successeurs immediats et somme des formes initiales},
we have $\epsilon\left(\widetilde{Q}\right)>\epsilon\left(Q\right)$.

By minimality of the degree of $Q'$ for this property, if we can
show that $\widetilde{Q}$ is a key polynomial, then we would have
$\deg_{X}\left(Q'\right)=\deg_{X}\left(\widetilde{Q}\right)$ and
so $q_{m}=1$.

Let us show that $\widetilde{Q}$ is a key polynomial.

Assume, aiming for contradiction, that it is not. Then there exists
a polynomial $P$ such that $\epsilon\left(P\right)\geq\epsilon\left(\widetilde{Q}\right)$
and $\deg_{X}\left(P\right)<\deg_{X}\left(\widetilde{Q}\right)$.
We choose $P$ monic and of minimal degree for this property. Let
us show that $P$ is a key polynomial.

Let $S$ be a polynomial such that $\epsilon\left(S\right)\geq\epsilon\left(P\right)$.
Then $\epsilon\left(S\right)\geq\epsilon\left(\widetilde{Q}\right)$. 

If $\deg_{X}\left(S\right)\geq\deg_{X}\left(\widetilde{Q}\right)$,
then, since $\deg_{X}\left(P\right)<\deg_{X}\left(\widetilde{Q}\right)$,
the proof is finished.

So let us assume that $\deg_{X}\left(S\right)<\deg_{X}\left(\widetilde{Q}\right)$.
Then $\epsilon\left(S\right)\geq\epsilon\left(\widetilde{Q}\right)$
and $\deg_{X}\left(S\right)<\deg_{X}\left(\widetilde{Q}\right)$.
By minimality of the degree of $P$ for that property, $\deg_{X}\left(S\right)\geq\deg_{X}\left(P\right)$
and the proof is finished.

So there exists a key polynomial $P$ such that $\epsilon\left(P\right)\geq\epsilon\left(\widetilde{Q}\right)$
and $\deg_{X}\left(P\right)<\deg_{X}\left(\widetilde{Q}\right)$. 

Since $\epsilon\left(\widetilde{Q}\right)>\epsilon\left(Q\right)$,
we have $\epsilon\left(P\right)>\epsilon\left(Q\right)$.

So we have a key polynomial $P$ such that $\epsilon\left(P\right)>\epsilon\left(Q\right)$.
By minimality of the degree of $Q'$ for this property, we know that
$\deg_{X}\left(Q'\right)\leq\deg_{X}\left(P\right)$. But $\deg_{X}\left(P\right)<\deg_{X}\left(\widetilde{Q}\right)$,
and this implies that $\deg_{X}\left(Q'\right)<\deg_{X}\left(\widetilde{Q}\right)$,
which is a contradiction.

Thus $\widetilde{Q}$ is a key polynomial.
\end{proof}
\begin{prop}
\label{prop: inegalite des indices} Let $Q$ and $Q'$ be two key
polynomials such that 
\[
\epsilon\left(Q\right)<\epsilon\left(Q'\right).
\]
Let $c$ and $c'$ be two polynomials of degrees strictly less than
$\deg_{X}Q'$ and let $j$ and $j'$ be two integers such that :

\[
\begin{cases}
\nu_{Q}\left(c\right)=\nu\left(c\right)\\
\nu_{Q}\left(c'\right)=\nu\left(c'\right)\\
j\leq j'\\
\nu_{Q}\left(c\left(Q'\right)^{j}\right)\leq\nu_{Q}\left(c'\left(Q'\right)^{j'}\right).
\end{cases}
\]
 Then: 
\[
\nu\left(c\left(Q'\right)^{j}\right)\leq\nu\left(c'\left(Q'\right)^{j'}\right).
\]

If, in addition, either $j<j'$ or $\nu_{Q}\left(c\left(Q'\right)^{j}\right)<\nu_{Q}\left(c'\left(Q'\right)^{j'}\right)$,
then 
\[
\nu\left(c\left(Q'\right)^{j}\right)<\nu\left(c'\left(Q'\right)^{j'}\right).
\]
\end{prop}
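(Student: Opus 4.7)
The plan is to rewrite both sides of the desired inequality in the form $\nu(c) + j\nu(Q')$ and $\nu(c') + j'\nu(Q')$, and to compare them to the corresponding expressions $\nu(c) + j\nu_Q(Q')$ and $\nu(c') + j'\nu_Q(Q')$ that appear in the hypothesis; the whole argument then reduces to exploiting the gap between $\nu_Q(Q')$ and $\nu(Q')$.

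The first step is to establish the key inequality $\nu_Q(Q') < \nu(Q')$. Note that this is exactly the content of the first implication in the proof of Proposition \ref{prop: successeurs imm=0000E9diats et in=0000E9galit=0000E9 des valuations}: setting $b:=b(Q')$, the assumption $\epsilon(Q) < \epsilon(Q')$ gives $b\,\epsilon(Q) < \nu(Q') - \nu(\partial_b Q')$, and since $\nu_Q \le \nu$ this forces $b\,\epsilon(Q) < \nu(Q') - \nu_Q(\partial_b Q')$; combining with Lemma \ref{lem: inegalit=0000E9 epsilon_Q(P)et epsilon(Q)} applied to $Q'$ yields $\nu_Q(Q') < \nu(Q')$. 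Crucially, this implication uses only the hypothesis on $\epsilon$ and not that $Q'$ is an immediate successor of $Q$, so it applies here verbatim.

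The second step uses that $\nu_Q$ is a valuation (since $Q$ is a key polynomial). Combined with the non-degeneracy hypotheses $\nu_Q(c)=\nu(c)$ and $\nu_Q(c')=\nu(c')$, this gives
\[
\nu_Q\!\left(c(Q')^{j}\right) = \nu(c) + j\,\nu_Q(Q'), \qquad \nu_Q\!\left(c'(Q')^{j'}\right) = \nu(c') + j'\,\nu_Q(Q'),
\]
so the main hypothesis translates into $\nu(c) - \nu(c') \le (j'-j)\,\nu_Q(Q')$. Since $j'-j \ge 0$ and $\nu_Q(Q') \le \nu(Q')$, I get $(j'-j)\,\nu_Q(Q') \le (j'-j)\,\nu(Q')$, hence $\nu(c) - \nu(c') \le (j'-j)\,\nu(Q')$, which rearranges to the conclusion $\nu(c(Q')^{j}) \le \nu(c'(Q')^{j'})$.

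For the strict version, I would distinguish two cases. If $j < j'$, then the strict inequality $\nu_Q(Q') < \nu(Q')$ from the first step upgrades $(j'-j)\,\nu_Q(Q') < (j'-j)\,\nu(Q')$, yielding strict inequality in the conclusion. If instead $j = j'$ and $\nu_Q(c(Q')^{j}) < \nu_Q(c'(Q')^{j'})$, then the translation above becomes $\nu(c) < \nu(c')$, and adding $j\,\nu(Q')$ to both sides finishes the proof. The only genuinely nontrivial ingredient is the strict comparison $\nu_Q(Q') < \nu(Q')$; once that is in hand, the rest is a mechanical rearrangement.
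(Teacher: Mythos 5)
Your proof is correct and follows essentially the same route as the paper: translate the hypothesis into $\nu(c)-\nu(c')\le (j'-j)\,\nu_Q(Q')$ and compare with $(j'-j)\,\nu(Q')$. In fact you are slightly more careful than the paper, which in this proof only records $\nu_Q(Q')\le\nu(Q')$ and leaves the strict inequality $\nu_Q(Q')<\nu(Q')$ (needed for the case $j<j'$) implicit, whereas you correctly extract it from the first implication in the proof of Proposition \ref{prop: successeurs imm=0000E9diats et in=0000E9galit=0000E9 des valuations}, observing that that implication uses only $\epsilon(Q)<\epsilon(Q')$.
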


\begin{proof}
We know that $\nu_{Q}\left(Q'\right)\leq\nu\left(Q'\right)$, hence
\[
\nu\left(Q'\right)-\nu_{Q}\left(Q'\right)\geq0.
\]
 Since we assumed that $j\leq j'$, we have
\[
j\left(\nu\left(Q'\right)-\nu_{Q}\left(Q'\right)\right)\leq j'\left(\nu\left(Q'\right)-\nu_{Q}\left(Q'\right)\right).
\]

Furthermore, we know that $\nu_{Q}\left(c\left(Q'\right)^{j}\right)\leq\nu_{Q}\left(c'\left(Q'\right)^{j'}\right)$,
hence 
\[
\nu_{Q}\left(c\left(Q'\right)^{j}\right)+j\left(\nu\left(Q'\right)-\nu_{Q}\left(Q'\right)\right)\leq\nu_{Q}\left(c'\left(Q'\right)^{j'}\right)+j'\left(\nu\left(Q'\right)-\nu_{Q}\left(Q'\right)\right).
\]

So we have the inequality 
\[
\nu_{Q}\left(c\right)+j\nu_{Q}\left(Q'\right)+j\nu\left(Q'\right)-j\nu_{Q}\left(Q'\right)\leq\nu_{Q}\left(c'\right)+j'\nu_{Q}\left(Q'\right)+j'\nu\left(Q'\right)-j'\nu_{Q}\left(Q'\right).
\]
Equivalently, $\nu_{Q}\left(c\right)+j\nu\left(Q'\right)\leq\nu_{Q}\left(c'\right)+j'\nu\left(Q'\right)$.

But $\nu_{Q}\left(c\right)=\nu\left(c\right)$ and $\nu_{Q}\left(c'\right)=\nu\left(c'\right)$,
so $\nu\left(c\left(Q'\right)^{j}\right)\leq\nu\left(c'\left(Q'\right)^{j'}\right)$.

If, in addition, either $j<j'$ or $\nu_{Q}\left(c\left(Q'\right)^{j}\right)<\nu_{Q}\left(c'\left(Q'\right)^{j'}\right)$,
then we have $\nu\left(c\left(Q'\right)^{j}\right)<\nu\left(c'\left(Q'\right)^{j'}\right)$.
\end{proof}
\begin{lem}
\label{lem: relations entre les Q et Q' dev}Let $Q$ and $Q'$ be
two polynomials such that 
\[
\epsilon\left(Q\right)<\epsilon\left(Q'\right)
\]
 and let $f\in K[X]$ be a polynomial whose $Q'$-expansion is $f=\sum\limits _{j=0}^{r}f_{j}\left(Q'\right)^{j}$.
Then
\[
\nu_{Q}\left(f\right)=\min\limits _{0\leq j\leq r}\left\{ \nu_{Q}\left(f_{j}\left(Q'\right)^{j}\right)\right\} .
\]

If we set 
\[
T_{Q,Q'}\left(f\right):=\left\{ j\in\left\{ 0,\dots,r\right\} \text{ such that }\nu_{Q}\left(f_{j}\left(Q'\right)^{j}\right)=\nu_{Q}\left(f\right)\right\} ,
\]
 then we have
\[
\mathrm{in}_{\nu_{Q}}\left(f\right)=\sum\limits _{j\in T_{Q,Q'}\left(f\right)}\mathrm{in}_{\nu_{Q}}\left(f_{j}\left(Q'\right)^{j}\right).
\]
\end{lem}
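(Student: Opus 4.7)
I will prove the first assertion; the second will follow at once by reading off which summands reach the minimum.

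The inequality $\nu_{Q}(f)\ge\min_{0\le j\le r}\nu_{Q}(f_{j}(Q')^{j})$ is free, since $\nu_{Q}$ is a valuation on $K[X]$. The content of the lemma is that no cancellation can occur among the terms reaching the minimum $m:=\min_{j}\nu_{Q}(f_{j}(Q')^{j})$.

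To establish equality, I argue by contradiction: suppose $\nu_{Q}(f)>m$, and let $j_{0}:=\min T_{Q,Q'}(f)$. The plan is to compare $\nu$-values of the terms $f_{j}(Q')^{j}$ via Proposition \ref{prop: inegalite des indices} and show that $\nu(f)$ is in fact realised by the single summand $f_{j_{0}}(Q')^{j_{0}}$, which produces a contradiction. For any $j_{1}<j_{2}$ in $T_{Q,Q'}(f)$, the equality $\nu_{Q}(f_{j_{1}}(Q')^{j_{1}})=\nu_{Q}(f_{j_{2}}(Q')^{j_{2}})=m$ combined with the strict inequality of the exponents triggers the strict-case conclusion of Proposition \ref{prop: inegalite des indices}, giving $\nu(f_{j_{1}}(Q')^{j_{1}})<\nu(f_{j_{2}}(Q')^{j_{2}})$. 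A parallel case analysis, using the strict inequality $\nu(Q')>\nu_{Q}(Q')$ that follows from $\epsilon(Q)<\epsilon(Q')$ via Proposition \ref{prop: successeurs imm=0000E9diats et in=0000E9galit=0000E9 des valuations}, shows that every summand with $j\notin T_{Q,Q'}(f)$ has $\nu$-value strictly greater than $\nu(f_{j_{0}}(Q')^{j_{0}})$ (splitting the sub-cases $j<j_{0}$ and $j>j_{0}$ according to the sign of $(j-j_{0})(\nu(Q')-\nu_{Q}(Q'))$). The ultrametric law then forces $\nu(f)=\nu(f_{j_{0}}(Q')^{j_{0}})=m+j_{0}\bigl(\nu(Q')-\nu_{Q}(Q')\bigr)$; comparing with $\nu(f)\ge\nu_{Q}(f)>m$ yields the contradiction.

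The main obstacle is that Proposition \ref{prop: inegalite des indices} requires its coefficients to satisfy $\nu_{Q}(c)=\nu(c)$, a hypothesis not directly granted by $\deg f_{j}<\deg Q'$ alone. I will secure it through Proposition \ref{prop:non degeneresence de polyn=0000F4mes par rapport au m=0000EAme polyn=0000F4me clef}, which produces a single key polynomial $\widetilde{Q}$ of degree at most $\max_{j}\deg f_{j}<\deg Q'$ with respect to which every $f_{j}$ is non-degenerate. When $\deg\widetilde{Q}<\deg Q$, the key polynomial property of $Q$ forces $\epsilon(\widetilde{Q})<\epsilon(Q)$, and Proposition \ref{prop: inegalite des valuations tronqu=0000E9es} sandwiches $\nu_{\widetilde{Q}}(f_{j})\le\nu_{Q}(f_{j})\le\nu(f_{j})=\nu_{\widetilde{Q}}(f_{j})$, giving the required equality $\nu_{Q}(f_{j})=\nu(f_{j})$. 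When $\deg\widetilde{Q}\ge\deg Q$, I instead expand each $f_{j}$ in its $Q$-expansion $f_{j}=\sum_{i}f_{j,i}Q^{i}$ with $\deg f_{j,i}<\deg Q$; the coefficients $f_{j,i}$ are automatically non-degenerate with respect to $Q$, and the same comparison proceeds on the refined double sum $\sum_{j,i}f_{j,i}Q^{i}(Q')^{j}$, after grouping by $j$.

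The second assertion is then immediate from the first: once $\nu_{Q}(f)=m$ is established with no cancellation at the minimum, the initial form $\mathrm{in}_{\nu_{Q}}(f)$ in the graded piece of degree $m$ of $\mathrm{gr}_{\nu_{Q}}(K[X])$ is, by definition, the sum of the initial forms of precisely those summands of the $Q'$-expansion that attain value $m$, namely the terms indexed by $T_{Q,Q'}(f)$.
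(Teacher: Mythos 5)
There is a genuine gap, and it sits at the heart of your strategy. You try to establish the missing inequality $\nu_{Q}(f)\leq m$ by computing $\nu(f)$ exactly and then ``comparing with $\nu(f)\geq\nu_{Q}(f)>m$''. But $\nu_{Q}(f)\leq\nu(f)$ goes the wrong way for this purpose: even granting your computation $\nu(f)=m+j_{0}\left(\nu(Q')-\nu_{Q}(Q')\right)$, that quantity is $\geq m$, and strictly greater than $m$ whenever $j_{0}>0$ (recall $\nu(Q')>\nu_{Q}(Q')$), so it is perfectly compatible with the assumption $\nu_{Q}(f)>m$ and no contradiction arises unless $j_{0}=\min T_{Q,Q'}(f)=0$ --- which certainly need not hold (take $f=(Q')^{2}$). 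Knowledge of $\nu(f)$ can never produce an upper bound on the truncated value $\nu_{Q}(f)$. There is also a secondary flaw: your claim that $f_{j_{0}}(Q')^{j_{0}}$ has strictly smallest $\nu$-value among all summands fails for indices $j<j_{0}$ with $j\notin T_{Q,Q'}(f)$. For such $j$,
\[
\nu\left(f_{j}(Q')^{j}\right)-\nu\left(f_{j_{0}}(Q')^{j_{0}}\right)=\left[\nu_{Q}\left(f_{j}(Q')^{j}\right)-m\right]+\left(j-j_{0}\right)\left(\nu(Q')-\nu_{Q}(Q')\right),
\]
a sum of a positive and a negative term whose sign is indeterminate; Proposition \ref{prop: inegalite des indices} cannot be invoked here because its hypothesis $j\leq j'$ points in the opposite direction.

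The assertion is really about cancellation inside the $Q$-expansion of $f$, so one must exhibit a term of that expansion, of value $m$, that provably survives. This is what the paper's proof does: with $b'=\max T_{Q,Q'}(f)$ and $b=\delta_{Q}(f_{b'})$, substituting the $Q$-expansions of the $f_{j}$ and of $(Q')^{j}$ produces the contribution $a_{b}c_{\deg_{Q}Q'}Q^{b+b'\deg_{Q}Q'}$, whose value is exactly $m$; the maximality of $b$ and $b'$ guarantees that no other minimal-value contribution lands in that $Q$-degree, so this term cannot be cancelled and $\nu_{Q}(f)\leq m$ follows. Your auxiliary device for securing $\nu_{Q}(f_{j})=\nu(f_{j})$ is not the problem; the passage from $\nu_{Q}$ to $\nu$ is, and it needs to be replaced by an uncancellable-term argument of this kind.
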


\begin{proof}
Only for the purposes of this proof, we will write
\[
\nu'\left(f\right):=\min\limits _{0\leq j\leq r}\left\{ \nu_{Q}\left(f_{j}\left(Q'\right)^{j}\right)\right\} 
\]
 and 
\[
T'\left(f\right):=\left\{ j\in\left\{ 0,\dots,r\right\} \text{ such that }\nu_{Q}\left(f_{j}\left(Q'\right)^{j}\right)=\nu'\left(f\right)\right\} .
\]
Let us show that $\nu_{Q}\left(f\right)=\nu'\left(f\right)$.

First, we have 

\[
\begin{array}{ccc}
\nu_{Q}\left(\sum\limits _{j\in T'\left(f\right)}f_{j}\left(Q'\right)^{j}\right) & \geq & \min\limits _{j\in T'\left(f\right)}\nu_{Q}\left(f_{j}\left(Q'\right)^{j}\right)\\
 & = & \min\limits _{j\in T'\left(f\right)}\nu'\left(f\right)\\
 & = & \nu'\left(f\right).
\end{array}
\]

Set $b'=\max T'\left(f\right)$ and $b=\delta_{Q}\left(f_{b'}\right)$.
In other words 
\[
b=\max\left\{ j\in\left\{ 0,\dots,n\right\} \text{ such that }\nu\left(a_{j}Q^{j}\right)=\nu_{Q}\left(f_{b'}\right)\right\} 
\]
 where $f_{b'}=\sum\limits _{j=0}^{n}a_{j}Q^{j}$. Hence, the expression
$\sum\limits _{j\in T'\left(f\right)}f_{j}\left(Q'\right)^{j}$ contains
the term 
\[
a_{b}c_{\deg_{Q}Q'}Q^{b+b'\deg_{Q}Q'}.
\]

Then for every $j\in\left\{ 0,\dots,r\right\} $ such that $f_{j}\neq0$,
we have:

\[
\begin{array}{ccc}
\nu_{Q}\left(f_{j}\left(Q'\right)^{j}\right) & \geq & \min\limits _{0\leq j\leq r}\left\{ \nu_{Q}\left(f_{j}\left(Q'\right)^{j}\right)\right\} \\
 & = & \nu'\left(f\right)\\
 & = & \nu_{Q}\left(f_{i}\left(Q'\right)^{i}\right)
\end{array}
\]

for every index $i\in T'\left(f\right)$. So in particular, 

\[
\begin{array}{ccc}
\nu_{Q}\left(f_{j}\left(Q'\right)^{j}\right) & \geq & \nu_{Q}\left(f_{b'}\left(Q'\right)^{b'}\right)\\
 & = & \nu_{Q}\left(f_{b'}\right)+\nu_{Q}\left(\left(Q'\right)^{b'}\right)\\
 & = & \nu\left(a_{b}Q^{b}\right)+\nu_{Q}\left(\left(Q'\right)^{b'}\right)\\
 & = & \nu\left(a_{b}Q^{b}\right)+\nu\left(c_{\deg_{Q}Q'}Q^{b'\deg_{Q}Q'}\right)\\
 & = & \nu\left(a_{b}c_{\deg_{Q}Q'}Q^{b+b'\deg_{Q}Q'}\right)
\end{array}
\]

with strict inequality if $j\notin T'\left(f\right)$. 

So 
\[
\nu\left(a_{b}c_{\deg_{Q}Q'}Q^{b+b'\deg_{Q}Q'}\right)=\nu'\left(f\right)
\]
 and 
\[
\nu_{Q}\left(\sum\limits _{j\notin T'\left(f\right)}f_{j}\left(Q'\right)^{j}\right)>\nu'\left(f\right).
\]

By maximality of $b$ and $b'$, the term $a_{b}c_{\deg_{Q}Q'}Q^{b+b'\deg_{Q}Q'}$
cannot be cancelled and so $\nu_{Q}\left(f\right)=\nu\left(a_{b}c_{\deg_{Q}Q'}Q^{b+b'\deg_{Q}Q'}\right)=\nu'\left(f\right)$.
In other words $\nu_{Q}\left(f\right)=\min\limits _{0\leq j\leq r}\left\{ \nu_{Q}\left(f_{j}\left(Q'\right)^{j}\right)\right\} $.
So we also have 
\[
T'\left(f\right)=T_{Q,Q'}\left(f\right).
\]

Then $\sum\limits _{j\in T'\left(f\right)}\mathrm{in}_{\nu_{Q}}\left(f_{j}\left(Q'\right)^{j}\right)$
is a non-zero element of $G_{\nu_{Q}}$, equal to $\mathrm{in}_{\nu_{Q}}\left(f\right)$.
This completes the proof.
\end{proof}
\begin{cor}
\label{cor:forme initiale de tout poly}Let $Q$ and $Q'$ be two
key polynomials such that 
\[
\epsilon\left(Q\right)<\epsilon\left(Q'\right)
\]
 and let
\[
f=\sum\limits _{j=0}^{r}f_{j}\left(Q'\right)^{j}=\sum\limits _{j=0}^{n}a_{j}Q^{j}
\]
 be the $Q'$ and $Q$-expansions of an element $f\in K\left[X\right]$.
We set
\[
\theta:=\min T_{Q,Q'}\left(f\right)=\min\left\{ j\in\left\{ 0,\dots,r\right\} \text{ such that }\nu_{Q}\left(f_{j}\left(Q'\right)^{j}\right)=\nu_{Q}\left(f\right)\right\} 
\]
and we assume that $\nu_{Q}\left(f_{\delta_{Q'}\left(f\right)}\right)=\nu\left(f_{\delta_{Q'}\left(f\right)}\right)$
and that $\nu_{Q}\left(f_{\theta}\right)=\nu\left(f_{\theta}\right)$.

Then:
\begin{enumerate}
\item $\delta_{Q'}\left(f\right)\deg_{Q}Q'\leq\delta_{Q}\left(f\right)$,
and so $\delta_{Q'}\left(f\right)\leq\delta_{Q}\left(f\right)$.
\item If $\delta_{Q}\left(f\right)=\delta_{Q'}\left(f\right)$, we set $\delta:=\delta_{Q}\left(f\right)$
and then 
\[
\deg_{Q}Q'=1,
\]
 
\[
T_{Q,Q'}\left(f\right)=\left\{ \delta\right\} 
\]
 and 
\[
\mathrm{in}_{\nu_{Q}}\left(f\right)=\left(\mathrm{in}_{\nu_{Q}}a_{\delta}\right)\left(\mathrm{in}_{\nu_{Q}}Q'\right)^{\delta}.
\]
\end{enumerate}
\end{cor}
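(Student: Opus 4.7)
The plan is to use Lemma \ref{lem: relations entre les Q et Q' dev}, which gives $\nu_Q\left(f\right)=\min_j\nu_Q\left(f_j\left(Q'\right)^j\right)$ and $\mathrm{in}_{\nu_Q}\left(f\right)=\sum_{j\in T_{Q,Q'}\left(f\right)}\mathrm{in}_{\nu_Q}\left(f_j\left(Q'\right)^j\right)$, combined with a careful tracking of leading $Q$-monomials. Write the $Q$-expansion $Q'=\sum_{i=0}^{d}c_iQ^i$ with $c_d\neq 0$ and $d:=\deg_QQ'$, and set $\delta':=\delta_{Q'}\left(f\right)$, $\delta:=\delta_Q\left(f\right)$. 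Note that $d\geq 1$, since $Q$ being a key polynomial and $\epsilon\left(Q\right)<\epsilon\left(Q'\right)$ force $\deg_XQ'\geq\deg_XQ$.

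For (1), I would extract from the proof of Lemma \ref{lem: relations entre les Q et Q' dev} the following refinement: with $b':=\max T_{Q,Q'}\left(f\right)$ and $b:=\delta_Q\left(f_{b'}\right)$, the $Q$-expansion of $\sum_{j\in T_{Q,Q'}\left(f\right)}f_j\left(Q'\right)^j$ contains an uncancelled nonzero monomial at $Q^{b+b'd}$, while contributions to this same $Q$-power coming from $j\notin T_{Q,Q'}\left(f\right)$ have $\nu$-value strictly greater than $\nu_Q\left(f\right)-\left(b+b'd\right)\nu\left(Q\right)$ and cannot cancel it; hence $b+b'd\in S_Q\left(f\right)$ and $\delta\geq b+b'd\geq b'd$. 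It then remains to prove $b'\geq\delta'$. If $\delta'\in T_{Q,Q'}\left(f\right)$, this is immediate. Otherwise $\nu_Q\left(f_\theta\left(Q'\right)^\theta\right)=\nu_Q\left(f\right)<\nu_Q\left(f_{\delta'}\left(Q'\right)^{\delta'}\right)$. Assuming for contradiction that $\theta\leq\delta'$, Proposition \ref{prop: inegalite des indices} applied to $c=f_\theta$, $c'=f_{\delta'}$, $j=\theta$, $j'=\delta'$ (here the two non-degeneracy hypotheses $\nu_Q\left(f_\theta\right)=\nu\left(f_\theta\right)$ and $\nu_Q\left(f_{\delta'}\right)=\nu\left(f_{\delta'}\right)$ are exactly what is required) yields $\nu\left(f_\theta\left(Q'\right)^\theta\right)<\nu\left(f_{\delta'}\left(Q'\right)^{\delta'}\right)=\nu_{Q'}\left(f\right)$, contradicting the fact that $\nu_{Q'}\left(f\right)$ is the minimum of $\nu\left(f_j\left(Q'\right)^j\right)$ over all $j$. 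Therefore $\theta>\delta'$, so $b'\geq\theta>\delta'$. In every case $\delta\geq\delta'd$, establishing (1).

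For (2), the hypothesis $\delta=\delta'$ collapses the chain $\delta\geq b+b'd\geq\delta'd\geq\delta$ to equalities, forcing $b=0$, $b'=\delta$ and $d=1$. With $d=1$, $Q'=Q+c_0$ for some $c_0$ of $X$-degree $<\deg_XQ$; then each $f_j$ has $X$-degree $<\deg_XQ$ and coincides with its own $Q$-expansion, and the binomial expansion of $\left(Q+c_0\right)^j$ produces the explicit formula $a_\delta=f_\delta+\sum_{j>\delta}\binom{j}{\delta}c_0^{j-\delta}f_j$. The extra summands have $\nu$-value strictly greater than $\nu\left(f_\delta\right)$ (by the minimality characterisation of $\delta'$), so $\mathrm{in}_{\nu_Q}\left(a_\delta\right)=\mathrm{in}_{\nu_Q}\left(f_\delta\right)$. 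Any hypothetical index $j\in T_{Q,Q'}\left(f\right)$ distinct from $\delta$ would, by re-running the top-monomial argument with $b'$ replaced by $j$, produce a further homogeneous component of $\mathrm{in}_{\nu_Q}\left(f\right)$ of a different $\mathrm{in}_{\nu_Q}\left(Q'\right)$-degree, which is incompatible with the single-term structure just obtained; hence $T_{Q,Q'}\left(f\right)=\left\{\delta\right\}$, and Lemma \ref{lem: relations entre les Q et Q' dev} finally yields $\mathrm{in}_{\nu_Q}\left(f\right)=\mathrm{in}_{\nu_Q}\left(f_\delta\right)\mathrm{in}_{\nu_Q}\left(Q'\right)^\delta=\mathrm{in}_{\nu_Q}\left(a_\delta\right)\mathrm{in}_{\nu_Q}\left(Q'\right)^\delta$.

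The hard part will be the case analysis in (1) ruling out $\theta\leq\delta'$ when $\delta'\notin T_{Q,Q'}\left(f\right)$: both non-degeneracy hypotheses enter essentially, and this is the only place where Proposition \ref{prop: inegalite des indices} is indispensable; everything else, including part (2), reduces to a careful bookkeeping of top monomials in the two expansions, carried out once the chain of inequalities of (1) is made sharp.
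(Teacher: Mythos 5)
Your part (1) is correct and is essentially the paper's own argument: you extract the uncancelled monomial $a_bc_{\deg_QQ'}Q^{b+b'\deg_QQ'}$ from the proof of Lemma \ref{lem: relations entre les Q et Q' dev} to get $\delta_Q(f)\geq b'\deg_QQ'$, and you use Proposition \ref{prop: inegalite des indices} to compare $\delta_{Q'}(f)$ with an element of $T_{Q,Q'}(f)$ (the paper works with $\theta=\min T_{Q,Q'}(f)$ where you work with $b'=\max T_{Q,Q'}(f)$, which changes nothing for point (1)).

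Part (2) has a genuine gap at the claim $T_{Q,Q'}(f)=\{\delta\}$. Your chain of equalities correctly yields $d=1$ and $b'=\delta$, which excludes indices $j>\delta$ from $T_{Q,Q'}(f)$, but nothing you have proved excludes indices $j<\delta$. The ``incompatibility of homogeneous components'' is not a contradiction: $\mathrm{in}_{\nu_Q}(f)$ may perfectly well have several homogeneous components of distinct $\mathrm{in}_{\nu_Q}(Q')$-degrees (this happens exactly when $S_Q(f)$ has more than one element), and ``re-running the top-monomial argument with $b'$ replaced by $j$'' fails because the non-cancellation in the Lemma's proof rests precisely on the maximality of $b'$. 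Note also that your part-(1) case split only gives $\theta>\delta'$ when $\delta'\notin T_{Q,Q'}(f)$; in the other case you obtain no lower bound on $\theta$. What is needed, and what the paper proves, is the unconditional inequality $\delta_{Q'}(f)\leq\theta$ (via Proposition \ref{prop: inegalite des indices} applied to $(f_\theta,f_{\delta'})$, or via $\nu_Q(Q')<\nu(Q')$ in the equality case); combined with $\theta\deg_QQ'\leq\delta$ and $d=1$ this forces $\theta=\delta$, hence $T_{Q,Q'}(f)=\{\delta\}$. Alternatively, once $d=1$ every $f_j$ has degree $<\deg_XQ$ and is automatically non-degenerate, so Proposition \ref{prop: inegalite des indices} applied to $(f_j,f_\delta)$ with $j\in T_{Q,Q'}(f)$, $j<\delta$, gives $\nu\left(f_j(Q')^j\right)<\nu_{Q'}(f)$, a contradiction.

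A second problem is your derivation of $\mathrm{in}_{\nu_Q}(a_\delta)=\mathrm{in}_{\nu_Q}(f_\delta)$. The formula $a_\delta=f_\delta+\sum_{j>\delta}\binom{j}{\delta}c_0^{j-\delta}f_j$ is not the $Q$-expansion coefficient, because the sums $\sum_{j\geq i}\binom{j}{i}c_0^{j-i}f_j$ can have degree $\geq\deg_XQ$ and must be reduced modulo $Q$. More seriously, the value estimate is wrong: the maximality of $\delta'$ gives $\nu(f_j)>\nu(f_\delta)-(j-\delta)\nu(Q')$, hence $\nu\left(c_0^{j-\delta}f_j\right)>\nu(f_\delta)+(j-\delta)\left(\nu(Q)-\nu(Q')\right)$, a bound lying \emph{below} $\nu(f_\delta)$ since $\nu(c_0)=\nu(Q)<\nu(Q')$. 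The correct strict inequality comes from $j\notin T_{Q,Q'}(f)$, i.e.\ from $\nu_Q\left(f_j(Q')^j\right)>\nu_Q(f)$ -- which is the very statement $T_{Q,Q'}(f)=\{\delta\}$ you were in the middle of proving, so the argument as written is circular. Once $T_{Q,Q'}(f)=\{\delta\}$ is correctly established, Lemma \ref{lem: relations entre les Q et Q' dev} gives $\mathrm{in}_{\nu_Q}(f)=\mathrm{in}_{\nu_Q}(f_\delta)\mathrm{in}_{\nu_Q}(Q')^\delta$ at once, and the identification of $\mathrm{in}_{\nu_Q}(f_\delta)$ with $\mathrm{in}_{\nu_Q}(a_\delta)$ (which the paper itself leaves implicit) follows by comparing coefficients of $\mathrm{in}_{\nu_Q}(Q)^\delta$, the quotients produced by the reductions modulo $Q$ having value $>\nu(f_\delta)$ by Lemma \ref{lem:paslechoixfaut le mettre}.
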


\begin{proof}
First let us show the point $1$.

By the proof of the previous Lemma, we know that 
\[
\theta\deg_{Q}Q'\leq\delta_{Q}\left(f\right).
\]

Furthermore, 

\[
\nu_{Q}\left(f_{\delta_{Q'}\left(f\right)}\right)=\nu\left(f_{\delta_{Q'}\left(f\right)}\right),
\]
 
\[
\nu_{Q}\left(f_{\theta}\right)=\nu\left(f_{\theta}\right).
\]
 By definition of $\delta=\delta_{Q'}\left(f\right)$, we have $\nu\left(f_{\delta}\left(Q'\right)^{\delta}\right)\leq\nu\left(f_{\theta}\left(Q'\right)^{\theta}\right)$.
We know by Lemma \ref{lem: relations entre les Q et Q' dev} that
$\nu_{Q}\left(f\right)=\min\limits _{0\leq j\leq r}\left\{ \nu_{Q}\left(f_{j}\left(Q'\right)^{j}\right)\right\} $.
Since $\theta=\min T_{Q,Q'}\left(f\right)$, we have 
\[
\nu_{Q}\left(f_{\theta}\left(Q'\right)^{\theta}\right)=\nu_{Q}\left(f\right)=\min\limits _{0\leq j\leq r}\left\{ \nu_{Q}\left(f_{j}\left(Q'\right)^{j}\right)\right\} .
\]
 Hence $\nu_{Q}\left(f_{\theta}\left(Q'\right)^{\theta}\right)\leq\nu_{Q}\left(f_{\delta}\left(Q'\right)^{\delta}\right)$. 

Then, since $\nu_{Q}\left(f_{\theta}\right)=\nu\left(f_{\theta}\right)$
and $\nu_{Q}\left(f_{\delta}\right)=\nu\left(f_{\delta}\right)$:

\[
\begin{array}{c}
\nu_{Q}\left(f_{\theta}\left(Q'\right)^{\theta}\right)\leq\nu_{Q}\left(f_{\delta}\left(Q'\right)^{\delta}\right)\\
\Leftrightarrow\nu_{Q}\left(f_{\theta}\right)+\theta\nu_{Q}\left(Q'\right)\leq\nu_{Q}\left(f_{\delta}\right)+\delta\nu_{Q}\left(Q'\right)\\
\Leftrightarrow\nu\left(f_{\theta}\right)+\theta\nu_{Q}\left(Q'\right)\leq\nu\left(f_{\delta}\right)+\delta\nu_{Q}\left(Q'\right).
\end{array}
\]

Assume we have equality on $\nu$, it means that $\nu\left(f_{\theta}\left(Q'\right)^{\theta}\right)=\nu\left(f_{\delta}\left(Q'\right)^{\delta}\right)$.
So $\nu\left(f_{\theta}\right)=\nu\left(f_{\delta}\right)+\delta\nu\left(Q'\right)-\theta\nu\left(Q'\right)$
and

\[
\begin{array}{c}
\nu_{Q}\left(f_{\theta}\left(Q'\right)^{\theta}\right)\leq\nu_{Q}\left(f_{\delta}\left(Q'\right)^{\delta}\right)\\
\Leftrightarrow\nu\left(f_{\delta}\right)+\delta\nu\left(Q'\right)-\theta\nu\left(Q'\right)+\theta\nu_{Q}\left(Q'\right)\leq\nu\left(f_{\delta}\right)+\delta\nu_{Q}\left(Q'\right)\\
\Leftrightarrow\left(\delta-\theta\right)\nu\left(Q'\right)\leq\left(\delta-\theta\right)\nu_{Q}\left(Q'\right).
\end{array}
\]

Since we know that $\epsilon\left(Q\right)<\epsilon\left(Q'\right)$,
by the proof of Proposition \ref{prop: successeurs imm=0000E9diats et in=0000E9galit=0000E9 des valuations},
we know that $\nu_{Q}\left(Q'\right)<\nu\left(Q'\right)$ and then
$\delta-\theta\leq0$, that is $\delta\leq\theta$.

Otherwise we have $\nu\left(f_{\delta}\left(Q'\right)^{\delta}\right)<\nu\left(f_{\theta}\left(Q'\right)^{\theta}\right)$. 

Then the following four conditions hold:

\[
\begin{cases}
\nu_{Q}\left(f_{\theta}\right)=\nu\left(f_{\theta}\right)\\
\nu_{Q}\left(f_{\delta}\right)=\nu\left(f_{\delta}\right)\\
\nu_{Q}\left(f_{\theta}\left(Q'\right)^{\theta}\right)\leq\nu_{Q}\left(f_{\delta}\left(Q'\right)^{\delta}\right)\\
\nu\left(f_{\delta}\left(Q'\right)^{\delta}\right)<\nu\left(f_{\theta}\left(Q'\right)^{\theta}\right).
\end{cases}
\]
 By the contrapositive of Proposition \ref{prop: inegalite des indices},
we deduce that $\delta<\theta$.

In each case, we have $\delta\leq\theta$. Then since $\theta\deg_{Q}Q'\leq\delta_{Q}\left(f\right)$,
we know that $\delta\deg_{Q}Q'\leq\delta_{Q}\left(f\right)$. So in
particular $\delta_{Q'}\left(f\right)\leq\delta_{Q}\left(f\right)$.

Now let us show the point $2$. 

Assume $\delta_{Q'}\left(f\right)=\delta_{Q}\left(f\right)=\delta$.
We just saw that $\delta_{Q'}\left(f\right)\deg_{Q}Q'\leq\delta_{Q}\left(f\right)$,
so we have $\deg_{Q}Q'=1$. Then $Q'=Q+b$ with $b$ a polynomial
of degree strictly less than the degree of $Q$.

We know by the proof of point $1$ that $\delta\leq\theta$. Furthermore,
we know that $\theta\deg_{Q}Q'\leq\delta_{Q}\left(f\right)=\delta$,
in other words $\theta\leq\delta$ since $\deg_{Q}Q'=1$.

Hence $\delta\leq\theta\leq\delta$, hence $\theta=\delta=\min T_{Q,Q'}\left(f\right)$.
We now have to prove that for every index $j>\delta$, we have $j\notin T_{Q,Q'}\left(f\right)$.
Equivalently, that:
\[
\nu_{Q}\left(f_{j}\left(Q'\right)^{j}\right)>\nu_{Q}\left(f\right)=\min\limits _{0\leq i\leq r}\left\{ \nu_{Q}\left(f_{i}\left(Q'\right)^{i}\right)\right\} .
\]
 And then we will have $T_{Q,Q'}\left(f\right)=\left\{ \delta\right\} $.

So let $j>\delta$. By definition of $\delta_{Q}\left(f\right)$ and
$\delta_{Q'}\left(f\right)$, we know that $\nu\left(f_{j}\left(Q'\right)^{j}\right)>\nu_{Q'}\left(f\right)$
and $\nu\left(a_{j}Q^{j}\right)>\nu_{Q}\left(f\right)$.

Furthermore, since $\delta\in T_{Q,Q'}\left(f\right)$, we have $\nu_{Q}\left(f_{\delta}\left(Q'\right)^{\delta}\right)=\nu_{Q}\left(f\right)$.
We want to prove that $\nu_{Q}\left(f_{j}\left(Q'\right)^{j}\right)>\nu_{Q}\left(f_{\delta}\left(Q'\right)^{\delta}\right)$
for every index $j\in\left\{ \delta+1,\dots,r\right\} $.

We know that: 

\[
\begin{cases}
\nu\left(f_{\delta}\left(Q'\right)^{\delta}\right)=\nu_{Q'}\left(f\right)<\nu\left(f_{j}\left(Q'\right)^{j}\right)\\
\nu_{Q}\left(f_{\delta}\right)=\nu\left(f_{\delta}\right) & \text{because }\deg_{X}\left(f_{\delta}\right)<\deg_{X}\left(Q'\right)=\deg_{X}\left(Q\right)\\
\nu_{Q}\left(f_{j}\right)=\nu\left(f_{j}\right) & \text{because }\deg_{X}\left(f_{j}\right)<\deg_{X}\left(Q'\right)=\deg_{X}\left(Q\right)\\
\delta<j
\end{cases}
\]

By the contrapositive of Proposition \ref{prop: inegalite des indices},
we have 
\[
\nu_{Q}\left(f_{\delta}\left(Q'\right)^{\delta}\right)<\nu_{Q}\left(f_{j}\left(Q'\right)^{j}\right).
\]

So we have $T_{Q,Q'}\left(f\right)=\left\{ \delta\right\} $.

By Lemma \ref{lem: relations entre les Q et Q' dev}, we have 

\[
\begin{array}{ccc}
\mathrm{in}_{\nu_{Q}}\left(f\right) & = & \sum\limits _{j\in T_{Q,Q'}\left(f\right)}\mathrm{in}_{\nu_{Q}}\left(f_{j}\left(Q'\right)^{j}\right)\\
 & = & \mathrm{in}_{\nu_{Q}}\left(f_{\delta}\left(Q'\right)^{\delta}\right)\\
 & = & \mathrm{in}_{\nu_{Q}}\left(f_{\delta}\right)\left(\mathrm{in}_{\nu_{Q}}\left(Q'\right)\right)^{\delta}.
\end{array}
\]
\end{proof}
\begin{thm}
\label{thm: delta et pc limite}Let $Q$ and $Q'$ be two key polynomials
such that 
\[
\epsilon\left(Q\right)<\epsilon\left(Q'\right).
\]
We recall that $\mathrm{char}\left(k_{\nu}\right)=0$. If $Q'$ is
a limit immediate successor of $Q$, then $\delta_{Q}\left(Q'\right)=1$.
\end{thm}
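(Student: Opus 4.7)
The plan is a proof by contradiction: suppose $\delta := \delta_Q(Q') \geq 2$, and construct a key polynomial $R$ with $\deg R < \deg Q'$ and $\epsilon(R) \geq \epsilon(Q')$. Such an $R$ would contradict the minimality of $\deg Q'$ in Definition \ref{def:limitpc}, since $\epsilon(R) \geq \epsilon(Q') > \epsilon(M_Q)$.

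First I extract an algebraic relation for $\mathrm{in}_\nu(Q)$. The hypothesis $\epsilon(Q) < \epsilon(Q')$ combined with the forward direction of Proposition \ref{prop: successeurs imm=0000E9diats et in=0000E9galit=0000E9 des valuations} yields $\nu_Q(Q') < \nu(Q')$. Writing the $Q$-expansion $Q' = \sum_{j=0}^{s} q_j Q^j$, this is equivalent (as in Proposition \ref{prop: successeurs immediats et somme des formes initiales}) to the homogeneous cancellation
\[
\sum_{j \in S_Q(Q')} \mathrm{in}_\nu(q_j)\,\mathrm{in}_\nu(Q)^j = 0
\]
in $\mathrm{gr}_\nu K[X]$. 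Thus $\mathrm{in}_\nu(Q)$ is a root of the polynomial $P(T) := \sum_{j \in S_Q(Q')} \mathrm{in}_\nu(q_j) T^j$ of degree $\delta \geq 2$ over the saturated subalgebra $G_{<\deg Q} \subset \mathrm{gr}_\nu K[X]$ generated by initial forms of polynomials of degree strictly less than $\deg Q$.

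Next I use $\mathrm{char}(k_\nu) = 0$ in the only essential way: the formal derivative $P'(T)$ has degree $\delta - 1 \geq 1$ and nonzero top coefficient $\delta\,\mathrm{in}_\nu(q_\delta)$. Using this nonvanishing I extract from the relation $P(\mathrm{in}_\nu Q) = 0$ a strictly lower-degree polynomial relation still satisfied by $\mathrm{in}_\nu(Q)$, and lift it to a monic polynomial $R \in K[X]$ of degree $< \delta\deg Q$ via the saturation-and-lifting device used in the proof of Lemma \ref{lem:coeff dominant et succ imm}; the minimality argument at the end of that proof simultaneously shows $R$ is itself a key polynomial with $\epsilon(R) \geq \epsilon(Q')$. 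Finally, Corollary \ref{cor:forme initiale de tout poly}(1) applied to $f = Q'$ (noting $\delta_{Q'}(Q') = 1$) gives $\deg_Q Q' \leq \delta$, hence $\deg Q' < (\delta+1)\deg Q$; this combines with $\deg R < \delta \deg Q$ to produce the desired strict inequality $\deg R < \deg Q'$, completing the contradiction.

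The main obstacle is precisely the degree-reduction step: producing from $P$ a polynomial relation of strictly smaller degree whose lift still has $\epsilon$ at least $\epsilon(Q')$. This is exactly where characteristic zero is indispensable, because if $\delta$ were a multiple of the residue characteristic the derivative $P'$ would vanish identically (the prototypical obstruction being $P(T) = T^p - a$), and no such $R$ could be built; indeed this is the precise source of failure of the statement in positive characteristic.
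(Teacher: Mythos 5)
There is a genuine gap, and it sits exactly where you locate "the main obstacle": the degree-reduction step does not exist. From the cancellation $\sum_{j\in S_{Q}(Q')}\mathrm{in}_{\nu}(q_{j})\,\mathrm{in}_{\nu}(Q)^{j}=0$ you obtain that $\mathrm{in}_{\nu}(Q)$ is a root of $P(T)=\sum_{j}\mathrm{in}_{\nu}(q_{j})T^{j}$ of degree $\delta\geq2$; but the fact that $P'(T)$ is a nonzero polynomial of degree $\delta-1$ (which is all that characteristic zero gives you here) does \emph{not} imply $P'(\mathrm{in}_{\nu}(Q))=0$, so it yields no relation of degree $<\delta$ satisfied by $\mathrm{in}_{\nu}(Q)$. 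On the contrary: in characteristic zero the nonvanishing of $P'$ is precisely separability, and when $P$ is the minimal homogeneous relation of $\mathrm{in}_{\nu}(Q)$ over $G_{<\deg Q}$ (the generic situation, and the one forced by the minimality of $\alpha$ in the construction of the sequence $\mathcal{Q}$), no lower-degree relation exists at all. Moreover, even granting some lower-degree relation, the saturation-and-lifting device of Lemma \ref{lem:coeff dominant et succ imm} only ever produces a key polynomial $R$ with $\epsilon(R)>\epsilon(Q)$ — that is what the cancellation of initial forms buys — and there is no mechanism in your sketch to upgrade this to $\epsilon(R)\geq\epsilon(Q')$, which is what your contradiction requires. (Indeed $\epsilon(R)\geq\epsilon(Q')$ together with $\deg R<\deg Q'$ directly contradicts $Q'$ being a key polynomial, so the entire burden of the proof is concentrated in a construction you have not supplied.)

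The paper's argument differentiates a different object: not the relation $P(T)$, but $Q'$ itself, with respect to $X$. Writing $\partial Q'=\sum_{j}\bigl[\partial(a_{j})Q^{j}+ja_{j}Q^{j-1}\partial Q\bigr]$, one shows the terms $\partial(a_{j})Q^{j}$ are negligible and that $\delta_{Q}(\partial Q')=\delta-1$; characteristic zero enters because the integer coefficients $j$ and $\delta$ must be units in $k_{\nu}$ for the index-$(\delta-1)$ term to survive. Then Corollary \ref{cor:forme initiale de tout poly} gives $\mathrm{in}_{\nu_{Q}}(\partial Q')=\delta\,\mathrm{in}_{\nu_{Q}}(a_{\delta}\partial Q)\,\mathrm{in}_{\nu_{Q}}(Q-a)^{\delta-1}$, and since $\nu_{Q}(Q-a)<\nu(Q-a)$ and $\delta-1\geq1$, this forces $\nu_{Q}(\partial Q')<\nu(\partial Q')$, hence $\epsilon(Q)<\epsilon(\partial Q')<\epsilon(Q')$ — note the second inequality is \emph{strict and in the opposite direction} from what your $R$ would need, precisely because $\deg\partial Q'<\deg Q'$ and $Q'$ is a key polynomial. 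The contradiction is then extracted not from the minimality of $\deg Q'$, but from showing that $\partial Q'$ is a key polynomial of the same degree as $Q$ while every key polynomial of that degree has $\epsilon$ strictly below $\epsilon(\partial Q')$, applied to $\partial Q'$ itself. Your final bookkeeping via Corollary \ref{cor:forme initiale de tout poly}(1) is fine, but it cannot rescue a construction whose central step rests on a false implication.
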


\begin{proof}
We give a proof by contradiction. Assume that $\delta_{Q}\left(Q'\right)>1$.
Among all the couples $\left(Q,Q'\right)$ such that $Q'$ is a limit
immediat successor of $Q$ and such that $\delta_{Q}\left(Q'\right)>1$,
we choose $Q$ and $Q'$ such that $\deg\left(Q'\right)-\deg\left(Q\right)$
is minimal.

By definition of a limit immediate successor, for every sequence of
immediate successors $\left(Q_{i}\right)_{i\in\mathbb{N}^{\ast}}$
with $Q_{1}=Q$, we have $Q_{i}\neq Q'$ for every non-zero index
$i$. By definition of limit key polynomials and by hypothesis, we
know that $\deg\left(Q'\right)-\deg\left(Q\right)$ is minimal for
this property.

If we find a polynomial $\widetilde{Q}$ such that 
\[
\epsilon\left(Q\right)<\epsilon\left(\widetilde{Q}\right)<\epsilon\left(Q'\right)
\]
 and $\deg\left(Q\right)<\deg\left(\widetilde{Q}\right)<\deg\left(Q'\right)$,
then by minimality of $\deg\left(Q'\right)-\deg\left(Q\right)$, we
know that there exists a finite sequence of immediate successors between
$Q$ and $\widetilde{Q}$ and that there exists a finite sequence
of immediate successors between $\widetilde{Q}$ and $Q'$. Then we
have a finite sequence of immediate successors between $Q$ and $Q'$,
which is a contradiction.

Hence there exists a key polynomial $\widetilde{Q}$ such that
\[
\epsilon\left(Q\right)<\epsilon\left(\widetilde{Q}\right)<\epsilon\left(Q'\right)
\]
 and $\deg\left(\widetilde{Q}\right)<\deg\left(Q'\right)$, and so
$\deg\left(Q\right)=\deg\left(\widetilde{Q}\right)$.

Let $\widetilde{Q}$ be a such key polynomial. We have $\widetilde{Q}:=Q-a$
where $a$ is a polynomial of degree strictly less than the degree
of $Q$. 

Since $\epsilon\left(Q\right)<\epsilon\left(\widetilde{Q}\right)$,
by Proposition \ref{prop: successeurs immediats et somme des formes initiales},
we know that $\mathrm{in}_{\nu}\left(Q\right)=\mathrm{in}_{\nu}\left(a\right)$.

Consider the $Q$-expansion $\sum\limits _{j=0}^{n}a_{j}Q^{j}$ of
$Q'$. We may assume that $\delta_{Q}\left(Q'\right)=\delta_{\widetilde{Q}}\left(Q'\right)$
and we set $\delta:=\delta_{Q}\left(Q'\right)$.

By Corollary \ref{cor:forme initiale de tout poly}, we know that
$\mathrm{in}_{\nu_{Q}}\left(Q'\right)=\mathrm{in}_{\nu_{Q}}\left(a_{\delta}\right)\mathrm{in}_{\nu_{Q}}\left(\widetilde{Q}\right)^{\delta}$.
In other words $\mathrm{in}_{\nu_{Q}}\left(Q'\right)=\mathrm{in}_{\nu_{Q}}\left(a_{\delta}\right)\mathrm{in}_{\nu_{Q}}\left(Q-a\right)^{\delta}$.

Furthermore, $\partial Q'=\sum\limits _{j=0}^{n}\left[\partial\left(a_{j}\right)Q^{j}+a_{j}jQ^{j-1}\partial Q\right]$.

We first show that the terms $\partial\left(a_{j}\right)Q^{j}$ do
not appear in $\mathrm{in}_{\nu}\left(\partial Q'\right)$. So let
$j\in\left\{ 0,\dots,n\right\} $.

We have 
\[
\begin{array}{ccc}
\nu_{Q}\left(\partial a_{j}\right) & = & \nu\left(\partial a_{j}\right)\\
 & \geq & \nu\left(a_{j}\right)-\epsilon\left(a_{j}\right).
\end{array}
\]
But $Q$ is a key polynomial and $a_{j}$ is of degree strictly less
than the degree of $Q$ since it is a coefficient of a $Q$-expansion.
Then $\epsilon\left(a_{j}\right)<\epsilon\left(Q\right)$.

So 
\[
\nu_{Q}\left(\partial a_{j}\right)>\nu\left(a_{j}\right)-\epsilon\left(Q\right)=\nu_{Q}\left(a_{j}\right)-\epsilon\left(Q\right).
\]

By the proof of Proposition \ref{prop:S_Q(P)neq0}, we know that,
since we are in characteristic zero,
\[
\nu_{Q}\left(Q\right)-\nu_{Q}\left(\partial Q\right)=\epsilon\left(Q\right).
\]

Then $\nu_{Q}\left(\partial a_{j}\right)>\nu_{Q}\left(a_{j}\right)-\nu_{Q}\left(Q\right)+\nu_{Q}\left(\partial Q\right)$.
In fact, 
\[
\nu_{Q}\left(\partial a_{j}\right)+\nu_{Q}\left(Q\right)>\nu_{Q}\left(a_{j}\right)+\nu_{Q}\left(\partial Q\right).
\]

It means that $\nu_{Q}\left(Q\partial a_{j}\right)>\nu_{Q}\left(a_{j}\partial Q\right)$,
and adding $\nu_{Q}\left(Q^{j-1}\right)$ to each side, we obtain:

\[
\nu_{Q}\left(Q^{j}\partial a_{j}\right)>\nu_{Q}\left(a_{j}Q^{j-1}\partial Q\right)=\nu_{Q}\left(ja_{j}Q^{j-1}\partial Q\right).
\]

So 
\[
\mathrm{in}_{\nu_{Q}}\left(\partial Q'\right)=\mathrm{in}_{\nu_{Q}}\left(\sum\limits _{j=1}^{n}\left[ja_{j}Q^{j-1}\partial Q\right]\right).
\]

Even though the expression $\sum\limits _{j=1}^{n}\left[ja_{j}Q^{j-1}\partial Q\right]$
need not be a $Q$-expansion, since $a_{j}$ and $\partial Q$ are
of degrees strictly less than the degree of $Q$ in characteristic
zero, by Lemma \ref{lem:paslechoixfaut le mettre}, the $\nu_{Q}$-initial
form of $a_{j}\partial Q$ is equal to the initial form of its remainder
after the Euclidean division by $Q$. So we conserve this expression
and consider it a substitute of a $Q$-expansion.

Now let us prove that $\delta_{Q}\left(\partial Q'\right)=\delta-1$. 

Replacing $Q$ by $\widetilde{Q}$ in the computation of the initial
form of $Q'$ with respect to $Q$ (respectively $\widetilde{Q}$)
does not change the problem, and we assume that $\delta$ stabilizes
starting with $Q$. Then, if $\delta_{Q}\left(\partial Q'\right)=\delta-1$,
we would also have $\delta_{\widetilde{Q}}\left(\partial Q'\right)=\delta-1$.

Let $j>\delta$. Let us first show that
\[
\nu_{Q}\left(ja_{j}Q^{j-1}\partial Q\right)>\nu_{Q}\left(\delta a_{\delta}Q^{\delta-1}\partial Q\right).
\]
 It is enough to show that 
\[
\nu_{Q}\left(ja_{j}Q^{j-1}\right)>\nu_{Q}\left(\delta a_{\delta}Q^{\delta-1}\right).
\]

But by definition of $\delta$, we have $\nu_{Q}\left(a_{j}Q^{j}\right)>\nu_{Q}\left(a_{\delta}Q^{\delta}\right)$.
So
\[
\nu_{Q}\left(a_{j}Q^{j-1}\right)>\nu_{Q}\left(a_{\delta}Q^{\delta-1}\right)
\]
 hence $\nu_{Q}\left(ja_{j}Q^{j-1}\right)>\nu_{Q}\left(\delta a_{\delta}Q^{\delta-1}\right)$.

We now have to prove that the value of the term $\delta-1$ is minimal.

Let $j<\delta$. We know that $\nu_{Q}\left(a_{j}Q^{j}\right)=\nu_{Q}\left(a_{\delta}Q^{\delta}\right)$,
and hence
\[
\nu_{Q}\left(a_{j}Q^{j-1}\partial Q\right)=\nu_{Q}\left(a_{\delta}Q^{\delta-1}\partial Q\right).
\]
 So $\nu_{Q}\left(ja_{j}Q^{j-1}\partial Q\right)=\nu_{Q}\left(\delta a_{\delta}Q^{\delta-1}\partial Q\right)$
since we are in characteristic zero.

So we do have $\delta_{Q}\left(\partial Q'\right)=\delta_{\widetilde{Q}}\left(\partial Q'\right)=\delta-1$.
By Corollary \ref{cor:forme initiale de tout poly}, we have:
\[
\mathrm{in}_{\nu_{Q}}\left(\partial Q'\right)=\mathrm{in}_{\nu_{Q}}\left(\delta a_{\delta}\partial Q\right)\mathrm{in}_{\nu_{Q}}\left(\widetilde{Q}\right)^{\delta-1}.
\]

In other words
\[
\mathrm{in}_{\nu_{Q}}\left(\partial Q'\right)=\delta\mathrm{in}_{\nu_{Q}}\left(a_{\delta}\partial Q\right)\mathrm{in}_{\nu_{Q}}\left(Q-a\right)^{\delta-1}.
\]

We know that $\nu_{Q}\left(Q-a\right)<\nu\left(Q-a\right)$. Then,
since $\delta>1$,
\[
\nu_{Q}\left(\delta a_{\delta}\partial Q\left(Q-a\right)^{\delta-1}\right)<\nu\left(\delta a_{\delta}\partial Q\left(Q-a\right)^{\delta-1}\right).
\]

It means that the image by $\varphi\colon\mathrm{gr}_{\nu_{Q}}K\left[x\right]\to\mathrm{gr}_{\nu}K\left[x\right]$
of 
\[
\mathrm{in}_{\nu_{Q}}\left(\delta a_{\delta}\partial Q\left(Q-a\right)^{\delta-1}\right)
\]
 is zero. Then, the image by $\varphi$ of $\mathrm{in}_{\nu_{Q}}\left(\partial Q'\right)$
is zero, and so 
\[
\nu_{Q}\left(\partial Q'\right)<\nu\left(\partial Q'\right).
\]

By the proof of Proposition \ref{prop: successeurs imm=0000E9diats et in=0000E9galit=0000E9 des valuations},
we have $\epsilon\left(Q\right)<\epsilon\left(\partial Q'\right)$.
But we know that $\deg\left(\partial Q'\right)<\deg\left(Q'\right)$,
and since $Q'$ is a key polynomial, we have $\epsilon\left(\partial Q'\right)<\epsilon\left(Q'\right)$.

More generally, the above argument holds if we replace $Q$ by any
key polynomial $\widetilde{Q}$ of the same degree as $Q$.

So for every key polynomial $\widetilde{Q}$ of the same degree as
$\deg\left(Q\right)$, we have $\epsilon\left(\widetilde{Q}\right)<\epsilon\left(\partial Q'\right)$.

In fact, $\epsilon\left(Q\right)<\epsilon\left(\partial Q'\right)<\epsilon\left(Q'\right)$
and $\deg\left(\partial Q'\right)<\deg\left(Q'\right)$. So if we
show that $\partial Q'$ is a key polynomial, we will have 
\[
\deg\left(Q\right)=\deg\left(\partial Q'\right).
\]

Let us show that $\partial Q'$ is a key polynomial. We assume, aiming
for contradiction, that it is not. There exists a polynomial $P$
such that $\epsilon\left(P\right)\geq\epsilon\left(\partial Q'\right)$
and $\deg\left(P\right)<\deg\left(\partial Q'\right)$. We choose
$P$ of minimal degree for this property. Using the same idea as before,
we can show that $P$ is a key polynomial.

We have $\deg\left(P\right)<\deg\left(\partial Q'\right)$, hence
$\deg\left(P\right)<\deg\left(Q'\right)$ and since $Q'$ is a key
polynomial, we have $\epsilon\left(P\right)<\epsilon\left(Q'\right)$.

Since $\epsilon\left(P\right)\geq\epsilon\left(\partial Q'\right)$,
we have $\epsilon\left(P\right)>\epsilon\left(Q\right)$.

Thus we have another key polynomial $P$ such that $\epsilon\left(Q\right)<\epsilon\left(P\right)<\epsilon\left(Q'\right)$
and $\deg\left(P\right)<\deg\left(Q'\right)$. Then $\deg\left(P\right)=\deg\left(Q\right)$.
Hence the polynomial $P$ is a key polynomial of same degree as $Q$,
and so $\epsilon\left(P\right)<\epsilon\left(\partial Q'\right)$,
which is a contradiction.

We have proved that $\partial Q'$ is a key polynomial. Then $\deg\left(Q\right)=\deg\left(\partial Q'\right)$.
But then $\epsilon\left(\partial Q'\right)<\epsilon\left(\partial Q'\right)$
and this in a contradiction. This completes the proof.
\end{proof}
\newpage{}

\part{Simultaneous local uniformization in the case of rings essentially
of finite type over a field.}

The objective of this part is to give a proof of the local uniformization
in the case of rings essentially of finite type over a field of zero
characteristic without any restriction on the rank of the valuation.
The proof of the local uniformization is well known in characteristic
zero. It has been proved for the first time by Zariski in 1940 (\cite{Z2})
in every dimension. The benefit of our proof is to present a universal
construction which works for all the elements of the regular ring
we start with, and in which the strict transforms of key polynomials
become coordinates after blowing up. Thus we will have an infinite
sequence of blow-ups given explicitly, together with regular systems
of parameters of the local rings appearing in the sequence, and which
eventually monomializes every element of our algebra essentially of
finite type.

To do this, we will proceed in several steps. Let us give the idea.

Let $k$ be a field of characteristic zero, $R$ a regular local $k$-algebra
essentially of finite type, with residual field $k$. Let $u=(u_{1},\ldots,u_{n})$
be a regular system of parameters of $R$, $\nu$ a valuation centered
in $R$, $\Gamma$ the value group of $\nu$ and $K=k(u_{1},\dots,u_{n-1})$.
We assume that $k=k_{\nu}$. This property is preserved under blowings-up.
Thus every ring that will appear in our local blowing-up sequence
along the valuation $\nu$ will have the same residue field: $k$.

We will construct a single sequence of blowings-up which monomializes
every element of $R$ provided we look far enough in the sequence.
To do this, we will construct a particular sequence of (possibly limit)
immediate successors. We will show that every element $f$ of $R$
will be non-degenerate with respect to a key polynomial $Q$ of this
sequence, in other words, that we will have $\nu_{Q}\left(f\right)=\nu\left(f\right)$.
Furthermore, all the polynomials of this sequence will be monomializable.
At this point we will have proved that every element of $R$ is non-degenerate
with respect to a regular system of parameters of a suitable regular
local ring $R_{i}$. Then we will just have to see that every element
non-degenerate with respect to a regular system of parameters is monomializable
by our sequence of blow-ups.

We will begin this part by some preliminaries, where we define non-degeneracy
and framed and monomial blowing-up.

Then, we will see that every element non-degenerate with respect to
a regular system of paramaters is monomializable. And then it will
be sufficient to prove that it is the case of all the elements of
$R$.

So, after that, we construct sequence of (possibly limit) immediate
successors such that every element $f$ of $R$ is non-degenerate
with respect to one of these key polynomials. 

In sections $6$ and $7$ we prove that all the key polynomials of
this sequence are monomializable, and that we have proven the simultaneous
local uniformization. To do this we will need a new notion: the one
of key element. Indeed, modified by the blow-ups, the key polynomials
of the above mentioned sequence have no reason to still be polynomials.
So we will give a new definition, this one of key element. This notion
has the benefit to be conserved by blow-ups. We will monomialize the
key elements and not the key polynomials, and the proof will be complete
by induction.

\section{Preliminaries.}

Let $k$ be a field of characteristic zero and $R$ a regular local
$k$-algebra which is essentially of finite type over $k$. We consider
$u=(u_{1},\ldots,u_{n})$ a regular system of parameters of $R$ and
$\nu$ a valuation centered on $R$ whose group of values is denoted
by $\Gamma$. We write $\beta_{i}=\nu(u_{i})$ for every integer $i\in\left\{ 1,\dots,n\right\} $,
and $K=k(u_{1},\dots,u_{n-1})$.

\subsection{Non-degenerate elements.}
\begin{defn}
\label{def:nond=0000E9g=0000E9}Let $f\in R$. We say that $f$ is
non-degenerate \index{Non-degenerate elements}with respect to $\nu$
and $u$ if we have $\nu_{u}(f)=\nu(f)$, where $\nu_{u}$ is the
monomial valuation with respect to $u$.
\end{defn}

We need a more convenient way of knowing whether an element is non-degenerate
with respect to a regular system of parameters. It is the objective
of the following Proposition.
\begin{prop}
\label{prop:non d=0000E9gen=0000E9rescence et id=0000E9aux monomiaux}Let
$f\in R$. The element $f$ is non-degenerate with respect to $\nu$
and $u$ if and only if there exists an ideal $N$ of $R$ which contains
$f$, monomial with respect to $u$ and such that
\[
\nu(f)=\nu(N)=\min\limits _{x\in N}\left\{ \nu(x)\right\} .
\]
\end{prop}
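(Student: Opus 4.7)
The argument splits cleanly into two directions, and the main technical point is controlling how the monomial valuation $\nu_u$, defined a priori via an expansion of elements as formal power series, interacts with elements of $R$ itself. The key preliminary inequality is
\[
\nu_u(g) \leq \nu(g) \qquad \forall g \in R.
\]
This is obtained by fixing, for each integer $M$, a Taylor-type decomposition $g = \sum_{|\alpha|\leq M} c_\alpha u^\alpha + g_M$ with $c_\alpha \in k$ and $g_M \in \mathfrak{m}^{M+1}$; since each $\beta_i>0$, for $M$ large enough $\nu(g_M) > \nu_u(g)$, and then the ultrametric inequality gives $\nu(g) \geq \min_{c_\alpha \neq 0,\, |\alpha|\leq M}\nu(u^\alpha) \geq \nu_u(g)$.

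For the implication ``non-degenerate $\Rightarrow$ existence of $N$'', set $\gamma := \nu(f) = \nu_u(f)$ and define $N$ to be the ideal generated by all monomials $u^\alpha$ with $\sum_i \alpha_i \beta_i \geq \gamma$. By Dickson's lemma this is generated by finitely many minimal such monomials, and since $\min_i \beta_i > 0$ it contains $\mathfrak{m}^{M+1}$ for all sufficiently large $M$. To check $f \in N$, write $f = \sum_{|\alpha|\leq M} c_\alpha u^\alpha + g_M$ with $g_M \in \mathfrak{m}^{M+1}\subseteq N$; each nonzero term $c_\alpha u^\alpha$ with $|\alpha|\leq M$ satisfies $\nu(u^\alpha) \geq \nu_u(f) = \gamma$, hence lies in $N$. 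Every element of $N$ is an $R$-linear combination of its monomial generators, so $\nu(N) \geq \gamma$, and this minimum is achieved by $f$, so $\nu(f) = \nu(N)$.

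For the converse, given a monomial ideal $N$ with $f \in N$ and $\nu(f) = \nu(N) = \gamma$, write $f = \sum_i r_i u^{\alpha_i}$ in terms of monomial generators of $N$. Since $\nu_u(r_i)\geq 0$ for $r_i \in R$, one has
\[
\nu_u(f) \geq \min_i \nu_u(r_i u^{\alpha_i}) \geq \min_i \nu_u(u^{\alpha_i}) = \min_i \nu(u^{\alpha_i}) = \nu(N) = \nu(f).
\]
Combined with the universal inequality $\nu_u(f) \leq \nu(f)$ established above, this yields $\nu_u(f) = \nu(f)$, i.e.\ non-degeneracy.

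The only subtlety I expect is bookkeeping around the monomial expansion: it must be applied only to truncations in $R/\mathfrak{m}^{M+1}$, which are perfectly well defined in $R$ itself, so that at no point does the argument secretly require $\nu$ to extend to $\widehat{R}$. Once this is granted, both directions reduce to one-line manipulations of the monomial valuation.
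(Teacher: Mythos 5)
Your overall structure coincides with the paper's: both directions come down to the universal inequality $\nu_{u}\le\nu$ together with the fact that $\nu_{u}$ and $\nu$ agree on monomial ideals, and your choice of $N$ (all monomials of value at least $\gamma$) differs only cosmetically from the paper's (the smallest monomial ideal containing $f$). The converse direction is fine as written, modulo the point below.

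The genuine gap is in the truncation argument you use twice: to prove $\nu_{u}(g)\le\nu(g)$ and to prove $\mathfrak{m}^{M+1}\subseteq N$ (hence $f\in N$). Both rest on the claim that, since each $\beta_i>0$, one has $\nu(\mathfrak{m}^{M+1})>\nu_{u}(g)$ for $M$ large. That is an archimedean statement, and it fails for valuations of rank at least $2$, which are explicitly within the scope of this part of the paper. For instance, with $n=2$ and $\left(\beta_1,\beta_2\right)=\left(\left(0,1\right),\left(1,0\right)\right)$ in $\mathbb{Z}^{2}$ ordered lexicographically, the monomial $u_1^{M+1}\in\mathfrak{m}^{M+1}$ has value $\left(0,M+1\right)$, which never exceeds $\left(1,0\right)=\nu\left(u_2\right)$; so if $\gamma=\left(1,0\right)$ then $\mathfrak{m}^{M+1}\not\subseteq N$ for any $M$, and no bound of the form $\nu\left(g_M\right)>\nu_{u}\left(g\right)$ is available. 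The statements you need are still true, but the correct route avoids truncation by total degree: by Dickson's lemma the set of exponents $\alpha$ with $\nu\left(u^{\alpha}\right)\geq\gamma$ is upward closed with finitely many minimal elements; the corresponding monomial ideal of $\widehat{R}\simeq k\left[\left[u\right]\right]$ visibly contains $f$, because every monomial of the power-series expansion of $f$ has value at least $\nu_{u}\left(f\right)$ and is therefore divisible by one of the minimal generators; and faithful flatness of $R\to\widehat{R}$ gives $f\in N\widehat{R}\cap R=N$. Writing $f$ as a finite $R$-linear combination of monomials of value at least $\gamma$ then yields both $f\in N$ and $\nu\left(f\right)\geq\gamma=\nu_{u}\left(f\right)$ in one stroke. (The paper is equally silent about $\nu_{u}\le\nu$, but its argument does not introduce the archimedean step that breaks down at higher rank.)
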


\begin{proof}
Let us show that if there exists an ideal $N$ of $R$ which contains
$f$, monomial with respect to $u$ and such that
\[
\nu(f)=\nu(N)=\min\limits _{x\in N}\left\{ \nu(x)\right\} ,
\]
then $\nu_{u}\left(f\right)=\nu\left(f\right)$. Let $N$ be such
an ideal. As $N$ is monomial with respect to $u$, we have $\nu_{u}(N)=\nu(N)$
and $\nu_{u}(N)\leq\nu_{u}(f)$ since $f\in N$.

So $\nu(f)=\nu(N)\leq\nu_{u}(f)$, which give us the equality.

Now let us show that if $\nu_{u}\left(f\right)=\nu\left(f\right)$,
then there exists an ideal $N$ of $R$ which contains $f$, monomial
with respect to $u$ and such that $\nu(f)=\nu(N)=\min\limits _{x\in N}\left\{ \nu(x)\right\} $. 

Let us assume that $\nu_{u}\left(f\right)=\nu\left(f\right)$. Let
$N$ be the smallest ideal of $R$ generated by monomials in $u$
containing $f$. So $\nu(N)=\nu_{u}(N)=\nu_{u}(f)$ and since $\nu_{u}(f)=\nu(f)$,
we have $\nu(N)=\nu(f)$.
\end{proof}

\subsection{Framed and monomial blow-up.}

Let $J_{1}\subset\left\{ 1,\dots,n\right\} $, $A_{1}=\left\{ 1,\dots,n\right\} \setminus J_{1}$
and $j_{1}\in J_{1}$.

We write
\[
u'_{q}=\begin{cases}
\frac{u_{q}}{u_{j_{1}}} & \text{ if }q\in J_{1}\setminus\left\{ j_{1}\right\} \\
u_{q} & \text{otherwise}
\end{cases}
\]
 and we let $R_{1}$ be a localisation of $R'=R\left[u'_{J_{1}\setminus\left\{ j_{1}\right\} }\right]$
by a prime ideal, say $R_{1}=R'_{\mathfrak{m}'}$ of maximal ideal
$\mathfrak{m}_{1}=\mathfrak{\mathfrak{m}^{'}}R_{1}$. Since $R$ is
regular, $R'$ and $R_{1}$ are regular. Let $u^{(1)}=\left(u_{1}^{(1)},\dots,u_{n_{1}}^{(1)}\right)$
be a regular system of parameters of $\mathfrak{m}_{1}$.

We write 
\[
B_{1}:=\left\{ q\in J_{1}\setminus\left\{ j_{1}\right\} \text{ such that }u'_{q}\notin R_{1}^{\times}\right\} 
\]
 and 
\[
C_{1}:=J_{1}\setminus\left(B_{1}\cup\left\{ j_{1}\right\} \right).
\]
Since $u$ is a regular system of parameters of $R$ , we have the
disjoint union 
\[
u'=u'_{A_{1}}\sqcup u'_{B_{1}}\sqcup u'_{C_{1}}\sqcup\left\{ u'_{j_{1}}\right\} .
\]

Let $\pi\colon R\to R_{1}$ be the natural map. Without loss of generality,
we may assume that 
\[
J_{1}=\left\{ 1,\dots,h\right\} .
\]

\begin{defn}
\label{def:=0000E9clatement encadr=0000E9}We say that $\pi\colon\left(R,u\right)\to\left(R_{1},u^{(1)}\right)$
is a framed blow-up \index{Framed blow-up} of $\left(R,u\right)$
along $\left(u_{J_{1}}\right)$ with respect to $\nu$ if there exists
$D_{1}\subset\left\{ 1,\dots,n_{1}\right\} $ such that 
\[
u'_{A_{1}\cup B_{1}\cup\left\{ j_{1}\right\} }=u_{D_{1}}^{(1)}
\]
 and if $\mathfrak{m}'=\left\{ x\in R'\text{ such that }\nu(x)>0\right\} $.
\end{defn}

\begin{rem}
A blow-up $\pi$ is framed if among the given generators of the maximal
ideal $\mathfrak{m}_{1}$ of $R_{1}$, we have all the elements of
$u'$, except, possibly, those that are in $u'_{C_{1}}$. In other
words, except, possibly, those that are invertibles in $R_{1}$.

It is framed with respect to $\nu$ if we localized in the center
of $\nu$.
\end{rem}

Let $\pi$ be such a blow-up.
\begin{defn}
We say that $\pi$ is monomial \index{Monomial framed blow-up}if
$B_{1}=J_{1}\setminus\{j_{1}\}$.
\end{defn}

\begin{rem}
\label{rem: =0000E9clatement monomial nombre de g=0000E9n=0000E9rateurs}Let
$\pi$ be a monomial blow-up.

Then $n_{1}=n$ and $D_{1}=\left\{ 1,\dots,n\right\} $.
\end{rem}

\begin{defn}
Let $\pi\colon\left(R,u\right)\to\left(R_{1},u^{(1)}\right)$ be a
framed blow-up and $T\subset\left\{ 1,\dots,n\right\} $.

We say that $\pi$ is independent \index{Independent framed blow-up}of
$u_{T}$ if $T\cap J_{1}=\emptyset$, in other words if $T\subset A_{1}$.
\end{defn}

\begin{rem}
Since we look at blow-ups with respect to a valuation $\nu$, we have
blow-ups such that $\nu(R_{1})\geq0$. Since $u'_{q}\in R_{1}$ for
every $q\in J_{1}$, we want $\nu\left(\frac{u_{q}}{u_{j_{1}}}\right)\geq0$,
so $\nu\left(u_{q}\right)\geq\nu\left(u_{j_{1}}\right)$ for every
$q\in J_{1}\setminus\left\{ j_{1}\right\} $ . So we can set $j_{1}$
to be an element of $J_{1}$ such that $\beta_{j_{1}}=\min\limits _{q\in J_{1}}\{\beta_{q}\}$.

We have :

\[
\begin{array}{ccc}
B_{1} & := & \left\{ q\in J_{1}\setminus\left\{ j_{1}\right\} \text{ such that }u'_{q}\notin R_{1}^{\times}\right\} \\
 & = & \left\{ q\in J_{1}\setminus\left\{ j_{1}\right\} \text{ such that \ensuremath{\nu}}\left(u'_{q}=\frac{u_{q}}{u_{j_{1}}}\right)>0\right\} \\
 & = & \left\{ q\in J_{1}\setminus\left\{ j_{1}\right\} \text{ such that }\beta_{q}>\beta_{j_{1}}\right\} .
\end{array}
\]
And $C_{1}=\left\{ q\in J_{1}\setminus\left\{ j_{1}\right\} \text{ such that }\beta_{q}=\beta_{j_{1}}\right\} $.
\end{rem}

Let $k_{1}$ be the residue field of $R_{1}$ and $t_{k_{1}}$ the
transcendence degree of $k\hookrightarrow k_{1}$. Let us show that
$t_{k_{1}}\leq\sharp C$.

We write $\bar{R}=\frac{R'}{\mathfrak{m}R'}$. We denote by $\bar{u}_{q}$
the image of $u'_{q}$ in $\bar{R}$ for every $q\in J_{1}\setminus\left\{ j_{1}\right\} $.
So $\bar{R}=k\left[\bar{u}_{B_{1}},\bar{u}_{C_{1}}^{\pm1}\right]$.
We have $R\to R'\to R_{1}\to k_{1}$, which induces homomorphisms
$k\to\bar{R}\to\frac{R_{1}}{\mathfrak{m}R_{1}}\to k_{1}$.

We have $\mathfrak{m}=\mathfrak{m}_{1}\cap R=\mathfrak{m}'R_{1}\cap R=\mathfrak{m}'\cap R$
. Let $\bar{\mathfrak{m}}=\frac{\mathfrak{m}'}{\mathfrak{m}R'}$.
We have

\[
\begin{array}{ccc}
\frac{R_{1}}{\mathfrak{m}R_{1}} & = & \frac{R'_{\mathfrak{m}'}}{\mathfrak{m}R_{\mathfrak{m}'}^{'}}\\
 & = & \left(\frac{R'}{\mathfrak{m}R'}\right)_{\frac{\mathfrak{m}'}{\mathfrak{m}R'}}\\
 & = & \bar{R}_{\bar{\mathfrak{m}}}
\end{array}
\]
 in other words
\begin{equation}
k\to\bar{R}\to\bar{R}_{\bar{\mathfrak{m}}}\to k_{1}.\label{eq:suitemodulom}
\end{equation}

Since $u'_{A_{1}\cup B_{1}\cup\left\{ j_{1}\right\} }\subset\mathfrak{m}'$,
for every $q\in A_{1}\cup B_{1}\cup\left\{ j_{1}\right\} $, the image
of $u'_{q}$ in $k_{1}$ is zero. So $k_{1}$ is generated over $k$
by the images of the $u'_{q}$ with $q\in C_{1}$. Hence $t_{k_{1}}\leq\sharp C_{1}$.

But we have $C_{1}:=J_{1}\setminus\left(B_{1}\cup\left\{ j_{1}\right\} \right)$.
So $\sharp C_{1}+\sharp B_{1}+1=\sharp J_{1}=h$, and:

\begin{equation}
\sharp B_{1}+1\leq t_{k_{1}}+\sharp B_{1}+1\leq\sharp C_{1}+\sharp B_{1}+1=h\leq n.\label{eq:troiscas}
\end{equation}

We will often set $J_{1}\subset\left\{ 1,\dots,r,n\right\} $ where
$r$ is the dimension of $\sum\limits _{i=1}^{n}\mathbb{Q}\nu(u_{i})$
in $\Gamma\otimes_{\mathbb{Z}}\mathbb{Q}$. If $J_{1}\subset\left\{ 1,\dots,r\right\} $,
the family $\beta_{J_{1}}$ is a family of $\mathbb{Q}$-linearly
independent elements, and so $B_{1}=J_{1}\setminus\left\{ j_{1}\right\} $. 

Otherwise $n\in J_{1}$. Then we have $B_{1}=J_{1}\setminus\left\{ j_{1}\right\} $
or $B_{1}=J_{1}\setminus\left\{ j_{1},q_{1}\right\} $ where $q_{1}\in J_{1}\setminus\left\{ j_{1}\right\} $.
The interesting cases are those where $h-2\leq\sharp B_{1}$, in other
words, those where $h-1\leq\sharp B_{1}+1$.

Since (\ref{eq:troiscas}), we have $h-1+t_{k_{1}}\leq\sharp B_{1}+1+t_{k_{1}}\leq h$.

Then we have three cases\label{troiscas}.

The first one, $\sharp B_{1}+1=h$ and $t_{k_{1}}=0$, it occurs when
the blow-up is monomial.

The second one, $\sharp B_{1}+1=h-1$ and $t_{k_{1}}=1$.

The last one, $\sharp B_{1}+1=h-1$ and $t_{k_{1}}=0$.
\begin{fact}
\label{fact:nombredegene}In the cases $1$ and $3$, we have $n_{1}=n$
and in the case $2$ we have $n_{1}=n-1$.
\end{fact}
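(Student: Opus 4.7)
The plan is to invoke the dimension formula for Noetherian local rings essentially of finite type over a field. For any integral local $k$-algebra $S$ essentially of finite type over $k$, with residue field $k_S$, one has
\[
\dim S \;=\; \mathrm{trdeg}_{k} \mathrm{Frac}(S) \;-\; \mathrm{trdeg}_{k} k_S,
\]
which is the standard consequence of the altitude/Noether normalization theorem for finitely generated algebras over a field, passed to a localization.

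First I would verify the hypotheses are satisfied by $R_1$. The ring $R' = R\left[u'_{J_1\setminus\{j_1\}}\right]$ is a finitely generated $R$-algebra, so (since $R$ is essentially of finite type over $k$) $R'$ and any of its localizations, in particular $R_1 = R'_{\mathfrak{m}'}$, are essentially of finite type over $k$. Moreover the text states that $R_1$ is regular, so $n_1 = \dim R_1$ by the definition of a regular system of parameters. Next, every generator $u'_q = u_q / u_{j_1}$ already lies in $\mathrm{Frac}(R)$, hence $\mathrm{Frac}(R_1) = \mathrm{Frac}(R)$; applying the formula to $R$ itself, whose residue field is $k$, gives $\mathrm{trdeg}_k \mathrm{Frac}(R) = \dim R = n$.

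Applying the formula to $R_1$ therefore yields
\[
n_1 \;=\; \mathrm{trdeg}_k \mathrm{Frac}(R_1) - \mathrm{trdeg}_k k_1 \;=\; n - t_{k_1}.
\]
It then suffices to substitute the values of $t_{k_1}$ already recorded in the three cases immediately before the Fact: in cases $1$ and $3$ one has $t_{k_1} = 0$, giving $n_1 = n$, and in case $2$ one has $t_{k_1} = 1$, giving $n_1 = n-1$. There is no serious obstacle; the only point requiring care is to recognize that the relevant tool is the dimension formula and to check its hypotheses (integrality and essentially of finite type over $k$, plus equality of fraction fields with $R$), after which the computation is entirely mechanical.
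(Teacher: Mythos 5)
Your argument is correct, and it fills in a step the paper itself leaves unproved: Fact \ref{fact:nombredegene} is stated without proof, but the preceding computation of $t_{k_1}$ via $k\to\bar{R}\to\bar{R}_{\bar{\mathfrak{m}}}\to k_{1}$ is plainly set up so that the dimension formula $\dim R_{1}=\mathrm{trdeg}_{k}\mathrm{Frac}(R_{1})-\mathrm{trdeg}_{k}k_{1}=n-t_{k_{1}}$ is the intended conclusion. You correctly check the hypotheses ($R_{1}$ a localization of a finitely generated domain over $k$, birationality giving $\mathrm{Frac}(R_{1})=\mathrm{Frac}(R)$, regularity of $R_{1}$ identifying $n_{1}$ with $\dim R_{1}$), so the case-by-case substitution of $t_{k_{1}}$ is all that remains.
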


\begin{rem}
\label{rem: le poly min est de degre 1}In the rest of the chapter,
we will assume that the valuation ring has $k$ as residue field.
So $k_{1}=k$ and $t_{k_{1}}=0$. Hence we will have $n_{1}=n$.

Since $k_{1}\simeq\frac{k\left[Z\right]}{\left(\lambda\left(Z\right)\right)}$,
we know that $\lambda\left(Z\right)$ is a polynomial of degree $1$
over $k$.
\end{rem}

\subsection{Key elements.}

We need a more general notion than the one of key polynomials. Indeed,
after several blow-ups, a key polynomial might not be a polynomial
anymore.

For example, we can have $\frac{1}{u_{n}+1}u_{n-1}$, which is not
a polynomial.
\begin{defn}
\label{def:elmtclef}Let $P_{1},P_{2}$ be two key polynomials for
the field extension $k\left(u_{1}^{(l)},\dots,u_{n-1}^{(l)}\right)\left(u_{n}^{(l)}\right)$
with $P_{2}$ and immediate successor of $P_{1}$. Let $P_{2}=\sum\limits _{j\in S_{P_{1}}\left(P_{2}\right)}a_{j}P_{1}^{j}$
be the $P_{1}$-expansion of $P_{2}$.

We call \emph{key element} \index{Key element}every element $P_{2}'$
of the form 
\[
P_{2}'=\sum\limits _{j\in S_{P_{1}}\left(P_{2}\right)}a_{j}b_{j}P_{1}^{j}
\]
 where $b_{j}$ are units of $R_{l}=k\left(u_{1}^{(l)},\dots,u_{n}^{(l)}\right)_{\left(u_{1}^{(l)},\dots,u_{n}^{(l)}\right)}$.
The polynomial $P_{2}$ is the key polynomial associated to the key
element $P_{2}'$.
\end{defn}

\begin{rem}
A key element is not necessarily a polynomial. Indeed, for example,
$\frac{1}{1+u_{n_{l}}^{(l)}}$ is a unit of $R_{l}$.
\end{rem}

\begin{defn}
\label{def:elmtsuccimm}Let $P_{1}'$ and $P_{2}'$ be two key elements.
We say that $P'_{2}$ is an \emph{immediate successor} of $P'_{1}$,
and we write \index{Immediate successors key elements}, $P_{1}'\ll P_{2}'$,
if their associated key polynomials are immediate successors of each
other.
\end{defn}

Now we define limit immediate successors key elements.
\begin{defn}
\label{def:limitelmt}Let $P_{1}'$ and $P_{2}'$ be two key elements.
We say that $P_{2}'$ is a \emph{limit immediate successor }\index{Limit immediate successor key elements}
of $P'_{1}$, and we write $P_{1}'\ll_{\lim}P_{2}'$, if their associated
key polynomials $P_{1}$ and $P_{2}$ are such that $P_{2}$ is a
limit immediate successor of $P_{1}$.
\end{defn}

\newpage{}

\section{Monomialization in the non-degenerate case.}

In this section, we will monomialize all the elements which are non-degenerate
with respect to a system of parameters.

Let $\alpha$ and $\gamma$ be two elements of $\mathbb{Z}^{n}$,
and let $\delta=\left(\min\left\{ \alpha_{j},\gamma_{j}\right\} \right)_{1\leq j\leq n}$.
We say that $u^{\alpha}\mid u^{\gamma}$ if for every integer $i$,
$\alpha_{i}$ is less than or equal to $\gamma_{i}$ , in other words
if $\alpha$ is componentwise less than or equal to $\beta$.

Let us set
\[
\widetilde{\alpha}=\alpha-\delta=\left(\widetilde{\alpha}_{1},\dots,\widetilde{\alpha}_{a},0,\dots,0\right)\in\mathbb{N}^{n}.
\]
The objective is to build a sequence of blow-ups $(R,u)\to\cdots\to\left(R',u'\right)$
such that in $R'$, we have $u^{\alpha}\mid u^{\gamma}$. 
\begin{defn}
We say that $\alpha\preceq\gamma$ if for every index $i$, we have
$\alpha_{i}\leq\gamma_{i}$.
\end{defn}

We assume that $\gamma\npreceq\alpha$ and that $\alpha\npreceq\gamma$
. So we may assume that $|\widetilde{\alpha}|\neq0$, and $\widetilde{\alpha}_{i}>0$
for every integer $i\in\left\{ 1,\dots,a\right\} $. 

Similarly, we set 
\[
\widetilde{\gamma}=\gamma-\delta=\left(0,\dots,0,\widetilde{\gamma}_{a+1},\dots,\widetilde{\gamma}_{n}\right)\in\mathbb{N}^{n}.
\]
 Interchanging $\alpha$ and $\gamma$, if necessary, we may assume
that $0<|\widetilde{\alpha}|\leq|\widetilde{\gamma}|$.

\subsection{Construction of a stricly decreasing numerical character.}
\begin{defn}
Let $\tau\colon\mathbb{Z}^{n}\times\mathbb{Z}^{n}\to\mathbb{N}^{2}$
be the map such that 
\[
\tau(\alpha,\gamma)=(|\widetilde{\alpha}|,|\widetilde{\gamma}|).
\]
\end{defn}

Let $J$ be a minimal subset of $\left\{ 1,\dots n\right\} $ such
that $\{1,\dots,a\}\subset J$ and $\sum\limits _{q\in J}\widetilde{\gamma}_{q}\geq|\widetilde{\alpha}|$. 

Let $\pi\colon\left(R,u\right)\to\left(R_{1},u^{(1)}\right)$ be a
framed blow-up along $(u_{J})$. Let $j\in J$ be such that $R_{1}$
is a localization of $R\left[\frac{u_{J}}{u_{j}}\right]$.

If $q\in J\setminus\{j\}$, we recall that $u'_{q}=\frac{u_{q}}{u_{j}}$,
and $u'_{q}=u_{q}$ otherwise.

We now define $\widetilde{\alpha}'_{q}=\widetilde{\alpha}_{q}$ for
$q\neq j$, and $\widetilde{\alpha}'_{q}=0$ otherwise. We set $\widetilde{\gamma}'_{q}=\widetilde{\gamma}_{q}$
if $q\neq j$, $\widetilde{\gamma}'_{q}=\sum\limits _{q\in J}\widetilde{\gamma}_{q}-|\widetilde{\alpha}|$
otherwise.

And finally we define
\[
\delta'=(\delta_{1},\dots,\delta_{j-1},\sum\limits _{q\in J}\delta_{q}+|\widetilde{\alpha}|,\delta_{j+1},\dots,\delta_{n}).
\]
So we have:

\[
\begin{array}{ccc}
u^{\alpha} & = & \prod\limits _{l=1}^{n}u_{l}^{\alpha_{l}}\\
 & = & \prod\limits _{\begin{array}{c}
l=1\\
l\in J\setminus\left\{ j\right\} 
\end{array}}^{n}u_{l}^{\alpha_{l}}\times\prod\limits _{\begin{array}{c}
l=1\\
l\notin J\setminus\left\{ j\right\} 
\end{array}}^{n}u_{l}^{\alpha_{l}}.
\end{array}
\]

But for every $l\in J\setminus\left\{ j\right\} $, we have $u_{l}=u_{l}'\times u_{j}$
and for $l\notin J\setminus\left\{ j\right\} $, we have $u_{l}=u_{l}'$.
Hence 
\[
u^{\alpha}=\prod\limits _{\begin{array}{c}
l=1\\
l\in J\setminus\left\{ j\right\} 
\end{array}}^{n}\left(u_{l}'\times u_{j}\right)^{^{\alpha_{l}}}\times\prod\limits _{\begin{array}{c}
l=1\\
l\notin J\setminus\left\{ j\right\} 
\end{array}}^{n}\left(u_{l}'\right)^{^{\alpha_{l}}}.
\]

Let us isolate the term $u_{j}$. We obtain:

\[
u^{\alpha}=u_{j}^{\sum\limits _{l\in J\setminus\left\{ j\right\} }\alpha_{l}}\times\prod\limits _{\begin{array}{c}
l=1\end{array}}^{n}\left(u'_{l}\right)^{^{\alpha_{l}}}
\]
 and since $\tilde{\alpha}=\alpha-\delta$, we have $\alpha=\tilde{\alpha}+\delta$
and then

\[
\begin{array}{ccc}
u^{\alpha} & = & u_{j}^{\sum\limits _{l\in J\setminus\left\{ j\right\} }\alpha_{l}}\times\prod\limits _{\begin{array}{c}
l=1\end{array}}^{n}\left(u'_{l}\right)^{^{\widetilde{\alpha}_{l}+\delta_{l}}}\\
 & = & u_{j}^{\sum\limits _{l\in J\setminus\left\{ j\right\} }\alpha_{l}}\times\prod\limits _{\begin{array}{c}
l=1\\
l\neq j
\end{array}}^{n}\left(u'_{l}\right)^{^{\widetilde{\alpha}_{l}+\delta_{l}}}\times\left(u'_{j}\right)^{^{\widetilde{\alpha}_{j}+\delta_{j}}}.
\end{array}
\]
But $\widetilde{\alpha}'_{q}=\widetilde{\alpha}_{q}$ for $q\neq j$
and $\delta'=(\delta_{1},\dots,\delta_{j-1},\sum\limits _{q\in J}\delta_{q}+|\widetilde{\alpha}|,\delta_{j+1},\dots,\delta_{n})$,
so

\[
\begin{array}{ccc}
u^{\alpha} & = & u_{j}^{\sum\limits _{l\in J\setminus\left\{ j\right\} }\alpha_{l}}\times\prod\limits _{\begin{array}{c}
l=1\\
l\neq j
\end{array}}^{n}\left(u'_{l}\right)^{^{\widetilde{\alpha}'_{l}+\delta'_{l}}}\times\left(u'_{j}\right)^{\widetilde{\alpha}_{j}+\delta_{j}}\\
 & = & u_{j}^{\sum\limits _{l\in J\setminus\left\{ j\right\} }\alpha_{l}+\widetilde{\alpha}_{j}+\delta_{j}}\times\prod\limits _{\begin{array}{c}
l=1\\
l\neq j
\end{array}}^{n}\left(u'_{l}\right)^{^{\widetilde{\alpha}'_{l}+\delta'_{l}}}.
\end{array}
\]

We include another time the term $l=j$ in the product, and then:

\[
\begin{array}{ccc}
u^{\alpha} & = & u_{j}^{\sum\limits _{l\in J\setminus\left\{ j\right\} }\alpha_{l}+\widetilde{\alpha}_{j}+\delta_{j}-\widetilde{\alpha}'_{j}-\delta'_{j}}\times\prod\limits _{\begin{array}{c}
l=1\end{array}}^{n}\left(u'_{l}\right)^{^{\widetilde{\alpha}'_{l}+\delta'_{l}}}\\
 & = & u_{j}^{\sum\limits _{l\in J\setminus\left\{ j\right\} }\alpha_{l}+\widetilde{\alpha}_{j}+\delta_{j}-\widetilde{\alpha}'_{j}-\delta'_{j}}\times\left(u'\right)^{^{\widetilde{\alpha}'+\delta'}}.
\end{array}
\]

But we have

\[
\begin{array}{ccc}
\sum\limits _{l\in J\setminus\left\{ j\right\} }\alpha_{l}+\widetilde{\alpha}_{j}+\delta_{j}-\widetilde{\alpha}'_{j}-\delta'_{j} & = & \sum\limits _{l\in J\setminus\left\{ j\right\} }\alpha_{l}+\widetilde{\alpha}_{j}+\delta_{j}-\delta'_{j}\\
 & = & \sum\limits _{l\in J\setminus\left\{ j\right\} }\alpha_{l}+\widetilde{\alpha}_{j}+\delta_{j}-\sum\limits _{q\in J}\delta_{q}-|\widetilde{\alpha}|\\
 & = & \sum\limits _{l\in J\setminus\left\{ j\right\} }\left(\widetilde{\alpha}_{l}+\delta_{l}\right)+\widetilde{\alpha}_{j}-\sum\limits _{q\in J\setminus\left\{ j\right\} }\delta_{q}-|\widetilde{\alpha}|\\
 & = & \sum\limits _{\begin{array}{c}
l\in J\end{array}}\widetilde{\alpha}_{l}-|\widetilde{\alpha}|\\
 & = & 0.
\end{array}
\]

So $u^{\alpha}=\left(u'\right)^{^{\widetilde{\alpha}'+\delta'}}$
, and similarly $u^{\gamma}=\left(u'\right)^{^{\widetilde{\gamma}'+\delta'}}$
.

We set $\alpha'=\delta'+\widetilde{\alpha}'$ and $\gamma'=\delta'+\widetilde{\gamma}'$.
\begin{prop}
\label{prop:taudecroit}We have $\tau(\alpha',\gamma')<\tau(\alpha,\gamma)$.
\end{prop}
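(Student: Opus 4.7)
The plan is to compute $|\widetilde{\alpha}'|$ and $|\widetilde{\gamma}'|$ directly from the recipe given just before the statement, then split on whether the blow-up index $j$ lies in $\{1,\dots,a\}$ or not, and read off the strict decrease in the lexicographic order on $\mathbb{N}^{2}$.

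From the definition $\widetilde{\alpha}'_{q}=\widetilde{\alpha}_{q}$ for $q\neq j$ and $\widetilde{\alpha}'_{j}=0$, I get
\[
|\widetilde{\alpha}'|=|\widetilde{\alpha}|-\widetilde{\alpha}_{j}.
\]
Similarly, since $\widetilde{\gamma}'_{q}=\widetilde{\gamma}_{q}$ for $q\neq j$ and $\widetilde{\gamma}'_{j}=\sum_{q\in J}\widetilde{\gamma}_{q}-|\widetilde{\alpha}|$,
\[
|\widetilde{\gamma}'|=|\widetilde{\gamma}|-\widetilde{\gamma}_{j}+\Bigl(\sum_{q\in J}\widetilde{\gamma}_{q}-|\widetilde{\alpha}|\Bigr)=|\widetilde{\gamma}|+\sum_{q\in J\setminus\{j\}}\widetilde{\gamma}_{q}-|\widetilde{\alpha}|.
\]
Both right-hand sides are non-negative: the first because $\widetilde{\alpha}_{j}\leq|\widetilde{\alpha}|$, the second because $\sum_{q\in J}\widetilde{\gamma}_{q}\geq|\widetilde{\alpha}|$ by the very choice of $J$, so that $\widetilde{\gamma}'_{j}\geq 0$.

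First case: $j\in\{1,\dots,a\}$. Then $\widetilde{\alpha}_{j}>0$ by definition of $a$, so $|\widetilde{\alpha}'|<|\widetilde{\alpha}|$, and the lexicographic order already gives $\tau(\alpha',\gamma')<\tau(\alpha,\gamma)$ at the first coordinate.

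Second case: $j\notin\{1,\dots,a\}$, so $\widetilde{\alpha}_{j}=0$ and $|\widetilde{\alpha}'|=|\widetilde{\alpha}|$. It remains to show $|\widetilde{\gamma}'|<|\widetilde{\gamma}|$, i.e. $\sum_{q\in J\setminus\{j\}}\widetilde{\gamma}_{q}<|\widetilde{\alpha}|$. This is precisely the minimality of $J$: the set $J\setminus\{j\}$ still contains $\{1,\dots,a\}$ (because $j$ is outside this subset), and if it satisfied $\sum_{q\in J\setminus\{j\}}\widetilde{\gamma}_{q}\geq|\widetilde{\alpha}|$ it would contradict the choice of $J$ as a minimal such set. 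Hence $|\widetilde{\gamma}'|<|\widetilde{\gamma}|$ and again $\tau(\alpha',\gamma')<\tau(\alpha,\gamma)$.

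The only mild subtlety, and the place where I would be most careful, is the second case: one must be sure that the index $j$ produced by the blow-up (an element of $J$ along which we blow up, typically chosen so that $\beta_{j}=\min_{q\in J}\beta_{q}$) is still covered by the minimality argument, which only requires $j\in J$ and $j\notin\{1,\dots,a\}$, both automatic in that case. Everything else is a bookkeeping computation, already essentially carried out in the derivation of $u^{\alpha}=(u')^{\widetilde{\alpha}'+\delta'}$ just above the proposition.
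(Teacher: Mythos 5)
Your proof is correct and follows the same two-case argument as the paper: a strict drop in $|\widetilde{\alpha}|$ when $j\in\{1,\dots,a\}$, and a strict drop in $|\widetilde{\gamma}|$ via the minimality of $J$ when $j\notin\{1,\dots,a\}$. The only point the paper treats that you omit is the final reordering: since the convention $0<|\widetilde{\alpha}|\leq|\widetilde{\gamma}|$ must be restored before iterating, one also checks that if $|\widetilde{\alpha}'|>|\widetilde{\gamma}'|$ then $\tau(\alpha',\gamma')=(|\widetilde{\gamma}'|,|\widetilde{\alpha}'|)$, which is lexicographically even smaller, so the strict decrease persists.
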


\begin{proof}
First case: $j\in\left\{ 1,\dots,a\right\} $. Then 
\[
|\widetilde{\alpha}'|=|\widetilde{\alpha}|-\widetilde{\alpha}_{j}<|\widetilde{\alpha}|.
\]

Second case: $j\in\left\{ a+1,\dots,n\right\} $. Then $|\widetilde{\alpha}'|=|\widetilde{\alpha}|$.
Let us show that $|\widetilde{\gamma}'|<|\widetilde{\gamma}|$.

We have

\[
\begin{array}{ccc}
|\widetilde{\gamma}'| & = & \sum\limits _{\begin{array}{c}
q=a+1\\
q\neq j
\end{array}}^{n}\widetilde{\gamma}_{q}+\sum\limits _{q\in J}\widetilde{\gamma}_{q}-|\widetilde{\alpha}|\\
 & = & \sum\limits _{\begin{array}{c}
q=a+1\end{array}}^{n}\widetilde{\gamma}_{q}+\sum\limits _{q\in J\setminus\left\{ j\right\} }\widetilde{\gamma}_{q}-|\widetilde{\alpha}|.
\end{array}
\]

By the minimality of $J$, we have $\sum\limits _{q\in J\setminus\left\{ j\right\} }\widetilde{\gamma}_{q}-|\widetilde{\alpha}|<0$,
and so 
\[
|\widetilde{\gamma}'|<\sum\limits _{\begin{array}{c}
q=a+1\end{array}}^{n}\widetilde{\gamma}_{q}=|\widetilde{\gamma}|.
\]

In every case, we have $\left(|\widetilde{\alpha}'|,|\widetilde{\gamma}'|\right)<\left(|\widetilde{\alpha}|,|\widetilde{\gamma}|\right)=\tau(\alpha,\gamma)$.

If $|\widetilde{\alpha}'|\leq|\widetilde{\gamma}'|$, then $\tau(\alpha',\gamma')=\left(|\widetilde{\alpha}'|,|\widetilde{\gamma}'|\right)$
and this completes the proof.

Otherwise, $|\widetilde{\alpha}'|>|\widetilde{\gamma}'|$, so 
\[
\tau(\alpha',\gamma')=\left(|\widetilde{\gamma}'|,|\widetilde{\alpha}'|\right)<\left(|\widetilde{\alpha}'|,|\widetilde{\gamma}'|\right),
\]
 and the proof is complete.
\end{proof}
Renumbering the $u'_{q}$, if necessary, we may assume that$u'_{q}\notin R_{1}^{\times}$
for every $q\in\left\{ 1,\dots,s\right\} $ and $u'_{q}\in R_{1}^{\times}$
otherwise. Since $\pi$ is a framed blow-up, we have $\left\{ u'_{1},\dots,u'_{s}\right\} \subset u^{(1)}$,
so renumbering again, if necessary, we may assume that $u'_{q}=u_{q}^{(1)}$
for every $q\in\left\{ 1,\dots,s\right\} $. We set 
\[
\alpha^{(1)}=\left(\alpha'_{1},\dots,\alpha'_{s},0,\dots0\right)\in\mathbb{Z}^{n_{1}}
\]
 and 
\[
\gamma^{(1)}=\left(\gamma'_{1},\dots,\gamma'_{s},0,\dots0\right)\in\mathbb{Z}^{n_{1}}.
\]
 We have $\tau\left(\alpha^{(1)},\gamma^{(1)}\right)\leq\tau(\alpha',\gamma')$.
By Proposition \ref{prop:taudecroit}, we have
\[
\tau\left(\alpha^{(1)},\gamma^{(1)}\right)<\tau(\alpha,\gamma).
\]

\subsection{Divisibility and change of variables.}

Let $s\in\left\{ 1,\dots,n\right\} $. We write $u=\left(w,v\right)$
where 
\[
w=\left(w_{1},\dots,w_{s}\right)=\left(u_{1},\dots,u_{s}\right)
\]
 and 
\[
v=\left(v_{1},\dots,v_{n-s}\right).
\]
Let $\alpha$ and $\gamma$ be two elements of $\mathbb{Z}^{s}$.
\begin{prop}
\label{prop:existencesuitedivise}There exists a framed local sequence
\[
\left(R,u\right)\to\left(R_{l},u^{(l)}\right),
\]
 with respect to $\nu$, independent of $v$, such that in $R_{l}$,
we have $w^{\alpha}\mid w^{\gamma}$ or $w^{\gamma}\mid w^{\alpha}$.
\end{prop}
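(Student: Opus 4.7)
The plan is to iterate the construction developed in the previous subsection and use a well-founded induction on the numerical character $\tau(\alpha,\gamma)\in\mathbb{N}^2$, equipped with the lexicographic order. Since $(\mathbb{N}^2,\mathrm{lex})$ is a well-ordered set, any strictly decreasing sequence terminates, so once we know that one well-chosen framed blow-up (independent of $v$) strictly decreases $\tau$, repeated application gives a finite sequence ending in the divisibility we want.

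The base case of the induction is $|\widetilde{\alpha}|=0$. After interchanging $\alpha$ and $\gamma$ if necessary, we may always assume $|\widetilde{\alpha}|\leq|\widetilde{\gamma}|$, so the lexicographically smallest values of $\tau$ are precisely those with $|\widetilde{\alpha}|=0$. In that case $\widetilde{\alpha}=0$, i.e.\ $\alpha=\delta\preceq\gamma$, and therefore $w^{\alpha}\mid w^{\gamma}$ directly in $R$, so no blow-up is needed.

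For the inductive step, assume $|\widetilde{\alpha}|>0$ (with $|\widetilde{\alpha}|\leq|\widetilde{\gamma}|$). Because $\alpha,\gamma\in\mathbb{Z}^{s}$, both $\widetilde{\alpha}$ and $\widetilde{\gamma}$ are supported in $\{1,\dots,s\}$, and in particular $\{1,\dots,a\}\subseteq\{1,\dots,s\}$ and $\sum\limits_{q=1}^{s}\widetilde{\gamma}_{q}=|\widetilde{\gamma}|\geq|\widetilde{\alpha}|$. Hence we may choose a minimal subset $J\subseteq\{1,\dots,s\}$ containing $\{1,\dots,a\}$ and satisfying $\sum\limits_{q\in J}\widetilde{\gamma}_{q}\geq|\widetilde{\alpha}|$, and perform the framed blow-up $(R,u)\to(R_{1},u^{(1)})$ along $(u_{J})$ with respect to $\nu$ as in the previous subsection. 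Since $J\cap\{s+1,\dots,n\}=\emptyset$, this blow-up is independent of $v$. By Proposition \ref{prop:taudecroit}, $\tau(\alpha',\gamma')<\tau(\alpha,\gamma)$, and after the renumbering that produces the truncated vectors $\alpha^{(1)},\gamma^{(1)}\in\mathbb{Z}^{n_{1}}$ we still have $\tau(\alpha^{(1)},\gamma^{(1)})\leq\tau(\alpha',\gamma')<\tau(\alpha,\gamma)$. Crucially, because $\delta$, $\widetilde{\alpha}$, $\widetilde{\gamma}$ and $j\in J$ are all supported in $\{1,\dots,s\}$, the same is true of $\delta'$, $\widetilde{\alpha}'$, $\widetilde{\gamma}'$, so $\alpha^{(1)}$ and $\gamma^{(1)}$ are supported in the indices corresponding to the new $w$-part of $u^{(1)}$. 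The induction hypothesis, applied in $R_{1}$ to these new exponent vectors and to the subsystem playing the role of $w$, produces the desired sequence.

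The main obstacle, and the only point that requires real care beyond invoking Proposition \ref{prop:taudecroit}, is precisely this bookkeeping: one must verify that after every blow-up the exponent vectors remain supported in the $w$-coordinates of the new regular system of parameters, so that the next blow-up can again be taken independent of $v$, and that the renumbering needed to pass from $\alpha'\in\mathbb{Z}^{n}$ to $\alpha^{(1)}\in\mathbb{Z}^{n_{1}}$ does not increase $\tau$. Both points follow from the explicit formulas for $\delta'$, $\widetilde{\alpha}'$ and $\widetilde{\gamma}'$ and from the fact that $\tau$ only records the sizes of the non-common parts, so dropping coordinates that become units in $R_{1}$ can only weakly decrease $\tau$.
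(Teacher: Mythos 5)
Your proof is correct and follows essentially the same route as the paper: iterate the blow-up construction of the preceding subsection, use Proposition \ref{prop:taudecroit} to see that $\tau$ strictly decreases in $(\mathbb{N}^{2},\mathrm{lex})$, and conclude by well-foundedness that after finitely many steps one exponent vector is componentwise $\preceq$ the other, giving the divisibility. Your additional bookkeeping — checking that the supports of $\delta'$, $\widetilde{\alpha}'$, $\widetilde{\gamma}'$ stay in the $w$-coordinates so that each blow-up can be taken independent of $v$ — is a point the paper leaves implicit, and it is handled correctly.
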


\begin{proof}
Unless $\gamma\preceq\alpha$, or $\alpha\preceq\gamma$, we can iterate
the above construction, choosing blow-up with respect to $\nu$ and
independent of $v$. Since $\tau$ is a vector in $\mathbb{N}^{2}$
and is stricly decreasing, after a finite number of steps, the process
stops. After these steps, we have $w^{\alpha}=U\times\left(u^{(l)}\right)^{\alpha^{(l)}}$,
$w^{\gamma}=U\times\left(u^{(l)}\right)^{\gamma^{(l)}}$, with $U\in R_{l}^{\times}$
and with $\gamma^{(l)}\preceq\alpha^{(l)}$, or $\alpha^{(l)}\preceq\gamma^{(l)}$.
So we do have $w^{\alpha}\mid w^{\gamma}$ or $w^{\gamma}\mid w^{\alpha}$
in $R_{l}$.
\end{proof}
Let us now study the change of variables we do at each blow-up. We
consider $i$ and $i'$ some indexes of the framed local sequence
\begin{equation}
\left(R,u\right)\to\dots\to\left(R_{i},u^{(i)}\right)\to\dots\to\left(R_{i'},u^{(i')}\right)\to\dots\to\left(R_{l},u^{(l)}\right).\label{eqjepense que celle l=0000E0 c'est mieux}
\end{equation}

\begin{prop}
\label{prop:chgmtvari}Let us consider $0\leq i<i'\leq l$. We let
$m$ be an element of $\{1,\dots,n_{i}\}$ and $m'$ one of $\{1,\dots,n_{i'}\}$.
Then:
\end{prop}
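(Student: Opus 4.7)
My plan is to proceed by induction on the length $i'-i$ of the subsequence from level $i$ to level $i'$, since a framed local sequence is built step by step and each individual step has an explicit formula for the new coordinates in terms of the old ones.

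For the base case $i'=i+1$, I would unpack the definition of a framed blow-up $\left(R_i,u^{(i)}\right)\to\left(R_{i+1},u^{(i+1)}\right)$ along some subset $J\subset\{1,\dots,n_i\}$ with chosen element $j\in J$. By construction, the new elements $u'_q$ are given by $u_q^{(i)}/u_j^{(i)}$ for $q\in J\setminus\{j\}$ and by $u_q^{(i)}$ otherwise, and the regular system of parameters $u^{(i+1)}$ is obtained from $\{u'_q\}_{q\notin C}$, the $u'_q$ with $q\in C$ becoming units of $R_{i+1}$. I would then read off directly that each $u_m^{(i)}$ is a monomial in $u^{(i+1)}$ times a unit of $R_{i+1}$, with non-negative integer exponents; conversely each $u_{m'}^{(i+1)}$ is expressible as $u_{m'}^{(i)}(u_j^{(i)})^{-1}$ or $u_{m'}^{(i)}$, giving the reverse change-of-variables formula (with possibly negative exponents on $u_j^{(i)}$, compensated by the unit).

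For the inductive step, I would factor the passage from level $i$ to level $i'$ through an intermediate level, say $i''=i'-1$, and compose the formulas given by the inductive hypothesis for $(i,i'')$ with the base-case formulas for $(i'',i')$. The composition of monomial-times-unit expressions is again a monomial-times-unit expression, and the exponents add in the expected way; the unit factors stay units because each map $R_{i''}\hookrightarrow R_{i'}$ is a localization followed by further localization at the center of $\nu$.

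The main bookkeeping obstacle, and the thing that requires care, is tracking which indices of the regular system of parameters get preserved, which get turned into units (those in $C_k$ at each step, in the notation of the section), and which genuinely split off as new variables (in the non-monomial cases, where $n_{k+1}\neq n_k$ is possible before specializing to the case $k_\nu=k$ noted in Remark \ref{rem: le poly min est de degre 1}). Once one sets up notation indexing the surviving variables at each step, the statement becomes a straightforward propagation of the base-case formula through the composition, and no further ideas are needed beyond the definitions of framed and monomial blow-ups already recorded in Definitions \ref{def:=0000E9clatement encadr=0000E9} and the paragraph that follows.
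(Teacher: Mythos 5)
Your proposal is correct and follows essentially the same route as the paper: reduce to the single-step case $i'=i+1$ (the general case by induction/composition), then read the formulas directly off the definition of a framed blow-up via a case analysis on whether the index lies in $A_{1}$, $B_{1}$, $C_{1}$ or $\left\{ j_{1}\right\} $, with the independence statements following from $u_{J_{1}}\cap u_{T}=\emptyset$. The bookkeeping you flag (indices in $C_{1}$ becoming units, the splitting $D_{1}=D_{1}^{A_{1}}\cup D_{1}^{B_{1}}$) is exactly what the paper's proof tracks.
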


\begin{enumerate}
\item \emph{There exists a vector $\delta_{m}^{(i',i)}\in\mathbb{N}^{\sharp D_{i}}$
such that 
\[
u_{m}^{(i)}\in\left(u_{D_{i'}}^{(i')}\right)^{\delta_{m}^{(i',i)}}R_{i'}^{\times}.
\]
}
\item \emph{If, in addition, the local sequence (\ref{eqjepense que celle l=0000E0 c'est mieux})
is independent of $u_{T}$, with $T\subset\{1,\dots,n\}$ ; and if
we assume that $u_{m}^{(i)}\notin u_{T}$, then $\left(u_{D_{i'}}^{(i')}\right)^{\delta_{m}^{(i',i)}}$
is monomial in $u_{D_{i'}}^{(i')}\setminus u_{T}$.}
\item \emph{We assume that $i''>0$ such that $i\leq i''<i'$. We have $D_{i"}=\{1,\dots,n_{i''}\}$,
and we assume that $m'\in D_{i'}$. Then there exists a vector $\gamma_{m'}^{(i,i')}$
of $\mathbb{Z}^{n_{i}}$ such that
\[
u_{m'}^{(i')}=\left(u^{(i)}\right)^{\gamma_{m'}^{(i,i')}}.
\]
}
\item \emph{If, in addition, the local sequence (\ref{eqjepense que celle l=0000E0 c'est mieux})
is independent of $u_{T}$ and if we assume that $u_{m'}^{(i')}\notin u_{T}$,
then $u_{m'}^{(i')}$ is monomial in $u^{(i)}\setminus u_{T}$.}
\end{enumerate}
\begin{proof}
We only consider the case $i'=i+1$, the general case can be proved
by induction on $i-i'$. We can also assume that $i=0$.

Let us show $(1)$. By Definition \ref{def:=0000E9clatement encadr=0000E9},
we have $u'_{A_{1}\cup B_{1}\cup\left\{ j_{1}\right\} }=u_{D_{1}}^{(1)}$.

We denote by $D_{1}=D_{1}^{A_{1}}\cup D_{1}^{B_{1}}$ where 
\[
u'_{A_{1}}=u_{D_{1}^{A_{1}}}^{(1)}
\]
 and 
\[
u'_{B_{1}\cup\{j_{1}\}}=u_{D_{1}^{B_{1}}}^{(1)}.
\]

If $m\in A_{1}\cup\{j_{1}\}$, so $u_{m}=u'_{m}$ and the proof is
finished. If $m\in B_{1}$ then $u_{m}=u_{j_{1}}u'_{m}=u'_{j_{1}}u'_{m}$
and the proof is finished.

If $m\in C_{1}$, so $u_{m}=u'_{j_{1}}u'_{m}$ and by definition,
$u'_{m}\in R_{1}^{\times}$, which gives us the result.

Let us show $(3)$. We have $m'\in D_{1}=D_{1}^{A_{1}}\cup D_{1}^{B_{1}}$
and $u'_{A_{1}\cup B_{1}\cup\left\{ j_{1}\right\} }=u_{D_{1}}^{(1)}$.
If $m'\in D_{1}^{A_{1}}$ then by definition $u_{m'}^{(1)}\in u'_{A_{1}}=u_{A_{1}}$
and we have the result. Otherwise $m'\in D_{1}^{B_{1}}$. So 
\[
u_{m'}^{(1)}\in u'_{B_{1}\cup\{j_{1}\}}=\left\{ u_{j_{1}},\frac{u_{q}}{u_{j_{1}}}\text{ }q\in B_{1}\right\} .
\]
 This completes the proof of $(3)$.

Now let us assume that the sequence is independent of $u_{T}$. By
definition we have $u_{J_{1}}\cap u_{T}=\emptyset$ and also 
\[
u_{D_{1}^{B_{1}}}^{(1)}\cap u_{T}=\emptyset.
\]

Let us show $(2)$. Assume that $u_{m}\notin u_{T}$.

If $m\in A_{1}$, then $u_{m}=u'_{m}\in u_{D_{1}^{A_{1}}}^{(1)}$
and $u_{m}\notin u_{T}$ and the proof is finished. Otherwise $m\in J_{1}$.
We saw in the proof of $(1)$ that $m$ was monomial in $u_{D_{1}^{B_{1}}}^{(1)}$,
and since $u_{D_{1}^{B_{1}}}^{(1)}\cap u_{T}=\emptyset$, this completes
the proof of $(2)$.

It remains to prove $(4)$. We assume that $u_{m'}^{(1)}\notin u_{T}$,
with $m'\in D_{1}=D_{1}^{A_{1}}\cup D_{1}^{B_{1}}$.

If $m'\in D_{1}^{A_{1}}$, then $u_{m'}^{(1)}\in u'_{A_{1}}=u_{A_{1}}$.
Since $u_{m'}^{(1)}\notin u_{T}$, we have $u_{m'}^{(1)}\in u\setminus u_{T}$.

Otherwise $m'\in D_{1}^{B_{1}}$ and we saw that $u_{m'}^{(1)}$ is
monomial in $u_{B_{1}\cup[j_{1}\}}\subset u_{J}$. Since $u_{J}\cap u_{T}=\emptyset$,
we are done.
\end{proof}
\begin{rem}
\label{rem:wenfonctiondew'}Let $T\subset A$, be a set of cardinality
$t$, and $s:=n-t$. We set 
\[
v=(v_{1},\dots,v_{t})=u_{T}
\]
 and
\[
w=(w_{1},\dots,w_{s})=u_{\{1,\dots,n\}\setminus T}.
\]

In this Remark, we only consider monomial blow-ups.

We have $u'=(v,w')$ where $w'=(w'_{1},\dots,w'_{s})=(w^{\gamma(1)},\dots,w^{\gamma(s)})$
with $\gamma(i)\in\mathbb{Z}^{s}$, by Proposition \ref{prop:chgmtvari}.
By the proof of this Proposition, the matrix $F_{s}=\left[\gamma(1)\dots\gamma(s)\right]$
is a unimodular matrix. For every $\delta\in\mathbb{Z}^{s}$, we have
$w'^{\delta}=w^{\delta F_{s}}$. In the same vein $w_{i}=w'^{\delta(i)}$
and the $s$-vectors $\delta(1),\dots,\delta(s)$ form a unimodular
matrix equal to the inverse of $F_{s}$. Then we have $w'^{\gamma}=w^{\gamma F_{s}^{-1}}$,
for every $\gamma\in\mathbb{Z}^{s}$.
\end{rem}

\begin{prop}
\label{prop:divaleur} We have: 

\[
w^{\alpha}|w^{\gamma}\text{ in }R_{l}\Leftrightarrow\nu(w^{\alpha})\leq\nu(w^{\gamma}).
\]
\end{prop}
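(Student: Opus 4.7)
The plan is to prove the two implications separately, using Proposition~\ref{prop:existencesuitedivise} for the non-trivial direction.

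The forward direction is immediate: if $w^{\alpha}\mid w^{\gamma}$ in $R_{l}$, say $w^{\gamma}=w^{\alpha}\cdot y$ with $y\in R_{l}$, then since the sequence of blow-ups is with respect to $\nu$ we have $R_{l}\subset R_{\nu}$, so $\nu(y)\geq 0$ and hence $\nu(w^{\gamma})\geq\nu(w^{\alpha})$. The same argument will be applied in the ``other direction'' in the second half.

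For the converse, I would invoke Proposition~\ref{prop:existencesuitedivise} to assert that, in $R_{l}$, either $w^{\alpha}\mid w^{\gamma}$ or $w^{\gamma}\mid w^{\alpha}$. In the first case there is nothing to prove. In the second case, the forward direction applied with the roles of $\alpha$ and $\gamma$ exchanged yields $\nu(w^{\gamma})\leq\nu(w^{\alpha})$. Combined with the hypothesis $\nu(w^{\alpha})\leq\nu(w^{\gamma})$, this gives $\nu(w^{\alpha})=\nu(w^{\gamma})$. Writing $w^{\alpha}=w^{\gamma}\cdot z$ with $z\in R_{l}$, we then have $\nu(z)=0$. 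Since $\nu$ is centered in $R_{l}$, i.e. $\mathfrak{m}_{l}=\mathfrak{m}_{\nu}\cap R_{l}$, the element $z$ lies outside $\mathfrak{m}_{l}$ and is therefore a unit of $R_{l}$. Hence $z^{-1}\in R_{l}$ and $w^{\gamma}=w^{\alpha}\cdot z^{-1}$, so $w^{\alpha}\mid w^{\gamma}$.

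There is essentially no obstacle here: the substantive work lies in the preceding Proposition~\ref{prop:existencesuitedivise}, which supplies a dichotomy between the two divisibility statements; once that dichotomy is available, the valuation inequality pins down the correct alternative. The only small subtlety to watch is the final step, where one must remember that elements of $R_{l}$ with zero valuation are units because the valuation is centered at the maximal ideal of $R_{l}$, a property that is preserved throughout a framed local sequence with respect to $\nu$.
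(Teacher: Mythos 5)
Your proof is correct and follows essentially the same route as the paper's: both reduce to the dichotomy supplied by the construction of $R_{l}$ (Proposition \ref{prop:existencesuitedivise}) and then use that an element of $R_{l}$ of value zero is a unit because $\nu$ is centered in $R_{l}$. The paper merely phrases the dichotomy in terms of the componentwise comparability of the exponent vectors $\alpha^{(l)},\gamma^{(l)}$ obtained from Proposition \ref{prop:chgmtvari}, which is the same fact in different clothing.
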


\begin{proof}
We have $u^{(l)}=\left(w_{1}^{(l)},\dots,w_{r_{l}}^{(l)},v\right)$.

By Proposition \ref{prop:chgmtvari}, there exists $\alpha^{(l)},\gamma^{(l)}\in\mathbb{N}^{r_{l}}$
and $y,z\in R_{l}^{\times}$ such that $w^{\alpha}=y\left(w^{(l)}\right)^{\alpha^{(l)}}$
and $w^{\gamma}=z\left(w^{(l)}\right)^{\gamma^{(l)}}$.

For every $i\in\left\{ 1,\dots,r_{l}\right\} $, we have $\nu(w_{i}^{(l)})\geq0$
since the blow-up is with respect to $\nu$, so centered in $R_{l}$.
By construction of $R_{l}$, we have that $\gamma^{(l)}\preceq\alpha^{(l)}$
or $\alpha^{(l)}\preceq\gamma^{(l)}$.

So 
\[
\left(w^{(l)}\right)^{\alpha^{(l)}}\mid\left(w^{(l)}\right)^{\gamma^{(l)}}\Leftrightarrow\nu\left(\left(w^{(l)}\right)^{\alpha^{(l)}}\right)\leq\nu\left(\left(w^{(l)}\right)^{\gamma^{(l)}}\right),
\]
hence 
\[
w^{\alpha}\mid w^{\gamma}\Leftrightarrow\nu(w^{\alpha})\leq\nu(w^{\gamma}).
\]
\end{proof}

\subsection{Monomialization of non-degenerate elements.}

Let $N$ be an ideal of $R$ generated by monomials in $w$. We choose
$w^{\epsilon_{0}},\dots,w^{\epsilon_{b}}$ to be a minimal set of
generators of $N$, with $\nu(w^{\epsilon_{0}})\leq\nu(w^{\epsilon_{i}})$
for every $i$.
\begin{prop}
\label{prop:idealprincipal}There exists a local framed sequence 
\[
\phi\colon\left(R,u\right)\to\left(R_{l},u^{(l)}\right)
\]
 with respect to $\nu$, independent of $v$ and such that $NR_{l}=(w^{\epsilon_{0}})R_{l}$.
\end{prop}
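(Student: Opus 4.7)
The plan is to use Proposition \ref{prop:existencesuitedivise} iteratively, one pair at a time, to force the generators $w^{\epsilon_1},\dots,w^{\epsilon_b}$ to become divisible by $w^{\epsilon_0}$ after a suitable local framed sequence with respect to $\nu$ independent of $v$.

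More precisely, I would argue by induction on $i\in\{1,\dots,b\}$. At stage $i$, I start from a local framed sequence $(R,u)\to(R_{l_i},u^{(l_i)})$ (independent of $v$, with respect to $\nu$), already constructed so that $w^{\epsilon_0}\mid w^{\epsilon_j}$ in $R_{l_i}$ for every $j\le i-1$. By Proposition \ref{prop:chgmtvari}, the monomials $w^{\epsilon_0}$ and $w^{\epsilon_i}$ can be written, up to units of $R_{l_i}$, as monomials in the $w$-part of $u^{(l_i)}$. I apply Proposition \ref{prop:existencesuitedivise} to this pair, obtaining a further framed sequence $(R_{l_i},u^{(l_i)})\to(R_{l_{i+1}},u^{(l_{i+1})})$, again with respect to $\nu$ and independent of $v$, such that in $R_{l_{i+1}}$ one of $w^{\epsilon_0}\mid w^{\epsilon_i}$ or $w^{\epsilon_i}\mid w^{\epsilon_0}$ holds. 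The hypothesis $\nu(w^{\epsilon_0})\le\nu(w^{\epsilon_i})$ together with Proposition \ref{prop:divaleur} forces the direction to be $w^{\epsilon_0}\mid w^{\epsilon_i}$ in $R_{l_{i+1}}$.

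The divisibilities $w^{\epsilon_0}\mid w^{\epsilon_j}$ established for $j<i$ persist in $R_{l_{i+1}}\supset R_{l_i}$ since divisibility is preserved under ring extension. Composing the framed sequences (which remains a framed local sequence with respect to $\nu$ and independent of $v$), after $b$ steps we obtain $(R,u)\to(R_l,u^{(l)})$ in which $w^{\epsilon_0}\mid w^{\epsilon_i}$ for every $i\in\{1,\dots,b\}$. Therefore $(w^{\epsilon_1},\dots,w^{\epsilon_b})R_l\subseteq(w^{\epsilon_0})R_l$, which yields $NR_l=(w^{\epsilon_0})R_l$ as desired.

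The only real point to check is that the class of sequences we work with (framed, with respect to $\nu$, independent of $v$) is closed under composition, so that the output of stage $i$ is a valid input for stage $i+1$; this is immediate from the definitions. No single step is a genuine obstacle: the proposition is essentially a packaging of Proposition \ref{prop:existencesuitedivise} applied finitely many times, with Proposition \ref{prop:divaleur} selecting the correct direction of divisibility at each stage.
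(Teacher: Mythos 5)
Your proof is correct and rests on the same two ingredients as the paper's own argument: Proposition \ref{prop:existencesuitedivise} to make a pair of monomials comparable by a framed sequence independent of $v$, and Proposition \ref{prop:divaleur} to force the direction of divisibility to be $w^{\epsilon_{0}}\mid w^{\epsilon_{i}}$. The only difference is organizational --- the paper runs one process governed by the decreasing invariant $\tau(N,w)=\left(b,\min\limits_{0\leq i<j\leq b}\tau\left(w^{\epsilon_{i}},w^{\epsilon_{j}}\right)\right)$, whereas you treat the pairs $\left(w^{\epsilon_{0}},w^{\epsilon_{i}}\right)$ sequentially and note that divisibilities already obtained persist under the injective local maps $R_{l_{i}}\hookrightarrow R_{l_{i+1}}$ --- and both terminate for the same reason.
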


\begin{proof}
Let 
\[
\tau(N,w):=\begin{cases}
\left(b,\min\limits _{0\leq i<j\leq b}\tau\left(w^{\epsilon_{i}},w^{\epsilon_{j}}\right)\right) & \text{if }b\neq0\\
(0,1) & \text{otherwise}.
\end{cases}
\]

Assume $b\neq0$.

We let $\left(w^{\epsilon_{i_{0}}},w^{\epsilon_{j_{0}}}\right)$ be
a pair for which the minimum 
\[
\min\limits _{0\leq i<j\leq b}\tau\left(w^{\epsilon_{i}},w^{\epsilon_{j}}\right)
\]
is attained. By Proposition \ref{prop:taudecroit}, $\tau(N,w)$ is
strictly decreasing at each blow-up.

Since the process stops, $NR_{l}$ is generated by a unique element
as an ideal of $R_{l}$. By Proposition \ref{prop:divaleur}, this
element is $w^{\epsilon_{0}}$ (which has the minimal value), which
divides the others. Then $NR_{l}=(w^{\epsilon_{0}})R_{l}$.
\end{proof}
\begin{defn}
\label{def:monom}An element $f$ of $R$ is monomializable if there
exists a sequence of blow-ups
\[
\left(R,u\right)\to\left(R',u'\right)
\]
 such that the total transformed of $f$ is a monomial. It means that
in $R'$, the total transform of $f$ is $v\prod\limits _{i=1}^{n}\left(u'_{i}\right)^{\alpha_{i}}$,
with $v$ a unit of $R'$.
\end{defn}

\begin{thm}
\label{thm:on monomialise les elements non degeneres}Let $f$ be
a non-degenerate element with respect to $u=(w,v)$, and let $N$
be the ideal which satisfies the conclusion of the Proposition \ref{prop:non d=0000E9gen=0000E9rescence et id=0000E9aux monomiaux},
generated by monomials in $w$.

Then there exists a local framed sequence, independent of $v$, 
\[
\left(R,u\right)\to\left(R',u'\right)
\]
 such that $f$ is a monomial in $u'$ multiplied by a unit of $R'$.
Equivalently, $f$ is monomializable.
\end{thm}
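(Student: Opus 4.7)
The plan is to deduce the theorem directly from Proposition~\ref{prop:idealprincipal} combined with the change-of-variables assertions of Proposition~\ref{prop:chgmtvari}. By hypothesis, $N$ contains $f$, is generated by monomials $w^{\epsilon_{0}},\dots,w^{\epsilon_{b}}$ in $w$, and satisfies $\nu(f)=\nu(N)=\nu(w^{\epsilon_{0}})$, where $w^{\epsilon_{0}}$ has minimal $\nu$-value among the generators.

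First I would apply Proposition~\ref{prop:idealprincipal} to $N$, which produces a local framed sequence
\[
\left(R,u\right)\to\left(R',u'\right)
\]
with respect to $\nu$ and independent of $v$, such that $NR'=(w^{\epsilon_{0}})R'$. In particular the image of $f$ lies in this principal ideal, so one may write $f=w^{\epsilon_{0}}g$ for some $g\in R'$.

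Next I would check that $g$ is a unit of $R'$. Because the sequence is framed with respect to $\nu$, the valuation is centered at the maximal ideal of $R'$, so units of $R'$ are precisely the elements of $\nu$-value zero. The equality $\nu(f)=\nu(w^{\epsilon_{0}})$ combined with additivity of $\nu$ forces $\nu(g)=0$, hence $g\in R'^{\times}$.

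Finally I would translate $w^{\epsilon_{0}}$ into the new parameters. Since the sequence is independent of $v$, parts~(1) and (2) of Proposition~\ref{prop:chgmtvari} express each variable $w_{i}$ as a monomial in the components of $u'$ disjoint from $v$, multiplied by a unit of $R'$. Taking the product with the exponents prescribed by $\epsilon_{0}$ yields that $w^{\epsilon_{0}}$, and therefore $f=w^{\epsilon_{0}}g$, is a monomial in $u'$ (with zero exponents on the $v$-components) times a unit of $R'$, as required. The only step that requires any genuine attention is this last one, where the independence of $v$ is used in an essential way to prevent any $v$-factor from appearing in the monomial expression; everything else is a direct application of the results already established.
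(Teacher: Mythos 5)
Your proposal is correct and follows essentially the same route as the paper: apply Proposition \ref{prop:idealprincipal} to make $N$ principal and generated by $w^{\epsilon_{0}}$, write $f=w^{\epsilon_{0}}g$, and conclude $g$ is a unit because $\nu(g)=\nu(f)-\nu(w^{\epsilon_{0}})=\nu(N)-\nu(w^{\epsilon_{0}})=0$ with $\nu$ centered in $R'$. Your final step invoking Proposition \ref{prop:chgmtvari} to rewrite $w^{\epsilon_{0}}$ as a monomial in $u'$ is a detail the paper leaves implicit, and it is a correct and harmless addition.
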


\begin{proof}
Let $\left(R,u\right)\to\left(R',u'\right)$ be the local framed sequence
of the Proposition \ref{prop:idealprincipal}. We have $NR'=w^{\epsilon_{o}}R'$.
Since $f\in N$, by the proof of the Proposition \ref{prop:non d=0000E9gen=0000E9rescence et id=0000E9aux monomiaux},
there exists an element $z\in R'$ such that $f=w^{\epsilon_{0}}z$.
Since $\nu$ is centered in $R'$, to show that $z$ is a unit of
$R'$, we will show that $\nu(z)=0$.

But $\nu(z)=\nu(f)-\nu(w^{\epsilon_{0}})=\nu(N)-\nu(w^{\epsilon_{0}})$
by Proposition \ref{prop:non d=0000E9gen=0000E9rescence et id=0000E9aux monomiaux}.

Since $NR'=w^{\epsilon_{o}}R'$, we have $\nu(N)=\nu(w^{\epsilon_{0}})$,
and so $\nu(z)=0$, and this completes the proof.
\end{proof}
\newpage{}

\section{\label{sectionsuitecofinale}Non-degeneracy and key polynomials.}

Now that we monomialized every non-degenerate element with respect
to the generators of the maximal ideal of our local ring, we are going
to show that every element is non-degenerate with respect to a particular
sequence of immediate successors. We denote by $\Lambda$ the set
of key polynomials and 
\[
M_{\alpha}:=\left\{ Q\in\Lambda\text{ such that \ensuremath{\deg}(Q)=\ensuremath{\alpha}}\right\} .
\]

\begin{prop}
\label{prop: suite infinie bornee}We consider $\nu$ an archimedean
valuation centered in a noetherian local domain $\left(R,\mathfrak{m},k\right)$.
We denote by $\Gamma$ the value group of $\nu$ and we set $\Phi:=\nu\left(R\setminus\left(0\right)\right)$.

The set $\Phi$ does not contain an infinite bounded strictly increasing
sequence.
\end{prop}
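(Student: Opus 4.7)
The plan is to prove the contrapositive: assume there exists an infinite strictly increasing sequence $(\gamma_i)_{i\in\mathbb{N}}$ in $\Phi$ with $\gamma_i<\gamma$ for some $\gamma\in\Gamma$, and derive a contradiction from the noetherian hypothesis on $R$.

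The key construction is the family of ideals $P_\beta:=\{x\in R\mid \nu(x)\geq\beta\}$ for $\beta\in\Gamma_{\geq 0}$; these are indeed ideals of $R$ because $\nu$ is centered in $R$, so $\nu(R)\subseteq\Gamma_{\geq 0}$. Pick $a_i\in R\setminus\{0\}$ realizing $\nu(a_i)=\gamma_i$. Since $\gamma_i<\gamma_{i+1}$, we have $a_i\in P_{\gamma_i}\setminus P_{\gamma_{i+1}}$, so the chain
\[
P_{\gamma_1}\supsetneq P_{\gamma_2}\supsetneq P_{\gamma_3}\supsetneq\cdots
\]
is strictly decreasing. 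This alone is not a contradiction in a noetherian ring, so one must cut down to an artinian quotient, which is where the archimedean hypothesis enters.

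Next I would use the archimedean property to produce a single power of $\mathfrak{m}$ contained in every $P_{\gamma_i}$. Since $R$ is noetherian, $\mathfrak{m}$ is finitely generated, say by $m_1,\ldots,m_s$, and setting $\mu:=\min_j\nu(m_j)$ we have $\mu>0$ because $\nu$ is centered in $R$. Any element of $\mathfrak{m}^k$ is a sum of products of $k$ generators, each of value at least $k\mu$, so $\nu\left(\mathfrak{m}^k\setminus\{0\}\right)\subseteq[k\mu,+\infty)$. By the archimedean property of $\Gamma$ applied to $\mu$ and $\gamma$, there is $k\in\mathbb{N}$ with $k\mu>\gamma$, hence $\mathfrak{m}^k\subseteq P_\gamma\subseteq P_{\gamma_i}$ for all $i$.

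Finally, passing to the quotient $R/\mathfrak{m}^k$ gives a strictly decreasing chain $P_{\gamma_1}/\mathfrak{m}^k\supsetneq P_{\gamma_2}/\mathfrak{m}^k\supsetneq\cdots$ of ideals of $R/\mathfrak{m}^k$. But $R/\mathfrak{m}^k$ is a noetherian local ring whose maximal ideal is nilpotent, hence artinian, which forbids infinite strictly descending chains of ideals; contradiction. The main conceptual step (and the only place the hypotheses are truly used in combination) is the choice of $k$: noetherianity gives a finite set of generators of $\mathfrak{m}$ so that a single positive lower bound $\mu$ on their values exists, and the archimedean hypothesis then allows one to dominate the a priori arbitrary bound $\gamma$ by a multiple of $\mu$. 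Everything else is routine ideal-theoretic bookkeeping.
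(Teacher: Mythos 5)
Your proof is correct and follows essentially the same route as the paper: build the strictly decreasing chain of $\nu$-ideals $P_{\gamma_i}$, use the archimedean hypothesis together with a positive lower bound on $\nu(\mathfrak{m})$ to trap the chain inside the artinian quotient $R/\mathfrak{m}^k$, and conclude by the descending chain condition. Your version is in fact slightly more careful than the paper's on two points (verifying strict decrease of the chain via elements realizing the values $\gamma_i$, and justifying $\mu>0$ from a finite generating set of $\mathfrak{m}$), but the argument is the same.
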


\begin{proof}
Assume, aiming for contradiction, that we have an infinite sequence
\[
\alpha_{1}<\alpha_{2}<\dots
\]
 of elements of $\Phi$ bounded by an element $\beta\in\Phi$.

Then we have an infinite decreasing sequence $\cdots\subseteq P_{\alpha_{2}}\subseteq P_{\alpha_{1}}$
such that for every index $i$, we have $P_{\beta}\subseteq P_{\alpha_{i}}$.
And so we have an infinite decreasing sequence of ideals of $\frac{R}{P_{\beta}}$.

We set
\[
\delta=\nu\left(\mathfrak{m}\right)=\min\limits _{x\in\Phi\setminus\left\{ 0\right\} }\left\{ \nu\left(x\right)\right\} .
\]
Since $\nu$ is archimedean, we know that there exists a non-zero
integer $n$ such that $\beta\leq n\delta$, and so such that $\mathfrak{m}^{n}\subseteq P_{\beta}$.
This way, we construct an epimorphism of rings $\frac{R}{\mathfrak{m}^{n}}\twoheadrightarrow\frac{R}{P_{\beta}}$.
Since the ring $R$ is noetherian, $\frac{R}{\mathfrak{m}^{n}}$ is
artinian, and so is $\frac{R}{P_{\beta}}$. This contradicts the existence
of the infinite decreasing sequence of ideals of $\frac{R}{P_{\beta}}$.
\end{proof}
\begin{defn}
Assume that the set $M_{\alpha}$ is non-empty and does not have an
maximal element. Assume also that there exists a key polynomial $Q\in\Lambda$
such that $\epsilon(Q)>\epsilon(M_{\alpha})$. We call a limit key
polynomial\emph{\index{Limit key polynomial}} every polynomial of
minimal degree which has this property.
\end{defn}

~
\begin{defn}
Let $\left(Q_{i}\right)_{i\in\mathbb{N}}$ be a sequence of key polynomials.
We say that it is a sequence of immediate successors if for every
integer $i$, we have $Q_{i}<Q_{i+1}$.
\end{defn}

\begin{prop}
If there are no limit key polynomials then there exists a finite or
infinite sequence of immediate successors $Q_{1}<\ldots<Q_{i}<\ldots$
such that the sequence $\left\{ \epsilon(Q_{i})\right\} $ is cofinal
in $\epsilon(\Lambda)$. Equivalently, such that 
\[
\forall Q\in\Lambda\text{ }\exists i\text{ such that }\epsilon(Q_{i})\geq\epsilon(Q).
\]
\end{prop}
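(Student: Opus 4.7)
The plan is to construct the sequence recursively by always taking $Q_{n+1}$ to be a maximal-$\epsilon$ element of the set $M_{Q_n} = \{P \in \Lambda : Q_n < P\}$ of immediate successors, terminating if $M_{Q_n}$ is empty. I would start with $Q_1$ equal to any key polynomial of minimal degree in $\Lambda$.

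The key observation I would establish first is: under the no-limit hypothesis, if $M_\beta$ has no maximum with respect to $\epsilon$, then $M_\gamma = \emptyset$ for every $\gamma > \beta$. Indeed, for any $P \in M_\gamma$ and any $R \in M_\beta$, the key polynomial property of $P$ applied to $R$ (since $\deg R = \beta < \gamma = \deg P$) gives $\epsilon(R) < \epsilon(P)$; thus $\epsilon(P) > \epsilon(M_\beta)$, and $P$ would witness a limit key polynomial associated to $\beta$, contradicting the hypothesis.

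Given $Q_n$ with $M_{Q_n} \neq \emptyset$, let $\alpha$ be the common degree of the elements of $M_{Q_n}$, so $M_{Q_n} \subseteq M_\alpha$. If $M_\alpha$ has a maximum $P^*$, then $P^* \in M_{Q_n}$ (since $\epsilon(P^*) \geq \epsilon(P) > \epsilon(Q_n)$ for any $P \in M_{Q_n}$) and in fact $P^* = \max M_{Q_n}$; set $Q_{n+1} = P^*$. Otherwise, by the observation above, $\alpha$ is the top degree occurring in $\Lambda$, so every $\epsilon$-value in $\epsilon(\Lambda)$ strictly above $\epsilon(Q_n)$ lies in $\epsilon(M_\alpha)$. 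I would finish the construction by selecting a countable $\epsilon$-increasing chain $Q_{n+1} < Q_{n+2} < \cdots$ in $\{P \in M_\alpha : \epsilon(P) > \epsilon(Q_n)\}$ which is $\epsilon$-cofinal there; consecutive elements are automatically immediate successors since they all share degree $\alpha$, so the defining minimal-degree condition is trivially satisfied.

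For cofinality, given any $Q \in \Lambda$ of degree $\beta$: if the sequence terminated at some $Q_N$, then trivially $\epsilon(Q) \leq \epsilon(Q_N)$ since $M_{Q_N} = \emptyset$; otherwise either some $\deg Q_i$ strictly exceeds $\beta$, giving $\epsilon(Q_i) > \epsilon(Q)$ by the key polynomial property, or $\deg Q_i = \beta$ for some $i$ and our maximal choice gives $\epsilon(Q_i) \geq \epsilon(Q)$, or the sequence entered a terminal within-$M_\alpha$ chain that is cofinal in $\epsilon(M_\alpha)$ by construction. The main obstacle is precisely the last case: producing a countable $\epsilon$-cofinal chain in $M_\alpha$ requires $\epsilon(M_\alpha)$ to have countable cofinality, which is not automatic for arbitrary value groups but holds in the valuation-theoretic settings (essentially of finite type over a field, noetherian, suitably archimedean) in which this proposition is applied.
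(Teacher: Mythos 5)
Your argument is correct and rests on the same dichotomy as the paper's proof, namely whether every degree class $M_{\alpha}$ admits an $\epsilon$-maximal element or some class does not; the difference is that the paper runs this by contrapositive (assuming no sequence of immediate successors is cofinal and producing a limit key polynomial), while you run it forwards as an explicit recursive construction. Your key observation --- that the absence of a maximum in a nonempty $M_{\beta}$, combined with the no-limit hypothesis, forces $M_{\gamma}=\emptyset$ for all $\gamma>\beta$ --- is exactly the contrapositive of the paper's second case, and your ``take $\max M_{\alpha}$ at each step'' is the paper's first case read constructively. What your version buys is transparency on the one genuine subtlety, which you correctly isolate: when the construction terminates inside a single degree class $M_{\alpha}$ with no maximum, one must extract a \emph{countable} $\epsilon$-cofinal chain from $\epsilon(M_{\alpha})$. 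The paper's own proof silently relies on the same point, in the step asserting that for any $R\in M_{\alpha}$ there exists $i$ with $\epsilon(R)<\epsilon(Q_{i})$, which presupposes that the chosen countable chain is cofinal in $\epsilon(M_{\alpha})$ rather than merely increasing. As you note, this is harmless in every setting where the proposition is applied: the value group has finite rational rank there, so $\epsilon(\Lambda)$ sits inside the countable set $\left(\Gamma\otimes_{\mathbb{Z}}\mathbb{Q}\right)\cup\left\{ -\infty\right\}$ and every subset has countable cofinality. One cosmetic remark: in your final case analysis, a $Q$ with $\deg Q=\deg Q_{1}$ and $\epsilon(Q)>\epsilon(Q_{1})$ is not literally covered by ``our maximal choice,'' since $Q_{1}$ is only required to be of minimal degree; but such a $Q$ necessarily lies in $M_{Q_{1}}$, so it is dominated by $Q_{2}$ or by the terminal chain, and the argument closes.
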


\begin{proof}
We do the proof by contrapositive.

Assume that for every finite or infinite sequence of immediate successors
key polynomials $(Q_{i})$, the sequence $\left\{ \epsilon(Q_{i})\right\} $
is not cofinal in $\epsilon(\Lambda)$. Let us show that there exists
a limit key polynomial.

First let assume that for every $\alpha\in\Omega=\left\{ \beta\text{ such that }M_{\beta}\neq\emptyset\right\} $,
$M_{\alpha}$ has a maximal element. It means that 
\[
\forall\alpha\in\Omega\text{ }\exists R_{\alpha}\in M_{\alpha}\text{ such that }\forall Q\in M_{\alpha}\text{, }\epsilon(R_{\alpha})\geq\epsilon(Q).
\]

We set $M:=\left\{ R_{\alpha}\right\} _{\alpha\in\Omega}$. All elements
in $M$ are of distinct degree, so they are strictly ordered by their
degrees. So if $\alpha<\alpha'$, then $\deg(R_{\alpha})<\deg(R_{\alpha'})$.
Since $R_{\alpha'}$ is a key polynomial, by definition, we have $\epsilon(R_{\alpha})<\epsilon(R_{\alpha'})$
as soon as $\alpha<\alpha'$. Then in $M$ the elements are strictly
ordered by their values of$\epsilon$.

Let us show that they are immediate successors. Let $R_{\alpha}$
and $R_{\alpha'}$ be two consecutive elements of $M$. We know that
\[
\alpha=\deg(R_{\alpha})<\deg(R_{\alpha'})=\alpha'
\]
 and $\epsilon(R_{\alpha})<\epsilon(R_{\alpha'})$. We want to show
that $R_{\alpha'}$ is of minimal degree for the property. So let
us set $R\in\Lambda$ such that $\epsilon(R_{\alpha})<\epsilon(R)$
and $\deg(R)\leq\deg(R_{\alpha'})$. Let us show that $\deg(R)=\deg(R_{\alpha'})=\alpha'$.
Since $\epsilon(R_{\alpha})<\epsilon(R)$ and since $R_{\alpha}$
is a key polynomial, by definition, 
\[
\deg(R_{\alpha})=\alpha\leq\deg(R)\leq\alpha'.
\]
 Since $R$ is a key polynomial, if we had $\deg(R)=\deg(R_{\alpha})$,
then we should have $\epsilon(R_{\alpha})\geq\epsilon(R)$, which
is a contradiction. Let us set $\lambda:=\deg(R),$ so we have $\alpha<\lambda\leq\alpha'$,
$R\in M_{\lambda}$ and $R_{\lambda}\in M$. Since the polynomials
in $M$ are strictly ordered by their degrees and that $R_{\alpha}$
and $R_{\alpha'}$ are consecutive, then we have $\lambda=\alpha'$,
and so $R_{\alpha}<R_{\alpha'}$.

So the set $M$ is a sequence of immediate successors. By hypothesis,
the sequence $\epsilon(M)$ is not cofinal, so there exists $R\in\Lambda$
such that $\epsilon(R)>\epsilon(M)$. But then there exists $\alpha$
such that $R\in M_{\alpha}$ and then $\epsilon(R_{\alpha})\geq\epsilon(R)>\epsilon(R_{\alpha})$.
It is a contradiction.

So there exists $\alpha\in\Omega$ such that $M_{\alpha}$ does not
have any maximal ideal. Then we have a sequence: 
\[
\epsilon(Q_{1})<\epsilon(Q_{2})<\ldots<\epsilon(Q_{i})<\ldots
\]
where $Q_{i}$ is an element of $M_{\alpha}$ for every integer $i$.

Let us show that the $Q_{i}$ are immediate successors. Let $R\in\Lambda$
such that $\epsilon(Q_{i})<\epsilon(R)$ and $\deg(R)\leq\deg(Q_{i+1})=\alpha$.
Since $Q_{i}$ is a key polynomial, by definition, $\deg(R)\geq\deg(Q_{i})=\alpha$.
So $\deg(R)=\deg(Q_{i+1})=\alpha$, and $Q_{i+1}$ is of minimal degree
for the property. Then for every integer $i$, we have $Q_{i}<Q_{i+1}$.

By hypothesis, the sequence of the $Q_{i}$ is a sequence of immediate
successors, so the sequence $\left(\epsilon(Q_{i})\right)_{i}$ is
not cofinal. So there exists a key polynomial $Q\in\Lambda$ such
that $\epsilon(Q)>\epsilon(Q_{i})$ for every integer $i$. Let $R\in M_{\alpha}$,
since $M_{\alpha}$ does not have a maximal element, there exists
$i$ such that $\epsilon(R)<\epsilon(Q_{i})<\epsilon(Q)$. So there
exists a key polynomial $Q\in\Lambda$ such that $\epsilon(Q)>\epsilon(M_{\alpha})$.
Then the polynomial $Q$ is a limit key polynomial.
\end{proof}
\begin{thm}
\label{thm:il existe une suite cofinale}There exists a finite or
infinite sequence $(Q_{i})_{i\ge1}$ of key polynomials such that
for each $i$ the polynomial $Q_{i+1}$ is either an optimal or a
limit immediate successor of $Q_{i}$ and such that the sequence $\left\{ \epsilon(Q_{i})\right\} $
is cofinal in $\epsilon(\Lambda)$ where $\Lambda$ is the set of
key polynomials.
\end{thm}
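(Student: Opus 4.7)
The plan is to construct the sequence by a greedy induction that extends the argument of the preceding proposition so as to accommodate limit immediate successor steps. Pick any $Q_{1}\in\Lambda$. Assuming $Q_{1},\dots,Q_{i}$ have been constructed, examine $M_{Q_{i}}$, remembering that by Definition \ref{def: successeur immediat} every element of $M_{Q_{i}}$ has the same degree $d$ (the minimal degree attaining $\epsilon>\epsilon(Q_{i})$). Three cases arise.

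First, if $M_{Q_{i}}$ has a maximum $P$, set $Q_{i+1}:=\widetilde{P}$, the optimal version provided by Proposition \ref{prop:on peut le prendre optimal}. Second, if $M_{Q_{i}}$ has no maximum but $\epsilon(M_{Q_{i}})$ is bounded above by $\epsilon(Q')$ for some $Q'\in\Lambda$, then Definition \ref{def:limitpc} supplies a limit immediate successor of $Q_{i}$, which I take as $Q_{i+1}$. Third, if $M_{Q_{i}}$ has no maximum and $\epsilon(M_{Q_{i}})$ is already cofinal in $\epsilon(\Lambda)$, then pick a strictly increasing cofinal sequence $P_{1}<P_{2}<\cdots$ inside $M_{Q_{i}}$. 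Since every $P_{j}$ has degree $d$ and (because $P_{j}$ is itself a key polynomial) no key polynomial of smaller degree can satisfy $\epsilon>\epsilon(P_{j})$, consecutive pairs $P_{j}<P_{j+1}$ are optimal immediate successor relations after replacement by the optimal version of Proposition \ref{prop:on peut le prendre optimal}. Appending the $P_{j}$ after $Q_{i}$ produces a cofinal sequence and the construction terminates.

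It remains to handle the possibility that the construction never enters the third case, producing an $\omega$-sequence $(Q_{i})$ built purely by the first two rules. Suppose for contradiction that some $Q\in\Lambda$ satisfies $\epsilon(Q)>\epsilon(Q_{i})$ for every $i$. Apply Proposition \ref{prop: l'un ou l'autre optimal} to the pair $(Q_{1},Q)$ to obtain a finite chain $Q_{1}=R_{1}<R_{2}<\cdots<R_{h}=Q$ of optimal or limit immediate successors. Compare this chain inductively with $(Q_{i})$: at each stage, the greedy choice of $Q_{i+1}$ (the maximum of $M_{Q_{i}}$, or a minimal-degree limit immediate successor whose $\epsilon$ exceeds $\epsilon(M_{Q_{i}})$) dominates the corresponding $R_{j}$ lying above $Q_{i}$, in the sense that the chain from $Q_{i+1}$ to $Q$ is strictly shorter than the chain from $Q_{i}$ to $Q$. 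After at most $h$ steps we reach $\epsilon(Q_{j})\geq\epsilon(Q)$, contradicting the assumption.

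The main obstacle is the final comparison step: verifying rigorously that the greedy choice at each stage shortens any chain of optimal/limit immediate successors joining $Q_{i}$ to a putative escaping $Q$. The ingredients for this — detection of immediate successors via $Q$-expansions (Proposition \ref{prop: successeurs immediats et somme des formes initiales}), the monotonicity of truncated valuations (Proposition \ref{prop: inegalite des valuations tronqu=0000E9es}), and the interpolation from Proposition \ref{prop: l'un ou l'autre optimal} — are already in hand, but a careful case split between the maximum case and the limit case is needed to turn these into the strict-decrease-of-chain-length statement that drives the contradiction.
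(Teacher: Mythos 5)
Your greedy construction is a legitimate alternative to the paper's target-driven one (the paper repeatedly picks an escaping key polynomial and interpolates back to it with Proposition \ref{prop: l'un ou l'autre optimal}), and Cases 1--3 are essentially sound; but the step you yourself flag as ``the main obstacle'' is a genuine gap, and I do not believe it can be closed in the form you propose. The quantity ``the length of the chain from $Q_{i}$ to $Q$'' is not well defined: chains of (optimal or limit) immediate successors joining two key polynomials are neither unique nor of canonical length, so there is no single number for the greedy step to decrease. Concretely, after one step the greedy sequence and the interpolating chain $R_{1}<\dots<R_{h}=Q$ diverge ($Q_{2}\neq R_{2}$ in general), and to continue you would have to re-interpolate from $Q_{2}$ to $Q$ with no control on the length of the new chain; neither Proposition \ref{prop: successeurs immediats et somme des formes initiales} nor the monotonicity of truncated valuations gives such control.

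The invariant that does work --- and the one the paper itself invokes --- is the degree, not a chain length. If $Q_{i+1}$ is the maximum of $M_{Q_{i}}$, then any key polynomial $S$ with $\epsilon\left(S\right)>\epsilon\left(Q_{i+1}\right)$ satisfies $\deg_{X}\left(S\right)>\deg_{X}\left(Q_{i+1}\right)$ (otherwise $S\in M_{Q_{i}}$ would contradict maximality), so the degree strictly increases at the following step; if $Q_{i+1}$ is a limit immediate successor, then $\deg_{X}\left(Q_{i+1}\right)>\deg_{X}\left(Q_{i}\right)$ already. Hence an infinite run of Cases 1 and 2 forces the degrees to tend to infinity, and cofinality is then immediate from Definition \ref{def:pc}: every $Q\in\Lambda$ eventually has $\deg_{X}\left(Q\right)<\deg_{X}\left(Q_{i}\right)$ and therefore $\epsilon\left(Q\right)<\epsilon\left(Q_{i}\right)$. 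Replace your comparison argument by this. Two smaller points: Proposition \ref{prop:on peut le prendre optimal} does not guarantee $\epsilon\left(\widetilde{P}\right)=\epsilon\left(P\right)$, so taking $Q_{i+1}:=\widetilde{P}$ in Case 1 may fall short of the maximum and break the degree-increase step; as in the proof of Proposition \ref{prop: l'un ou l'autre optimal}, you must insert the extra step $Q_{i}<\widetilde{P}<P$ when the values differ. And in Case 2, Definition \ref{def:limitpc} requires every element of $M_{Q_{i}}$ to have degree equal to $\deg_{X}\left(Q_{i}\right)$, which your case hypothesis does not ensure; you should first pass to some $P\in M_{Q_{i}}$ so that the degrees agree before invoking a limit immediate successor.
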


\begin{proof}
We know that $x$ is a key polynomial. If for every key polynomial
$Q\in\Lambda$, we have $\epsilon\left(x\right)\geq\epsilon\left(Q\right)$,
then the sequence $\left\{ \epsilon(x)\right\} $ is cofinal in $\epsilon(\Lambda)$
and it is done. Otherwise, it exists a key polynomial $Q\in\Lambda$
such that $\epsilon\left(x\right)<\epsilon\left(Q\right)$. If it
exists a maximal element among the key polynomials of same degree
than $Q$, then we exchange $Q$ by this element. By Proposition \ref{prop: l'un ou l'autre optimal},
it exists a finite sequence $Q_{1}=x<\dots<Q_{p}=Q$ of optimal (possibly
limit) immediate successors which begins at $x$ and ends at $Q$.

If for every key polynomial $Q'\in\Lambda$, there exists a key polynomial
of this sequence $Q_{i}$ such that $\epsilon\left(Q_{i}\right)\geq\epsilon\left(Q'\right)$,
then the sequence $\left\{ \epsilon(Q_{i})\right\} $ is cofinal in
$\epsilon(\Lambda)$ and it is over.

Otherwise there exists a polynomial $Q'\in\Lambda$ such that for
every integer $i\in\left\{ 1,\dots,p\right\} $, we have $\epsilon\left(Q_{i}\right)<\epsilon\left(Q'\right)$.
So $\epsilon\left(Q_{p}\right)<\epsilon\left(Q'\right)$ and we use
Proposition \ref{prop: l'un ou l'autre optimal} again to construct
a sequence of optimal (possibly limit) immediate successors which
begins at $Q_{p}$ and ends at $Q'$. So we have a sequence $Q_{1}=x,\dots,Q_{r}=Q'$
of optimal (possibly limit) immediate successors which begins at $x$
and ends at $Q'$.

We iterate the process until the sequence $\left\{ \epsilon(Q_{i})\right\} $
is cofinal in $\epsilon(\Lambda)$. If $Q_{i}$ is maximal among the
set of key polynomials of degree $\deg_{X}\left(Q_{i}\right)$, then
$\deg_{X}\left(Q_{i}\right)<\deg_{X}\left(Q_{i+1}\right)$. If $Q_{i}<_{\lim}Q_{i+1}$,
we have again $\deg_{X}\left(Q_{i}\right)<\deg_{X}\left(Q_{i+1}\right)$.
In fact, the degree of the polynomials of the sequence stricly increase
at least each two steps, so the process stops.
\end{proof}
\begin{prop}
\label{prop: binoooomme}Assume that $k=k_{\nu}$. There exists a
finite or infinite sequence $(Q_{i})_{i\ge1}$ of key polynomials
such that for each $i$ the polynomial $Q_{i+1}$ is either an optimal
or a limit immediate successor of $Q_{i}$ and such that the sequence
$\left\{ \epsilon(Q_{i})\right\} $ is cofinal in $\epsilon(\Lambda)$
where $\Lambda$ is the set of key polynomials. 

And this sequence is such that: if $Q_{i}<Q_{i+1}$, then the $Q_{i}$-expansion
of $Q_{i+1}$ has exactly two terms.
\end{prop}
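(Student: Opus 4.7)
The plan is to take the cofinal sequence $(Q_i)_{i\ge1}$ produced by Theorem~\ref{thm:il existe une suite cofinale} and refine it so that at every ordinary (non-limit) step $Q_i<Q_{i+1}$ the $Q_i$-expansion of $Q_{i+1}$ has exactly two nonzero terms. The hypothesis $k=k_\nu$ is precisely what will make this reduction possible.

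Fix one such ordinary step and consider the $Q_i$-expansion $Q_{i+1}=\sum_{j=0}^{m}q_jQ_i^{j}$. By Lemma~\ref{lem:coeff dominant et succ imm} we have $q_m=1$, and by Proposition~\ref{prop: successeurs immediats et somme des formes initiales} combined with the optimality of $Q_{i+1}$ (guaranteed by Proposition~\ref{prop:on peut le prendre optimal}), we will have $\sum_{j\in S_{Q_i}(Q_{i+1})}\mathrm{in}_\nu(q_jQ_i^{j})=0$ in $\mathrm{gr}_\nu K[X]$, with all summands lying in the single homogeneous component of degree $m\nu(Q_i)$. Dividing by $\mathrm{in}_\nu(Q_i^{m})$, each ratio $\lambda_j:=\mathrm{in}_\nu(q_jQ_i^{j})/\mathrm{in}_\nu(Q_i^{m})$ lies in the degree-zero part of the fraction field of $\mathrm{gr}_\nu K[X]$, which by hypothesis equals $k_\nu=k$. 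Hence $\lambda_j\in k$ with $\lambda_m=1$, $\lambda_j\ne 0$ for every $j\in S_{Q_i}(Q_{i+1})$ (by optimality), and $\sum_j\lambda_j=0$; in particular there exists $j_0\in S_{Q_i}(Q_{i+1})$ with $j_0<m$.

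We will then set $\widetilde Q_{i+1}:=Q_i^{m}-\lambda_{j_0}^{-1}q_{j_0}Q_i^{j_0}$, a polynomial whose $Q_i$-expansion has exactly two nonzero terms. By construction $\mathrm{in}_\nu(Q_i^{m})-\lambda_{j_0}^{-1}\mathrm{in}_\nu(q_{j_0}Q_i^{j_0})=(1-\lambda_{j_0}^{-1}\lambda_{j_0})\mathrm{in}_\nu(Q_i^{m})=0$, so $\nu_{Q_i}(\widetilde Q_{i+1})<\nu(\widetilde Q_{i+1})$, and Proposition~\ref{prop: successeurs imm=0000E9diats et in=0000E9galit=0000E9 des valuations} will yield $Q_i<\widetilde Q_{i+1}$. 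Since $\deg\widetilde Q_{i+1}=m\deg Q_i=\deg Q_{i+1}$, and the latter is by definition the minimal degree of an immediate successor of $Q_i$, the contradiction-by-minimality argument from the proof of Proposition~\ref{prop:on peut le prendre optimal} (assume $\widetilde Q_{i+1}$ is not a key polynomial, extract a key polynomial $P$ of strictly smaller degree with $\epsilon(P)\ge\epsilon(\widetilde Q_{i+1})>\epsilon(Q_i)$, contradicting the minimality of $\deg Q_{i+1}$) will apply verbatim and show that $\widetilde Q_{i+1}$ is itself a key polynomial, hence an optimal two-term immediate successor of $Q_i$.

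To assemble the final cofinal sequence, we will reproduce the inductive construction of Theorem~\ref{thm:il existe une suite cofinale}: given the current partial sequence ending at $\widetilde Q_l$, if $\{\epsilon(\widetilde Q_j)\}_{j\le l}$ is not yet cofinal in $\epsilon(\Lambda)$, pick some $Q\in\Lambda$ with $\epsilon(Q)>\epsilon(\widetilde Q_l)$, invoke Proposition~\ref{prop: l'un ou l'autre optimal} to obtain a finite chain of optimal or limit immediate successors from $\widetilde Q_l$ to $Q$, and apply the two-term reduction above to each ordinary link of the chain while leaving limit links unchanged. Iterating this procedure---interposing further two-term reductions within the same degree class or limit immediate successor steps whenever necessary to reach $\epsilon(Q)$---produces a sequence with the required two-term property, whose cofinality follows as in Theorem~\ref{thm:il existe une suite cofinale} because the degree strictly increases across every limit link and eventually across successive ordinary links. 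The principal technical obstacle is the verification that the two-term polynomial $\widetilde Q_{i+1}$ is genuinely a key polynomial, which is exactly the step where the minimality argument of Proposition~\ref{prop:on peut le prendre optimal} is indispensable.
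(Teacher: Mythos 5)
Your argument is essentially correct, but it follows a genuinely different route from the paper's. The paper does not post-process the cofinal sequence of Theorem \ref{thm:il existe une suite cofinale}: it rebuilds the sequence from scratch, setting $\alpha:=\min\{h\in\mathbb{N}^{*}\ :\ h\nu(Q_i)\in\Delta_{<a}\}$, picking $f$ of degree $<a$ with $\nu(f)=\alpha\nu(Q_i)$, using $k=k_\nu$ to find $c\in k^{*}$ with $\mathrm{in}_\nu(Q_i^{\alpha})=\mathrm{in}_\nu(cf)$, and taking $Q_{i+1}:=Q_i^{\alpha}-cf$; the minimality of $\alpha$ then yields directly the minimality of $\deg Q_{i+1}$, since a lower-degree candidate would produce a shorter dependence relation among the powers $\mathrm{in}_\nu(Q_i)^{j}$. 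Your collapsing of an already-given optimal successor rests on the same residue-field identification (each $\lambda_j$ lies in the degree-zero part, which is $k_\nu=k$), but it shifts the work elsewhere: you must re-prove that $\widetilde{Q}_{i+1}$ is a key polynomial (which you do, correctly, by the minimality argument of Proposition \ref{prop:on peut le prendre optimal}), and, since $\epsilon(\widetilde{Q}_{i+1})$ need not equal $\epsilon(Q_{i+1})$, every subsequent link of the chain has to be re-derived before being collapsed in its turn; you acknowledge this re-assembly and the cofinality check only in passing, though at roughly the level of detail of the paper's own Theorem \ref{thm:il existe une suite cofinale}. One point you should make explicit: your $j_0$ is in fact forced to equal $0$, for if some $j_0$ with $0<j_0<m$ belonged to $S_{Q_i}(Q_{i+1})$, then $Q_i^{m-j_0}-c_{j_0}^{-1}q_{j_0}$ would be a polynomial of degree strictly less than $\deg Q_{i+1}$ with cancelling initial forms, hence with $\epsilon$ exceeding $\epsilon(Q_i)$, contradicting the minimality of $\deg Q_{i+1}$. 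This matters because the shape in which the proposition is actually invoked later (Section 7) is $Q_{i+1}=Q_i^{\alpha}+b_0$ with $b_0$ of degree less than $\deg Q_i$, which the bare conclusion ``exactly two terms'' does not by itself deliver; the paper's direct construction gives this shape, together with an explicit description of $\alpha$ that is reused in the Puiseux packages.
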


\begin{proof}
We have $Q_{1}=x$, and we assume that $Q_{1},Q_{2},\dots,Q_{i}$
have been constructed. We note $a:=\deg_{x}\left(Q_{i}\right)$ and
recall that 
\[
G_{<a}=\sum\limits _{\deg_{x}\left(P\right)<a}\mathrm{in}_{\nu_{Q_{i}}}\left(P\right)G_{\nu}.
\]

If $Q_{i}$ is maximal in $\Lambda$, we stop. Otherwise, $Q_{i}$
is not maximal and so it has an immediate successor.

We set $\alpha:=\min\left\{ h\in\mathbb{N}^{\ast}\text{ such that }h\nu\left(Q_{i}\right)\in\Delta_{<a}\right\} $
where $\Delta_{<a}$ is the subgroup of $\Gamma$ generated by the
values of the elements of $G_{<a}$.

In fact, there exists a polynomial $f$ of degree strictly less than
$a$ such that $\alpha\nu\left(Q_{i}\right)=\nu\left(Q_{i}^{\alpha}\right)=\nu\left(f\right)\neq0$.

Then, since $k_{\nu}=k$, there exists $c\in k^{\ast}$ such that
$\mathrm{in}_{\nu}\left(Q_{i}^{\alpha}\right)=\mathrm{in}_{\nu}\left(cf\right)$.

We set $Q=Q_{i}^{\alpha}-cf$. By the proof of Proposition \ref{prop: successeurs immediats et somme des formes initiales},
we have $\epsilon\left(Q_{i}\right)<\epsilon\left(Q\right)$.

Let us show that $Q_{i}<Q$. We only have to show that $Q$ is of
minimal degree. So let us set $P$ a key polynomial such that $\epsilon\left(Q_{i}\right)<\epsilon\left(P\right)$.

Assume by contradiction that $\deg\left(P\right)<a\alpha$. We set
$P=\sum\limits _{j=0}^{\alpha-1}p_{j}Q_{i}^{j}$ the $Q_{i}$-expansion
of $P$. Then by the proof of Proposition \ref{prop: successeurs immediats et somme des formes initiales},
we have $\sum\limits _{j=0}^{\alpha-1}\mathrm{in}_{\nu}\left(p_{j}\right)\mathrm{in}_{\nu}\left(Q_{i}\right)^{j}=0$,
which contradicts the minimality of $\alpha$.

Then $Q$ is of minimal degree and $Q_{i}<Q$. Since it has just two
terms in his $Q_{i}$-expansion, it is an optimal immediate successor
of $Q_{i}$.

First case: $\alpha>1$. Then we set $Q_{i+1}:=Q$ and we iterate.

Second case: $\alpha=1$. Then all the elements of $M_{Q_{i}}$ have
same degree than $Q_{i}$. If $M_{Q_{i}}$ does not have a maximal
element, then we do the same thing than in the proof of Proposition
\ref{prop: l'un ou l'autre optimal} and we set $Q_{i+1}$ a limit
immediate successor of $Q_{i}$.

Otherwise, $M_{Q_{i}}$ has a maximal element $Q_{i+1}$. This element
has same degree as $Q_{i}$, so we have $Q_{i+1}=Q_{i}-h$ with $h$
of degree strictly less than the degree of $Q_{i}$. Then it is an
immediate successor of $Q_{i}$ which $Q_{i}$-expansion admits uniquely
two terms. So it is optimal, and this completes the proof.
\end{proof}
We now assume $k=k_{\nu}$ and consider $\mathcal{Q}:=\left(Q_{i}\right)_{i}$
a sequence of optimal (possibly limit) immediate successors such that
$\left(\epsilon\left(Q_{i}\right)\right)_{i}$ is cofinal in $\epsilon\left(\Lambda\right)$
and such that if $Q_{i}<Q_{i+1}$, then the $Q_{i}$-expansion of
$Q_{i+1}$ admits exactly two terms.
\begin{rem}
We keep the same hypothesis as in Example \ref{exa:pc}. Then $\mathcal{Q}=\left\{ z,Q\right\} $.
\end{rem}

\begin{cor}
For every polynomial $f$, there exists an index $i$ such that $\nu_{Q_{i}}(f)=\nu(f)$.
\end{cor}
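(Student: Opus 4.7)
The plan is to combine the existence of a non-degenerating key polynomial (Proposition \ref{prop:non degeneresence de polyn=0000F4mes par rapport au m=0000EAme polyn=0000F4me clef}) with the monotonicity of truncated valuations (Proposition \ref{prop: inegalite des valuations tronqu=0000E9es}) and the cofinality built into the sequence $\mathcal{Q}$.

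First, I would fix an arbitrary polynomial $f\in K[X]$ and apply Proposition \ref{prop:non degeneresence de polyn=0000F4mes par rapport au m=0000EAme polyn=0000F4me clef} to the single polynomial $f$. This yields a key polynomial $Q\in\Lambda$ such that $\nu_{Q}(f)=\nu(f)$, i.e.\ $f$ is non-degenerate with respect to $Q$.

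Next, I would invoke the cofinality of $\bigl(\epsilon(Q_{i})\bigr)_{i}$ in $\epsilon(\Lambda)$, which is guaranteed by the construction of $\mathcal{Q}$ in Theorem \ref{thm:il existe une suite cofinale} (refined by Proposition \ref{prop: binoooomme}). Since $Q\in\Lambda$, there exists an index $i$ such that $\epsilon(Q_{i})\geq\epsilon(Q)$.

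Finally, I would apply the second assertion of Proposition \ref{prop: inegalite des valuations tronqu=0000E9es}: from $\epsilon(Q)\leq\epsilon(Q_{i})$ and $\nu_{Q}(f)=\nu(f)$, we conclude $\nu_{Q_{i}}(f)=\nu(f)$, which is exactly what we wanted. No genuine obstacle is expected here; the corollary is really just a packaging of the two propositions with the cofinality property, and the only thing to check is that the hypotheses of Proposition \ref{prop:non degeneresence de polyn=0000F4mes par rapport au m=0000EAme polyn=0000F4me clef} (a single polynomial suffices, so the case $n=1$ of that statement is enough) and of Proposition \ref{prop: inegalite des valuations tronqu=0000E9es} are both available in our setting.
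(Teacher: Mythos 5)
Your proposal is correct and follows exactly the paper's own argument: Proposition \ref{prop:non degeneresence de polyn=0000F4mes par rapport au m=0000EAme polyn=0000F4me clef} produces a key polynomial $Q$ with $\nu_{Q}(f)=\nu(f)$, cofinality of $\left(\epsilon(Q_{i})\right)_{i}$ gives an index $i$ with $\epsilon(Q_{i})\geq\epsilon(Q)$, and Proposition \ref{prop: inegalite des valuations tronqu=0000E9es} transfers the equality to $\nu_{Q_{i}}(f)=\nu(f)$. Nothing is missing.
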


\begin{proof}
By Proposition \ref{prop:non degeneresence de polyn=0000F4mes par rapport au m=0000EAme polyn=0000F4me clef},
there exists a key polynomial$Q$ such that $\nu_{Q}(f)=\nu(f)$.

The sequence $\left\{ \epsilon(Q_{i})\right\} $ being cofinal, there
exists an index $i$ such that
\[
\epsilon(Q_{i})\geq\epsilon(Q).
\]
 By Proposition \ref{prop: inegalite des valuations tronqu=0000E9es},
$\nu_{Q}(f)\leq\nu_{Q_{i}}(f)$ and since $\nu_{Q}(f)=\nu(f)$, we
have $\nu_{Q_{i}}(f)=\nu(f)$.
\end{proof}
\begin{rem}
\label{rem: tout element est non degenere par rapport a un poly clef}So,
for every polynomial $f$, there exists a key polynomial $Q_{i}$
of the sequence $\mathcal{Q}$ such that $f$ is non-degenerate with
respect to $Q_{i}$.
\end{rem}

~
\begin{rem}
\label{rem: le r peut pas monter trop}Let $Q_{i}\in\mathcal{Q}$.
We don't assume here $k=k_{\nu}$.

We set $a_{i}:=\deg_{x}\left(Q_{i}\right)$ and $\Gamma_{<a_{i}}$
the group $\nu\left(G_{<a_{i}}\setminus\left\{ 0\right\} \right)$.

If $\nu\left(Q_{i}\right)\notin\Gamma_{<a_{i}}\otimes_{\mathbb{Z}}\mathbb{Q}$,
then $\epsilon\left(Q_{i}\right)$ is maximal in $\epsilon\left(\Lambda\right)$
and the sequence $\mathcal{Q}$ stops at $Q_{i}$.
\end{rem}

\newpage{}

\section{\label{sec:Monomialisation-des-polyn=0000F4mes}Monomialization of
the key polynomials.}

We set $K:=k\left(u_{1},\dots,u_{n-1}\right)$ and we consider the
extension $K(u_{n})$. We consider also a sequence of key polynomials
$\mathcal{Q}$ as in the section \ref{sectionsuitecofinale}.

In other words, $\mathcal{Q}=\left(Q_{i}\right)_{i}$ is a sequence
of optimal (possibly limit) immediate successors such that $\left(\epsilon\left(Q_{i}\right)\right)_{i}$
is cofinal in $\epsilon\left(\Lambda\right)$.

Let $f$ be an element of $R$. We know that this element is non-degenerate
with respect to a key polynomial of the sequence $\mathcal{Q}$. We
also know that every element non-degenerate with respect to a regular
system of parameters is monomializable. 

Then, to monomialize $f$, it is enough to monomialize the set of
key polynomials of this sequence. We assume in this part that the
residue field is $k$.

\subsection{Generalities.}

Let $r:=r\left(R,u,\nu\right)$ be the dimension of 
\[
\sum\limits _{i=1}^{n}\nu(u_{i})\mathbb{Q}
\]
 in $\Gamma\otimes_{\mathbb{Z}}\mathbb{Q}$. Renumbering, if necessary,
we can assume that $\nu\left(u_{1}\right),\dots,\nu\left(u_{r}\right)$
are rationally independent and we consider $\Delta$ the subgroup
of $\Gamma$ generated by $\nu(u_{1}),\ldots,\nu(u_{r})$. 
\begin{rem}
Let $\left(R,u\right)\to\left(R_{1},u^{\left(1\right)}\right)$ be
a framed blow-up. Then $r\leq r_{1}:=r\left(R_{1},u^{\left(1\right)},\nu\right)$. 
\end{rem}

~
\begin{rem}
We will consider the framed local blow-ups
\[
\left(R,u\right)\to\dots\to\left(R_{i},u^{\left(i\right)}\right)\to\dots
\]
 Then we write $r_{i}:=r\left(R_{i},u^{\left(i\right)},\nu\right)$.
\end{rem}

We set $E:=\left\{ 1,\dots,r,n\right\} $ and $\overline{\alpha^{(0)}}:=\min\limits _{h\in\mathbb{N}^{\ast}}\left\{ h\text{ such that }h\nu(u_{n})\in\Delta\right\} .$ 

So $\overline{\alpha^{(0)}}\nu(u_{n})=\sum\limits _{j=1}^{r}\alpha_{j}^{(0)}\nu(u_{j})$
with, renumbering the $\alpha_{i}^{(0)}$ if necessary, 
\[
\alpha_{1}^{(0)},\ldots,\alpha_{s}^{(0)}\geq0
\]
and 
\[
\alpha_{s+1}^{(0)},\ldots,\alpha_{r}^{(0)}<0.
\]

We set 
\[
w=(w_{1},\ldots,w_{r},w_{n})=(u_{1},\ldots,u_{r},u_{n})
\]
 and 
\[
v=(v_{1},\ldots,v_{t})=(u_{r+1},\ldots,u_{n-1}),
\]
 with $t=n-r-1$.

We set $x_{i}=\mathrm{in}_{\nu}u_{i}$ , and we have that $x_{1},\ldots,x_{r}$
are algebraically independent over $k$ in $G_{\nu}$. Let $\lambda_{0}$
be the minimal polynomial of $x_{n}$ over $k\left(x_{1},\ldots,x_{r}\right)$,
of degree $\alpha$.

We set:

\[
y=\prod\limits _{j=1}^{r}x_{j}^{\alpha_{j}^{(0)}},
\]
 
\[
\overline{y}=\prod\limits _{j=1}^{r}w_{j}^{\alpha_{j}^{(0)}},
\]
 
\[
z=\frac{x_{n}^{\overline{\alpha^{(0)}}}}{y}
\]
 and 
\[
\overline{z}=\frac{w_{n}^{\overline{\alpha^{(0)}}}}{\overline{y}}.
\]

We have 
\[
\lambda_{0}=X^{\alpha}+c_{0}y
\]
 where $c_{0}\in k$ , and $Z+c_{0}$ is the minimal polynomial $\lambda_{z}$
of $z$ over $\mathrm{gr}_{\nu}k\left(x_{1},\dots,x_{r}\right)$. 

Indeed, $k_{\nu}\simeq k\simeq\frac{k\left[Z\right]}{\left(\lambda_{z}\right)}$
so $\lambda_{z}$ is of degree $1$ in $Z$. Then $\lambda_{0}$ is
of degree $\overline{\alpha^{(0)}}$, and so $\alpha=\overline{\alpha^{\left(0\right)}}$.
\begin{defn}
\index{Monomializable}We say that $Q_{i}$ is \emph{monomializable}
if there exists a sequence of blow-ups $\left(R,u\right)\to\left(R_{l},u^{\left(l\right)}\right)$
such that in $R_{l}$, $Q_{i}$ can be written as $u_{n}^{\left(l\right)}$
multiplied by a monomial in $\left(u_{1}^{\left(l\right)},\dots,u_{r_{l}}^{\left(l\right)}\right)$
up to a unit of $R_{l}$, where $r_{l}:=r\left(R_{l},u^{\left(l\right)},\nu\right)$.
\end{defn}

We are going to show that there exists a local framed sequence that
monomializes all the $Q_{i}$.

We have $Q_{1}=u_{n}$, it is a monomial. By the blow-ups, $Q_{1}$
stays a monomial. So we have to begin monomializing $Q_{2}$.

Since we want to monomialize the key polynomials $Q_{i}$ of the sequence
$\mathcal{Q}$ constructed earlier by induction on $i$, we are going
to do something more general here: we consider an immediate successors
(possibly limit) key element $Q_{2}$ of $Q_{1}$ instead of immediate
successor (possibly limit) key polynomial of $Q_{1}$.

First, let us consider
\[
Q=w_{n}^{\alpha}+a_{0}b_{0}\overline{y}
\]
 where $b_{0}\in R$ such that $b_{\text{0}}\equiv c_{0}$ modulo
$\mathfrak{m}$ and $a_{0}\in R^{\times}$. 

A priori, $Q$ is not a key polynomial but we are going to prove that
we can reduce this case to the case $Q_{2}=Q$ by a local framed sequence
independent of $u_{n}$.

\subsection{Puiseux packages.}

~\\
Let 
\[
\gamma=(\gamma_{1},\ldots,\gamma_{r},\gamma_{n})=(\alpha_{1}^{(0)},\ldots,\alpha_{s}^{(0)},0,\ldots,0)
\]
 and 
\[
\delta=(\delta_{1},\ldots,\delta_{r},\delta_{n})=(0,\ldots,0,-\alpha_{s+1}^{(0)},\ldots,-\alpha_{r}^{(0)},\alpha).
\]

We have 
\[
w^{\delta}=w_{n}^{\delta_{n}}\prod\limits _{j=1}^{r}w_{j}^{\delta_{j}}=\frac{w_{n}^{\alpha}}{\prod\limits _{j=s+1}^{r}w_{j}^{\alpha_{j}^{(0)}}}
\]
 and 
\[
w^{\gamma}=\prod\limits _{j=1}^{s}w_{j}^{\alpha_{j}^{(0)}}.
\]
 So $\frac{w^{\delta}}{w^{\gamma}}=\frac{w_{n}^{\alpha}}{\prod\limits _{j=1}^{r}w_{j}^{\alpha_{j}^{(0)}}}=\overline{z}$.

Let us compute the value of $w^{\delta}$.

\[
\begin{array}{ccc}
\nu(w^{\delta}) & = & \alpha\nu(w_{n})-\sum\limits _{j=s+1}^{r}\alpha_{j}^{(0)}\nu(w_{j})\\
 & = & \alpha\nu(u_{n})-\sum\limits _{j=s+1}^{r}\alpha_{j}^{(0)}\nu(u_{j})\\
 & = & \sum\limits _{j=1}^{r}\alpha_{j}^{(0)}\nu(u_{j})-\sum\limits _{j=s+1}^{r}\alpha_{j}^{(0)}\nu(u_{j})\\
 & = & \sum\limits _{j=1}^{s}\alpha_{j}^{(0)}\nu(w_{j})\\
 & = & \nu(\prod\limits _{j=1}^{s}w_{j}^{\alpha_{j}^{(0)}})\\
 & = & \nu(w^{\gamma}).
\end{array}
\]

\begin{thm}
\label{thm:monomialise}There exists a local framed sequence 

\begin{equation}
\left(R,u\right)\overset{\pi_{0}}{\to}\left(R_{1},u^{(1)}\right)\overset{\pi_{1}}{\to}\cdots\overset{\pi_{l-1}}{\to}\left(R_{l},u^{(l)}\right)\label{eq:suitelocale}
\end{equation}

with respect to $\nu$, independent of $v$, and that has the next
properties:

For every integer $i\in\{1,\dots,l\}$, we write $u^{(i)}:=\left(u_{1}^{(i)},\dots,u_{n}^{(i)}\right)$
and we recall that $k$ is the residue field of $R_{i}$.
\begin{enumerate}
\item The blow-ups $\pi_{0},\dots,\pi_{l-2}$ are monomials.
\item We have $\overline{z}\in R_{l}^{\times}$.
\item We set $u^{\left(l\right)}:=\left(w_{1}^{(l)},\dots,w_{r}^{(l)},v,w_{n}^{(l)}\right)$.
So for every integer $j\in\left\{ 1,\dots,r,n\right\} $, $w_{j}$
is a monomial in $w_{1}^{(l)},\dots,w_{r}^{(l)}$ multiplied by an
element of $R_{l}^{\times}$. And for every integer $j\in\left\{ 1,\dots,r\right\} $,
$w_{j}^{(l)}=w^{\eta}$ where $\eta\in\mathbb{Z}^{r+1}$.
\item We have $Q=w_{n}^{(l)}\times\overline{y}$.
\end{enumerate}
\end{thm}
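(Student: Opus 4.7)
The plan is to build the sequence in two phases, exploiting the splitting $\gamma,\delta\in\mathbb{N}^{r+1}$ which have disjoint supports except in the $w_n$-slot and satisfy $\nu(w^\gamma)=\nu(w^\delta)$. Phase~1 will consist of the monomial blow-ups $\pi_0,\dots,\pi_{l-2}$, obtained by running the divisibility algorithm of Proposition~\ref{prop:existencesuitedivise} on the ``$w_1,\dots,w_r$-parts'' of $w^\gamma$ and $w^\delta$. Each intermediate blow-up will be along a subset $J_i\subset\{1,\dots,r\}$; because $\nu(w_1),\dots,\nu(w_r)$ are $\mathbb{Q}$-linearly independent, the trichotomy on page~\pageref{troiscas} places us in case~1 at every step, so the blow-up is monomial, the residue field stays $k$, and $n_i=n$. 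The sequence is independent of $v$ and $w_n$, and by Proposition~\ref{prop:chgmtvari} each old $w_j$ with $j\le r$ becomes a unit of $R_{l-1}$ times a monomial in $w^{(l-1)}_1,\dots,w^{(l-1)}_r$, with each $w^{(l-1)}_j$ itself a Laurent monomial $w^\eta$ in the old~$w_i$'s. This yields~(1) and the part of~(3) concerning the first $r$ coordinates at level $l-1$.

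At the end of Phase~1 we can arrange $\overline{z}=w_n^\alpha/M$, where $M$ is an explicit monomial in $w^{(l-1)}_1,\dots,w^{(l-1)}_r$ of value $\alpha\nu(w_n)$. Phase~2 is the single blow-up $\pi_{l-1}$, chosen along a subset $J_l\ni n$ that realizes the Perron relation between $w_n^\alpha$ and $M$. This blow-up absorbs $w_n$ into the first $r$ coordinates: afterwards $w_n$ itself becomes a unit of $R_l$ times a monomial in $w^{(l)}_1,\dots,w^{(l)}_r$, which completes~(3). Since $k_\nu=k$, we are in case~3 of the trichotomy, $n_l=n$, and Remark~\ref{rem: le poly min est de degre 1} applies; we localize at the chart determined by the residue of $\overline{z}$ in $k$, which the relation $\lambda_z=Z+c_0$ identifies as~$-c_0$. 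In that chart $\overline{z}$ is a unit of $R_l$, giving~(2), and the very cancellation $\mathrm{in}_\nu(w_n^\alpha)+\mathrm{in}_\nu(a_0b_0\overline{y})=0$ that guarantees $\nu(Q)>\nu(w_n^\alpha)$ ensures $Q/\overline{y}=\overline{z}+a_0b_0\in\mathfrak{m}_l$; we set $w^{(l)}_n:=Q/\overline{y}$, and the factorization $Q=w^{(l)}_n\,\overline{y}$ gives~(4).

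The main obstacle will be Phase~2: the single non-monomial blow-up $\pi_{l-1}$ has to accomplish three things at once --- unitize $\overline{z}$, express $w_n$ as a monomial in the new $w^{(l)}_j$'s, and promote $Q/\overline{y}$ to a regular parameter. The correct choice of $J_l$ and the correct chart are both dictated by the Perron relation $\alpha\nu(w_n)=\nu(M)$ and by the residue $-c_0$ of $\overline{z}$ read off from $\lambda_z$. The delicate verification is that $w^{(l)}_n=\overline{z}+a_0b_0$ really extends $\{w^{(l)}_1,\dots,w^{(l)}_r,v\}$ to a full regular system of parameters of $R_l$; this will reduce to the residue cancellation in $k$ together with the strict inequality $\nu(\overline{z}+a_0b_0)>0$ coming from the initial-form identity defining~$Q$.
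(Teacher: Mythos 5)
There is a genuine structural gap in your two--phase decomposition. In Phase~1 you restrict every center to $J_i\subset\{1,\dots,r\}$, so the variable $u_n$ is never touched and the exponent $\alpha$ of $w_n$ in $\overline{z}=w_n^{\alpha}/\overline{y}$ never changes. A single framed blow-up performs only one division step of the Perron/Euclidean algorithm on the values involved, so your Phase~2 blow-up $\pi_{l-1}$ along some $J_l\ni n$ cannot, in general, turn $w_n^{\alpha}/M$ into a unit: that would require $\alpha=1$ and $\nu(w_n^{(l-1)})$ to coincide with the value of one of the other coordinates already, which is precisely the degenerate situation one has to \emph{reach} by many blow-ups whose centers contain $u_n$. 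The paper instead runs the divisibility algorithm of Proposition~\ref{prop:existencesuitedivise} on the full pair $(w^{\delta},w^{\gamma})$ --- with $n$ allowed in the centers throughout --- takes the sequence minimal, and only \emph{afterwards} proves that $\pi_0,\dots,\pi_{l-2}$ are monomial.

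This points to the second gap: your justification of property~(1) by the $\mathbb{Q}$-linear independence of $\nu(w_1),\dots,\nu(w_r)$ does not apply, because the intermediate centers do contain $u_n$ and $\beta_n$ is rationally dependent on $\beta_1,\dots,\beta_r$. The actual reason the intermediate blow-ups are monomial is Lemma~\ref{lem:pgcdtotal} combined with minimality: every $\mathbb{Z}$-linear relation among $\beta_1^{(i)},\dots,\beta_r^{(i)},\beta_n^{(i)}$ is an integer multiple of $\sum_q(\gamma_q^{(i)}-\delta_q^{(i)})\beta_q^{(i)}=0$, whose coefficients have total gcd $1$; hence a tie $\beta_q^{(i)}=\beta_1^{(i)}$ forces $w_q^{(i)}/w_1^{(i)}=\overline{z}^{\pm1}$, so $\overline{z}\in R_{i+1}^{\times}$ and $i=l-1$ (Lemma~\ref{lem:chaqueeclatementestmonomial}). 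Without this uniqueness-of-relation argument you have no control on when a non-monomial step can occur, and properties~(1)--(4) do not follow. Your identification of the last chart via $\lambda_z=Z+c_0$ and the factorization $Q=w_n^{(l)}\overline{y}$ are in the right spirit, but they sit on top of a construction that, as set up, does not produce the required sequence.
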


\begin{proof}
We apply Proposition \ref{prop:existencesuitedivise} to $(w^{\delta},w^{\gamma})$
and so we obtain a local framed sequence for $\nu$, independent of
$v$ and such that $w^{\gamma}\mid w^{\delta}$ in $R_{l}$. 

By Proposition \ref{prop:divaleur} and the fact that $w^{\delta}$
and $w^{\gamma}$ have same value, we have that $w^{\delta}\mid w^{\gamma}$
in $R_{l}$. In fact $\overline{z},\overline{z}^{-1}\in R_{l}^{\times}.$
So we have $(2)$.

We choose the local sequence to be minimal, in other words the sequence
made by $\pi_{0},\ldots,\pi_{l-2}$ does not satisfy the conclusion
of the Proposition \ref{prop:existencesuitedivise} for $(w^{\delta},w^{\gamma})$.
Now we are going to prove that this sequence satisfies the five properties
of Theorem \ref{thm:monomialise}. Let $i\in\left\{ 0,\dots,l\right\} $.
We write $w^{(i)}=\left(w_{1}^{(i)},\dots,w_{r}^{(i)},w_{n}^{(i)}\right)$,
with $r=n-t-1$ and define $J_{i}$, $A_{i}$, $B_{i}$, $j_{i}$
and $D_{i}$ similarly that we defined $J$, $A$, $B$, $j$ and
$D_{1}$, considering the $i$-th blow-up. 

Since $D_{i}\subset\left\{ 1,\dots,n\right\} $, we have $\sharp D_{i}\leq n$.
Hence $\sharp(A_{i}\cup(B_{i}\cup\left\{ j_{i}\right\} ))\leq n$,
so $\sharp A_{i}+\sharp B_{i}+1\leq n$. As the sequence is independent
of $v$, this implies that $T\subset A_{i}$, and so $\sharp T\leq\sharp A_{i}$.
Then $\sharp T+1+\sharp B_{i}\leq n$, so $t+1\leq n$, and so $r\geq0$.
By the minimality of the sequence, we know that if $i<l$, $w^{\delta}\nmid w^{^{\gamma}}$
in $R_{i}$, and so $\sharp B_{i}\neq0$, hence $r>0$.

For every integers $i\in\left\{ 1,\dots,l\right\} $ and $j\in\left\{ 1,\dots,n\right\} $,
we set $\beta_{j}^{(i)}=\nu\left(u_{j}^{(i)}\right)$. For each $i<l$,
$\pi_{i}$ is a blow-up along an ideal of the form $\left(u_{J_{i}}^{(i)}\right)$.
Renumbering if necessary, we may assume that $1\in J_{i}$ and that
$R_{i+1}$ is a localisation of $R_{i}\left[\frac{u_{J_{i}}^{(i)}}{u_{1}^{(i)}}\right]$.
So we have $\beta_{1}^{(i)}=\min\limits _{j\in J_{i}}\left\{ \beta_{j}^{(i)}\right\} $.
\begin{fact}
\label{fact:premierentreeux}Let $X=\left(x_{1},\dots,x_{n}\right)\in\mathbb{Z}^{n}$
be a vector whose elements are relatively prime. Then there exists
a matrix $A\in\mathrm{SL}_{n}(\mathbb{Z})$ of determinant $1$ such
that $X$ is the first line of $A$.
\end{fact}

\begin{proof}
This proof is made by induction on $n$ and using Bezout theorem.
\end{proof}
\begin{lem}
\label{lem:pgcdtotal}Let $i\in\left\{ 0,\dots,l-1\right\} $. We
assume that the sequence $\pi_{0},\dots,\pi_{i-1}$ of \ref{eq:suitelocale}
is monomial. 

We set $w^{\gamma}=\left(w^{(i)}\right)^{\gamma^{(i)}}$ and $w^{\delta}=\left(w^{(i)}\right)^{\delta^{(i)}}$.
Then:
\end{lem}

\begin{enumerate}
\item 
\begin{equation}
\sum\limits _{q\in E}\left(\gamma_{q}^{(i)}-\delta_{q}^{(i)}\right)\beta_{q}^{(i)}=0,\label{eq:relation}
\end{equation}
\item $\mathrm{pgcd}\left(\gamma_{1}^{(i)}-\delta_{1}^{(i)},\dots,\gamma_{r}^{(i)}-\delta_{r}^{(i)},\gamma_{n}^{(i)}-\delta_{n}^{(i)}\right)=1,$
\item Every $\mathbb{Z}$-linear dependence relation between $\beta_{1}^{(i)},\dots,\beta_{r}^{(i)},\beta_{n}^{(i)}$
is an integer multiple of (\ref{eq:relation}).
\end{enumerate}
\begin{proof}
~
\begin{enumerate}
\item We have $\nu\left(w^{\gamma}\right)=\nu\left(w^{\delta}\right)$,
hence $\nu\left(\left(w^{(i)}\right)^{\gamma^{(i)}}\right)=\nu\left(\left(w^{(i)}\right)^{\delta^{(i)}}\right)$.
So, since $w^{(i)}=\left(w_{1}^{(i)},\dots,w_{r}^{(i)},w_{n}^{(i)}\right)$:
\[
\nu\left(\prod\limits _{j=1}^{r}\left(w_{j}^{(i)}\right)^{\gamma_{j}^{(i)}}\times\left(w_{n}^{(i)}\right)^{\gamma_{n}^{(i)}}\right)=\nu\left(\prod\limits _{j=1}^{r}\left(w_{j}^{(i)}\right)^{\delta_{j}^{(i)}}\times\left(w_{n}^{(i)}\right)^{\delta_{n}^{(i)}}\right)
\]
in other words 
\[
\sum\limits _{j=1}^{r}\gamma_{j}^{(i)}\nu\left(w_{j}^{(i)}\right)+\gamma_{n}^{(i)}\nu\left(w_{n}^{(i)}\right)=\sum\limits _{j=1}^{r}\delta_{j}^{(i)}\nu\left(w_{j}^{(i)}\right)+\delta_{n}^{(i)}\nu\left(w_{n}^{(i)}\right).
\]
By definition of $w^{(i)}$, for every integer $j\in\left\{ 1,\dots,r,n\right\} $,
we have $w_{j}^{(i)}=u_{j}^{(i)}$, so $\nu(w_{j}^{(i)})=\beta_{j}^{(i)}$.
Then:
\[
\sum\limits _{j=1}^{r}\gamma_{j}^{(i)}\beta_{j}^{(i)}+\gamma_{n}^{(i)}\beta_{n}^{(i)}=\sum\limits _{j=1}^{r}\delta_{j}^{(i)}\beta_{j}^{(i)}+\delta_{n}^{(i)}\beta_{n}^{(i)}.
\]
Then $\sum\limits _{j\in\left\{ 1,\dots,r,n\right\} }\left(\gamma_{j}^{(i)}-\delta_{j}^{(i)}\right)\beta_{j}^{(i)}=0$. 
\item We do an induction. Case $i=0$. \\
We have 
\[
\begin{array}{c}
\mathrm{pgcd}\left(\gamma_{1}^{(i)}-\delta_{1}^{(i)},\dots,\gamma_{r}^{(i)}-\delta_{r}^{(i)},\gamma_{n}^{(i)}-\delta_{n}^{(i)}\right)\\
=\mathrm{pgcd}\left(\gamma_{1}^{(0)}-\delta_{1}^{(0)},\dots,\gamma_{r}^{(0)}-\delta_{r}^{(0)},\gamma_{n}^{(0)}-\delta_{n}^{(0)}\right)\\
=\mathrm{pgcd}\left(\alpha_{1}^{(0)},\dots,\alpha_{s}^{(0)},\alpha_{s+1}^{(0)},\dots,\alpha_{r}^{(0)},-\overline{\alpha^{(0)}}\right).
\end{array}
\]
By definition 
\[
\alpha=\overline{\alpha^{(0)}}=\min\limits _{h\in\mathbb{N}^{\ast}}\left\{ h\text{ such that }h\beta_{n}\in\Delta\right\} 
\]
 and 
\[
\alpha\beta_{n}=\sum\limits _{j=1}^{r}\alpha_{j}^{(0)}\beta_{j}.
\]
 So $\mathrm{pgcd}\left(\alpha_{1}^{(0)},\dots,\alpha_{s}^{(0)},\alpha_{s+1}^{(0)},\dots,\alpha_{r}^{(0)},-\alpha\right)=1$.\\
Case $i>0$. We assume the result shown at the previous rank. We have
$\gamma^{(i)}=\gamma^{(i-1)}G^{(i)}$, $\delta^{(i)}=\delta^{(i-1)}G^{(i)}$
and $\beta^{(i)}=\beta^{(i-1)}F^{(i)}$ where $F^{(i)}=\left(G^{(i)}\right)^{-1}$
and $G^{(i)}\in\mathrm{SL}_{r+1}(\mathbb{Z})$ such that 
\[
G_{sq}^{(i)}=\begin{cases}
1 & \text{if }s=q\\
1 & \text{if }q=j\text{ and }s\in J\\
0 & \text{otherwise.}
\end{cases}
\]
So $\left(\gamma^{(i)}-\delta^{(i)}\right)=\left(\gamma^{(i-1)}-\delta^{(i-1)}\right)G^{(i)}=\left(\gamma-\delta\right)G$
where $G$ is a product of unimodular matrixes, and so $G$ is unimodular.\\
By the case $i=0$, $\left(\gamma-\delta\right)$ is a vector whose
elements are relatively prime. By \ref{fact:premierentreeux} this
vecteur can be complete as a base of $\mathbb{Z}^{r+1}$, which, by
a unimodular matrix, stay a base of $\mathbb{Z}^{r+1}$. The vector
$\left(\gamma^{(i)}-\delta^{(i)}\right)$ is then a vector of this
base, so its elements are relatively prime.
\item Case $i=0$ is the fact that $\beta_{1},\dots,\beta_{r},\beta_{n}$
generate a vector space of dimension $r$. \\
Let 
\[
Z:=\left\{ (x_{1},\dots,x_{r+1})\in\mathbb{Z}^{r+1}\text{ such that }\sum\limits _{j=1}^{^{r}}x_{j}\beta_{j}+x_{r+1}\beta_{n}=0\right\} .
\]
But $\alpha\beta_{n}=\sum\limits _{j=1}^{r}\alpha_{j}^{(0)}\beta_{j}$,
so: 
\[
Z=\left\{ (x_{1},\dots,x_{r+1})\in\mathbb{Z}^{r+1}\text{ such that }\sum\limits _{j=1}^{^{r}}\left(\alpha x_{j}+x_{r+1}\alpha_{j}^{(0)}\right)\beta_{j}=0\right\} .
\]
 Since $\beta_{1},\dots,\beta_{r}$ are $\mathbb{Q}$-linearly independent
elements, we have that $Z$ is a free $\mathbb{Z}$-module of rank
$1$, so it is generated by a unique vector. By point $\left(1\right)$,
the vector $(\gamma-\delta)$ is in $Z$, and by point $(2)$, it
is composed of relatively prime elements. This vector generates the
free $\mathbb{Z}$-module of rank $1$.\\
Let $i>0$. We already know that $\beta^{(i)}=\beta^{(i-1)}F^{(i)}=\beta F$
where $F$ is a unimodular matrix, so an automorphism of $\mathbb{Z}^{r}$.
\\
Let 
\[
Z^{(i)}:=\left\{ (x_{1},\dots,x_{r+1})\in\mathbb{Z}^{r+1}\text{ such that }\sum\limits _{j=1}^{r}x_{j}\beta_{j}^{(i)}+x_{r+1}\beta_{n}^{(i)}=0\right\} .
\]
So 
\[
Z^{(i)}=\left\{ (x_{1},\dots,x_{r+1})\in\mathbb{Z}^{r+1}\text{ such that }\sum\limits _{j=1}^{r}x_{j}\beta_{j}F+x_{r+1}\beta_{n}F=0\right\} ,
\]
 then 
\[
Z^{(i)}=\left\{ (x_{1},\dots,x_{r+1})\in\mathbb{Z}^{r+1}\text{ such that }\sum\limits _{j=1}^{r}x_{j}\beta_{j}+x_{r+1}\beta_{n}=0\right\} .
\]
 Then the set $Z^{(i)}$ is a free $\mathbb{Z}$-module of rank $1$
by the case $i=0$. And we know by (3) that the vector $\left(\gamma^{(i)}-\delta^{(i)}\right)$
is a vector of $Z^{(i)}$ composed of relatively prime elements, so
it generates $Z^{(i)}$. This completes the proof.
\end{enumerate}
\end{proof}
\begin{lem}
\label{lem:suitenonmonomiale}The sequence (\ref{eq:suitelocale})
is not monomial.
\end{lem}

\begin{proof}
Assume, aiming for contradiction, that it is. By induction on $i$,
we have $r_{i}=r$ for every $i\in\left\{ 0,\dots,l\right\} $. We
know that $w^{(l)}$ is a regular system of parameters of $R_{l}$
and that $w^{\delta}$ and $w^{\gamma}$ divide each other in $R_{l}$.

We saw that 
\[
\begin{array}{ccc}
\gamma^{(l)} & = & \gamma^{(l-1)}G^{(l)}\\
 & = & \gamma\prod\limits _{j\in\left\{ 1,\dots,l\right\} }G^{(j)}
\end{array}
\]
 and 
\[
\begin{array}{ccc}
\delta^{(l)} & = & \delta^{(l-1)}G^{(l)}\\
 & = & \delta\prod\limits _{j\in\left\{ 1,\dots,l\right\} }G^{(j)}.
\end{array}
\]
 So $\delta^{(l)}=\gamma^{(l)}$ .

But $\left(\gamma^{(l)}-\delta^{(l)}\right)=\left(\gamma-\delta\right)G$
where $G$ is a unimodular matrix, hence $\gamma=\delta$, which is
a contradiction.
\end{proof}
\begin{lem}
\label{lem:chaqueeclatementestmonomial}Let $i\in\{0,\dots,l-1\}$
and assume $\pi_{0},\dots,\pi_{i-1}$ are all monomials. Then the
following assertions are equivalent:
\begin{enumerate}
\item The blow-up $\pi_{i}$ is not monomial.
\item There exists a unique index $q\in J_{i}\setminus\left\{ 1\right\} $
such that $\beta_{q}^{(i)}=\beta_{1}^{(i)}$.
\item We have $i=l-1$.
\end{enumerate}
\end{lem}

\begin{proof}
$(3)\Rightarrow(1)$ by Lemma \ref{lem:suitenonmonomiale}.

$(1)\Rightarrow(2)$ First, we prove the existence. We have$\beta_{1}^{(i)}=\min\limits _{j\in J_{i}}\left\{ \beta_{j}^{(i)}\right\} $.
So $\pi_{i}$ monomial $\Leftrightarrow B_{i}=J_{i}\setminus\left\{ 1\right\} \Leftrightarrow\beta_{q}^{(i)}>\beta_{1}^{(i)}$
for every $q\in J_{i}\setminus\left\{ 1\right\} $.

Since the blow-up is not monomial by hypothesis, there exists $q\in J_{i}\setminus\left\{ 1\right\} $
such that $\beta_{q}^{(i)}=\beta_{1}^{(i)}$.

Now let us show the unicity. Assume, aiming for contradiction, that
there exist two different indexes $q$ and $q'$ in $J_{i}\setminus\left\{ 1\right\} $
such that $\beta_{q}^{(i)}-\beta_{1}^{(i)}=0$ and $\beta_{q'}^{(i)}-\beta_{1}^{(i)}=0$
.

Then we have two linear dependence relations between $\beta_{1}^{(i)},\dots,\beta_{r}^{(i)}$
and the element $\beta_{n}^{(i)}$, which are not linearly dependent.
It is a contradiction by point $(4)$ of Lemma \ref{lem:pgcdtotal}.

$(2)\Rightarrow(3)$

By Remark \ref{rem:wenfonctiondew'}, we write $w_{1}^{(i)}=w^{\epsilon}$
and $w_{q}^{(i)}=w^{\mu}$ where $\epsilon$ and $\mu$ are two colons
of an unimodular matrix. Then $\epsilon-\mu$ is unimodular, so its
total pgcd is one.

So 
\[
\nu(w^{\mu})=\sum\limits _{s\in E}\mu_{s}\beta_{s}=\nu(w_{q}^{(i)})=\beta_{q}^{(i)}
\]
 and 
\[
\nu(w^{\epsilon})=\sum\limits _{s\in E}\epsilon_{s}\beta_{s}=\nu(w_{1}^{(i)})=\beta_{1}^{(i)}.
\]

By hypothesis, $\beta_{q}^{(i)}=\beta_{1}^{(i)}$. Then $\sum\limits _{s\in E}\left(\mu_{s}-\epsilon_{s}\right)\beta_{s}=0$
and by points $(3)$ and $(4)$ of Lemma \ref{lem:pgcdtotal}, and
the fact that the total pgcd of $\mu-\epsilon$ is one, we have $\mu-\epsilon=\pm(\gamma-\delta)$.

So $\frac{w_{q}^{(i)}}{w_{1}^{(i)}}=w^{\epsilon-\mu}=w^{\pm(\gamma-\delta)}=\overline{z}^{\pm1}$,
then either $\overline{z}\in R_{i+1}$ or $\overline{z}^{-1}\in R_{i+1}$. 

To show that $i=l-1$, we are going to show that $i+1=l$ . And to
do this, we are going to use the fact that $l$ has been chosen minimal
such that $\overline{z}\in R_{l}^{\times}$. So let us show that $\overline{z}\in R_{i+1}^{\times}$.

Since $\overline{z}\in R_{i+1}$ or $\overline{z}^{-1}\in R_{i+1}$,
we know that $w^{\delta}\mid w^{\gamma}$ in $R_{i+1}$ or the converse.
By Proposition \ref{prop:divaleur} and the fact that $w^{\delta}$
and $w^{\gamma}$ have same value, we have $w^{\delta}\mid w^{\gamma}$
in $R_{i+1}$ if and only if the converse is true. So $\overline{z}\in R_{i+1}^{\times}$,
and the proof is complete.
\end{proof}
Doing an induction on $i$ and using Lemma \ref{lem:chaqueeclatementestmonomial},
we conclude that $\pi_{0},\dots,\pi_{l-2}$ are monomials. So we have
the first point of Theorem \ref{thm:monomialise}.

It remains to show the points $(3)$ and $(4)$.

By Lemma \ref{lem:chaqueeclatementestmonomial} there exists a unique
element $q\in J_{l-1}\setminus\left\{ j_{l-1}\right\} $ such that
$\beta_{q}^{(l-1)}=\beta_{1}^{(l-1)}$, so we are in the case $\sharp B_{l-1}+1=\sharp J_{l-1}-1$.
Now we have to see if we are in the case $t_{k_{l-1}}=0$ or in the
case $t_{k_{l-1}}=1$.

We recall that $w_{1}^{(l-1)}=w^{\epsilon}$ and $w_{q}^{(l-1)}=w^{\mu}$
where $\epsilon$ and $\mu$ are two colons of a unimodular matrix
such that $\mu-\epsilon=\pm(\gamma-\delta)$. So we have $x_{1}^{(l-1)}=x^{\epsilon}$
and $x_{q}^{(l-1)}=x^{\mu}$, then 
\[
\frac{x_{q}^{(l-1)}}{x_{1}^{(l-1)}}=x^{\mu-\epsilon}=x^{\pm\left(\gamma-\delta\right)}=x^{\pm\left(\alpha_{1}^{(0)},\dots,\alpha_{r}^{(0)},-\alpha\right)}.
\]

In other words
\[
\frac{x_{q}^{(l-1)}}{x_{1}^{(l-1)}}=\left(\frac{\prod\limits _{j=1}^{r}x_{j}^{\alpha_{j}^{(0)}}}{x_{n}^{\alpha}}\right)^{\pm1}=\left(z^{-1}\right)^{\pm1}=z^{\pm1}.
\]
 Replacing $x_{1}^{(l-1)}$ and $x_{q}^{(l-1)}$ if necessary, we
may assume $\frac{x_{q}^{(l-1)}}{x_{1}^{(l-1)}}=z$ .

Since $\beta_{1}^{(l-1)},\dots,\beta_{r}^{(l-1)}$ are linearly independent,
we have $q=n$.

We recall that $\lambda_{0}=X^{\alpha}+c_{0}y$ where $c_{0}\in k$
, and $Z+c_{0}$ is the minimal polynomial $\lambda_{z}$ of $z$
on $\mathrm{gr}_{\nu}k\left(x_{1},\dots,x_{r}\right)$. By \ref{fact:nombredegene},
we have 
\[
w_{n}^{(l)}=u_{n}^{(l)}=\overline{\lambda_{0}}(u'_{n})=\overline{\lambda_{0}}\left(\frac{u_{n}^{(l-1)}}{u_{1}^{(l-1)}}\right)=\overline{\lambda_{0}}\left(\frac{w_{n}^{(l-1)}}{w_{1}^{(l-1)}}\right)=\overline{\lambda_{0}}\left(\overline{z}\right)=\overline{z}+a_{0}b_{0}.
\]
\begin{rem}
We know that $\overline{\lambda_{0}}\left(\overline{z}\right)=\overline{z}+b_{0}g_{0}$
where $g_{0}$ is a unit and$b_{0}\in R$ such that $b_{\text{0}}\equiv c_{0}$
modulo $\mathfrak{m}$. Then we choose $g_{0}=a_{0}$.
\end{rem}

But $\overline{z}=\frac{w_{n}^{\alpha}}{\overline{y}}$, so 
\[
w_{n}^{(l)}=\frac{w_{n}^{\alpha}}{\overline{y}}+a_{0}b_{0}=\frac{w_{n}^{\alpha}+a_{0}b_{0}\overline{y}}{\overline{y}}=\frac{Q}{\overline{y}}
\]
 as desired in point $(4)$. 

Let us show the point $(3)$. We apply Proposition \ref{prop:chgmtvari}
at $i=0$ and $i'=l$. By the monomiality of $\pi_{0},\dots,\pi_{l-2}$,
we know that $D_{i}=\{1,\dots,n\}$ for each $i\in\{1,\dots,l-1\}$,
and we know that $D_{l}=\{1,\dots,n\}$. We set $u_{T}=v$.

For every $j\in\left\{ 1,\dots,r,n\right\} $, the fact that $w_{j}=u_{j}$
is a monomial in $w_{1}^{(l)},\dots,w_{r}^{(l)}$, in other words
in $u_{1}^{(l)},\dots,u_{r}^{(l)}$, multiplied by an element of $R_{l}^{\times}$
is a consequence of Proposition \ref{prop:chgmtvari}.

The fact that for every integer $j\in\{1,\dots,r\}$, we have $w_{j}^{(l)}=w^{\eta}$
is a consequence of the same Proposition. This completes the proof.
\end{proof}
\begin{rem}
In the case $Q_{2}=Q$, we monomialized $Q_{2}$ as desired.
\end{rem}

\begin{defn}
\cite{CRS} \index{Puiseux package}\label{def:Puiseux}A local framed
sequence that satisfies Theorem \ref{thm:monomialise} is called a\emph{
$n$-}Puiseux package.

Let $j\in\left\{ r+1,\dots,n\right\} $. A \emph{$j$-Puiseux }package
is a $n$-Puiseux package replacing $n$ by $j$ in Theorem \ref{thm:monomialise}.
\end{defn}

\begin{lem}
\label{lem: on se ramene au bon cas}Let $P=u_{n}^{\alpha}+c_{0}$
be the $u_{n}$-expansion of an immediate successor key element of
$u_{n}$. 

There exists a local framed sequence $\left(R,u\right)\to\left(R_{l},u^{(l)}\right)$,
independant of $u_{n}$, that transforms $c_{0}$ in a monomial in
$\left(u_{1}^{(l)},\dots,u_{r}^{(l)}\right)$, multiplied by a unit
of $R_{l}$. 

In particular, after this local framed sequence, the element $P$
is of the form $w_{n}^{\alpha}+a_{0}b_{0}\overline{y}$.
\end{lem}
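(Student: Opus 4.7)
First I would extract the consequences of $P=u_{n}^{\alpha}+c_{0}$ being an immediate successor key element of $u_{n}$. By Propositions \ref{prop:on peut le prendre optimal}, \ref{prop: successeurs immediats et somme des formes initiales} and \ref{prop: binoooomme}, we may assume $P$ is optimal and that its $u_{n}$-expansion has only two terms, so $\nu(c_{0})=\alpha\nu(u_{n})=\sum_{j=1}^{r}\alpha_{j}^{(0)}\nu(u_{j})\in\Delta$ and $\mathrm{in}_{\nu}(c_{0})=\tilde{c}\prod_{j=1}^{r}x_{j}^{\alpha_{j}^{(0)}}$ for some $\tilde{c}\in k^{\times}$. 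In particular $c_{0}$ lies in the subring $\widetilde{R}:=k[u_{1},\ldots,u_{n-1}]_{(u_{1},\ldots,u_{n-1})}$, and its value belongs to the subgroup $\Delta$ generated by the $\mathbb{Q}$-linearly independent values $\nu(u_{1}),\ldots,\nu(u_{r})$.

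The plan is then to reduce to Theorem \ref{thm:on monomialise les elements non degeneres} by induction on $n-r-1$, the number of ``rationally dependent'' variables among $u_{r+1},\ldots,u_{n-1}$. In the base case $n-r-1=0$ the element $c_{0}$ lives in $k[u_{1},\ldots,u_{r}]_{(u_{1},\ldots,u_{r})}$; since $\nu(u_{1}),\ldots,\nu(u_{r})$ are $\mathbb{Q}$-linearly independent, distinct monomials in $u_{1},\ldots,u_{r}$ have distinct values, so the restriction of $\nu$ to this subring coincides with the monomial valuation $\nu_{u}$. Hence $c_{0}$ is automatically non-degenerate with respect to $(u_{1},\ldots,u_{r})$, and Theorem \ref{thm:on monomialise les elements non degeneres} produces a local framed sequence, independent of $u_{n}$ and of $v$, that turns $c_{0}$ into a monomial in $(u_{1}^{(l)},\ldots,u_{r}^{(l)})$ times a unit.

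For the inductive step $n-r-1\geq 1$, I would apply Theorem \ref{thm:il existe une suite cofinale} to the extension $k(u_{1},\ldots,u_{n-2})\hookrightarrow k(u_{1},\ldots,u_{n-1})$ to obtain an optimal immediate successor key element of $u_{n-1}$ of the form $u_{n-1}^{\alpha'}+c'_{0}$ inside $\widetilde{R}$. Recursively invoking the lemma at a strictly smaller value of $n$ to put $c'_{0}$ in the required form, and then applying the corresponding $(n-1)$-Puiseux package of Definition \ref{def:Puiseux} (the analogue of Theorem \ref{thm:monomialise} with $n$ replaced by $n-1$), we obtain a local framed sequence independent of $u_{n}$ that transforms $u_{n-1}$ into a monomial in $(u_{1}^{(l)},\ldots,u_{r}^{(l)})$ multiplied by a unit of $R_{l}$. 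Substituting this expression into $c_{0}$ yields a new element whose representation involves only $u_{1}^{(l)},\ldots,u_{n-2}^{(l)}$, so the inductive invariant strictly decreases and iteration terminates in the base case.

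The main obstacle I anticipate is the recursive interplay of the argument: the inductive step invokes the lemma itself on $u_{n-1}$ instead of $u_{n}$, so one must phrase the induction carefully (for instance on $n$ rather than just on $n-r-1$) in order to make it well-founded. A second delicate point is to verify that after each Puiseux package the initial form $\mathrm{in}_{\nu}(c_{0})$ remains a monomial in the rationally independent variables: this is precisely the consequence of the value-theoretic constraint $\nu(c_{0})\in\Delta$, and it is what ultimately guarantees that the final monomial representation of $c_{0}$ does not involve any $u_{j}^{(l)}$ with $j>r$. Once $c_{0}=b_{0}\overline{y}$ with $b_{0}\in R_{l}^{\times}$, we indeed obtain $P=w_{n}^{\alpha}+a_{0}b_{0}\overline{y}$ with $a_{0}=1$ (or, more generally, with an appropriate choice of unit absorbing the leading coefficient), as asserted.
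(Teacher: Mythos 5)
Your overall strategy --- observe that $c_{0}$ lives in the $(n-1)$-dimensional ring $k\left[u_{1},\dots,u_{n-1}\right]_{\left(u_{1},\dots,u_{n-1}\right)}$, monomialize it there by a sequence independent of $u_{n}$, and then use Puiseux packages in the rationally dependent variables to push the resulting monomial into $u_{1}^{(l)},\dots,u_{r}^{(l)}$ --- is exactly the paper's: the lemma is proved there as a special case of Lemma \ref{lem: on se ramene au bon cas-1}, whose proof first monomializes the coefficient in the lower-dimensional ring and then performs a $g$-Puiseux package for each $g\in\left\{ r_{l}+1,\dots,n-1\right\} $. Your base case ($r=n-1$: rationally independent values force non-degeneracy, so Theorem \ref{thm:on monomialise les elements non degeneres} applies) is correct, as is the final Puiseux-package step.

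The gap is in your inductive step. Monomializing the single variable $u_{n-1}$ and ``substituting this expression into $c_{0}$'' does not monomialize $c_{0}$: writing $c_{0}=\sum\limits_{\beta}c_{\beta}u_{n-1}^{\beta}$ and replacing $u_{n-1}$ by a monomial times a unit produces a sum of terms whose values may coincide and whose initial forms may cancel; controlling precisely this cancellation is what the key-polynomial machinery exists for, so you cannot conclude that a single term dominates. What is actually needed is the full monomialization statement in dimension $n-1$ applied to the arbitrary element $c_{0}$ itself. This is exactly the standing hypothesis under which the paper proves the result (the assumption in Lemma \ref{lem: on se ramene au bon cas-1} that every element of every ring $G$ of dimension strictly less than $n$ is monomializable, which is supplied by the global induction on dimension running through Theorems \ref{thm:monomialisation de Q3} and \ref{thm:generalisation de la monomialisation}). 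Your recursion should therefore call that dimension-$(n-1)$ monomialization theorem on $c_{0}$ --- not the present lemma applied to $u_{n-1}$ --- and only afterwards apply the $g$-Puiseux packages to replace the variables $u_{r_{l}+1}^{(l)},\dots,u_{n-1}^{(l)}$ occurring in the resulting monomial by monomials in $u_{1}^{(l)},\dots,u_{r}^{(l)}$ times units.
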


\begin{proof}
We will prove this Lemma in a more general version in Lemma \ref{lem: on se ramene au bon cas-1}.
\end{proof}
\begin{cor}
\label{cor: tout element qui suit est monomialaisable}Let $P$ be
an immediate successor key element of $u_{n}$. Then $P$ is monomializable.
\end{cor}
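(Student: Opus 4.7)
The plan is to combine the two preceding results: Lemma~\ref{lem: on se ramene au bon cas} reduces an arbitrary immediate successor key element to one of the canonical form treated in Theorem~\ref{thm:monomialise} (the Puiseux package), and the Puiseux package then finishes the job.

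First, I would write down the $u_n$-expansion of $P$. Since $P$ is an immediate successor key element of $u_n$, by the way key elements are defined (Definition~\ref{def:elmtclef}) together with Proposition~\ref{prop: binoooomme}, the expansion has exactly two terms, so after factoring out the unit in front of $u_n^\alpha$ we may write $P = u_n^\alpha + c_0$ with $c_0\in R$ and $\nu(c_0)=\alpha\nu(u_n)$. At this point I would apply Lemma~\ref{lem: on se ramene au bon cas}: it produces a local framed sequence $(R,u)\to(R_l,u^{(l)})$, independent of $u_n$, after which $c_0$ becomes a monomial in $(u_1^{(l)},\dots,u_r^{(l)})$ times a unit, and $P$ takes the exact form $w_n^\alpha+a_0 b_0\,\overline{y}$ required as input for Theorem~\ref{thm:monomialise}.

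Next I would feed this new shape of $P$ into the Puiseux package Theorem~\ref{thm:monomialise}. That theorem produces a further local framed sequence, independent of $v$, after which $P = w_n^{(l')}\cdot \overline{y}$ in $R_{l'}$, where $w_n^{(l')}=u_n^{(l')}$ and $\overline{y}=\prod_{j=1}^{r} w_j^{\alpha_j^{(0)}}$ is a monomial in the $w_j=u_j$. By point~(3) of Theorem~\ref{thm:monomialise}, each $w_j$ is itself a monomial in $(w_1^{(l')},\dots,w_{r}^{(l')})$ multiplied by a unit of $R_{l'}$; substituting these expressions shows that $\overline{y}$ is a monomial in the new regular system of parameters times a unit, and hence so is $P$.

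Concatenating the two local framed sequences we obtain a single sequence of blow-ups $(R,u)\to(R_{l'},u^{(l')})$ at the end of which $P$ is a monomial in $u^{(l')}$ times a unit; by Definition~\ref{def:monom} this is exactly what it means for $P$ to be monomializable. There is no real obstacle in this corollary beyond verifying that the two procedures are compatible: the Puiseux package requires $c_0$ to already be monomialized in the $w_j$'s (so that $P$ has the clean binomial form $w_n^\alpha+a_0b_0\overline{y}$), and the prior lemma is engineered precisely to achieve this while remaining independent of $u_n$, so that the leading term $u_n^\alpha$ is preserved unchanged.
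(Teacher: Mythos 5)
Your proposal is correct and follows the same route as the paper: reduce $P$ to the binomial form $w_n^{\alpha}+a_0b_0\overline{y}$ via Lemma \ref{lem: on se ramene au bon cas}, then apply the Puiseux package of Theorem \ref{thm:monomialise}. The extra detail you supply (justifying the two-term expansion and unwinding point (3) of Theorem \ref{thm:monomialise} to see that $\overline{y}$ becomes a monomial in the final coordinates) is a harmless elaboration of the paper's two-line argument, not a different method.
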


\begin{proof}
If $u_{n}\ll P$, we use Lemma \ref{lem: on se ramene au bon cas}
to reduce to the case $P=w_{n}^{\alpha}+a_{0}b_{0}\overline{y}.$
By Theorem \ref{thm:monomialise}, we can monomialize $P$. 
\end{proof}
Let $G$ be a local ring essentially of finite type over $k$ of dimension
strictly less than $n$ that is equipped with a valuation centered
on $G$.
\begin{thm}
\label{thm: monomialiase pc lim de u_n} Assume that for every ring
$G$ as above, every element of $G$ is monomializable.

We recall that $\mathrm{car}\left(k_{\nu}\right)=0$. If $u_{n}\ll_{\lim}P$,
then $P$ is monomializable.
\end{thm}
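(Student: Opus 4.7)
The strategy is to parallel the non-limit case (Corollary \ref{cor: tout element qui suit est monomialaisable}), using Theorem \ref{thm: delta et pc limite} to bring $P$ into a shape to which a Puiseux package applies, with the inductive hypothesis on dimension used to monomialize the coefficients.

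Since $\mathrm{char}(k_\nu)=0$ and $u_n <_{\lim} P_2$ for the key polynomial $P_2$ associated to $P$, Theorem \ref{thm: delta et pc limite} gives $\delta_{u_n}(P_2)=1$. Hence $S_{u_n}(P_2)=\{0,1\}$, and by the very definition of a key element the element $P$ has only two terms:
\[
P \;=\; a_0 b_0 \;+\; a_1 b_1\, u_n,
\]
where $a_0,a_1\in K=k(u_1,\dots,u_{n-1})$ are the coefficients of indices $0$ and $1$ in the $u_n$-expansion of $P_2$, the $b_j$ are units of $R$, and $\nu(a_0b_0)=\nu(a_1b_1u_n)$.

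After clearing a common denominator (which amounts to multiplying $P$ by a unit and so does not affect monomializability), I may regard $a_0$ and $a_1$ as elements of the regular local ring $R_{n-1}:=k[u_1,\dots,u_{n-1}]_{(u_1,\dots,u_{n-1})}$. This ring is essentially of finite type over $k$, of dimension $n-1<n$, and carries the restriction of $\nu$, hence falls under the standing inductive hypothesis. I apply that hypothesis first to $a_0$ and then, in the new ring, to the strict transform of $a_1$; each intermediate blow-up of $R_{n-1}$ extends canonically to a framed blow-up of $R$ independent of $u_n$. After this local framed sequence I arrive at a ring $(R',u')$ in which
\[
P \;=\; V_0\,(u')^{\alpha_0} \;+\; V_1\,(u')^{\alpha_1}\,u_n,
\]
with $V_0,V_1\in (R')^\times$ and with the two summands of equal $\nu$-value. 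This is exactly the input of Theorem \ref{thm:monomialise} in the case $\alpha=1$, where $\overline{y}$ is supplied by the quotient of the two monomial parts (whichever quotient lies in the valuation ring). A final Puiseux package then produces a regular system of parameters $u^{(l)}$ in which $P$ equals $u_n^{(l)}$ times a monomial, up to a unit.

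The main obstacle is the first, reductive, step: the inductive hypothesis gives single-element monomializability, whereas I need to monomialize $a_0$ and $a_1$ simultaneously without the second stage destroying what was achieved in the first. This is handled by the standard observation that once $a_0$ is a monomial in a regular system of parameters, any further framed blow-up (centered in the transform of the appropriate coordinates and compatible with $\nu$) transforms that monomial into another monomial times a unit, as recorded in Proposition \ref{prop:chgmtvari} and Remark \ref{rem:wenfonctiondew'}. A secondary point is to verify that the centers of the blow-ups of $R_{n-1}$ (involving only $u_1,\dots,u_{n-1}$) indeed lift to framed blow-ups of $R$ independent of $u_n$; this is immediate because the ideals being blown up are generated by elements of $R_{n-1}\subset R$.
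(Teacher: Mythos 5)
There is a genuine gap at the very first step: you claim that, because $\delta_{u_n}(P_2)=1$, the key element $P$ has only the two terms of degrees $0$ and $1$. Theorem \ref{thm: delta et pc limite} only asserts that the terms of the $u_n$-expansion of $P_2$ that \emph{minimize the valuation} are exactly those of degrees $0$ and $1$; the expansion still contains higher-degree terms of strictly larger value (indeed a limit immediate successor of $u_n$ is monic of degree $N>1$, so the term in $u_n^N$ is certainly present), and the element to which the theorem is actually applied --- the strict transform of the limit key polynomial $Q_{i+1}$ in Theorem \ref{thm:monomialisation de Q3} --- carries all of these terms, each multiplied by a unit. Your argument therefore monomializes only the two-term truncation $a_0b_0+a_1b_1u_n$, which is not the element $P$.

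The missing content is precisely the control of the tail $\sum_{j\geq2}a_jb_ju_n^j$. In the paper's proof one writes $P=\sum_{j=0}^{N}a_jb_ju_n^j$, uses $\nu(b_0)=\nu(b_1u_n)<\nu(b_ju_n^j)$ for $j>1$ to deduce the strict inequalities $\nu(b_1^j)<\nu(b_jb_0^{j-1})$, monomializes \emph{all} the coefficients $b_j$ (not just $b_0,b_1$) by the inductive hypothesis, and arranges the divisibilities $b_1^j\mid b_jb_0^{j-1}$ via Propositions \ref{prop:existencesuitedivise} and \ref{prop:divaleur}. After the $\alpha=1$ Puiseux package with new parameter $u'_n=\frac{b_1u_n}{b_0}+1$, these divisibilities guarantee that $\frac{P}{b'_1}=u'_n+\varphi$ with $\varphi\in\left(u'_1,\dots,u'_{n-1}\right)$, so that $\frac{P}{b'_1}$ can itself be taken as the $n$-th regular parameter and $P$ becomes that parameter times a monomial. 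Without this step, your final Puiseux package leaves the higher-order terms unaccounted for, and $P$ is not exhibited as a monomial. The remaining points of your outline (lifting blow-ups of $k[u_1,\dots,u_{n-1}]$ to framed blow-ups of $R$ independent of $u_n$, and the stability of monomials under further framed blow-ups via Proposition \ref{prop:chgmtvari}) are consistent with the paper.
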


\begin{proof}
We write $P=\sum\limits _{j=0}^{N}b_{j}a_{j}u_{n}^{j}$ the $u_{n}$-expansion
of $P$, with $a_{j}\in R^{\times}$ and $Q=\sum\limits _{j=0}^{N}b_{j}u_{n}^{j}$
a limite immediate successor of $u_{n}$.

By Theorem \ref{thm: delta et pc limite}, we have $\delta_{u_{n}}\left(Q\right)=1$.
Then: 
\[
\nu\left(b_{0}\right)=\nu\left(b_{1}u_{n}\right)<\nu\left(b_{j}u_{n}^{j}\right),
\]
for every $j>1$. 

The elements $a_{i}$ are units of $R$, so for every $j>1$ we have:

\[
\nu\left(a_{0}b_{0}\right)=\nu\left(a_{1}b_{1}u_{n}\right)<\nu\left(a_{j}b_{j}u_{n}^{j}\right).
\]
In fact, $\nu\left(a_{1}b_{1}\right)<\nu\left(a_{0}b_{0}\right)$
and by hypothesis, after a sequence of blow-ups independent of $u_{n}$,
we can monomialize $a_{j}b_{j}$ for every index $j$, and assume
that $a_{1}b_{1}\mid a_{0}b_{0}$ by Proposition \ref{prop:divaleur}.

Then 
\[
\nu\left(b_{0}\right)=\nu\left(b_{1}u_{n}\right)<\nu\left(b_{j}\right)+j\nu\left(u_{n}\right)=\nu\left(b_{j}\right)+j\left(\nu\left(b_{0}\right)-\nu\left(b_{1}\right)\right).
\]

So $\nu\left(b_{0}\right)<\left(b_{j}\right)+j\left(\nu\left(b_{0}\right)-\nu\left(b_{1}\right)\right).$

In fact, $\nu\left(b_{1}^{j}\right)<\nu\left(b_{j}b_{0}^{j-1}\right)$.
So after a sequence of blow-ups independent of $u_{n}$, we have $b_{1}^{j}\mid b_{j}b_{0}^{j-1}$.
After a $n$-Puiseux package $\left(\ast\right)$ $\left(R,u\right)\to\dots\to\left(R',u'\right)$
in the special case $\alpha=1$, we obtain $P=\sum\limits _{j=0}^{N}b'_{j}\left(u'_{n}\right)^{j}$
with $b'_{1}\mid b'_{j}$ for every index $j$ with $u'_{n}=\frac{b_{1}u_{n}}{b_{0}}+1$.

In fact, $\frac{P}{b'_{1}}=u'_{n}+\varphi$ with $\varphi\in\left(u'_{1},\dots,u'_{n-1}\right)$.
So $u'':=\left(u'_{1},\dots,u'_{n-1},\frac{P}{b'_{1}}\right)$ is
a regular system of parameters of $R'$. Then, the sequence $\left(R,u\right)\to\dots\to\left(R',u"\right)$
given by $\left(\ast\right)$ changing uniquely the last parameter
$u'_{n}$ after the last blow-up is still a local framed sequence.
So $P$ is monomializable.
\end{proof}
\begin{rem}
Since $Q_{2}$ is an immediate successor (possibly limit) of $u_{n}$,
this is in particular an immediate successor (possibly limit) key
element of $u_{n}$. By Corollary \ref{cor: tout element qui suit est monomialaisable},
or Theorem \ref{thm: monomialiase pc lim de u_n}, it is monomializable
modulo Lemma \ref{lem: on se ramene au bon cas}.
\end{rem}

\subsection{Generalization.}

Now we monomialized $Q_{2}$, but we want to monomialize every key
polynomial of the sequence $\mathcal{Q}$. Here the key elements will
be useful. Indeed, modified by the blow-ups which monomialized $Q_{2}$,
we cannot know if $Q_{3}$ is still a key polynomial.

To be more general, we will show that if $Q_{i}\in\mathcal{Q}$ is
monomializable, then $Q_{i+1}$ is monomializable.

Assume that the polynomial $Q_{i}$ is monomializable after a sequence
of blow-ups $\left(R,u\right)\to\left(R_{l},u^{(l)}\right)$.

Let $\Delta_{l}$ be the group $\nu\left(k\left(u_{1}^{\left(l\right)},\dots,u_{n-1}^{\left(l\right)}\right)\setminus\left\{ 0\right\} \right)$.
We set 
\[
\alpha_{l}:=\min\left\{ h\text{ such that }h\beta_{n}^{(l)}\in\Delta_{l}\right\} .
\]

We set $X_{j}=\mathrm{in}_{\nu}\left(u_{j}^{(l)}\right)$, $W_{j}=w_{j}^{(l)}$
and $\lambda_{l}$ the minimal polynomial of $X_{n}$ over $\mathrm{gr}_{\nu}k\left(u_{1}^{\left(l\right)},\dots,u_{n-1}^{\left(l\right)}\right)$
of degree $\alpha_{l}$.

Since $k=k_{\nu}$, there exists $c_{0}\in\mathrm{gr}_{\nu}k\left(u_{1}^{\left(l\right)},\dots,u_{n-1}^{\left(l\right)}\right)$
such that 
\[
\lambda_{l}\left(X\right)=X^{\alpha_{l}}+c_{0}.
\]

Furthermore, we have $Q_{i}=\overline{\omega}w_{n}^{(l)}$ with $\overline{\omega}$
a monomial in $W_{1},\dots,W_{r_{l}}$ multiplied by a unit. We set
$\omega:=\mathrm{in}_{\nu}\left(\overline{\omega}\right)$.

We know that $Q_{i+1}$ is an optimal immediate successor of $Q_{i}$,
so we denote by 
\[
Q_{i+1}=Q_{i}^{\alpha_{l}}+b_{0}
\]
 the $Q_{i}$-expansion of $Q_{i+1}$ in $k\left(u_{1},\dots,u_{n-1}\right)\left[u_{n}\right]$
by Proposition \ref{prop: binoooomme} with $c_{0}=\mathrm{in}_{\nu}\left(b_{0}\right)$.

Since $Q_{i}=\overline{\omega}W_{n}$ and $Q_{i+1}=Q_{i}^{\alpha_{l}}+b_{0}$,
we have 
\[
\frac{Q_{i+1}}{\overline{\omega}^{\alpha_{l}}}=\left(u_{n}^{(l)}\right)^{\alpha_{l}}+\frac{b_{0}}{\overline{\omega}^{\alpha_{l}}}.
\]

We know that both terms of the $Q_{i}$-expansion of $Q_{i+1}$ have
same value. So these two terms are divisible by the same power of
$\overline{\omega}$ after a suitable sequence of blow-ups $(\ast_{i})$
independent of $u_{n}^{(l)}$.

We denote by $\widetilde{Q}_{i+1}$ the strict transform of $Q_{i+1}$
by the composition of $\left(\ast_{i}\right)$ with the sequence of
blow-ups $\left(\ast'_{i}\right)$ that monomialize $Q_{i}$. We denote
this composition by $\left(c_{i}\right)$. We write $\left(R,u\right)\overset{\left(c_{i}\right)}{\to}\left(R_{l},u^{(l)}\right)$.

We know that $\widetilde{Q}_{i}$, the strict transform of $Q_{i}$
by $\left(c_{i}\right)$, is a regular parameter of $R_{l}$. Indeed,
by Proposition \ref{prop:chgmtvari}, we know that every $u_{j}$
of $R$ can be written as a monomial in $w_{1}^{(l)},\dots,w_{r_{l}}^{(l)}$.
In fact, the reduced exceptional divisor of this sequence of blow-ups
is exactly $V\left(\overline{\omega}\right)_{\mathrm{red}}$. Then,
since $Q_{i}=W_{n}\overline{\omega}$, we have that the strict transform
of $Q_{i}$ is $\widetilde{Q}_{i}=W_{n}=w_{n}^{(l)}=u_{n}^{(l)}$.
So it is a key polynomial in the extension $k\left(u_{1}^{(l)},\dots,u_{n-1}^{(l)}\right)\left(u_{n}^{(l)}\right)$.

Let us show that $\widetilde{Q}_{i+1}=\frac{Q_{i+1}}{\overline{\omega}^{\alpha_{l}}}$. 

We have 
\[
Q_{i}^{\alpha_{l}}=\overline{\omega}^{\alpha_{l}}\left(u_{n}^{(l)}\right)^{\alpha_{l}}
\]
 and also $u_{n}^{(l)}\nmid\overline{\omega}$. Thus $\overline{\omega}^{\alpha_{l}}$
divides $Q_{i}^{\alpha_{l}}$ and all the non-zero terms of the $Q_{i}$-expansion
of $Q_{i+1}$. Furthermore, it is the greatest power of $\overline{\omega}$
that divides all the terms, so $\frac{Q_{i+1}}{\overline{\omega}^{\alpha_{l}}}$
is $\widetilde{Q}_{i+1}$, the strict transform of $Q_{i+1}$ by the
sequence of blow-ups.

Let $G$ be a local ring essentially of finite type over $k$ of dimension
strictly less than $n$ equipped with a valuation centered in $G$
whose residue field is $k$.
\begin{lem}
\label{lem: on se ramene au bon cas-1} Assume that for every ring
$G$ as above, every element of $G$ is monomializable.

Assume that $Q_{i}<Q_{i+1}$ in $\mathcal{Q}$.

There exists a local framed sequence $\left(R_{l},u^{(l)}\right)\to\left(R_{e},u^{(e)}\right)$
such that in $R_{e}$, the strict transform of $Q_{i+1}$ is of the
form $\left(u_{n}^{\left(e\right)}\right)^{\alpha_{l}}+\tau_{0}\eta$,
where $\tau_{0}\in R_{e}^{\times}$ and $\eta$ is a monomial in $u_{1}^{\left(e\right)},\dots,u_{r_{e}}^{\left(e\right)}$. 
\end{lem}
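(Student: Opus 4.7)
The paragraph preceding the statement already writes, in $R_l$,
\[
\widetilde{Q}_{i+1}=(u_n^{(l)})^{\alpha_l}+\frac{b_0}{\overline{\omega}^{\alpha_l}},
\]
where $b_0$ is the non-leading term of the $Q_i$-expansion of $Q_{i+1}$ provided by Proposition \ref{prop: binoooomme} (only two terms, since $Q_i<Q_{i+1}$). Since the desired local framed sequence will be independent of $u_n^{(l)}$, the parameter $u_n^{(e)}=u_n^{(l)}$ is preserved, and the first term $(u_n^{(l)})^{\alpha_l}$ is unchanged. Thus it suffices to produce a local framed sequence $(R_l,u^{(l)})\to(R_e,u^{(e)})$, independent of $u_n^{(l)}$, transforming $b_0/\overline{\omega}^{\alpha_l}$ into a product $\tau_0\eta$ with $\tau_0\in R_e^\times$ and $\eta$ a monomial in $u_1^{(e)},\dots,u_{r_e}^{(e)}$.

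The plan is to invoke the induction hypothesis. Expand $b_0$ in powers of $u_n$ over $K=k(u_1,\dots,u_{n-1})$, writing $b_0=\sum_j a_j u_n^j$ with $a_j\in K$ and $j<\deg_{u_n}Q_i$. Each coefficient $a_j$ lies in (the fraction field of) a ring of Krull dimension $n-1<n$, so by the induction hypothesis applied to a suitable localization of $k[u_1,\dots,u_{n-1}]$ equipped with the restriction of $\nu$, every $a_j$ is monomializable via a local framed sequence that is automatically independent of $u_n$. Incorporating these blow-ups as a preliminary step before the monomialization of $Q_i$ (and rebuilding the composition $(c_i)$ accordingly), we may assume that already in $R_l$ each $a_j$ has the form of a monomial in the $w$-coordinates multiplied by an element of $R_l^\times$. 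Combining this with Theorem \ref{thm:monomialise} (3), which gives $u_n$ as a monomial in $w_1^{(l)},\dots,w_r^{(l)}$ times a unit, each individual term $a_ju_n^j/\overline{\omega}^{\alpha_l}$ becomes a (Laurent) monomial in $u_1^{(l)},\dots,u_r^{(l)}$ multiplied by a unit of $R_l$; applying Proposition \ref{prop:existencesuitedivise} once more we arrange that $\overline{\omega}^{\alpha_l}$ divides every such term in $R_l$ so that they genuinely lie in $R_l$.

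One then applies Proposition \ref{prop:idealprincipal} to the ideal $N$ generated by this finite family of monomial terms: this yields a local framed sequence $R_l\to R_e$, independent of $u_n^{(l)}$ and of the $v$-variables, such that $NR_e$ is principal, generated by the term $M_0$ of minimal $\nu$-value. Since $b_0/\overline{\omega}^{\alpha_l}\in N$ has $\nu$-value exactly $\alpha_l\nu(u_n^{(l)})=\nu(M_0)$, dividing gives $b_0/\overline{\omega}^{\alpha_l}=\tau_0 M_0$ with $\nu(\tau_0)=0$, hence $\tau_0\in R_e^\times$, and $M_0$ is a monomial in $u_1^{(e)},\dots,u_{r_e}^{(e)}$ as required.

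The main obstacle will be the bookkeeping needed to ensure that the units arising from monomializing the $a_j$ and from Theorem \ref{thm:monomialise} (3) do not obstruct the application of Proposition \ref{prop:idealprincipal}, and in particular to guarantee that the resulting monomial $M_0$ involves only the variables $u_1^{(e)},\dots,u_{r_e}^{(e)}$ rather than also the $v$-coordinates. This last point will rely on the fact that $\nu(b_0/\overline{\omega}^{\alpha_l})=\alpha_l\nu(u_n^{(l)})$ lies in $\Delta_l$ and can be realised by a monomial in the $w$-coordinates after sufficiently many further monomial blow-ups refining the group generated by the values of $u_1^{(e)},\dots,u_{r_e}^{(e)}$.
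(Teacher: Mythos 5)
Your overall frame agrees with the paper's: both reduce the lemma to turning $b_0/\overline{\omega}^{\alpha_l}$ into a unit times a monomial in the first $r$ coordinates, the first term $(u_n^{(l)})^{\alpha_l}$ being untouched by a sequence independent of $u_n^{(l)}$. But the execution diverges at the crucial step. The paper applies the induction hypothesis to $b_0$ \emph{as a single element} of an $(n-1)$-dimensional coefficient ring, obtaining in one stroke that $b_0$ is a monomial in $\bigl(u_1^{(l)},\dots,u_{n-1}^{(l)}\bigr)$ times a unit, and then runs a $g$-Puiseux package for each $g\in\{r_l+1,\dots,n-1\}$ to push that monomial into $u_1^{(l)},\dots,u_{r_l}^{(l)}$. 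You instead expand $b_0=\sum_j a_ju_n^j$, monomialize each coefficient $a_j$ separately, and try to reassemble $b_0$ through the monomial ideal $N$ generated by the terms $a_ju_n^j/\overline{\omega}^{\alpha_l}$ via Proposition \ref{prop:idealprincipal}.

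This term-by-term route has a genuine gap at the final division. To get $b_0/\overline{\omega}^{\alpha_l}=\tau_0M_0$ with $\tau_0$ a unit you need $\nu(b_0)=\nu(N)=\min_j\nu\bigl(a_ju_n^j\bigr)$, i.e.\ that $b_0$ be non-degenerate with respect to the monomial ideal generated by the terms of its own $u_n$-expansion (Proposition \ref{prop:non d=0000E9gen=0000E9rescence et id=0000E9aux monomiaux}). That is precisely the statement $\nu_{u_n}(b_0)=\nu(b_0)$, and it fails in general: $b_0$ has degree up to $\deg_{u_n}(Q_i)-1$ in $u_n$, so it may for instance be a $K$-multiple of an intermediate key polynomial $Q_m$ with $1<m<i$, for which $\nu_{Q_1}(Q_m)<\nu(Q_m)$ by the very definition of immediate successor. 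In that case the initial forms of the terms $a_ju_n^j$ cancel, $\nu(M_0)<\nu\bigl(b_0/\overline{\omega}^{\alpha_l}\bigr)=\alpha_l\nu\bigl(u_n^{(l)}\bigr)$, and your $\tau_0$ has strictly positive value, hence is not a unit. Monomializing the $a_j$ individually does not repair this, since it leaves the cancellation among the terms untouched; avoiding exactly this degeneracy is why the paper feeds $b_0$ whole into the induction hypothesis rather than coefficient by coefficient. The obstacle you flag at the end (bookkeeping of units and elimination of the $v$-coordinates) is the minor one and is what the $g$-Puiseux packages handle; the cancellation is the one that sinks the argument as written.
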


\begin{proof}
By hypothesis, after a sequence of blow-ups independent of $u_{n}^{(l)}$,
we can monomialize $b_{0}$ and assume that it is a monomial in $\left(u_{1}^{(l)},\dots,u_{n-1}^{(l)}\right)$
multiplied by a unit of $R_{l}$.

For every $g\in\left\{ r_{l}+1,\dots,n-1\right\} $, we do a $g$-Puiseux
package, and then we have a sequence 
\[
\left(R_{l},u^{(l)}\right)\to\left(R_{t},u^{(t)}\right)
\]
 such that every $u_{g}^{(l)}$ is a monomial in $\left(u_{1}^{(t)},\dots,u_{r_{t}}^{(t)}\right)$.

In fact, we can assume that $b_{0}$ is a monomial in $\left(u_{1}^{(l)},\dots,u_{r_{l}}^{(l)}\right)$
multiplied by a unit of $R_{l}$.

Since the strict transform $\widetilde{Q}_{i+1}=\left(u_{n}^{(l)}\right)^{\alpha_{l}}+\frac{b_{0}}{\overline{\omega}^{\alpha_{l}}}$
is an immediate successor key element of $\widetilde{Q}_{i}$. This
completes the proof.
\end{proof}
\begin{rem}
Lemma \ref{lem: on se ramene au bon cas} is a special case of Lemma
\ref{lem: on se ramene au bon cas-1}.
\end{rem}

Let $G$ be a local ring essentially of finite type over $k$ of dimension
strictly less than $n$ equipped with a valuation centered in $G$
whose residue field is $k$.
\begin{thm}
\label{thm:monomialisation de Q3} Assume that for every ring $G$
as above, every element of $G$ is monomializable.

We recall that $\mathrm{car}\left(k_{\nu}\right)=0$. If $Q_{i}$
is monomializable, there exists a local framed sequence 
\begin{equation}
\left(R,u\right)\overset{\pi_{0}}{\to}\left(R_{1},u^{(1)}\right)\overset{\pi_{1}}{\to}\cdots\overset{\pi_{l-1}}{\to}\left(R_{l},u^{(l)}\right)\overset{\pi_{l}}{\to}\cdots\overset{\pi_{m-1}}{\to}\left(R_{m},u^{(m)}\right)\label{eq:suitelocale2}
\end{equation}
that monomializes $Q_{i+1}$.
\end{thm}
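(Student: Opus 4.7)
The plan is to use the sequence $(c_i)$ that monomializes $Q_i$ to reduce the problem to monomializing the strict transform $\widetilde{Q}_{i+1}$, which now lives in the regular local ring $R_l$ and, by the computations preceding the theorem, is a (respectively limit) immediate successor key element of $\widetilde{Q}_i=u_n^{(l)}$ in the extension $k\!\left(u_1^{(l)},\ldots,u_{n-1}^{(l)}\right)\!\left(u_n^{(l)}\right)$. Since $Q_{i+1}=\overline{\omega}^{\alpha_l}\widetilde{Q}_{i+1}$ with $\overline{\omega}$ already a monomial in $u_1^{(l)},\ldots,u_{r_l}^{(l)}$ up to a unit of $R_l$, any local framed sequence that monomializes $\widetilde{Q}_{i+1}$ automatically monomializes $Q_{i+1}$. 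So the whole task reduces to monomializing a (possibly limit) immediate successor key element of the parameter $u_n^{(l)}$ inside $R_l$.

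In the case $Q_i<Q_{i+1}$, I would apply Lemma \ref{lem: on se ramene au bon cas-1} to obtain a local framed sequence $\left(R_l,u^{(l)}\right)\to\left(R_e,u^{(e)}\right)$, independent of $u_n^{(l)}$, after which $\widetilde{Q}_{i+1}$ takes the binomial form $\left(u_n^{(e)}\right)^{\alpha_l}+\tau_0\eta$ with $\tau_0\in R_e^{\times}$ and $\eta$ a monomial in $u_1^{(e)},\ldots,u_{r_e}^{(e)}$. This is precisely the shape required for an $n$-Puiseux package, so I would then invoke Theorem \ref{thm:monomialise}, which, after finitely many monomial blow-ups followed by a single non-monomial blow-up, turns this element into a new parameter $u_n^{(m)}$ times a monomial in $u_1^{(m)},\ldots,u_{r_m}^{(m)}$, i.e.\ monomializes $\widetilde{Q}_{i+1}$.

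In the case $Q_i<_{\lim}Q_{i+1}$, the strict transform $\widetilde{Q}_{i+1}$ is a limit immediate successor key element of $u_n^{(l)}$, and I would invoke Theorem \ref{thm: monomialiase pc lim de u_n} directly. This is where the characteristic-zero hypothesis is essential: via Theorem \ref{thm: delta et pc limite} one has $\delta_{u_n^{(l)}}\!\left(\widetilde{Q}_{i+1}\right)=1$, so the two leading terms of its $u_n^{(l)}$-expansion dominate strictly; after monomializing all the coefficients by a sequence independent of $u_n^{(l)}$ and arranging the right divisibilities, one performs an $n$-Puiseux package with $\alpha=1$ and then replaces $u_n^{(l)}$ by the translate $\dfrac{b_1u_n^{(l)}}{b_0}+1$, producing a regular system of parameters of the final ring in which $\widetilde{Q}_{i+1}$ is a single parameter up to a unit.

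The main obstacle is the limit case, for two intertwined reasons: first, Theorem \ref{thm: delta et pc limite} is the only place where characteristic zero is genuinely used, and without $\delta=1$ the clean binomial reduction above would collapse; second, both Lemma \ref{lem: on se ramene au bon cas-1} and Theorem \ref{thm: monomialiase pc lim de u_n} rely on the inductive hypothesis that every element of a local ring essentially of finite type over $k$ of dimension strictly less than $n$ is monomializable, because the monomialization of the coefficients $b_j$ of the $u_n^{(l)}$-expansion takes place, in effect, in the hypersurface $R_l/(u_n^{(l)})$. Thus the proof of this step is the inductive engine of the whole section, and its correctness hinges on carefully matching the shape produced by the sequence $(c_i)$ with the hypotheses of Corollary \ref{cor: tout element qui suit est monomialaisable} or Theorem \ref{thm: monomialiase pc lim de u_n} applied to $\widetilde{Q}_i=u_n^{(l)}$ rather than to the original $u_n$.
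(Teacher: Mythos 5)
Your proposal is correct and follows essentially the same route as the paper: reduce to the strict transform $\widetilde{Q}_{i+1}$ as a (possibly limit) immediate successor key element of $\widetilde{Q}_i=u_n^{(l)}$, then apply Lemma \ref{lem: on se ramene au bon cas-1} together with Theorem \ref{thm:monomialise} in the non-limit case and Theorem \ref{thm: monomialiase pc lim de u_n} in the limit case. Your additional remarks on where characteristic zero and the dimension induction enter are accurate but not needed beyond what the paper's two-case argument already records.
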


\begin{proof}
There are two cases.

First: $Q_{i}<Q_{i+1}$. Then we just saw that the strict transform
$\widetilde{Q}_{i+1}$ of $Q_{i+1}$ by the sequence $\left(R,u\right)\to\left(R_{l},u^{(l)}\right)$
that monomializes $Q_{i}$ is an immediate successor key element of
$\widetilde{Q}_{i}=u_{n}^{(l)}$, and that we can reduce the problem
to the hypotheses of Theorem \ref{thm:monomialise} by Lemma \ref{lem: on se ramene au bon cas-1}.
So we use Theorem \ref{thm:monomialise} replacing $Q_{1}$ by $\widetilde{Q}_{i}$
and $Q_{2}$ by $\widetilde{Q}_{i+1}$.

Then we have constructed a local framed sequence (\ref{eq:suitelocale2})
that monomializes $\widetilde{Q}_{i+1}$.

Second case: $Q_{i}<_{\lim}Q_{i+1}$. 

Then we saw that the strict transform $\widetilde{Q}_{i+1}$ of $Q_{i+1}$
by the sequence $\left(R,u\right)\to\left(R_{l},u^{(l)}\right)$ that
monomialize $Q_{i}$ is a limit immediate successor key element of
$\widetilde{Q}_{i}=u_{n}^{(l)}$. Then we apply Theorem \ref{thm: monomialiase pc lim de u_n}
replacing $Q_{1}$ by $\widetilde{Q}_{i}$ and $Q_{2}$ by $\widetilde{Q}_{i+1}$.

We have constructed a local framed sequence (\ref{eq:suitelocale2})
that monomializes $\widetilde{Q}_{i+1}$. 
\end{proof}
\begin{thm}
\label{thm:generalisation de la monomialisation}There exists a local
sequence 
\begin{equation}
\left(R,u\right)\overset{\pi_{0}}{\to}\cdots\overset{\pi_{s-1}}{\to}\left(R_{s},u^{(s)}\right)\overset{\pi_{s}}{\to}\cdots\label{eq:suitelocaleterminator}
\end{equation}

that monomializes all the key polynomials of $\mathcal{Q}$.

More precisely, for every index $i$, there exists an index $s_{i}$
such that in $R_{s_{i}}$, $Q_{i}$ is a monomial in $u^{\left(s_{i}\right)}$
multiplied by a unit of $R_{s_{i}}$.
\end{thm}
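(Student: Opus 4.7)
The proof will be by induction on the index $i\geq 1$, with the inductive step supplied directly by Theorem \ref{thm:monomialisation de Q3}. The base case is immediate: $Q_1=u_n$ is already a regular parameter of $R$, hence trivially a monomial in $u$ (times the unit $1$), so we may take $s_1=0$ and $(R_{s_1},u^{(s_1)})=(R,u)$.

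For the inductive step, suppose that a finite local framed sequence
\[
(R,u)\overset{\pi_0}{\to}\cdots\overset{\pi_{s_i-1}}{\to}(R_{s_i},u^{(s_i)})
\]
has already been constructed, together with indices $s_1\leq\cdots\leq s_i$, such that for every $j\leq i$ the polynomial $Q_j$ is a monomial in $u^{(s_j)}$ times a unit of $R_{s_j}$. In particular $Q_i$ is monomializable by this sequence, and by the discussion preceding Lemma \ref{lem: on se ramene au bon cas-1} the strict transform $\widetilde Q_i$ coincides with $u_n^{(s_i)}$. One then applies Theorem \ref{thm:monomialisation de Q3} to extend the sequence by further framed blow-ups
\[
(R_{s_i},u^{(s_i)})\overset{\pi_{s_i}}{\to}\cdots\overset{\pi_{s_{i+1}-1}}{\to}(R_{s_{i+1}},u^{(s_{i+1})})
\]
after which the strict transform $\widetilde Q_{i+1}$ is a monomial in $u^{(s_{i+1})}$ times a unit; concatenating with the previous sequence yields the required extension. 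A routine check using Proposition \ref{prop:chgmtvari} shows that the monomial form of $Q_j$ (for every $j\leq i$) is preserved in all the rings $R_{s}$ for $s\geq s_j$, since the further framed blow-ups only rewrite each parameter as a monomial in the new parameters, possibly multiplied by a unit. Concatenating the finite pieces for all $i$ produces the single (possibly infinite) sequence announced in the statement.

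The real content of the argument therefore sits inside Theorem \ref{thm:monomialisation de Q3}, and the main obstacle is the hypothesis invoked there: one must know in advance that every element of any local ring essentially of finite type over $k$ of dimension strictly less than $n$ is monomializable. This is handled by an outer induction on $n=\dim R$ which underlies the entire Part~3 of the paper; the present theorem is the inductive step at dimension~$n$, and it legitimately uses the dimension-$(n-1)$ statement both in Lemma \ref{lem: on se ramene au bon cas-1} (to monomialize the coefficients $b_j$ of the $Q_i$-expansion of $Q_{i+1}$ by a sequence independent of $u_n^{(l)}$) and in Theorem \ref{thm: monomialiase pc lim de u_n} (to handle the limit case $Q_i<_{\lim}Q_{i+1}$). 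The two cases of the inductive step $Q_i<Q_{i+1}$ versus $Q_i<_{\lim}Q_{i+1}$ are then separately dispatched by Theorem \ref{thm:monomialise} (via an $n$-Puiseux package applied to $\widetilde Q_{i+1}=\left(u_n^{(l)}\right)^{\alpha_l}+\tau_0\eta$) and by Theorem \ref{thm: monomialiase pc lim de u_n}, as already done inside Theorem \ref{thm:monomialisation de Q3}; no new technical input is needed beyond the bookkeeping that assembles these finite blow-up sequences into one cumulative sequence.
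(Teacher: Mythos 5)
Your proof is correct and follows exactly the route the paper takes: the paper's own proof is the single sentence ``Induction on the dimension $n$ and on the index $i$ and we iterate the previous process,'' and your argument is a faithful, more detailed unpacking of that double induction, with the inductive step on $i$ supplied by Theorem \ref{thm:monomialisation de Q3} and the dimension induction furnishing its hypothesis. No discrepancy to report.
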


\begin{proof}
Induction on the dimension $n$ and on the index $i$ and we iterate
the previous process.
\end{proof}

\subsection{Divisibility.}

We consider, for every integer $j$, the countable sets
\[
\mathscr{S}_{j}:=\left\{ \prod\limits _{i=1}^{n}\left(u_{i}^{(j)}\right)^{\alpha_{i}^{(j)}}\text{, with }\alpha_{i}^{(j)}\in\mathbb{Z}\right\} 
\]
 and 
\[
\widetilde{\mathscr{S}}_{j}:=\left\{ \left(s_{1},s_{2}\right)\in\mathscr{S}_{j}\times\mathscr{S}_{j}\text{, with }\nu\left(s_{1}\right)\leq\nu\left(s_{2}\right)\right\} 
\]
with the convention that for every $i\in\left\{ 1,\dots,n\right\} $,
$u_{i}^{(0)}=u_{i}$.

The set $\widetilde{\mathscr{S}}_{j}$ being countable for every integer
$j$, we can number its elements, and then we write $\widetilde{\mathscr{S}}_{j}:=\left\{ s_{m}^{(j)}\right\} _{m\in\mathbb{N}}$.
We consider now the finite set 
\[
\mathscr{S}'_{j}:=\left\{ s_{m}^{(j)},\text{ }m\leq j\right\} \cup\left\{ s_{j}^{(m)},\text{ }m\leq j\right\} .
\]

Then $\bigcup\limits _{j\in\mathbb{N}}\left(\mathscr{S}_{j}\times\mathscr{S}_{j}\right)=\bigcup\limits _{j\in\mathbb{N}}\widetilde{\mathscr{S}}_{j}=\bigcup\limits _{j\in\mathbb{N}}\mathscr{S}'_{j}$
is a countable union of finite sets.

Now we fix a local framed sequence
\[
\left(R,u\right)\to\cdots\to\left(R_{i},u^{(i)}\right).
\]

\begin{thm}
\label{thm:existence suite les ensembles finis se divisent}There
exists a finite local framed sequence
\[
p_{i}\colon\left(R_{i},u^{(i)}\right)\to\cdots\to\left(R_{i+q_{i}},u^{(i+q_{i})}\right)
\]
 such that for every integer $j\leq i$ and for every element $s$
of $\mathscr{S}'_{j}$ , the first coordinate of $s$ divides its
second coordinate in $R_{i+q_{i}}$.
\end{thm}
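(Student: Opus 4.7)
The plan is to reduce the statement to finitely many applications of Proposition \ref{prop:existencesuitedivise}, each handling a single pair of monomials, using Proposition \ref{prop:chgmtvari} to translate between the various parameter systems $u^{(j)}$.

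First, I would observe that $\bigcup_{j\le i}\mathscr{S}'_j$ is a finite union of finite sets, hence a finite collection of pairs $(s_1,s_2)$ of Laurent monomials (each $s_k$ in the parameters $u^{(j)}$ of some intermediate ring $R_j$ with $j\le i$) satisfying $\nu(s_1)\le\nu(s_2)$. By Proposition \ref{prop:chgmtvari}, each $u_k^{(j)}$ with $j\le i$ is a Laurent monomial in $u^{(i)}$ multiplied by a unit of $R_i$. Substituting these expressions, every $s\in\mathscr{S}_j$ takes the form $s=U\cdot m$ with $U\in R_i^\times$ and $m$ a Laurent monomial in $u^{(i)}$. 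Since units in a local ring play no role in divisibility, I may replace each pair $(s_1,s_2)$ by a pair $(m_1,m_2)$ of Laurent monomials in $u^{(i)}$ still satisfying $\nu(m_1)\le\nu(m_2)$.

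Next, I would enumerate the resulting pairs $(m_1^{(1)},m_2^{(1)}),\dots,(m_1^{(N)},m_2^{(N)})$ and treat them one at a time. For the first pair, Proposition \ref{prop:existencesuitedivise} (whose proof applies unchanged to Laurent monomials) furnishes a finite local framed sequence along $\nu$, independent of the parameters not appearing in $m_1^{(1)}m_2^{(1)}$, after which one of $m_1^{(1)}\mid m_2^{(1)}$ or $m_2^{(1)}\mid m_1^{(1)}$ holds; Proposition \ref{prop:divaleur} combined with $\nu(m_1^{(1)})\le\nu(m_2^{(1)})$ forces the correct direction $m_1^{(1)}\mid m_2^{(1)}$. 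I would then re-express $(m_1^{(2)},m_2^{(2)})$ in the new regular system of parameters (again via Proposition \ref{prop:chgmtvari}) and repeat the argument, and so on for all $N$ pairs.

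The only additional check is that each divisibility established at an earlier stage survives the subsequent blow-ups. This is immediate: if $a\mid b$ in $R_l$, meaning $b/a\in R_l$, and $R_l\hookrightarrow R_{l+1}$ is the natural inclusion associated to a local blow-up, then $b/a\in R_{l+1}$, so the divisibility persists. After $N$ iterations the concatenated finite local framed sequence has the required property simultaneously for every pair in $\bigcup_{j\le i}\mathscr{S}'_j$, which is precisely the statement of the theorem. There is no genuine conceptual obstacle here: the argument is essentially a bookkeeping procedure on top of Propositions \ref{prop:chgmtvari}, \ref{prop:existencesuitedivise} and \ref{prop:divaleur}; the only point to watch is the careful tracking of unit factors when re-expressing monomials in each new parameter system.
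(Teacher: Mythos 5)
Your proposal is correct and follows essentially the same route as the paper: both reduce the finitely many pairs to iterated applications of Proposition \ref{prop:existencesuitedivise} together with Proposition \ref{prop:divaleur} to pin down the direction of divisibility, re-expressing the monomials in the current parameter system (you via Proposition \ref{prop:chgmtvari}, the paper via point $4$ of Theorem \ref{thm:monomialise}, which carry the same content) and discarding unit factors. The persistence of already-established divisibilities under further blow-ups, which you verify explicitly, is the same implicit point in the paper's iteration.
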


\begin{proof}
Consider an integer $j\leq i$ and an element $s=\left(s_{1},s_{2}\right)\in\mathscr{S}'_{j}$
. We want to construct a sequence of blow-ups such that at the end
we have $s_{1}\mid s_{2}$.

We know that $s\in\widetilde{\mathscr{S}}_{m}$ with $m\leq j$. All
cases being similar, we may assume $s\in\widetilde{\mathscr{S}}_{j}$
and then we have
\[
s_{1}=\prod\limits _{i=1}^{n}\left(u_{i}^{(j)}\right)^{\alpha_{i,1}^{(j)}}
\]
 and 
\[
s_{2}=\prod\limits _{i=1}^{n}\left(u_{i}^{(j)}\right)^{\alpha_{i,2}^{(j)}}.
\]
 By Proposition \ref{prop:existencesuitedivise} applied to $R_{i}$
instead of $R$, there exists a sequence $\left(R_{i},u^{(i)}\right)\to\cdots\to\left(R_{i+l},u^{(i+l)}\right)$
such that in $R_{i+l}$, $s_{1}\mid s_{2}$ or $s_{2}\mid s_{1}$.
By definition $\nu(s_{1})\leq\nu(s_{2})$, so we have $s_{1}\mid s_{2}$
by Proposition \ref{prop:divaleur}.

By point $4$ of Theorem \ref{thm:monomialise}, we know that $\mathscr{S}_{j}\subseteq R_{i+l}^{\times}\mathscr{S}_{i+l}$.
In other words every element of $\mathscr{S}_{j}$ can be written
$z_{i+l}s_{i+l}$ with $z_{i+l}\in R_{i+l}^{\times}$ and $s_{i+l}\in\mathscr{S}_{i+l}$.

Let $\left(s_{3},s_{4}\right)\in\mathscr{S}'_{j}$, be another pair
of $\mathscr{S}'_{j}$, let us say that it is still in $\widetilde{\mathscr{S}}_{j}$.
We just saw that $s_{3},s_{4}\in R_{i+l}^{\times}\mathscr{S}_{i+l}$.
Units don't have an effect on divisibility, so we can only consider
the part of $s_{3}$ and $s_{4}$ which is in $\mathscr{S}_{i+l}$.
Hence we can iterate the Proposition \ref{prop:existencesuitedivise}
applying it to $\left(R_{i+l},u^{(i+l)}\right)$. So we constructed
an other sequence of blow-ups
\[
\left(R_{i+l},u^{(i+l)}\right)\overset{}{\to}\cdots\overset{}{\to}\left(R_{i+h},u^{(i+h)}\right)
\]
 such that $R_{i+h}$ we have $s_{3}\mid s_{4}$ or $s_{4}\mid s_{3}$.
Since $\nu(s_{3})\leq\nu(s_{4})$, we know that $s_{3}$ divides $s_{4}$.

We iterate the process for all the pairs of $\mathscr{S}'_{j}$, and
for every $j\leq i$ . This is a finite number of times since $\mathscr{S}'_{j}$
has a finite number of elements for every $j$ and since we consider
a finite number of such sets. Then we obtain a finite sequence of
blow-ups 
\[
\left(R_{i},u^{(i)}\right)\overset{}{\to}\cdots\overset{}{\to}\left(R_{i+q_{i}},u^{(i+q_{i})}\right)
\]
such that for every integer $j\leq i$ and every $s$ in $\mathscr{S}'_{j}$
, the first coordinate of $s$ divides the second coordinate in $R_{i+q_{i}}$.
\end{proof}
The goal of the next theorem is to construct an infinite local framed
sequence \begin{equation} (R,u)\rightarrow\dots\rightarrow\left(R_i,u^{(i)}\right)\dots\label{locframedsequence} \end{equation}
that monomializes all the key elements, as well as other elements
specified below, and to ensure countably many divisibility conditions,
also specified below. We will use the notation $$ B_i:=k\left[u^{(i)}_1,\dots,u^{(i)}_{n-1}\right]. $$
\begin{thm}
\label{thm: La suite de la mort qui tue} We recall that $\mathrm{car}\left(k_{\nu}\right)=0$.
There exists an infinite sequence of blow-ups 
\begin{equation}
\left(R,u\right)\overset{}{\to}\cdots\overset{}{\to}\left(R_{m},u^{(m)}\right)\overset{}{\to}\cdots\label{eq: la mort qui tue}
\end{equation}
that monomializes all the key polynomials, all the elements of $B_{i}$
for every index $i$ and that has the following property:

\[
\forall j\in\mathbb{N}\text{ }\forall s=\left(s_{1},s_{2}\right)\in\mathscr{S}'_{j}\text{ }\exists i\in\mathbb{N}_{\geq j}\text{ such that in }R_{i}\text{ we have }s_{1}\mid s_{2}.
\]
\end{thm}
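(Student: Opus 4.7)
The plan is to construct the infinite sequence by interleaving three families of tasks via a diagonal procedure, working by induction on the dimension $n$ of $R$. The base case $n=1$ is immediate, since every nonzero element of a one-dimensional regular local ring is already a monomial in its uniformizer. Assuming the theorem has been established in all dimensions strictly less than $n$, I fix a sequence $\mathcal{Q}=(Q_i)_{i\geq 1}$ of (possibly limit) immediate successors as in Proposition \ref{prop: binoooomme}, and at every stage $m$, once $u^{(m)}$ has been constructed, I fix an enumeration $f_{m,1},f_{m,2},\dots$ of the elements of $B_m=k[u_1^{(m)},\dots,u_{n-1}^{(m)}]$.

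Starting from an already constructed finite local framed sequence $(R,u)\to\cdots\to(R_m,u^{(m)})$, at stage $m+1$ I would extend it by a finite local framed sub-sequence achieving, in order, the following three objectives. First, \textbf{(A)} monomialize $Q_{m+1}$, using Theorem \ref{thm:generalisation de la monomialisation}. Its proof (through Theorem \ref{thm:monomialise}, Theorem \ref{thm: monomialiase pc lim de u_n} and Lemmas \ref{lem: on se ramene au bon cas}--\ref{lem: on se ramene au bon cas-1}) reduces to monomializing certain elements of $B_m$, which can be done by the inductive hypothesis on dimension applied to the localization of $B_m$ at the center of $\nu|_{\mathrm{Frac}(B_m)}$. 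Second, \textbf{(B)} enforce the divisibility $s_1\mid s_2$ for every pair $(s_1,s_2)\in\bigcup_{j\leq m+1}\mathscr{S}'_j$, which is a finite set, by invoking Theorem \ref{thm:existence suite les ensembles finis se divisent}. Third, \textbf{(C)} for each $i\leq m+1$, monomialize the finitely many elements $f_{i,1},\dots,f_{i,m+1}$, again by the inductive hypothesis on dimension applied to the localization of $B_i$, the corresponding blow-ups lifting to blow-ups of the ambient regular local ring that are independent of $u_n^{(i)}$.

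Concatenating these finite stages produces the infinite sequence \eqref{eq: la mort qui tue}. Checking that every required task is eventually accomplished will be pure bookkeeping: each key polynomial $Q_i$ is monomialized at stage $i$ by (A); every $f\in B_j$ equals $f_{j,k}$ for some $k$ and is hence monomialized by the latest at stage $\max(j,k)$ by (C); and every pair $s\in\mathscr{S}'_j$ is settled at stage $j$ by (B).

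The main obstacle is to verify that the operations performed at later stages never destroy what was achieved at earlier ones. Monomial form is preserved by further framed blow-ups, because a monomial times a unit remains of the same form under a unimodular change of exponent vectors as in Remark \ref{rem:wenfonctiondew'}; divisibility between monomial ideals is preserved because, once $u^{(i)}$ has been chosen so that both sides are monomials in the current parameters, the divisibility is equivalent to an inequality of valuations by Proposition \ref{prop:divaleur}, which is an intrinsic property of $\nu$. The delicate point will be to check that each task in (A)--(C) can indeed be arranged via a local framed sub-sequence that is independent of the appropriate parameters, so that the inductive hypothesis is applicable and the previously achieved key-polynomial and divisibility structure persists; this is guaranteed by the explicit independence statements already built into Proposition \ref{prop:existencesuitedivise} and Theorems \ref{thm:monomialise} and \ref{thm:existence suite les ensembles finis se divisent}.
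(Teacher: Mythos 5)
Your overall architecture --- a diagonal interleaving of finite tasks (the next key polynomial via Theorem \ref{thm:generalisation de la monomialisation}, the finitely many divisibility conditions coming from $\bigcup_{j\le m+1}\mathscr{S}'_j$ via Theorem \ref{thm:existence suite les ensembles finis se divisent}, and finitely many elements attached to the $B_i$), together with the observation that monomial form and divisibility persist under later framed blow-ups --- is exactly the paper's proof. There is, however, one step that fails as written: you ``fix an enumeration $f_{m,1},f_{m,2},\dots$ of the elements of $B_m=k\left[u_1^{(m)},\dots,u_{n-1}^{(m)}\right]$''. No such enumeration exists unless $k$ is countable; for $k=\mathbb{R}$ or $k=\mathbb{C}$ the ring $B_m$ is uncountable, and a countable sequence of stages, each treating finitely many elements, cannot exhaust it one element at a time. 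Your final bookkeeping (``every $f\in B_j$ equals $f_{j,k}$ for some $k$'') therefore does not go through.

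The missing idea is the reduction from ``all elements of $B_i$'' to a countable family. The paper never enumerates elements of $B_i$: it enumerates the generators $P_j^{(i)}$ of the $\nu$-ideals of the $B_i$, a family indexed by $\mathbb{N}$ because the set of values taken on a noetherian ring is countable and each $\nu$-ideal is finitely generated. The diagonal scheme is applied to these generators. Then, for an arbitrary $f\in B_i$, once at some finite stage the finitely many generators of the $\nu$-ideal of value $\nu(f)$ have become monomials and the one of minimal value divides the others (this is exactly where the $\mathscr{S}'_j$-divisibility conditions are used), $f$ is automatically equal to that minimal monomial times an element of value zero, hence times a unit; this is the argument of Proposition \ref{prop:idealprincipal} and Theorem \ref{thm:on monomialise les elements non degeneres}. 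This indirect step is what allows countably many operations to monomialize a possibly uncountable set. With that substitution --- treating the $\nu$-ideal generators rather than the elements themselves --- your proof coincides with the paper's.
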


\begin{proof}
The first key polynomial is a monomial, so for it we do not need to
do anything. For $j=0$, the elements of $\mathscr{S}'_{j}=\mathscr{S}'_{0}$
are just pairs of monomials in $u$. Let us consider $s=(s_{1},s_{2})\in\mathscr{S}'_{0}$
and apply Proposition \ref{prop:existencesuitedivise}. We construct
a sequence $p_{0}\colon R\to R_{q_{0}}$ such that in $R_{q_{0}}$,
we have $s_{1}\mid s_{2}$ or $s_{2}\mid s_{1}$. Since $\nu\left(s_{1}\right)\leq\nu\left(s_{2}\right)$,
we have $s_{1}\mid s_{2}$. We do the same for all the elements of
$\mathscr{S}'_{0}$ (recall that the set $\mathscr{S}'_{0}$ is finite),
and by abuse of notation we still denote by $p_{0}\colon R\to R_{q_{0}}$
the sequence obtained at the end. Now we have a sequence of blow-ups
$p_{0}\colon R\to R_{q_{0}}$ such that the first key polynomial is
a monomial and such that for every $s=(s_{1},s_{2})\in\mathscr{S}'_{0}$,
we have $s_{1}\mid s_{2}$ in $R_{q_{0}}$. 

We denote by $\left(P_{j}^{(i)}\right)_{j\in\mathbb{N}}$ the sequence
of the generators of the $\nu$-ideals of the $B_{i}$. For the moment
we only monomialize $P_{0}^{(0)}$ and still denote by $p_{0}\colon R\to R_{q_{0}}$
the sequence of blow-ups that monomializes the first key polynomial
$P_{0}^{(0)}$ and such that for every $s=(s_{1},s_{2})\in\mathscr{S}'_{0}$,
we have $s_{1}\mid s_{2}$ in $R_{q_{0}}$. 

Arguing exactly as in the proof of Theorem \ref{thm:generalisation de la monomialisation},
we show that there exists a sequence $\pi^{(2)}\colon R_{q_{0}}\to...\to R_{1}$
that monomializes the second key polynomial.

We have a sequence $\pi^{(2)}\circ p_{0}\colon R\to R_{q_{0}}\to R_{1}$
that monomializes the first two key polynomials, the element $P_{0}^{(0)}$,
and such that for every $s=(s_{1},s_{2})\in\mathscr{S}'_{0}$, we
have $s_{1}\mid s_{2}$ in $R_{q_{0}}$. Now, again by Proposition
\ref{prop:existencesuitedivise}, we construct a sequence $p_{1}\colon R_{1}\to R_{q_{1}}$
such that for every $s=(s_{1},s_{2})\in\mathscr{S}'_{1}$, we have
$s_{1}\mid s_{2}$ in $R_{q_{1}}$. 

Now we monomialize all the $P_{j}^{(i)}$ for $i,j\leq1$ and still
denote, by abuse of notation, by $p_{1}\colon R_{1}\to R_{q_{1}}$
the sequence of blow-ups that monomializes these $P_{j}^{(i)}$ and
such that for every $s=(s_{1},s_{2})\in\mathscr{S}'_{1}$, we have
$s_{1}\mid s_{2}$ in $R_{q_{1}}$. 

Arguing exactly as in the proof of Theorem \ref{thm:generalisation de la monomialisation},
we show that there exists a sequence of blow-ups $\pi^{(3)}\colon R_{q_{1}}\to...\to R_{2}$
that monomializes the third key polynomial.

So we have a sequence $\pi^{(3)}\circ p_{1}\circ\pi^{(2)}\circ p_{0}\colon R\to R_{q_{0}}\to R_{q_{1}}\to R_{2}$
that monomializes the first three key polynomials, the elements $P_{j}^{(i)}$
for $i,j\leq1$, and such that for every $s=(s_{1},s_{2})\in\mathscr{S}'_{0}$
or $\mathscr{S}'_{1}$, we have $s_{1}\mid s_{2}$ in $R_{q_{0}}$
or in $R_{q_{1}}$. Now, again by Proposition \ref{prop:existencesuitedivise},
we construct a sequence $p_{2}\colon R_{2}\to R_{q_{2}}$ such that
for every $s=(s_{1},s_{2})\in\mathscr{S}'_{2}$, we have $s_{1}\mid s_{2}$
in $R_{q_{2}}$. 

Now we monomialize all the $P_{j}^{(i)}$ for $i,j\leq2$ and still
denote, by abuse of notation, by $p_{2}\colon R_{2}\to R_{q_{2}}$
the sequence of blow-ups that monomializes these $P_{j}^{(i)}$ and
such that for every $s=(s_{1},s_{2})\in\mathscr{S}'_{2}$, we have
$s_{1}\mid s_{2}$ in $R_{q_{2}}$. 

Then we have a sequence $p_{2}\circ\pi^{(3)}\circ p_{1}\circ\pi^{(2)}\circ p_{0}$
that monomializes the first three key polynomials, the elements $P_{j}^{(i)}$
for $i,j\leq2$, and such that for every $s=(s_{1},s_{2})\in\mathscr{S}'_{i}$
for $i\in\left\{ 0,1,2\right\} $ we have $s_{1}\mid s_{2}$ in $R_{q_{i}}$.
We iterate this process an infinite number of times. Hence we construct
a sequence of blow-ups $(R,u)\to\cdots\to(R_{m},u^{(m)})\to\cdots$
that monomializes all the key polynomials, all the generators $P_{j}^{(i)}$
(and so all the elements of the $B_{i}$) and that has the last property
of the statement of the Theorem.
\end{proof}
\newpage{}

\section{Conclusion.}

Now we can prove the main result of this chapter, namely, simultaneaous
embedded local uniformization for the local rings essentially of finite
type over a field of characteristic zero.

A local algebra $K$ essentially of finite type over a field $k$
that has $k$ as residue field is an étale extension of 
\[
K'=k\left[u_{1},\dots,u_{n}\right]_{\left(u_{1},\dots,u_{n}\right)}.
\]
 Let $f\in K$ be an irreducible element over $k$ and 
\[
I:=\left(f\right)\bigcap k\left[u_{1},\dots,u_{n}\right].
\]
 The ideal $I$ is a prime ideal of height $1$, so $I$ principal.
We consider a generator $\widetilde{f}$ of $I$. Then $\frac{K'}{\left(\widetilde{f}\right)}\hookrightarrow\frac{K}{\left(f\right)}$
and each local sequence in $\frac{K'}{\left(\widetilde{f}\right)}$
induced a local sequence in $\frac{K}{\left(f\right)}$. 

So it is enough to prove local uniformization in the case of the rings
$k\left[u_{1},\dots,u_{n}\right]_{\left(u_{1},\dots,u_{n}\right)}$
to prove it in the general case of algebras essentially of finite
type over a field $k$.
\begin{thm}
\label{thm:Monomialisons}Let us consider the sequence 
\[
\left(R,u\right)\overset{}{\to}\cdots\overset{}{\to}\left(R_{m},u^{(m)}\right)\overset{}{\to}\cdots
\]
 of Theorem \ref{thm: La suite de la mort qui tue}. 

Then for every element $f$ of $R$, there exists $i$ such that in
$R_{i}$, $f$ is a monomial multiplied by a unit. 
\end{thm}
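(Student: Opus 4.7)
The plan is to combine three ingredients already at our disposal: the non-degeneracy property of elements of $R$ with respect to some key polynomial in $\mathcal{Q}$ (Remark \ref{rem: tout element est non degenere par rapport a un poly clef}), the monomialization of all key polynomials and of all elements of the intermediate rings $B_i$ guaranteed by Theorem \ref{thm: La suite de la mort qui tue}, and finally the pairwise divisibility property built into the same theorem. Let me fix $f\in R$. First, by the cofinality of $\left(\epsilon(Q_i)\right)_i$ in $\epsilon(\Lambda)$ and Proposition \ref{prop:non degeneresence de polyn=0000F4mes par rapport au m=0000EAme polyn=0000F4me clef}, there is an index $k$ such that $\nu_{Q_k}(f)=\nu(f)$. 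Write the $Q_k$-expansion $f=\sum_{j=0}^{N}f_j Q_k^j$ with $f_j\in k[u_1,\dots,u_{n-1}]\subset B_0$. Non-degeneracy gives $\nu(f)=\min_{j}\nu(f_j Q_k^j)$, and in particular there exists $j_0$ such that $\nu(f_{j_0}Q_k^{j_0})=\nu(f)$.

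Next, I use Theorem \ref{thm: La suite de la mort qui tue} to pass to some ring $R_l$ in the sequence where $Q_k$ has been monomialized (so $Q_k$ equals $u_n^{(s_k)}$ times a monomial in $u_1^{(s_k)},\dots,u_{r_{s_k}}^{(s_k)}$, and after further blow-ups this remains a monomial times a unit in $R_l$) and each coefficient $f_j\in B_0\subset B_l$ has also been monomialized into a unit times a monomial in $u^{(l)}$. Since only finitely many $j$'s appear, a single $l$ suffices. In $R_l$ each term $f_j Q_k^j$ is therefore a monomial in $u^{(l)}$ multiplied by a unit of $R_l$; denote these monomials by $m_j$ and write $f_j Q_k^j=v_j m_j$ with $v_j\in R_l^\times$. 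Because the $m_j$ belong to the set $\mathscr{S}_l$, each pair $(m_{j_0},m_j)$ with $\nu(m_{j_0})\leq\nu(m_j)$ lies in some $\mathscr{S}'_{l'}$ with $l'\geq l$. Applying the last conclusion of Theorem \ref{thm: La suite de la mort qui tue} successively to these finitely many pairs, we reach an index $m$ in the sequence in which $m_{j_0}\mid m_j$ for every $j$.

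Finally, in $R_m$ we factor out $f_{j_0}Q_k^{j_0}=v_{j_0}m_{j_0}$ to obtain
\[
f=v_{j_0}m_{j_0}\left(1+\sum_{j\neq j_0}\frac{v_j}{v_{j_0}}\cdot\frac{m_j}{m_{j_0}}\right).
\]
Each ratio $m_j/m_{j_0}$ is an element of $R_m$ (by the divisibility just arranged) whose $\nu$-value equals $\nu(f_j Q_k^j)-\nu(f)\geq 0$, with strict inequality whenever the summand is genuinely present from a larger-value term. Non-degeneracy $\nu(f)=\min_j\nu(f_j Q_k^j)$ ensures the parenthesized factor has $\nu$-value $0$ (its $\nu$-value is the image under the natural map and equals $1$ modulo the maximal ideal), hence is a unit of $R_m$. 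Therefore $f$ is a monomial times a unit in $R_m$. The main subtlety of this argument is the bookkeeping to extract, from the finitely many nontrivial terms in the $Q_k$-expansion, a single index in the universal sequence of Theorem \ref{thm: La suite de la mort qui tue} at which all the required monomializations and pairwise divisibilities hold simultaneously; but because the sequence is constructed precisely to eventually handle every pair in every $\mathscr{S}'_j$ and every element of every $B_i$, this bookkeeping is only finite for a given $f$ and goes through.
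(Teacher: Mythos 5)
Your overall strategy is the same as the paper's: find $Q_k$ in the cofinal sequence with $\nu_{Q_k}(f)=\nu(f)$, pass far enough along the universal sequence of Theorem \ref{thm: La suite de la mort qui tue} that all the relevant objects become monomials times units, and then use the built-in divisibility to factor out the term of minimal value; your final ``the cofactor has value $0$, hence is a unit'' step is exactly the argument of Theorem \ref{thm:on monomialise les elements non degeneres}. The only structural difference is that you work directly with the $Q_k$-expansion of $f$, whereas the paper passes through non-degeneracy of $f$ with respect to the regular system of parameters $u^{(l)}$ and the monomial ideal $N$ of Proposition \ref{prop:non d=0000E9gen=0000E9rescence et id=0000E9aux monomiaux}.

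There is, however, one concrete inaccuracy that leaves a gap as written: the coefficients $f_j$ of the $Q_k$-expansion are \emph{not} elements of $k[u_1,\dots,u_{n-1}]$. The $Q_k$-expansion is taken in $K[u_n]$ with $K=k(u_1,\dots,u_{n-1})$, so each $f_j$ is a polynomial in $u_n$ of degree strictly less than $\deg_{u_n}Q_k$, and its coefficients may moreover carry denominators from $K$. Consequently you cannot invoke the clause of Theorem \ref{thm: La suite de la mort qui tue} about monomializing the elements of the $B_i$ to monomialize the $f_j$ directly. To repair this you need the recursive structure the paper relies on: each $f_j$, having degree $<\deg Q_k$, is non-degenerate with respect to some earlier key polynomial, and one descends until the data genuinely live in the lower-dimensional rings $B_i$ --- or, more economically, one follows the paper: once $Q_1,\dots,Q_k$ are monomials in $R_l$, conclude that $f$ is non-degenerate with respect to $u^{(l)}$, then apply Proposition \ref{prop:non d=0000E9gen=0000E9rescence et id=0000E9aux monomiaux} to get a monomial ideal $N\ni f$ with $\nu(N)=\nu(f)$ and use the divisibility property on its finitely many generators. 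With that adjustment the rest of your argument (finitely many pairs, divisibility, factoring out the minimal-value term, unit cofactor) goes through.
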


\begin{proof}
Let $f\in R$. By Theorem \ref{thm:il existe une suite cofinale},
there exists a finite or infinite sequence $\left(Q_{i}\right)_{i}$
of key polynomials of the extension $K\left(u_{n}\right)$, optimal
(possibly limit) immediate successors, such that $\left(\epsilon(Q_{i})\right)_{i}$
is cofinal in $\epsilon(\Lambda)$ where $\Lambda$ is the set of
key polynomials. 

Then by Remark \ref{rem: tout element est non degenere par rapport a un poly clef},
$f$ is non-degenerate with respect to one of these polynomials $Q_{i}$.
But we saw in Theorem \ref{thm: La suite de la mort qui tue} that
there exists an index $l$ such that in $R_{l}$, all the $Q_{j}$
with $j\leq i$ are monomials, hence $f$ is non-degenerate with respect
to a regular system of parameters of $R_{l}$. 

Let $N=\left(w_{1},\dots,w_{s}\right)$ be a monomial ideal in $u^{\left(l\right)}$
such that $\nu\left(N\right)=\nu\left(f\right)$ with $w_{j}$ monomials
in $u^{\left(l\right)}$ such that $\nu\left(w_{1}\right)=\min\left\{ \nu\left(w_{j}\right)\right\} $.
By construction of the local framed sequence, there exists $l'\geq l$
such that in $R_{l'}$, $w_{1}\mid w_{j}$ for all $j$. So in $R_{l'}$,
$f$ is equal to $w_{1}$ multiplied by a unit of $R_{l'}$.
\end{proof}
\begin{thm}[Embedded local uniformization]
\label{thm: pour le citer} Let $k$ be a zero characteristic field
and $f=\left(f_{1},\dots,f_{l}\right)\in k\left[u_{1},\dots,u_{n}\right]^{l}$
be a set of $l$ polynomials in $n$ variables, that are irreducible
over $k$. We set $R:=k\left[u_{1},\dots,u_{n}\right]_{\left(u_{1},\dots,u_{n}\right)}$
and $\nu$ a valuation centered in $R$ such that $k=k_{\nu}$.

We consider the sequence $\left(R,u\right)\overset{}{\to}\cdots\overset{}{\to}\left(R_{m},u^{(m)}\right)\overset{}{\to}\cdots$
of Theorem \ref{thm: La suite de la mort qui tue}.

Then there exists an index $j$ such that the subscheme of $\mathrm{Spec}\left(R_{j}\right)$
defined by the ideal $\left(f_{1},\dots,f_{l}\right)$ is a normal
crossing divisor.
\end{thm}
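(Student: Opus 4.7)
The plan is to combine Theorem \ref{thm:Monomialisons} (which monomializes every individual element of $R$) with the divisibility property built into the sequence of Theorem \ref{thm: La suite de la mort qui tue}, so that the ideal $(f_1,\dots,f_l)R_j$ becomes principal and generated by a single monomial in a regular system of parameters of $R_j$.

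First I would apply Theorem \ref{thm:Monomialisons} to each of the finitely many elements $f_1,\dots,f_l \in R$. This produces indices $j_1,\dots,j_l$ such that in $R_{j_s}$ one has $f_s = v_s m_s$, with $v_s \in R_{j_s}^{\times}$ and $m_s$ a monomial in $u^{(j_s)}$. Setting $j_0 := \max_{1\le s\le l} j_s$, the ``monomial times unit'' form is preserved at step $j_0$: by point (1) of Proposition \ref{prop:chgmtvari}, each coordinate $u_m^{(j_s)}$ is itself a monomial in $u^{(j_0)}$ up to a unit of $R_{j_0}$, so in $R_{j_0}$ each $f_s$ still equals a unit of $R_{j_0}$ times a monomial $\widetilde m_s$ in $u^{(j_0)}$. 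Since the units can be absorbed into the generators, $(f_1,\dots,f_l)R_{j_0} = (\widetilde m_1,\dots,\widetilde m_l)R_{j_0}$ is a monomial ideal in $u^{(j_0)}$.

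Next I would invoke the divisibility clause of Theorem \ref{thm: La suite de la mort qui tue}. Renumbering so that $\nu(\widetilde m_1) = \min_{1\le s\le l}\nu(\widetilde m_s)$, each of the finitely many pairs $(\widetilde m_1,\widetilde m_s)$ with $s \ge 2$ lies in $\widetilde{\mathscr{S}}_{j_0}$, and therefore in some $\mathscr{S}'_{j'_s}$ with $j'_s \ge j_0$. The Theorem then provides an index $i_s \ge j'_s$ at which $\widetilde m_1 \mid \widetilde m_s$ in $R_{i_s}$. Taking $j := \max_{2\le s\le l} i_s$, all $l-1$ divisibilities hold simultaneously in $R_j$, so $(\widetilde m_1,\dots,\widetilde m_l)R_j = (\widetilde m_1)R_j$ is principal, generated by a single monomial in the regular system of parameters $u^{(j)}$ (once more, monomiality is preserved when we pass from $R_{j_0}$ to $R_j$ by Proposition \ref{prop:chgmtvari}).

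Finally, the subscheme of $\mathrm{Spec}(R_j)$ cut out by $(f_1,\dots,f_l)$ is thus $V(\widetilde m_1)$, and since $\widetilde m_1 = \prod_i (u_i^{(j)})^{\alpha_i}$ with the $u_i^{(j)}$ part of a regular system of parameters of $R_j$, this is by definition a normal crossing divisor (the effective divisor $\sum_i \alpha_i V(u_i^{(j)})$, whose irreducible components are smooth coordinate hyperplanes meeting transversally). The only real obstacle is bookkeeping rather than mathematics: one must check that once an element has been monomialized at some stage of the sequence, its monomial form together with its factorization through the later coordinate systems is preserved along all subsequent framed blow-ups; this is exactly the content of Proposition \ref{prop:chgmtvari}, so no further argument is required and the theorem follows directly from the two main results assembled above.
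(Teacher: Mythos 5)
Your proof is correct and follows essentially the same route as the paper: monomialize each $f_s$ along the sequence of Theorem \ref{thm: La suite de la mort qui tue}, then use the built-in divisibility property to make the ideal principal, generated by the monomial of minimal value. Your version is in fact slightly more explicit than the paper's (which simply asserts the existence of $j\geq j_l$ with $f_1\mid f_i$), since you track why the pairs land in the sets $\mathscr{S}'_{j'}$ and why monomiality persists under later blow-ups via Proposition \ref{prop:chgmtvari}.
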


\begin{proof}
Renumbering, if necessary, we may assume 
\[
\nu\left(f_{1}\right)=\min\left\{ \nu\left(f_{j}\right)\right\} .
\]

By Theorem \ref{thm: La suite de la mort qui tue} there exists an
index $j_{1}$ such that in $R_{j_{1}}$, the total transform of $f_{1}$
is a monomial in $u^{(j_{1})}$, and so defines a normal crossing
divisor.

Now we look at the equation $f_{2}$ in $R_{j_{1}}$. By Theorem \ref{thm: La suite de la mort qui tue},
there exists an index $j_{2}$ such that in $R_{j_{2}}$, the total
transform of $f_{2}$ defines a normal crossing divisor.

In $R_{2}$, the total transforms of $f_{1}$ and $f_{2}$ define
normal crossing divisors.

We iterate the process until the total transforms of $f_{1},\dots,f_{l}$
define normal crossing divisors in $R_{j_{l}}$.

By construction of the local framed sequence $\left(R,u\right)\overset{}{\to}\cdots\overset{}{\to}\left(R_{m},u^{(m)}\right)\overset{}{\to}\cdots$,
there exists $j\geq j_{l}$ such that in $R_{j}$, we have $f_{1}\mid f_{i}$
for every index $i$. 
\end{proof}
\begin{cor}
We keep the same notation and hypotheses as in the previous Theorem.

Then $R_{\nu}=\lim\limits _{\rightarrow}R_{i}$.
\end{cor}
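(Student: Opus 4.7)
The plan is to prove the two inclusions separately, using the domination property on one side and the combination of Theorem \ref{thm:Monomialisons} with the countable divisibility property of Theorem \ref{thm: La suite de la mort qui tue} on the other.

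For the inclusion $\varinjlim R_{i}\subseteq R_{\nu}$, the key point is that every blow-up $\pi_{i}$ in the constructed sequence is framed with respect to $\nu$, meaning we localize at the center of $\nu$ at each step. Thus every $R_{i}$ is a local ring dominated by $R_{\nu}$ and sits inside $K=\mathrm{Frac}(R)$, so $R_{i}\subseteq R_{\nu}$ for each $i$, and the direct limit (which is simply the union, since the maps are injective) is contained in $R_{\nu}$.

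For the reverse inclusion $R_{\nu}\subseteq\varinjlim R_{i}$, take $x\in R_{\nu}$. Since blow-ups are birational, all the $R_{i}$ share the fraction field $K=\mathrm{Frac}(R)$, so write $x=f/g$ with $f,g\in R$. By Theorem \ref{thm:Monomialisons} there exists an index $i$ such that in $R_{i}$ both $f$ and $g$ are monomials in $u^{(i)}$ multiplied by units; say $f=v_{f}m_{f}$ and $g=v_{g}m_{g}$ with $v_{f},v_{g}\in R_{i}^{\times}$ and $m_{f},m_{g}$ monomials in $u^{(i)}$ with non-negative exponents. Since $\nu(x)\geq0$, we have $\nu(m_{f})=\nu(f)\geq\nu(g)=\nu(m_{g})$, so the pair $(m_{g},m_{f})$ lies in the finite set $\mathscr{S}'_{i}$ (or more precisely in $\widetilde{\mathscr{S}}_{i}$, and hence enters some $\mathscr{S}'_{j}$ with $j\geq i$).

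By the final property of Theorem \ref{thm: La suite de la mort qui tue}, there then exists an index $j\geq i$ at which $m_{g}\mid m_{f}$ in $R_{j}$, so $m_{f}/m_{g}\in R_{j}$. Since $v_{f}/v_{g}\in R_{i}^{\times}\subseteq R_{j}^{\times}$, we conclude
\[
x=\frac{f}{g}=\frac{v_{f}}{v_{g}}\cdot\frac{m_{f}}{m_{g}}\in R_{j}\subseteq\varinjlim R_{i},
\]
which gives the desired inclusion. The only delicate point is synchronising the monomialization of $f$ and $g$ with the divisibility step between their monomial parts, but this is precisely what was built into the construction of the sequence in Theorem \ref{thm: La suite de la mort qui tue}, so no additional work is needed.
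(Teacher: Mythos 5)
Your argument is correct, and since the paper states this corollary without proof, it fills the gap using exactly the machinery the paper sets up for this purpose: the inclusion $\varinjlim R_{i}\subseteq R_{\nu}$ follows because each blow-up is framed with respect to $\nu$ (one localizes at the center of $\nu$, so each $R_{i}$ is dominated by $R_{\nu}$), and the reverse inclusion follows by writing $x=f/g$, monomializing $f$ and $g$ via Theorem \ref{thm:Monomialisons}, and invoking the divisibility property of Theorem \ref{thm: La suite de la mort qui tue} on the pair $\left(m_{g},m_{f}\right)\in\widetilde{\mathscr{S}}_{i}$. The only minor imprecision is the claim that the pair lies in $\mathscr{S}'_{i}$ itself; as you note parenthetically, it lies in $\widetilde{\mathscr{S}}_{i}$ and hence in $\mathscr{S}'_{j}$ for some $j\geq i$ (namely $j=\max\left(i,m\right)$ where it is the $m$-th element of the enumeration), which suffices.
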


\newpage{}

\part{Simultaneous local uniformization in the case of quasi-excellent
rings for valuations of rank less than or equal to $2$.}

\section{Preliminaries.}

Let $R$ be a local noetherian domain of equicharacteristic zero and
$\nu$ a valuation of $\mathrm{Frac}\left(R\right)$ of rank $1$,
centered in $R$ and of value group $\Gamma_{1}$. We are going to
define the implicit prime ideal $H$ of $R$ for the valuation $\nu$,
which is a key object in local uniformization. Indeed, this ideal
will be the ideal we have to desingularize. We are going to prove
in this part that to regularize $R$, hence to construct a local uniformization,
we only have to regularize $\widehat{R}_{H}$ and $\frac{\widehat{R}}{H}$.
At this point, the hypothesis of quasi excellence is very important:
if $R$ is quasi excellent, the ring $\widehat{R}_{H}$ is regular.
So we will only have to monomialize the elements of $\frac{\widehat{R}}{H}$.

\subsection{Quasi-excellent rings and the implicit prime ideal.}
\begin{defn}
Let $R$ be a domain. We say that $R$ is a \emph{G-ring \index{G-ring}
}if for every prime ideal $\mathfrak{p}$ of $R$, the completion
morphism $R_{\mathfrak{p}}\to\widehat{R}_{\mathfrak{p}}$ is a regular
homomorphism.
\end{defn}

~
\begin{defn}
Let $R$ be a local ring. Then $R$ is quasi-excellent if $R$ is
a G-ring.\emph{ \index{Quasi excellent ring}}

More generally, if $A$ is a ring, then $A$ is quasi-excellent if
$A$ is a local G-ring whose regular locus is open for all $A$-algebra
of finite type.
\end{defn}

\begin{prop}
\cite{M} A local noetherian ring $R$ is quasi-excellent if the completion
morphism $R\to\widehat{R}$ is regular.
\end{prop}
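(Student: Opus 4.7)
The plan is to verify the two defining conditions of quasi-excellence for the local noetherian ring $R$: that $R$ is a G-ring (every formal fiber of every localization is geometrically regular), and that the regular locus is Zariski-open on every finitely generated $R$-algebra. The hypothesis that $R\to\widehat{R}$ is regular gives us the G-ring condition only at the maximal ideal, so the first real task is to propagate this to every prime.

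For the G-ring condition, I would exploit the standard permanence properties of regular homomorphisms. First, since regular homomorphisms are stable under base change, the map $R_{\mathfrak{p}}\to\widehat{R}\otimes_{R}R_{\mathfrak{p}}$ is regular for every $\mathfrak{p}\in\mathrm{Spec}(R)$. Localizing at a prime $\mathfrak{q}$ of $\widehat{R}$ lying over $\mathfrak{p}$ preserves regularity, so $R_{\mathfrak{p}}\to\widehat{R}_{\mathfrak{q}}$ is also regular. Next, I would compare the formal fibers of $R_{\mathfrak{p}}$ with those just obtained by a descent argument: writing $\widehat{R_{\mathfrak{p}}}$ as a faithfully flat extension built from completions of the $\widehat{R}_{\mathfrak{q}}$ as $\mathfrak{q}$ runs over the finitely many primes of $\widehat{R}$ minimal over $\mathfrak{p}\widehat{R}$, one checks that each fiber of $R_{\mathfrak{p}}\to\widehat{R_{\mathfrak{p}}}$ arises by geometrically regular base change from a fiber of $R\to\widehat{R}$, hence is itself geometrically regular. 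This is precisely the technical content of Chapter~32 of Matsumura's \emph{Commutative Ring Theory}.

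For the openness of the regular locus on finite-type $R$-algebras, I would invoke the Nagata--Grothendieck--Matsumura theorem: a local G-ring automatically has open regular loci on all of its finite-type algebras. The proof idea, which I would only sketch, is to use the Jacobian criterion to exhibit a Zariski-open dense subset of the regular locus of any finite-type $R$-algebra, and then a spreading-out argument, together with the geometric regularity of the formal fibers supplied by the G-ring property, to upgrade this to openness of the full regular locus.

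The main obstacle will be the first step: upgrading regularity of a single completion morphism to regularity of every completion of every localization. This is not purely formal, since the completion functor does not commute with localization. One must carefully combine base change of regular morphisms with a descent through the finitely many primes of $\widehat{R}$ sitting over $\mathfrak{p}\widehat{R}$, ensuring throughout that geometric regularity (not merely regularity) of fibers is preserved. Once this delicate upgrade is achieved, openness of the regular locus then follows from the classical Nagata--Matsumura theory without substantial additional work.
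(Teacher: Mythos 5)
The paper does not actually prove this proposition: it is quoted verbatim from Matsumura and used as a black box, so there is no ``paper's proof'' to compare against. Your plan is precisely the standard argument from the cited source (Matsumura, Ch.~32), and it is correct in outline; two remarks are worth making. First, note that under the paper's own definition a \emph{local} ring is quasi-excellent as soon as it is a G-ring, and the openness of the regular locus for finite-type algebras is relegated to a separate remark; so the second half of your proof, while true, is not needed for the statement as the paper uses it. Second, the load-bearing and partly hidden input in your first step is that the complete local ring $\widehat{R}$ is itself a G-ring (Grothendieck's theorem that complete local rings are excellent): to descend geometric regularity of the fibers of $R_{\mathfrak{p}}\to\widehat{R_{\mathfrak{p}}}$ you choose a prime $\mathfrak{q}$ of $\widehat{R}$ \emph{minimal} over $\mathfrak{p}\widehat{R}$ (so that $\mathfrak{p}\widehat{R}_{\mathfrak{q}}$ is primary for the maximal ideal and $\widehat{R_{\mathfrak{p}}}\to\widehat{\widehat{R}_{\mathfrak{q}}}$ is faithfully flat), and you need $\widehat{R}_{\mathfrak{q}}\to\widehat{\widehat{R}_{\mathfrak{q}}}$ regular to conclude that $R_{\mathfrak{p}}\to\widehat{\widehat{R}_{\mathfrak{q}}}$ is regular before descending. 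Your phrasing that the fibers of $R_{\mathfrak{p}}\to\widehat{R_{\mathfrak{p}}}$ ``arise by geometrically regular base change'' from fibers of $R\to\widehat{R}$ is slightly off: they are recovered by faithfully flat \emph{descent} from the geometrically regular fibers of $R_{\mathfrak{p}}\to\widehat{\widehat{R}_{\mathfrak{q}}}$, not by base change. With these two points made explicit, the argument is complete and is exactly the one the paper implicitly relies on.
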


\begin{rem}
Let $R$ be a local ring. If $R$ is a G-ring, then its regular locus
is open. Since the class of $G$-rings is stable under passing to
algebras of finite type, for every $R$-algebra $A$ of finite type,
the set $\mathrm{Reg}(A)$ is open.
\end{rem}

\begin{defn}
We call the implicit prime ideal $H$ of $R$ the ideal $H=\bigcap\limits _{\beta\in\nu\left(R\setminus\left\{ 0\right\} \right)}P_{\beta}\widehat{R}$.
The ideal $H$ is composed of the elements of $\widehat{R}$ whose
value is greater than every element of $\Gamma_{1}$. 

Furthermore, the valuation $\nu$ extends uniquely to a valuation
$\widetilde{\nu}$ centered in $\frac{\widehat{R}}{H}$ (\cite{V}).
\end{defn}

\begin{prop}
\label{prop:localise regulier}Let $R$ be a quasi-excellent local
ring. Then $\widehat{R}_{H}$ is regular.
\end{prop}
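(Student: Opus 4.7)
The plan is to exploit two observations: that the implicit prime ideal $H$ contracts to $(0)$ in $R$, and that the quasi-excellence hypothesis yields regularity of the completion morphism $R \to \widehat{R}$. Together these realize $\widehat{R}_H$ as a localization of the generic fiber of $R \to \widehat{R}$, which will then be a regular Noetherian ring by the definition of a regular morphism.

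First I would verify that $H \cap R = (0)$. Let $r \in R$ be nonzero. If $r$ is a unit in $R$, then it is a unit in $\widehat{R}$ and cannot lie in the proper ideal $H$. Otherwise $r \in \mathfrak{m}$, and because $\nu$ is centered in $R$ one has $\nu(r) > 0$. Setting $\beta := \nu(r^2) = 2\nu(r) \in \nu(R \setminus \{0\})$ gives $\nu(r) < \beta$, so $r \notin P_\beta$; faithful flatness of $R \to \widehat{R}$ yields $P_\beta \widehat{R} \cap R = P_\beta$, so $r \notin P_\beta \widehat{R}$, and \emph{a fortiori} $r \notin H$.

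Consequently $S := R \setminus \{0\}$ is disjoint from $H$, so every element of $S$ becomes invertible in $\widehat{R}_H$. The localization factors through $S^{-1}\widehat{R} = \widehat{R} \otimes_R \mathrm{Frac}(R)$, and one has the identification $\widehat{R}_H \cong (\widehat{R} \otimes_R \mathrm{Frac}(R))_{H S^{-1}\widehat{R}}$. By the preceding Proposition (or directly by the definition of a G-ring applied to the maximal ideal of $R$), the quasi-excellence of $R$ means precisely that $R \to \widehat{R}$ is a regular ring homomorphism; its generic fiber $\widehat{R} \otimes_R \mathrm{Frac}(R)$ is therefore geometrically regular over $\mathrm{Frac}(R)$, hence in particular is a regular Noetherian ring. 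A localization of a regular Noetherian ring at a prime ideal is a regular local ring, which gives the conclusion.

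The main subtlety sits in the contraction step: one has to dispose separately of the case where $r$ is a unit, and to invoke faithful flatness of the completion to pull the non-membership of $r$ in $P_\beta$ up to non-membership in $P_\beta \widehat{R}$. Once $H \cap R = (0)$ is in hand, the identification of $\widehat{R}_H$ with a localization of the generic fiber and the passage from regularity of that fiber to regularity of $\widehat{R}_H$ are standard formal consequences of the definition of a regular morphism and of localization.
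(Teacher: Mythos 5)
Your proof is correct and follows essentially the same route as the paper: identify $\widehat{R}_{H}$ as a localization of the generic fiber $\mathrm{Frac}(R)\otimes_{R}\widehat{R}$, which is regular because quasi-excellence makes $R\to\widehat{R}$ a regular morphism. The only difference is that you actually justify the contraction $H\cap R=(0)$ (via $\nu(r)<\nu(r^{2})$ and faithful flatness of completion), a point the paper asserts without proof; this is a welcome addition rather than a divergence.
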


\begin{proof}
The ring $R$ is a G-ring. Then for every prime ideal $\mathfrak{p}$
of $R$, we have the injective map $\kappa\left(\mathfrak{p}\right)\hookrightarrow\kappa\left(\mathfrak{p}\right)\otimes_{R}\widehat{R}$
such that the fiber $\kappa\left(\mathfrak{p}\right)\otimes_{R}\widehat{R}$
is geometrically regular over $\kappa\left(\mathfrak{p}\right)$,
where $\kappa\left(\mathfrak{p}\right):=\frac{R_{\mathfrak{p}}}{\mathfrak{p}R_{\mathfrak{p}}}$.
Since $R$ is a domain, $\left(0\right)$ is a prime ideal of $R$.

We write $K:=\mathrm{Frac}\left(R\right)$, then we have the injective
map $K\hookrightarrow K\otimes_{R}\widehat{R}$ such that the fiber
$K\otimes_{R}\widehat{R}$ is geometrically regular over $\mathrm{K}$.
In other words the morphism $K\hookrightarrow K\otimes_{R}\widehat{R}$
is regular.

But $R\setminus\left\{ 0\right\} $ and $\widehat{R}\setminus H$
are two multiplicative subsets of $\widehat{R}$ such that $R\setminus\left\{ 0\right\} \subseteq\widehat{R}\setminus H$,
since $R\cap H=\left\{ 0\right\} $. Then, $\widehat{R}_{H}$ is a
localisation of $\widehat{R}_{R\setminus\left\{ 0\right\} }$. If
we show that $\widehat{R}_{R\setminus\left\{ 0\right\} }$ is regular,
then $\widehat{R}_{H}$ will be also regular as a localization of
a regular ring. By the universal property of tensor product, the ring
$\widehat{R}_{R\setminus\left\{ 0\right\} }$ is isomorphic to $K\otimes_{R}\widehat{R}$,
which is regular by hypothesis. This completes the proof.
\end{proof}

\subsection{Numerical characters associated to a singular local noetherian ring.}

Let $\left(S,\mathfrak{q},L\right)$ be a local noetherian ring and
$\mu$ a valuation centered in $S$. We write $\mu=\mu_{2}\circ\mu_{1}$
with $\mu_{1}$ of rank $1$. The valuation $\mu_{2}$ is trivial
if and only if $\mu$ is also of rank $1$. We denote by $G$ the
value group of $\mu$ and by $G_{1}$ the value group of $\mu_{1}$.
In fact $G_{1}$ is the smallest isolated subgroup non-trivial of
$G$. We set $I:=\left\{ x\in S\text{ such that }\mu(x)\notin G_{1}\right\} $,
and then $\mu_{1}$ induces a valuation of rank $1$ over $\frac{S}{I}$.
Let $\overline{J}$ be the implicit prime ideal of $\frac{\hat{S}}{I\hat{S}}$
for the valuation $\mu_{1}$ and $J$ its preimage in $\hat{S}$. 
\begin{defn}
We set 
\[
e\left(S,\mu\right):=\mathrm{emb.dim}\left(\frac{\hat{S}}{J}\right).
\]
\end{defn}

We assume that $I\subseteq\mathfrak{q}^{2}$. Let $v=\left(v_{1},\dots,v_{n}\right)$
be a minimal set of generators of $\mathfrak{q}$. We have $\mu\left(v_{j}\right)\in G_{1}$
for every index $j$.
\begin{defn}
We have $\sum\limits _{j=1}^{n}\mathbb{Q}\mu\left(v_{j}\right)\subseteq G_{1}\otimes\mathbb{Q}$
and we set 
\[
r\left(S,v,\mu\right):=\dim_{\mathbb{Q}}\left(\sum\limits _{j=1}^{n}\mathbb{Q}\mu\left(v_{j}\right)\right).
\]
\end{defn}

\begin{rem}
We have $r\left(S,v,\mu\right)\leq e\left(S,\mu\right)$.
\end{rem}

Now we consider $M\subset\left\{ 1,\dots,n\right\} $ and 
\[
\left(S,v\right)\to\left(S_{1},v^{(1)}=\left(v_{1}^{(1)},\dots,v_{n_{1}}^{(1)}\right)\right)
\]
 a framed blow-up along $\left(v_{M}\right)$. We set $C'=\left\{ 1,\dots,n_{1}\right\} \setminus D_{1}$,
where $D_{1}$ is as in \ref{def:=0000E9clatement encadr=0000E9}.

If the elements of $v_{M}$ are $L$-linearly independent in $\frac{\mathfrak{q}\hat{S}}{J+\mathfrak{q}^{2}\hat{S}}$,
then there exists a partition of $A$ that we denote by $A'\sqcup A"$.
This partition is such that $v_{M}\cup v_{A'}$ are $L$-linearly
independent modulo $J+\mathfrak{q}^{2}\hat{S}$ and $v_{A"}$ is in
the space generated by $v_{J}\cup v_{A'}$ over $L$ modulo $J+\mathfrak{q}^{2}\hat{S}$.
As we know that $v'_{A\cup B\cup\left\{ j\right\} }=v_{D_{1}}^{(1)}$,
we can identify $A'\cup B\cup\left\{ j\right\} $ with a subset of
$D_{1}$.

Now we set $I_{1}:=\left\{ x\in S_{1}\text{ such that }\mu(x)\notin G_{1}\right\} $
and we consider $\overline{J}_{1}$ the implicit prime ideal of $\frac{\hat{S}_{1}}{I_{1}\hat{S}_{1}}$
with respect to $\mu_{1}$ and $J_{1}$ its preimage in $\hat{S}_{1}$.
We call $\mathfrak{q}_{1}$ the maximal ideal of $S_{1}$ and $L_{1}$
its residue field.
\begin{rem}
\label{rem:e=00003Dn implique}We have $e\left(S,\mu\right)=n$ if
and only if the elements of $v$ are $L$-linarly independent in $\frac{\mathfrak{q}\hat{S}}{J+\mathfrak{q}^{2}\hat{S}}$.
\end{rem}

\begin{thm}
\label{thm:decroissance de la dimension plongee}If $e\left(S,\mu\right)=n$,
then:

\[
e\left(S_{1},\mu\right)\leq e\left(S,\mu\right).
\]

This inequality is strict once the elements of $v_{A'\cup B\cup\left\{ j\right\} \cup C'}^{(1)}$
are $L_{1}$-linearly dependent in $\frac{\mathfrak{q_{1}}\hat{S}_{1}}{J_{1}+\mathfrak{q}_{1}^{2}\hat{S}_{1}}$.
\end{thm}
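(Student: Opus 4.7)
The plan is to bound
\[
e(S_1,\mu)=\dim_{L_1}\!\Bigl(\mathfrak{q}_1\hat S_1/\bigl(J_1+\mathfrak{q}_1^2\hat S_1\bigr)\Bigr)
\]
by exhibiting an explicit $L_1$-generating family of this vector space and then controlling its size. The essential ingredients are the identification $v'_{A\cup B\cup\{j\}}=v^{(1)}_{D_1}$ from Definition~\ref{def:=0000E9clatement encadr=0000E9}, the three-cases analysis of a framed blow-up on page~\pageref{troiscas}, and the characterization of $e(S,\mu)=n$ provided by Remark~\ref{rem:e=00003Dn implique}.

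First I will observe that, because $v^{(1)}=(v_1^{(1)},\dots,v_{n_1}^{(1)})$ generates $\mathfrak{q}_1\hat S_1$ as an ideal of $\hat S_1$, its classes modulo $J_1+\mathfrak{q}_1^2\hat S_1$ form an $L_1$-generating family of the above quotient; hence $e(S_1,\mu)\le n_1$. Next I will invoke the three-cases analysis, which gives $n_1=n$ in cases~$1$ and~$3$ and $n_1=n-1$ in case~$2$; in particular $n_1\le n$, which together with the previous inequality yields
\[
e(S_1,\mu)\le n_1\le n=e(S,\mu),
\]
establishing the first assertion.

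For the strict inequality, I will use the hypothesis $e(S,\mu)=n$ together with Remark~\ref{rem:e=00003Dn implique} to conclude that the entire family $v=(v_1,\dots,v_n)$ is $L$-linearly independent modulo $J+\mathfrak{q}^2\hat S$. This forces the partition $A=A'\sqcup A''$ appearing in the statement to satisfy $A''=\emptyset$, so $A'=A$. Using the identifications $D_1=A\cup B\cup\{j\}$ inside $\{1,\dots,n_1\}$ and $C'=\{1,\dots,n_1\}\setminus D_1$, I then obtain
\[
v^{(1)}_{A'\cup B\cup\{j\}\cup C'}=v^{(1)}_{D_1\cup C'}=v^{(1)},
\]
so the family in question is precisely the full generating family from the first step. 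If it is $L_1$-linearly dependent in $\mathfrak{q}_1\hat S_1/(J_1+\mathfrak{q}_1^2\hat S_1)$, then the dimension of that $L_1$-vector space is strictly less than $n_1$, and combined with $n_1\le n$ one concludes $e(S_1,\mu)<n=e(S,\mu)$.

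The step I expect to require the most care is not a computation but the bookkeeping just above: one must verify carefully that $e(S,\mu)=n$ really forces $A''=\emptyset$, so that the family of the theorem coincides with the full minimal system of generators $v^{(1)}$ and not merely with a proper subset of it. Once this identification is secured, both the weak and the strict inequalities reduce to the elementary observation that a generating family of a finite-dimensional $L_1$-vector space of cardinality exceeding its dimension must be linearly dependent.
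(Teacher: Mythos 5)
Your proof is correct and follows essentially the same route as the paper: both arguments bound $e(S_1,\mu)$ by the cardinality of an explicit $L_1$-generating family of $\mathfrak{q}_1\hat S_1/(J_1+\mathfrak{q}_1^2\hat S_1)$ and compare it with $e(S,\mu)=\sharp M+\sharp A'$, which under the hypothesis $e(S,\mu)=n$ (so $A''=\emptyset$, $A'=A$) is exactly your count $n_1\le n$. The only cosmetic difference is that the paper keeps the bookkeeping in terms of $\sharp A'+\sharp B+1+\sharp C'\le\sharp A'+\sharp M$ without first collapsing the family to all of $v^{(1)}$, and it asserts $n_1\le n$ directly from the definition of a framed blow-up rather than via the three-cases analysis (which is stated only for the ``interesting'' cases), a harmless difference in citation.
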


\begin{proof}
By definition, $v^{(1)}$ generates the maximal ideal $\mathfrak{q}_{1}$
of $S_{1}$, and so induces a set of generators of $\mathfrak{q}_{1}\frac{\widehat{S_{1}}}{J_{1}}$.
Since $n_{1}\leq n$, by definition of a framed blow-up, we know that
$\sharp C'\leq\sharp C$.

Furthermore, we have $e\left(S,\mu\right)=\sharp M+\sharp A'$. We
also know that $v_{D_{1}\setminus\left(A'\cup B\cup\left\{ j\right\} \right)}^{(1)}$
is in the $L$-vector space of $v_{A'\cup B\cup\left\{ j\right\} \cup C'}^{(1)}$
modulo $J_{1}+\mathfrak{q}_{1}^{2}\hat{S}_{1}$.

So:
\[
\begin{array}{ccc}
e\left(S_{1},\mu\right) & \leq & \sharp A'+\sharp B+\sharp\left\{ j\right\} +\sharp C'\\
 & \leq & \sharp A'+\sharp B+1+\sharp C\\
 & = & \sharp A'+\sharp M\\
 & = & e\left(S,\mu\right).
\end{array}
\]

If in addition the elements of $v_{A'\cup B\cup\left\{ j\right\} \cup C'}^{(1)}$
are $L_{1}$-linearly dependents in $\frac{\mathfrak{q_{1}}\hat{S}_{1}}{J_{1}+\mathfrak{q}_{1}^{2}\hat{S}_{1}}$,
then we have $e\left(S_{1},\mu\right)<\sharp A'+\sharp B+\sharp\left\{ j\right\} +\sharp C'$
and so $e\left(S_{1},\mu\right)<e\left(S,\mu\right)$.
\end{proof}
\begin{thm}
\label{thm: r croit}We have $r\left(S_{1},v^{(1)},\mu\right)\geq r\left(S,v,\mu\right)$.
\end{thm}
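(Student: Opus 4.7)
The plan is to show that the $\mathbb{Q}$-vector space $V:=\sum_{q=1}^{n}\mathbb{Q}\mu(v_q)$ is contained in $V_1:=\sum_{i=1}^{n_1}\mathbb{Q}\mu\bigl(v_i^{(1)}\bigr)$, which immediately gives $r(S,v,\mu)=\dim_{\mathbb{Q}}V\leq\dim_{\mathbb{Q}}V_1=r(S_1,v^{(1)},\mu)$. The argument is straightforward linear algebra once one tracks carefully the definitions of framed blow-up and of the sets $A$, $B$, $C$, $\{j\}$ and $D_1$; there is no real obstacle.

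First I would recall how the intermediate ring $R'=S\left[v'_{J\setminus\{j\}}\right]$ is built from $S$: for $q\in J\setminus\{j\}$ one has $v'_q=\frac{v_q}{v_j}$ and hence $\mu(v'_q)=\mu(v_q)-\mu(v_j)$, while for $q\notin J$ one has $v'_q=v_q$ and $\mu(v'_q)=\mu(v_q)$. By definition of $C=\{q\in J\setminus\{j\}\mid v'_q\in R_1^\times\}$, for every $q\in C$ we have $\mu(v'_q)=0$, hence $\mu(v_q)=\mu(v_j)$. The framing condition gives $v'_{A\cup B\cup\{j\}}=v^{(1)}_{D_1}$ inside $S_1$, so $\mu(v'_q)\in V_1$ for every $q\in A\cup B\cup\{j\}$.

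Next I would verify the inclusion $V\subseteq V_1$ by splitting the index set $\{1,\dots,n\}=A\sqcup B\sqcup C\sqcup\{j\}$ and checking each part. For $q\in A$, $\mu(v_q)=\mu(v'_q)\in V_1$. For $q=j$, $\mu(v_j)=\mu(v'_j)\in V_1$. For $q\in B$, $\mu(v_q)=\mu(v'_q)+\mu(v_j)=\mu(v'_q)+\mu(v'_j)\in V_1$. For $q\in C$, the previous paragraph gives $\mu(v_q)=\mu(v_j)=\mu(v'_j)\in V_1$. Since these cases exhaust the indices, every generator of $V$ lies in $V_1$, hence $V\subseteq V_1$.

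Taking $\mathbb{Q}$-dimensions yields $r(S,v,\mu)\leq r(S_1,v^{(1)},\mu)$, which is the desired inequality. The argument does not require any hypothesis beyond the definition of a framed blow-up; in particular it does not use the assumption on $e(S,\mu)$ that appeared in the previous Theorem, and it does not use $\mu_1$-specific properties, so it works for arbitrary centered valuations $\mu$.
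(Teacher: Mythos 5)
Your proof is correct and follows essentially the same route as the paper: the paper simply cites Proposition \ref{prop:chgmtvari}, whose proof consists of exactly the case analysis over $A\sqcup B\sqcup C\sqcup\{j\}$ that you carry out explicitly, yielding $\mu(v_q)\in\sum_i\mathbb{Q}\mu\bigl(v_i^{(1)}\bigr)$ for every $q$ and hence the inequality of dimensions. (If anything, your version is the cleaner reference, since the relevant content is the expression of the \emph{old} parameters as monomials in the new ones times units, i.e.\ the first point of that proposition rather than the last two.)
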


\begin{proof}
This is induced by the two last points of Proposition \ref{prop:chgmtvari}.
\end{proof}
\begin{cor}
Once $e\left(S,\mu\right)=n$, we have 
\[
\left(e\left(S_{1},\mu\right),e\left(S_{1},\mu\right)-r\left(S_{1},v^{(1)},\mu\right)\right)\leq\left(e\left(S,\mu\right),e\left(S,\mu\right)-r\left(S,v,\mu\right)\right).
\]
 The inequality is strict if $e\left(S_{1},\mu\right)<n$.
\end{cor}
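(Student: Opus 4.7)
The statement is a direct combination of Theorem \ref{thm:decroissance de la dimension plongee} and Theorem \ref{thm: r croit}, packaged so as to produce a lexicographically strictly decreasing numerical character whenever the embedding dimension actually drops. The plan is to exploit the two theorems componentwise on the pair, and then observe that the standing hypothesis $e(S,\mu)=n$ gives the second bullet for free.

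First I would apply Theorem \ref{thm:decroissance de la dimension plongee}, whose hypothesis $e(S,\mu)=n$ is precisely what we are assuming, to obtain the inequality of first coordinates
\[
e(S_{1},\mu)\le e(S,\mu).
\]
Next I would apply Theorem \ref{thm: r croit} to get
\[
r(S_{1},v^{(1)},\mu)\ge r(S,v,\mu),
\]
and subtract it from the previous inequality to obtain the bound on the second coordinates,
\[
e(S_{1},\mu)-r(S_{1},v^{(1)},\mu)\le e(S,\mu)-r(S,v,\mu).
\]
Since both coordinates of $\bigl(e(S_{1},\mu),\,e(S_{1},\mu)-r(S_{1},v^{(1)},\mu)\bigr)$ are bounded by the corresponding coordinates of $\bigl(e(S,\mu),\,e(S,\mu)-r(S,v,\mu)\bigr)$, the inequality in the lexicographic order follows.

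For the strict part, assume $e(S_{1},\mu)<n$. Because $e(S,\mu)=n$ by hypothesis, this means $e(S_{1},\mu)<e(S,\mu)$, so already the first coordinate of the pair strictly decreases, which in lexicographic order is enough to conclude strict inequality of the pairs. No further analysis of the second coordinate is needed; the proof consists only in chaining the two preceding theorems and reading off the lex comparison, so there is essentially no obstacle beyond bookkeeping.
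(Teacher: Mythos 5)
Your proof is correct and is exactly the intended argument: the paper states this corollary without proof precisely because it follows by combining Theorem \ref{thm:decroissance de la dimension plongee} (first coordinate) with Theorem \ref{thm: r croit} (second coordinate), and the strict case is immediate from $e(S_{1},\mu)<n=e(S,\mu)$ in the lexicographic order.
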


\begin{rem}
We are doing an induction on the dimension $n$. We saw that this
dimension decreases by the sequence of blow-ups.

If it decreases strictly, then it will happen a finite number of time
and the proof is finished.

Then, after now, we assume this dimension to be constant by blow-up.
In other words for all framed sequence $S\to S_{1}$, we assume that
$e\left(S,\mu\right)=e\left(S_{1},\mu\right)=n$.

Similarly, we may assume that $r\left(S,v,\mu\right)=r\left(S_{1},v^{\left(1\right)},\mu\right)$.
\end{rem}

\newpage{}

\section{\label{sec:Id=0000E9al-implicite.}Implicit ideal.}

Let $\left(R,\mathfrak{m},k\right)$ be a local quasi excellent ring
equicharacteristic and let $\nu$ be a valuation of rank $1$ of its
field of fractions, centered in $R$ and of value group $\Gamma_{1}$.
We denote by $H$ the implicit prime ideal of $R$ for the valuation
$\nu$.

By the Cohen structure Theorem, there exists an epimorphism $\Phi$
from a complete regular local ring $A\simeq k\left[\left[u_{1},\dots,u_{n}\right]\right]$
of field of fractions $K$ into $\frac{\widehat{R}}{H}$. Its kernel
$I$ is a prime ideal of $A$. 

We consider $\mu$ a monomial valuation with respect to a regular
system of parameters of $A_{I}$. It is a valuation on $A$ centered
in $I$ such that $k_{\mu}=\kappa\left(I\right)$ where $\kappa\left(I\right)$
is the residue field of $I$. Then we set $\widehat{\nu}:=\widetilde{\nu}\circ\mu$,
hence we define a valuation on $A$. Let $\Gamma$ be the value group
of $\widehat{\nu}$.

Then, $\Gamma_{1}$ is the smallest non-trivial isolated subgroup
of $\Gamma$ and we have:

\[
\begin{array}{ccc}
I & = & \left\{ f\in A\text{ such that }\widehat{\nu}\left(f\right)\notin\Gamma_{1}\right\} .\end{array}
\]
 
\begin{defn}
Let $\pi\colon\left(A,u\right)\to\left(A',u'\right)$ be a framed
blow-up and $\sigma\colon A'\to\widehat{A'}$ be the formal completion
of $A'$. The composition $\sigma\circ\pi$ is called \emph{formal
framed blow-up}.

A composition of such blow-ups is called a \emph{formal framed sequence}.
\end{defn}

Let $\left(A,u\right)\to\left(A_{1},u^{(1)}\right)\to\cdots\to\left(A_{l},u^{(l)}\right)$
a formal sequence, that we denote by $(\ast)$.
\begin{defn}
The formal sequence $\left(A,u\right)\to\left(A_{1},u^{(1)}\right)\to\cdots\to\left(A_{l},u^{(l)}\right)$
is said \emph{defined on $\Gamma_{1}$ }if for every integers $i\in\left\{ 0,\dots,l-1\right\} $
and $q\in J_{i}$, we have $\nu\left(u_{q}^{(i)}\right)\in\Gamma_{1}$.
\end{defn}

Now we consider $A_{i}\simeq k_{i}\left[\left[u_{1}^{(i)},\dots,u_{n}^{(i)}\right]\right]$
and we denote by $I_{i}^{\mathrm{strict}}$ the strict transform of
$I$ in $A_{i}$.
\begin{defn}
We call \emph{formal transformed} of $I$ in $A_{i}$, and we denote
it by $I_{i}$, the preimage in $A_{i}$ of the implicit ideal of
$\frac{A_{i}}{I_{i}^{\mathrm{strict}}}$.
\end{defn}

Let $v_{i}$ be the greatest integer of $\left\{ r,\dots,n\right\} $
such that 
\[
I_{i}\cap k_{i}\left[\left[u_{1}^{(i)},\dots,u_{v_{i}}^{(i)}\right]\right]=\left(0\right)
\]
 and we set 
\[
B_{i}:=k_{i}\left[\left[u_{1}^{(i)},\dots,u_{v_{i}}^{(i)}\right]\right].
\]

\begin{defn}
Let $P$ be a prime ideal of $A$. We call \emph{$\ell$-th symbolic
power of} $P$ the ideal $P^{(\ell)}:=\left(P^{\ell}A_{P}\right)\cap A$.

Equivalently, we have $P^{(\ell)}=\left\{ x\in A\text{ such that }\exists y\in A\setminus P\text{ such that }xy\in P^{\ell}\right\} $.

It is the set composed by the elements that vanish with order at least
$\ell$ in the generic point of $\mathrm{V}\left(P\right)$.

Let $G$ be a complete ring of dimension strictly less than $n$ and
let $\theta$ be a valuation centered in $G$, of value group $\widetilde{\Gamma}$.

We consider $\widetilde{\Gamma}_{1}$ the first non trivial isolated
subgroup of $\widetilde{\Gamma}$ and $\mathfrak{g}:=\left\{ g\in G\text{ such that }\theta\left(g\right)\notin\widetilde{\Gamma}_{1}\right\} $.

The next result will help us to prove the simultaneous local uniformization
by induction.
\end{defn}

\begin{prop}
\label{prop:monomialisation en dehors du carre}Assume that:
\begin{enumerate}
\item In the formal sequence $\left(A,u\right)\to\left(A_{1},u^{(1)}\right)\to\cdots\to\left(A_{l},u^{(l)}\right)$,
there exists a formal framed subsequence 
\[
\pi\colon\left(A,u\right)\to\left(A_{i},u^{(i)}\right)
\]
 such that $v_{i}<n-1$. 
\item For every ring $G$ as above, every element in $G\setminus\mathfrak{g}^{(2)}$
is monomializable by a formal framed sequence defined on $\widetilde{\Gamma}_{1}$.
\end{enumerate}
Then for every element $f$ of $A\setminus I^{(2)}$, there exists
a formal sequence 
\[
\left(A,u\right)\to\cdots\to\left(A_{l},u^{(l)}\right)
\]
 defined over $\Gamma_{1}$ such that $f$ can be written as a monomial
in $u_{1}^{(l)},\dots,u_{n}^{(l)}$ multiplied by an element of $A_{l}^{\times}$.
\end{prop}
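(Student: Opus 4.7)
The plan is to use hypothesis (1) to drop dimension, then invoke the inductive hypothesis (2) applied to the smaller ring $B_i$. First I would pass from $A$ to $A_i$ along the formal framed subsequence $\pi$, tracking $f$ to its image $f_i\in A_i$. The key structural observation is that, by definition of $v_i$, the subring $B_i=k_i[[u_1^{(i)},\dots,u_{v_i}^{(i)}]]$ of $A_i$ satisfies $I_i\cap B_i=(0)$, so the restriction $\widehat{\nu}|_{B_i}$ takes values in $\Gamma_1$. In particular the analogue of the ideal $\mathfrak{g}$ (the elements with value outside $\Gamma_1$) inside $B_i$ is the zero ideal, and $B_i$ has dimension $v_i\le n-2<n$. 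Thus hypothesis (2) applies to $B_i$ and monomializes every non-zero element of $B_i$ by a formal framed sequence defined over $\Gamma_1$.

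The bridge from monomializing elements of $B_i$ to monomializing $f_i\in A_i$ is provided by a Weierstrass-type reduction. Since $v_i$ is maximal with $I_i\cap B_i=(0)$, the ideal $I_i$ contains a non-zero element involving $u_{v_i+1}^{(i)}$; after a change of coordinates among the last $n-v_i$ variables we may assume there is a Weierstrass element of $I_i$ that is regular in $u_{v_i+1}^{(i)}$. Using Weierstrass preparation repeatedly in the variables $u_{v_i+1}^{(i)},\dots,u_n^{(i)}$, I would rewrite $f_i$ (up to multiplication by an $A_i$-unit) as a finite polynomial in these variables with coefficients in $B_i$. The hypothesis $f\notin I^{(2)}$ transfers to the statement that not all of these coefficients lie in the intersection $I_i\cap B_i=(0)$, so at least one coefficient $b\in B_i$ is non-zero; one then singles out the unique coefficient whose term minimizes $\widehat{\nu}$.

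Next I would apply hypothesis (2) to the selected non-zero $b\in B_i$, obtaining a formal framed sequence on $B_i$, defined over $\Gamma_1$, at the end of which $b$ becomes a monomial times a unit. This sequence lifts canonically to a formal framed sequence on $A_i$: each blow-up of an ideal generated by elements of $B_i$ extends trivially to the power series extension in the remaining variables $u_{v_i+1}^{(i)},\dots,u_n^{(i)}$, and all the ideals involved still have value in $\Gamma_1$. After this lifted sequence the dominant coefficient becomes monomial, and I would then finish with a further sequence of monomial framed blow-ups in the remaining variables (in the spirit of Proposition \ref{prop:existencesuitedivise} and Theorem \ref{thm:on monomialise les elements non degeneres}) to absorb the remaining powers of $u_{v_i+1}^{(i)},\dots,u_n^{(i)}$ into a single monomial, turning $f$ into a monomial times a unit.

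The main obstacle is the Weierstrass-type reduction together with the bookkeeping it requires: one has to confirm that the coefficient in $B_i$ is non-zero precisely because of the symbolic-square hypothesis $f\notin I^{(2)}$ (and not merely $f\notin I$), verify that every intermediate blow-up chosen is defined on $\Gamma_1$, and finally show that the lifted sequence on $A_i$ together with the residual monomial blow-ups produces a monomial expression for $f$. Verifying the non-vanishing of the appropriate $B_i$-coefficient under the symbolic square condition is the most delicate point, because the symbolic power $I^{(2)}$ does not behave as transparently as $I^2$ with respect to formal transforms and Weierstrass data.
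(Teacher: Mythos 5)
There is a genuine gap: your proposal misses the central mechanism of the paper's argument and replaces it with a reduction that does not close. The paper does \emph{not} try to expand $f_i$ over $B_{i}$ and monomialize its coefficients. Instead it uses hypothesis (1) to produce a non-zero element $g\in\mathfrak{g}_{i}\setminus\mathfrak{g}_{i}^{(2)}$ of the ring $C_{i}:=k_{i}\left[\left[u_{1}^{(i)},\dots,u_{v_{i}+1}^{(i)}\right]\right]$, where $\mathfrak{g}_{i}=I_{i}\cap C_{i}\neq(0)$ by maximality of $v_{i}$, and $\dim C_{i}=v_{i}+1<n$. Hypothesis (2) applied to $C_{i}$ monomializes $g$; since $\widehat{\nu}(g)\notin\Gamma_{1}$, one of the regular parameters occurring in the resulting monomial must itself have value outside $\Gamma_{1}$, which forces $e\left(S',\widehat{\nu}_{|_{S'}}\right)<v_{i}+1$ and hence $e\left(A_{l},\widehat{\nu}\right)<n$ after extending the sequence trivially in the remaining variables. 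Only then is hypothesis (2) applied a second time, \emph{directly to $f$}, in a ring of smaller embedding dimension. You work with $B_{i}$ (where the intersection with $I_{i}$ is zero, so no such $g$ exists) rather than with $C_{i}$, and you never perform the step that drops $e$ below $n$; this is the whole point of the proposition.

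Two further steps in your route would fail as stated. First, the Weierstrass-type reduction: Weierstrass preparation writes an element as a unit times a distinguished polynomial in \emph{one} variable; it does not let you rewrite $f_{i}$ as a finite polynomial in all of $u_{v_{i}+1}^{(i)},\dots,u_{n}^{(i)}$ with coefficients in $B_{i}$, and even granting such an expansion, monomializing a single dominant coefficient does not monomialize the sum -- controlling the sum is exactly the non-degeneracy problem that the key-polynomial machinery addresses elsewhere, and nothing in your argument substitutes for it here. Second, you misread the role of the hypothesis $f\notin I^{(2)}$: it is not used to guarantee a non-zero $B_{i}$-coefficient (any non-zero $f$ has one), but to ensure that after the dimension drop the image of $f$ still lies outside the symbolic square of the transformed ideal, so that hypothesis (2) is applicable to $f$ itself in the lower-dimensional ring.
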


\begin{proof}
We assume that there exists a formal framed sequence 
\[
\pi\colon\left(A,u\right)\to\left(A_{i},u^{(i)}\right)
\]
 such that $v_{i}<n-1$. It means that $v_{i}+1<n$. By definition
of $v_{i}$, we know that $\mathfrak{g}_{i}:=I_{i}\cap k_{i}\left[\left[u_{1}^{(i)},\dots,u_{v_{i}+1}^{(i)}\right]\right]\neq\left(0\right)$.
So we consider an element $g$ in $\mathfrak{g}_{i}\setminus\mathfrak{g}_{i}^{(2)}\subseteq C_{i}\setminus\mathfrak{g}_{i}^{(2)}$,
where $C_{i}:=k_{i}\left[\left[u_{1}^{(i)},\dots,u_{v_{i}+1}^{(i)}\right]\right]$.
Since $v_{i}+1<n$, the ring $C_{i}$ is of dimension strictly less
than $n$. So we can use the second hypothesis on the element $g$
in the ring $C_{i}$.

Hence there exists a formal sequence defined over $\Gamma_{1}$ 
\[
\left(C_{i},\left(u_{1}^{(i)},\dots,u_{v_{i}+1}^{(i)}\right)\right)\to\cdots\to\left(S',\left(u'_{1},\dots,u'_{v'}\right)\right)
\]
where $v'\leq v_{i}+1$, and such that $g$ can be written as a monomial
in $u'_{1},\dots,u'_{v'}$ multiplied by an element of $S'^{\times}$. 

Since $g\in\mathfrak{g}_{i}$, there exists a regular parameter of
$S'$, say $u'_{v'}$, such that $\nu\left(u'_{v'}\right)\notin\Gamma_{1}$.
Indeed, $g\in\mathfrak{g}_{i}=I_{i}\cap C_{i}$, so $g\in I_{i}$
hence it belongs to $I$. Equivalently, it satisfies $\widehat{\nu}(g)\notin\Gamma_{1}$.
Since $g$ can be written as a monomial in the generators of the maximal
ideal of $S'$, one of these generators which appears in the factorization
of $g$ must be in $I$. Hence $e\left(S',\widehat{\nu}_{|_{S'}}\right)<v_{i}+1$. 

Replacing every ring $O$ which appears in 
\[
\left(C_{i},\left(u_{1}^{(i)},\dots,u_{v_{i}+1}^{(i)}\right)\right)\to\cdots\to\left(S',\left(u'_{1},\dots,u'_{v'}\right)\right)
\]
 by $O\left[\left[u_{v_{i}+2}^{(i)},\dots,u_{n}^{(i)}\right]\right]$,
we obtain a formal sequence 
\[
\pi'\colon\left(A_{i},u^{(i)}\right)\to\cdots\to\left(A_{l},u^{(l)}\right)
\]
 independent of $u_{v_{i}+2}^{(i)},\dots,u_{n}^{(i)}$, with $A_{l}=S'\left[\left[u_{v_{i}+2}^{(i)},\dots,u_{n}^{(i)}\right]\right]$.
But we know that $e\left(S',\widehat{\nu}_{|_{S'}}\right)<v_{i}+1$,
and so $e\left(A_{l},\widehat{\nu}\right)<n$. 

Let $f$ be an element of $A\setminus I^{(2)}$. Its image under $\pi'\circ\pi$
is an element of $A_{l}$, whose dimension is strictly less than $n$.
Since all the $A_{i}$ are quasi-excellent, we have $f\notin A_{i}\setminus I_{i}^{\left(2\right)}$
and we can use again the second hypothesis. Hence we constructed a
formal sequence $\pi'\circ\pi$ such that $f$ can be written as a
monomial in the generators of the maximal ideal of $A_{l}$ multiplied
by a unit of $A_{l}$. This completes the proof.
\end{proof}
Now, we assume that for every formal sequence $\left(A,u\right)\to\left(A_{1},u^{(1)}\right)\to\cdots\to\left(A_{l},u^{(l)}\right)$
and for every integer $i$, we have $v_{i}\in\left\{ n-1,n\right\} $.

So for every integer $i$, we have $I_{i}\cap k_{i}\left[\left[u_{1}^{(i)},\dots,u_{n-1}^{(i)}\right]\right]=(0)$.

We consider a complete local ring $G$ of dimension strictly less
than $n$ and a valuation $\theta$ of rank $1$ centered in $G$.
\begin{lem}
\label{lem:L'id=0000E9al est de hauteur au plus 1} Assume that for
every ring $G$ as above, there exists a formal framed sequence that
monomializes every element of $G$.

Then $I$ is of height at most $1$.
\end{lem}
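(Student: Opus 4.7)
The plan is to combine a Weierstrass-type preparation with the monomialization hypothesis applied to $B := k[[u_1,\dots,u_{n-1}]]$, in order to exhibit a distinguished polynomial inside the formal transform of $I$; this will force the quotient to be finite over $B$ and pin down the height. If $I=0$ there is nothing to prove, so assume $I\neq 0$. The standing hypothesis $v_i\in\{n-1,n\}$ applied at $i=0$ then forces $v_0=n-1$, equivalently $I\cap B=(0)$; in particular $B\hookrightarrow A/I$, and the restriction $\widehat\nu|_B$ is a rank-$1$ valuation on $B$ centered in $B$ with values in $\Gamma_1$. Since $\dim B=n-1<n$, the standing monomialization hypothesis applies to the ring $G=B$.

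I pick any nonzero $f\in I$ and expand $f=\sum_{i\geq 0}b_i u_n^i$ with $b_i\in B$. Using a finite initial portion of the monomialization sequence of $B$ (extended trivially on $u_n$, hence independent of $u_n$ and defined over $\Gamma_1$), I arrange that every coefficient $b_i$ with $i\leq N$ becomes a monomial times a unit in the new parameters of $B'$; by iterating Proposition~\ref{prop:existencesuitedivise} on these finitely many monomials, some index $i_0\leq N$ satisfies $b_{i_0}\mid b_i$ for every $i\leq N$. In $A'=B'[[u_n]]$, the standing assumption yields $I_l\cap B'=(0)$, so $b_{i_0}\in B'\setminus\{0\}$ lies outside the prime ideal $I_l$; primality and the factorization $f=b_{i_0}\tilde f$ then force $\tilde f\in I_l$. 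The coefficient of $u_n^{i_0}$ in $\tilde f$ is a unit of $B'$, so $\tilde f(0,\dots,0,u_n)\in k[[u_n]]$ is a nonzero power series of order $\leq i_0$, i.e., $\tilde f$ is $u_n$-regular of some order $d\leq i_0$ (choosing $N\geq d$ is consistent with this bound). Weierstrass preparation now writes $\tilde f=U\cdot p$ with $U\in(A')^\times$ and $p=u_n^d+\lambda_1 u_n^{d-1}+\cdots+\lambda_d$, $\lambda_j\in\mathfrak m_{B'}$, and primality gives $p\in I_l$. Weierstrass division presents $A'/(p)$ as a free $B'$-module of rank $d$, so $A'/I_l$ is module-finite over $B'$; combined with $B'\hookrightarrow A'/I_l$, the extension is integral and thus $\dim(A'/I_l)=\dim B'=n-1$, i.e., $\mathrm{ht}_{A'}(I_l)=1$. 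Because $I_l$ contains the strict transform of $I$, whose height equals $\mathrm{ht}(I)$, I conclude $\mathrm{ht}(I)\leq \mathrm{ht}(I_l)=1$.

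The main obstacle is producing a $u_n$-regular element in $I$ via a \emph{legitimate} (framed, $u_n$-preserving) formal sequence: the standard Weierstrass trick via the Tschirnhaus change $u_i\mapsto u_i+c_i u_n^{N_i}$ is forbidden here, because it would replace $B$ by a different subring and risk destroying the condition $I\cap B=(0)$, the very property supplied by $v_0=n-1$. The monomialization hypothesis on $B$, coupled with the divisibility Proposition~\ref{prop:existencesuitedivise}, is precisely what lets me replace that illegal coordinate change by a legitimate formal framed sequence, independent of $u_n$ and compatible at every stage with the standing assumption $v_i\in\{n-1,n\}$; once this bookkeeping is in place, Weierstrass preparation and the finiteness of $A'/I_l$ over $B'$ complete the proof essentially mechanically.
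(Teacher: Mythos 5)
Your proof follows essentially the same route as the paper's: expand $f\in I\setminus\{0\}$ in powers of $u_n$, use the induction hypothesis on $B=k[[u_1,\dots,u_{n-1}]]$ together with Proposition \ref{prop:existencesuitedivise} to make finitely many coefficients monomial and mutually divisible, factor out the coefficient of minimal value, apply Weierstrass preparation to get a distinguished polynomial in the transform of $I$, and conclude that $A'/I_l$ is finite over $B'$, hence of dimension $n-1$. The one point you pass over is the factorization $f=b_{i_0}\tilde f$ itself: divisibility of the coefficients $b_i$ with $i\le N$ does not by itself give $b_{i_0}\mid f$ in $A'$, since the tail coefficients $b_i$ with $i>N$ must also be divisible by $b_{i_0}$; this is exactly why the paper first chooses $N$, by noetherianity of $B$, so that every $a_j$ with $j>N$ lies in the ideal $(a_0,\dots,a_N)$ — with that choice the tail divisibility follows and your argument closes.
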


\begin{proof}
If $I=(0)$, the proof is finished. So we assume $I\neq(0)$ and we
consider $f\in I\setminus\left\{ 0\right\} $. We write
\[
f=\sum\limits _{j=0}^{\infty}a_{j}u_{n}^{j}
\]
 with $a_{j}\in k\left[\left[u_{1},\dots,u_{n-1}\right]\right]$.
We consider an integer $N$ big enough such that every $a_{j}$ with
$j>N$ is in the ideal generated by $\left(a_{0},\dots,a_{N}\right)$.
Now let us consider 
\[
\delta:=\min\left\{ j\in\left\{ 0,\dots,N\right\} \text{ such that }\nu\left(a_{j}\right)=\min\limits _{0\leq s\leq N}\left\{ \nu\left(a_{s}\right)\right\} \right\} .
\]

We set $\overline{u}:=\left(u_{1},\dots,u_{n-1}\right)$ and $B:=k\left[\left[\overline{u}\right]\right]$.
Since $B$ is a complete local ring of dimension strictly less than
$n$, by hypothesis we can construct a formal sequence $\left(B,\overline{u}\right)\to\left(B',\overline{u}'\right)$
such that for every $j\in\left\{ 0,\dots,N\right\} $, the element
$a_{j}$ is a monomial in $\overline{u}'$. By Propositions \ref{prop:existencesuitedivise}
and \ref{prop:divaleur}, we can construct a local framed sequence
$\left(B',\overline{u}'\right)\to\left(B",\overline{u}"\right)$ such
that $a_{\delta}\mid a_{j}$ for every $j\in\left\{ 0,\dots,N\right\} $
in $B"$, since $a_{\delta}$ has minimal value. So we have a sequence
\[
\left(B,\overline{u}\right)\to\left(B',\overline{u}'\right)\to\left(B",\overline{u}"\right).
\]
 We compose with the formal completion and obtain 
\[
\left(B,\overline{u}\right)\to\left(\widehat{B"},\overline{u}"\right)
\]
 in which we still have $a_{\delta}\mid a_{j}$ for every $j\in\left\{ 0,\dots,N\right\} $. 

We replace again all the rings $O$ of the sequence $\left(B,\overline{u}\right)\to\left(\widehat{B"},\overline{u}"\right)$
by $O\left[\left[u_{n}\right]\right]$, and obtain a sequence $\left(A,u\right)\to\left(A',u'\right)$
independent of $u_{n}$ and in which we still have $a_{\delta}\mid a_{j}$
for every $j\in\left\{ 0,\dots,N\right\} $.

We recall that for every index $i$, we have
\[
I_{i}\cap k_{i}\left[\left[u_{1}^{(i)},\dots,u_{n-1}^{(i)}\right]\right]=(0).
\]
 If we denote by $I'$ the formal transform of $I$ in $A'$, we obtain
$I'\cap\widehat{B"}=(0)$. We know that $\frac{f}{a_{\delta}}\in I'$,
and by Weierstrass preparation Theorem, $\frac{f}{a_{\delta}}=xy$
where $x$ is a unit of $A'$, and $y$ is a monic polynomial in $u_{n}$
of degree $\delta$. Then the morphism $\widehat{B"}\to\frac{A'}{I'}$
is injective and finite. 

Hence $\dim\left(\frac{A'}{I'}\right)=\dim\left(\widehat{B"}\right)=n-1$.
Since $\dim\left(A'\right)=n$, we have $\mathrm{ht}\left(I\right)\leq\mathrm{ht}\left(I'\right)=\dim\left(A'\right)-\dim\left(\frac{A'}{I'}\right)=n-(n-1)=1$.
This completes the proof.
\end{proof}
\begin{cor}[of Lemma \ref{lem:L'id=0000E9al est de hauteur au plus 1}.]
\label{cor: h est unitaire}We keep the same hypothesis as in Lemma
\ref{lem:L'id=0000E9al est de hauteur au plus 1}. Let $I=\left(h\right)$. 

There exists a formal framed sequence $\left(A,u\right)\to\left(A',u'\right)$
such that in $A'$, the strict transform of $h$ is a monic polynomial
of degree $\delta$.
\end{cor}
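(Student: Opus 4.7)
The plan is to rerun the argument of Lemma~\ref{lem:L'id=0000E9al est de hauteur au plus 1} verbatim, but starting from $f = h$ rather than a generic nonzero element of $I$, and then to read off the strict transform directly from the Weierstrass factorization that appears at the end of that proof.

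First I would expand
\[
h \;=\; \sum_{j=0}^{\infty} a_j\, u_n^{\,j}, \qquad a_j \in B:=k[[u_1,\dots,u_{n-1}]],
\]
choose $N$ large enough that every $a_j$ with $j>N$ lies in $(a_0,\dots,a_N)$, and set
\[
\delta \;:=\; \min\left\{\,j\in\{0,\dots,N\}\ \text{such that}\ \nu(a_j)=\min_{0\le s\le N}\nu(a_s)\,\right\},
\]
exactly as in the lemma. Since $\dim B = n-1 < n$, I can apply the standing inductive hypothesis to monomialize simultaneously all the $a_j$ for $j\le N$ in $B$, and then use Propositions~\ref{prop:existencesuitedivise} and \ref{prop:divaleur} to arrange that $a_\delta \mid a_j$ for every $j\le N$ (and hence automatically for every $j$, by the choice of $N$). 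After composing with formal completion and pulling the whole sequence up to $A$ by adjoining $u_n$, I obtain a formal framed sequence $(A,u)\to (A',u')$ independent of $u_n$.

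Next I would exploit the standing assumption $I_i\cap k_i[[u_1^{(i)},\dots,u_{n-1}^{(i)}]]=(0)$, which lets me divide $h$ by $a_\delta$ inside $A'$: the quotient $h/a_\delta$ is a well-defined element whose $u_n$-expansion is $\sum_j (a_j/a_\delta)\, u_n^{\,j}$, with $a_\delta/a_\delta = 1$ a unit and with $a_j/a_\delta$ in the maximal ideal of $A'$ for every $j<\delta$ (by the minimality of $\delta$ and the strict inequality $\nu(a_j)>\nu(a_\delta)$ for such $j$). The Weierstrass preparation theorem then gives
\[
h/a_\delta \;=\; x\cdot y,
\]
with $x\in (A')^{\times}$ and $y$ a distinguished, monic polynomial in $u_n$ of degree exactly $\delta$.

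Finally I would identify $h/a_\delta$ with the strict transform of $h$ under $(A,u)\to(A',u')$. Indeed, after the monomializing sequence $a_\delta$ becomes a monomial in the exceptional coordinates (the sequence is independent of $u_n$, so $u_n$ does not appear in this monomial), and it is by construction the largest monomial factor of the total transform $h\in A'$. Hence the strict transform of $h$ equals $xy$, and after absorbing the unit $x$ we recover a monic polynomial in $u_n$ of degree $\delta$, as claimed. The only point requiring genuine care is controlling the Weierstrass degree, and that is pinned down precisely by the definition of $\delta$; no new ideas beyond those already developed in the proof of Lemma~\ref{lem:L'id=0000E9al est de hauteur au plus 1} are needed.
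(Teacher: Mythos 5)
Your proposal is correct and is essentially the paper's own argument: the paper gives no separate proof of this corollary because the Weierstrass factorization $\frac{f}{a_{\delta}}=xy$, with $y$ a monic polynomial in $u_{n}$ of degree $\delta$, is already established inside the proof of Lemma \ref{lem:L'id=0000E9al est de hauteur au plus 1}, and specializing $f=h$ yields the statement. Your identification of $h/a_{\delta}$ with the strict transform, and your control of the Weierstrass degree via the minimality in the definition of $\delta$, match the intended reading exactly.
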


From now on, we assume that $h$ is a monic polynomial of degree $\delta$.
\begin{prop}
\label{prop:implicite et pc}We keep the same hypothesis as in Lemma
\ref{lem:L'id=0000E9al est de hauteur au plus 1}. Let $I=\left(h\right)$.
The polynomial $h$ is a key polynomial.
\end{prop}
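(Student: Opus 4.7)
The plan is to exploit the fact that, by the very definition of the implicit prime ideal $I$, the value $\hat{\nu}(h)$ lies strictly above every element of the isolated subgroup $\Gamma_{1}$, whereas any polynomial in $u_{n}$ of degree strictly less than $\delta$ will have values (and values of its derivatives) trapped inside $\Gamma_{1}$. Once these two facts are combined, the implication $\epsilon(P)\geq\epsilon(h)\Rightarrow\deg_{u_{n}}(P)\geq\delta$ required by Definition \ref{def:pc} becomes automatic. To set things up, I would begin by establishing a lower bound for $\epsilon(h)$. Since $h$ is monic of degree $\delta$ in $u_{n}$ and we are in characteristic zero, $\partial_{\delta}h=1$, hence $\hat{\nu}(\partial_{\delta}h)=0$ and $\epsilon(h)\geq\hat{\nu}(h)/\delta$. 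The description $I=\{f\in A:\hat{\nu}(f)\notin\Gamma_{1}\}$, which comes from the decomposition $\hat{\nu}=\widetilde{\nu}\circ\mu$ with $\widetilde{\nu}$ of value group $\Gamma_{1}$, ensures that $\hat{\nu}(h)$ is strictly larger than every element of $\Gamma_{1}$, and because $\Gamma_{1}\otimes_{\mathbb{Z}}\mathbb{Q}$ is convex in $\Gamma\otimes_{\mathbb{Z}}\mathbb{Q}$, the same remains true of $\hat{\nu}(h)/\delta$.

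Next I would produce the complementary upper bound $\epsilon(P)\in\Gamma_{1}\otimes_{\mathbb{Z}}\mathbb{Q}$ for every polynomial $P$ in $u_{n}$ of degree strictly less than $\delta$. After multiplying $P$ by a non-zero element of $k[[u_{1},\dots,u_{n-1}]]$ (whose value lies in $\Gamma_{1}$ since $I\cap k[[u_{1},\dots,u_{n-1}]]=(0)$ under the standing assumption $v_{i}\in\{n-1,n\}$), I may assume $P\in k[[u_{1},\dots,u_{n-1}]][u_{n}]$, in which case $P\notin I=(h)$ because $h$ is monic of degree $\delta$ in $u_{n}$ so no non-zero multiple of $h$ in $k[[u_{1},\dots,u_{n-1}]][u_{n}]$ has $u_{n}$-degree less than $\delta$. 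Hence $\hat{\nu}(P)\in\Gamma_{1}$, and the same argument applied to each non-zero $\partial_{b}P$ (also of $u_{n}$-degree strictly less than $\delta$) gives $\hat{\nu}(\partial_{b}P)\in\Gamma_{1}$. Consequently $\epsilon(P)$, being the maximum of quotients $(\hat{\nu}(P)-\hat{\nu}(\partial_{b}P))/b$ of such values, lies in $\Gamma_{1}\otimes_{\mathbb{Z}}\mathbb{Q}$, and is therefore strictly smaller than $\epsilon(h)$. This yields exactly the required implication.

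The only delicate point I foresee is making the comparison of values in $\Gamma\otimes_{\mathbb{Z}}\mathbb{Q}$ fully rigorous, in particular checking that the implicit ideal description $I=\{f\in A:\hat{\nu}(f)\notin\Gamma_{1}\}$ is indeed available (this is built into the construction of $H$, the isomorphism $\widehat{R}/H\simeq A/I$, and the decomposition $\hat{\nu}=\widetilde{\nu}\circ\mu$), and that the convexity of $\Gamma_{1}$ passes to its $\mathbb{Q}$-span. Once these technicalities are in place, the argument reduces to a direct unwinding of the definitions of $\epsilon$, of a key polynomial, and of the implicit prime ideal.
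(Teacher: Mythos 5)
Your proof is correct and follows essentially the same route as the paper's: both arguments rest on the observation that $\widehat{\nu}(h)\notin\Gamma_{1}$ while every polynomial of $u_{n}$-degree strictly less than $\delta$ (and each of its derivatives) lies outside $I$ and hence has value in $\Gamma_{1}$, so that convexity of the isolated subgroup $\Gamma_{1}$ forces $\epsilon(P)<\epsilon(h)$. Your version is marginally more careful on two technical points the paper glosses over — working in $\Gamma\otimes_{\mathbb{Z}}\mathbb{Q}$ where $\epsilon$ actually lives, and clearing denominators so that $P\notin I$ follows from monicness of $h$ — but these are refinements of the same argument, not a different one.
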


\begin{proof}
By definition, $I=\left\{ f\in A\text{ such that }\widehat{\nu}\left(f\right)\notin\Gamma_{1}\right\} $,
so $\widehat{\nu}\left(h\right)\notin\Gamma_{1}$ . Furthermore, for
every non-zero integer $b$, we have $\widehat{\nu}\left(\partial_{b}h\right)\in\Gamma_{1}$
since $h$ is a generator of $I$, hence has the smallest degree among
all the elements of $I$ and so $\partial_{b}h\notin I$.

Then $\epsilon\left(h\right)\notin\Gamma_{1}$.

Let $P$ be a polynomial such that $\deg\left(P\right)<\deg\left(h\right)$.
To show that $h$ is a key polynomial, it remains to prove that $\epsilon\left(P\right)<\epsilon\left(h\right)$.

By the minimality of $\deg\left(h\right)$, we still have $P\notin I$
and so $\widehat{\nu}\left(P\right)\in\Gamma_{1}$. So for every non-zero
integer $b$, we also have $\widehat{\nu}\left(\partial_{b}P\right)\in\Gamma_{1}$.
Then $\epsilon\left(P\right)\in\Gamma_{1}$.

Assume, aiming for contradiction, that $\epsilon\left(P\right)\ge\epsilon\left(h\right)$. 

Then $-\epsilon\left(P\right)\leq\epsilon\left(h\right)\leq\epsilon\left(P\right)$
and since $\Gamma_{1}$ is an isolated subgroup, $\Gamma_{1}$ is
a segment and so $\epsilon\left(h\right)\in\Gamma_{1}$. Contradiction.

Hence, $\epsilon\left(P\right)<\epsilon\left(h\right)$ and $h$ is
a key polynomial.
\end{proof}
Now we are going to monomialize the key polynomial $h$.

As in the previous part, we construct a sequence $(Q_{i})_{i\ge1}$
of key polynomials such that for each $i$ the polynomial $Q_{i+1}$
is either an optimal or a limit immediate successor of $Q_{i}$ that
begins with $x$ and ends with $h$. So since $\epsilon\left(h\right)$
is maximal in $\epsilon\left(\Lambda\right)$, we stop. Then we have
a finite sequence $(Q_{i})_{i\ge1}$ of key polynomials such that
for each $i$ the polynomial $Q_{i+1}$ is either an optimal or a
limit immediate successor of $Q_{i}$ that begins with $x$ and ends
with $h$.

In the case $I=\left(0\right)$, we construct again a sequence $(Q_{i})_{i\ge1}$
of key polynomials such that for each $i$ the polynomial $Q_{i+1}$
is either an optimal or a limit immediate successor of $Q_{i}$ such
that $\epsilon\left(\mathcal{Q}\right)$ is cofinal in $\epsilon\left(\Lambda\right)$.

Since we don't assume $k=k_{\nu}$ in this part, we need a generalization
of the monomialization Theorems of the Part $3$, paragraph $7$.

\newpage{}

\section{Monomialization of key polynomials.}

Here we consider the ring $A\simeq k\left[\left[u_{1},\dots,u_{n}\right]\right]$
and a valuation $\nu$ centered in $A$ of value group $\Gamma$.
For more clarity, we recall some previous notation.

Let $r$ be the dimension of $\sum\limits _{i=1}^{n}\mathbb{Q}\nu(u_{i})$
in $\Gamma\otimes_{\mathbb{Z}}\mathbb{Q}$. Renumbering if necessary,
we may assume that $\nu\left(u_{1}\right),\dots,\nu\left(u_{r}\right)$
are rationaly independent and we consider $\Delta$ the subgroup of
$\Gamma$ generated by $\nu(u_{1}),\ldots,\nu(u_{r})$. 

We set $E:=\left\{ 1,\dots,r,n\right\} $ and
\[
\overline{\alpha^{(0)}}:=\min\limits _{\alpha\in\mathbb{N}^{\ast}}\left\{ \alpha\text{ such that }\alpha\nu(u_{n})\in\Delta\right\} .
\]
 So $\overline{\alpha^{(0)}}\nu(u_{n})=\sum\limits _{j=1}^{r}\alpha_{j}^{(0)}\nu(u_{j})$
with 
\[
\alpha_{1}^{(0)},\ldots,\alpha_{s}^{(0)}\geq0
\]
 and 
\[
\alpha_{s+1}^{(0)},\ldots,\alpha_{r}^{(0)}<0.
\]

We set
\[
w=(w_{1},\ldots,w_{r},w_{n})=(u_{1},\ldots,u_{r},u_{n})
\]
 and 
\[
v=(v_{1},\ldots,v_{t})=(u_{r+1},\ldots,u_{n-1}),
\]
 with $t=n-r-1$.

We write $x_{i}=\mathrm{in}_{\nu}u_{i}$, and so $x_{1},\ldots,x_{r}$
are algebraically independent over $k$ in $G_{\nu}$. Let $\lambda_{0}$
be the minimal polynomial of $x_{n}$ over $k[x_{1},\ldots,x_{r}]$,
of degree $\alpha$. If $x_{n}$ is transcental, we set $\lambda_{0}:=0$.

We consider

\[
y=\prod\limits _{j=1}^{r}x_{j}^{\alpha_{j}^{(0)}},
\]
 
\[
\overline{y}=\prod\limits _{j=1}^{r}w_{j}^{\alpha_{j}^{(0)}},
\]
 
\[
z=\frac{x_{n}^{\overline{\alpha^{(0)}}}}{y}
\]
 and 
\[
\overline{z}=\frac{w_{n}^{\overline{\alpha^{(0)}}}}{\overline{y}}.
\]

Let $d_{0}:=\frac{\alpha}{\overline{\alpha^{(0)}}}\in\mathbb{N}$. 

If $\lambda_{0}\neq0$, we have 
\[
\lambda_{0}=\sum\limits _{q=0}^{d_{0}}c_{q}y^{d_{0}-q}X^{q\overline{\alpha^{(0)}}}
\]
 where $c_{q}\in k$ , $c_{d}=1$ and $\sum\limits _{q=0}^{d_{0}}c_{q}Z^{q}$
is the minimal polynomial of $z$ over $G_{\nu}$.

We are going to show that there exists a formal framed sequence that
monomializes all the $Q_{i}$. We have $Q_{1}=u_{n}$ so we have to
begin by monomializing $Q_{2}$.

First, let us consider
\[
Q=\sum\limits _{q=0}^{d_{0}}a_{q}b_{q}\overline{y}^{d_{0}-q}w_{n}^{q\overline{\alpha^{(0)}}}
\]
 where $b_{q}\in R$ such that $b_{q}\equiv c_{q}$ modulo $\mathfrak{m}$
and $a_{q}\in A^{\times}$. 

Then we will show that we can reduce the problem to this special case.

Let 
\[
\gamma=(\gamma_{1},\ldots,\gamma_{r},\gamma_{n})=(\alpha_{1}^{(0)},\ldots,\alpha_{s}^{(0)},0,\ldots,0)
\]
 and 
\[
\delta=(\delta_{1},\ldots,\delta_{r},\delta_{n})=(0,\ldots,0,-\alpha_{s+1}^{(0)},\ldots,-\alpha_{r}^{(0)},\overline{\alpha^{(0)}}).
\]

We have 
\[
w^{\delta}=w_{n}^{\delta_{n}}\prod\limits _{j=1}^{r}w_{j}^{\delta_{j}}=\frac{w_{n}^{\overline{\alpha^{(0)}}}}{\prod\limits _{j=s+1}^{r}w_{j}^{\alpha_{j}^{(0)}}}
\]
 and 
\[
w^{\gamma}=\prod\limits _{j=1}^{s}w_{j}^{\alpha_{j}^{(0)}}.
\]
 So $\frac{w^{\delta}}{w^{\gamma}}=\frac{w_{n}^{\overline{\alpha^{(0)}}}}{\prod\limits _{j=1}^{r}w_{j}^{\alpha_{j}^{(0)}}}=\overline{z}$.

Let us compute the value of $w^{\delta}$.

\[
\begin{array}{ccc}
\nu(w^{\delta}) & = & \overline{\alpha^{(0)}}\nu(w_{n})-\sum\limits _{j=s+1}^{r}\alpha_{j}^{(0)}\nu(w_{j})\\
 & = & \overline{\alpha^{(0)}}\nu(u_{n})-\sum\limits _{j=s+1}^{r}\alpha_{j}^{(0)}\nu(u_{j})\\
 & = & \sum\limits _{j=1}^{r}\alpha_{j}^{(0)}\nu(u_{j})-\sum\limits _{j=s+1}^{r}\alpha_{j}^{(0)}\nu(u_{j})\\
 & = & \sum\limits _{j=1}^{s}\alpha_{j}^{(0)}\nu(u_{j})\\
 & = & \sum\limits _{j=1}^{s}\alpha_{j}^{(0)}\nu(w_{j})\\
 & = & \nu(w^{\gamma}).
\end{array}
\]

\begin{thm}
\label{thm:monomialise general}There exists a local framed sequence 

\begin{equation}
\left(A,u\right)\overset{\pi_{0}}{\to}\left(A_{1},u^{(1)}\right)\overset{\pi_{1}}{\to}\cdots\overset{\pi_{l-1}}{\to}\left(A_{l},u^{(l)}\right)\label{eq:suitelocale-1}
\end{equation}

with respect to $\nu$, independent of $v$, that has the following
properties:

For every integer $i\in\{1,\dots,l\}$, we write $u^{(i)}=\left(u_{1}^{(i)},\dots,u_{n_{i}}^{(i)}\right)$
and denote by $k_{i}$ the residue field of $A_{i}$.
\begin{enumerate}
\item The blow-ups $\pi_{0},\dots,\pi_{l-2}$ are monomial.
\item We have $\overline{z}\in A_{l}^{\times}$.
\item We have
\[
n_{l}=\begin{cases}
n & \text{if }\lambda_{0}\neq0\\
n-1 & \text{otherwise}.
\end{cases}
\]
\item We set
\[
u^{(l)}=\begin{cases}
\left(w_{1}^{(l)},\dots,w_{r}^{(l)},v,w_{n}^{(l)}\right) & \text{ if }\lambda_{0}\neq0\\
\left(w_{1}^{(l)},\dots,w_{r}^{(l)},v\right) & \text{otherwise}.
\end{cases}
\]
For every integer $j\in\left\{ 1,\dots,r,n\right\} $, $w_{j}$ is
a monomial in $w_{1}^{(l)},\dots,w_{r}^{(l)}$ multiplied by an element
of $A_{l}^{\times}$. And for every integer $j\in\left\{ 1,\dots,r\right\} $,
$w_{j}^{(l)}=w^{\eta}$ where $\eta\in\mathbb{Z}^{r+1}$.
\item If $\lambda_{0}\neq0$, then $Q=w_{n}^{(l)}\times\overline{y}^{d_{0}}$.
\end{enumerate}
\end{thm}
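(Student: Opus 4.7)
The plan is to mirror the proof of Theorem \ref{thm:monomialise} and adapt it to the setting where the residue field of the valuation is no longer assumed to equal $k$, so that the minimal polynomial $\lambda_0$ can have degree $\alpha=d_0\overline{\alpha^{(0)}}$ with $d_0\geq 1$, and to the formal setting where we take formal completions at each stage. The starting point is identical: I would apply Proposition \ref{prop:existencesuitedivise} to the pair $(w^{\delta},w^{\gamma})$ to produce a local framed sequence with respect to $\nu$, independent of $v$, after which $w^{\gamma}\mid w^{\delta}$. Since $\nu(w^{\gamma})=\nu(w^{\delta})$ (as computed just before the theorem), Proposition \ref{prop:divaleur} gives the reverse divisibility as well, hence $\overline{z},\overline{z}^{-1}\in A_l^{\times}$, which is item (2). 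I would choose the sequence minimal for this property; this minimality is what drives everything else.

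Next I would reprove, essentially verbatim, the numerical analogs of Lemma \ref{lem:pgcdtotal} (the relation $\sum_{q\in E}(\gamma_q^{(i)}-\delta_q^{(i)})\beta_q^{(i)}=0$, the coprimality of the coordinates of $\gamma^{(i)}-\delta^{(i)}$, and uniqueness of the linear dependence up to sign), Lemma \ref{lem:suitenonmonomiale} (the whole sequence cannot be monomial, else $\gamma=\delta$), and Lemma \ref{lem:chaqueeclatementestmonomial} (a blow-up is non-monomial iff a unique $q\in J_i\setminus\{1\}$ satisfies $\beta_q^{(i)}=\beta_1^{(i)}$, iff $i=l-1$). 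These proofs only use the abstract structure of the framed sequence, coprimality, and the $\mathbb{Q}$-independence of $\beta_1,\dots,\beta_r$, so none of them requires $k=k_{\nu}$. This yields item (1): $\pi_0,\dots,\pi_{l-2}$ are monomial.

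The last blow-up $\pi_{l-1}$ is where the case split occurs. By the analog of Lemma \ref{lem:chaqueeclatementestmonomial}, there is a unique $q\in J_{l-1}\setminus\{1\}$ with $\beta_q^{(l-1)}=\beta_1^{(l-1)}$, and the computation of $\frac{x_q^{(l-1)}}{x_1^{(l-1)}}$ as $x^{\pm(\gamma-\delta)}=z^{\pm1}$ (using $k_\nu$-independence of $\beta_1^{(l-1)},\dots,\beta_r^{(l-1)}$) forces $q=n$; up to interchanging $x_1^{(l-1)}$ and $x_n^{(l-1)}$, we obtain $\frac{x_n^{(l-1)}}{x_1^{(l-1)}}=z$. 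At this point the discussion of the three cases on page \pageref{troiscas} applies: if $\lambda_0=0$ then $z$ is transcendental over $k(x_1,\dots,x_r)$, so $t_{k_l}=1$ and we are in case 2 with $n_l=n-1$ (first alternative of items (3) and (4)); if $\lambda_0\neq0$ then $\lambda_z=\sum_{q=0}^{d_0}c_qZ^q$ is the degree-$d_0$ minimal polynomial of $z$, we are in case 3 with $n_l=n$, and $w_n^{(l)}$ is a lift of $\lambda_z(\overline{z})$, which can be chosen of the form $\sum_{q=0}^{d_0}a_qb_q\overline{z}^q$ with $b_q\equiv c_q\pmod{\mathfrak{m}}$ and $a_{d_0}b_{d_0}=1$. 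The expression of $w_j$ and $w_j^{(l)}$ in item (4) then follows exactly as in the proof of Theorem \ref{thm:monomialise} by invoking Proposition \ref{prop:chgmtvari} with $i=0$, $i'=l$, $u_T=v$.

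Finally, for item (5) in the algebraic case, using $\overline{z}^q\,\overline{y}^q = w_n^{q\overline{\alpha^{(0)}}}$ coming from $\overline{z}=w_n^{\overline{\alpha^{(0)}}}/\overline{y}$, I compute
\[
w_n^{(l)}\,\overline{y}^{d_0}=\sum_{q=0}^{d_0}a_qb_q\,\overline{z}^q\,\overline{y}^{d_0}=\sum_{q=0}^{d_0}a_qb_q\,\overline{y}^{d_0-q}\,w_n^{q\overline{\alpha^{(0)}}}=Q,
\]
which is exactly the claimed identity. The main obstacle, compared with the $k=k_\nu$ case, is keeping track of the residue-field extension of degree $d_0$ produced by the last blow-up and verifying that the lift $w_n^{(l)}$ of $\lambda_z(\overline{z})$ can be taken in the prescribed form $\sum a_qb_q\overline{z}^q$; once this is done, the rest is bookkeeping with monomial exponents and is essentially a direct transcription of the earlier argument.
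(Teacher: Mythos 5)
Your proposal follows the paper's own proof essentially step for step: the same application of Propositions \ref{prop:existencesuitedivise} and \ref{prop:divaleur} to $(w^{\delta},w^{\gamma})$ with a minimal sequence, the same three numerical lemmas (whose proofs indeed carry over verbatim since they never use $k=k_{\nu}$), the same identification of the unique non-monomial last blow-up with $q=n$ and $\frac{x_n^{(l-1)}}{x_1^{(l-1)}}=z$, the same dichotomy $\lambda_0=0$ (transcendental, $t_{k_l}=1$, $n_l=n-1$) versus $\lambda_0\neq0$ (algebraic of degree $d_0$, $n_l=n$) via Fact \ref{fact:nombredegene}, and the same computation $w_n^{(l)}\overline{y}^{d_0}=Q$. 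The argument is correct and matches the paper's; the only cosmetic slip is writing ``$k_\nu$-independence'' where $\mathbb{Q}$-linear independence of $\beta_1^{(l-1)},\dots,\beta_r^{(l-1)}$ is meant.
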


\begin{proof}
We apply Proposition \ref{prop:existencesuitedivise} to $(w^{\delta},w^{\gamma})$
and obtain a local framed sequence for $\nu$, independent of $v$,
such that $w^{\gamma}\mid w^{\delta}$ in $A_{l}$. 

By Proposition \ref{prop:divaleur} and the fact that $w^{\delta}$
and $w^{\gamma}$ have same value, we have $w^{\delta}\mid w^{\gamma}$
in $R_{l}$. In fact $\overline{z},\overline{z}^{-1}\in A_{l}^{\times}.$
So we have the point $(2)$.

We choose the sequence to be minimal, it means that the sequence composed
by $\pi_{0},\ldots,\pi_{l-2}$ does not satisfy the conclusion of
Proposition \ref{prop:existencesuitedivise} for $(w^{\delta},w^{\gamma})$.
We are now going to show that this sequence satisfies the conclusion
of Theorem \ref{thm:monomialise general}. Let $i\in\left\{ 0,\dots,l\right\} $.
We write $w^{(i)}=\left(w_{1}^{(i)},\dots,w_{r_{i}}^{(i)},w_{n_{i}}^{(i)}\right)$,
with $r_{i}=n_{i}-t-1>0$. For every integers $i\in\left\{ 1,\dots,l\right\} $
and $j\in\left\{ 1,\dots,n_{i}\right\} $, we write $\beta_{j}^{(i)}=\nu\left(u_{j}^{(i)}\right)$.
For all $i<l$, $\pi_{i}$ is a blow-up along an ideal of the form
$\left(u_{J_{i}}^{(i)}\right)$. Renumbering if necessary, we may
assume that $1\in J_{i}$ and that $A_{i+1}$ is a localization of
$A_{i}\left[\frac{u_{J_{i}}^{(i)}}{u_{1}^{(i)}}\right]$. Hence, $\beta_{1}^{(i)}=\min\limits _{j\in J_{i}}\left\{ \beta_{j}^{(i)}\right\} $.
\begin{lem}
Let $i\in\left\{ 0,\dots,l-1\right\} $. We assume that the sequence
$\pi_{0},\dots,\pi_{i-1}$ of \ref{eq:suitelocale-1} is monomial. 

We write $w^{\gamma}=\left(w^{(i)}\right)^{\gamma^{(i)}}$ and $w^{\delta}=\left(w^{(i)}\right)^{\delta^{(i)}}$.
Then:
\end{lem}

\begin{enumerate}
\item $r_{i}=r$,
\item 
\begin{equation}
\sum\limits _{q\in E}\left(\gamma_{q}^{(i)}-\delta_{q}^{(i)}\right)\beta_{q}^{(i)}=0,\label{eq:relation-1}
\end{equation}
\item $\mathrm{gcd}\left(\gamma_{1}^{(i)}-\delta_{1}^{(i)},\dots,\gamma_{r}^{(i)}-\delta_{r}^{(i)},\gamma_{n}^{(i)}-\delta_{n}^{(i)}\right)=1,$
\item Every $\mathbb{Z}$-linear dependence relation between $\beta_{1}^{(i)},\dots,\beta_{r}^{(i)},\beta_{n}^{(i)}$
is an integer multiple of (\ref{eq:relation-1}).
\end{enumerate}
\begin{proof}
~
\begin{enumerate}
\item It is enough to do an induction on $i$ and use Remark \ref{rem: =0000E9clatement monomial nombre de g=0000E9n=0000E9rateurs}.
\item We have $\nu\left(w^{\gamma}\right)=\nu\left(w^{\delta}\right)$,
in other words $\nu\left(\left(w^{(i)}\right)^{\gamma^{(i)}}\right)=\nu\left(\left(w^{(i)}\right)^{\delta^{(i)}}\right)$.
Since $w^{(i)}=\left(w_{1}^{(i)},\dots,w_{r_{i}}^{(i)},w_{n_{i}}^{(i)}\right)$,
we have: 
\[
\nu\left(\prod\limits _{j=1}^{r_{i}}\left(w_{j}^{(i)}\right)^{\gamma_{j}^{(i)}}\times\left(w_{n_{i}}^{(i)}\right)^{\gamma_{n_{i}}^{(i)}}\right)=\nu\left(\prod\limits _{j=1}^{r_{i}}\left(w_{j}^{(i)}\right)^{\delta_{j}^{(i)}}\times\left(w_{n_{i}}^{(i)}\right)^{\delta_{n_{i}}^{(i)}}\right).
\]
So we have
\[
\sum\limits _{j=1}^{r_{i}}\gamma_{j}^{(i)}\nu\left(w_{j}^{(i)}\right)+\gamma_{n_{i}}^{(i)}\nu\left(w_{n_{i}}^{(i)}\right)=\sum\limits _{j=1}^{r_{i}}\delta_{j}^{(i)}\nu\left(w_{j}^{(i)}\right)+\delta_{n_{i}}^{(i)}\nu\left(w_{n_{i}}^{(i)}\right).
\]
By definition of $w^{(i)}$, for every integer $j\in\left\{ 1,\dots,r_{i},n_{i}\right\} $,
we have $w_{j}^{(i)}=u_{j}^{(i)}$. So $\nu\left(w_{j}^{(i)}\right)=\beta_{j}^{(i)}$.
Then: 
\[
\sum\limits _{j=1}^{r_{i}}\gamma_{j}^{(i)}\beta_{j}^{(i)}+\gamma_{n_{i}}^{(i)}\beta_{n_{i}}^{(i)}=\sum\limits _{j=1}^{r_{i}}\delta_{j}^{(i)}\beta_{j}^{(i)}+\delta_{n_{i}}^{(i)}\beta_{n_{i}}^{(i)}.
\]
Hence $\sum\limits _{j\in\left\{ 1,\dots,r_{i},n_{i}\right\} }\left(\gamma_{j}^{(i)}-\delta_{j}^{(i)}\right)\beta_{j}^{(i)}=0$.
\\
But $r_{i}=n_{i}-t-1=r$, so $n_{i}=r+t+1=n$, and:
\[
\begin{array}{ccc}
\sum\limits _{j\in\left\{ 1,\dots,r_{i},n_{i}\right\} }\left(\gamma_{j}^{(i)}-\delta_{j}^{(i)}\right)\beta_{j}^{(i)} & = & \sum\limits _{j\in\left\{ 1,\dots,r,n\right\} }\left(\gamma_{j}^{(i)}-\delta_{j}^{(i)}\right)\beta_{j}^{(i)}\\
 & = & \sum\limits _{j\in E}\left(\gamma_{j}^{(i)}-\delta_{j}^{(i)}\right)\beta_{j}^{(i)}\\
 & = & 0.
\end{array}
\]
\item Same proof as in Theorem \ref{thm:monomialise}.
\item Same proof as in Theorem \ref{thm:monomialise}.
\end{enumerate}
\end{proof}
\begin{lem}
\label{lem:suitenonmonomiale-1}The sequence $\left(A,u\right)\overset{\pi_{0}}{\to}\left(A_{1},u^{(1)}\right)\overset{\pi_{1}}{\to}\cdots\overset{\pi_{l-1}}{\to}\left(A_{l},u^{(l)}\right)$
of Theorem \ref{thm:monomialise general} is not monomial.
\end{lem}

\begin{proof}
Same proof as Lemma \ref{lem:suitenonmonomiale}.
\end{proof}
\begin{lem}
\label{lem:chaqueeclatementestmonomial-1}Let $i\in\{0,\dots,l-1\}$
and we assume that $\pi_{0},\dots,\pi_{i-1}$ are all monomial. Then
following properties are equivalent:
\begin{enumerate}
\item The blow-up $\pi_{i}$ is not monomial.
\item There exists a unique index $q\in J_{i}\setminus\left\{ 1\right\} $
such that $\beta_{q}^{(i)}=\beta_{1}^{(i)}$.
\item We have $i=l-1$.
\end{enumerate}
\end{lem}

\begin{proof}
Same proof as Lemma \ref{lem:chaqueeclatementestmonomial}.
\end{proof}
Using induction on $i$ and Lemma \ref{lem:chaqueeclatementestmonomial-1},
we conclude that $\pi_{0},\dots,\pi_{l-2}$ are monomial. This proves
the first point of the Theorem.

It remains to prove the last three points.

By Lemma \ref{lem:chaqueeclatementestmonomial-1} we know that there
exists a unique element $q\in J_{l-1}\setminus\left\{ j_{l-1}\right\} $
such that $\beta_{q}^{(l-1)}=\beta_{1}^{(l-1)}$, hence we are in
the case $\sharp B_{l-1}+1=\sharp J_{l-1}-1$. We now have to see
if $t_{k_{l-1}}=0$ or $1$.

We recall that $w_{1}^{(l-1)}=w^{\epsilon}$ and $w_{q}^{(l-1)}=w^{\mu}$
where $\epsilon$ and $\mu$ are two columns of a unimodular matrix
such that $\mu-\epsilon=\pm(\gamma-\delta)$. So $x_{1}^{(l-1)}=x^{\epsilon}$
and $x_{q}^{(l-1)}=x^{\mu}$, then 
\[
\frac{x_{q}^{(l-1)}}{x_{1}^{(l-1)}}=x^{\mu-\epsilon}=x^{\pm\left(\gamma-\delta\right)}=x^{\pm\left(\alpha_{1}^{(0)},\dots,\alpha_{r}^{(0)},-\overline{\alpha^{(0)}}\right)}.
\]

In other words 
\[
\frac{x_{q}^{(l-1)}}{x_{1}^{(l-1)}}=\left(\frac{\prod\limits _{j=1}^{r}x_{j}^{\alpha_{j}^{(0)}}}{x_{n}^{\overline{\alpha^{(0)}}}}\right)^{\pm1}=\left(z^{-1}\right)^{\pm1}=z^{\pm1}.
\]
 So we can assume $\frac{x_{q}^{(l-1)}}{x_{1}^{(l-1)}}=z$ .

The case $t_{k_{l-1}}=1$ corresponds to the fact that $z$ is transcendantal
over $k$, in other words $\lambda_{0}=0$. The case $t_{k_{l-1}}=0$
corresponds to the fact that $z$ is algebraic over $k$, in other
words $\lambda_{0}\neq0$. The third point of the Theorem is then
a consequence of \ref{fact:nombredegene}.

Since $\beta_{1}^{(l-1)},\dots,\beta_{r}^{(l-1)}$ are linearly independent,
we have $q=n$. By \ref{fact:nombredegene}, if $\lambda_{0}\neq0$,
we have 
\[
w_{n}^{(l)}=u_{n}^{(l)}=\overline{\lambda_{0}}(u'_{n})=\overline{\lambda_{0}}\left(\frac{u_{n}^{(l-1)}}{u_{1}^{(l-1)}}\right)=\overline{\lambda_{0}}\left(\frac{w_{n}^{(l-1)}}{w_{1}^{(l-1)}}\right)=\overline{\lambda_{0}}\left(\overline{z}\right)=\sum\limits _{i=0}^{d}a_{i}b_{i}\overline{z}^{i}.
\]
\begin{rem}
We have $\overline{\lambda_{0}}\left(\overline{z}\right)=\sum\limits _{i=0}^{d}c_{i}b_{i}\overline{z}^{i}$
where $c_{i}$ are units. Then we choose to set $c_{i}=a_{i}$ for
every index $i$.
\end{rem}

But since $\overline{z}=\frac{w_{n}^{\overline{\alpha^{(0)}}}}{\overline{y}}$,
we have 
\[
w_{n}^{(l)}=\sum\limits _{i=0}^{d_{0}}a_{i}b_{i}\left(\frac{w_{n}^{\overline{\alpha^{(0)}}}}{\overline{y}}\right)^{i}=\frac{\sum\limits _{i=0}^{d_{0}}a_{i}b_{i}\overline{y}^{d_{0}-i}\left(w_{n}^{\overline{\alpha^{(0)}}}\right)^{i}}{\overline{y}^{d_{0}}}=\frac{Q}{\overline{y}^{d_{0}}}
\]
 and the point $(5)$ is proven.

So it remains to prove the point $\left(4\right)$.

We apply Proposition \ref{prop:chgmtvari} to $i=0$ and $i'=l$.
By the monomiality of $\pi_{0},\dots,\pi_{l-2}$, we know that $D_{i}=\{1,\dots,n\}$
for every $i\in\{1,\dots,l-1\}$.

We know that $D_{l}=\{1,\dots,n\}$ if $\lambda\neq0$ and $D_{l}=\{1,\dots,n-1\}$
otherwise. Here we set again $u_{T}=v$.

By Proposition \ref{prop:chgmtvari}, for every $j\in\left\{ 1,\dots,r,n\right\} $,
$w_{j}=u_{j}$ is a monomial in $w_{1}^{(l)},\dots,w_{r}^{(l)}$ (or
equivalently in $u_{1}^{(l)},\dots,u_{r}^{(l)}$) multiplied by an
element of $A_{l}^{\times}$.

Same thing for the fact that for every integer $j\in\{1,\dots,r\}$,
we have $w_{j}^{(l)}=w^{\eta}$. This completes the proof.
\end{proof}
\begin{rem}
In the case $Q_{2}=Q$, the we constructed a local framed sequence
such that the total transform of $Q_{2}$ is a monomial. We will bring
us to this case.
\end{rem}

\begin{defn}
\cite{CRS} \index{Generalized Puiseux package}A local framed sequence
that satisfies Theorem \ref{thm:monomialise general} is called a\emph{
$n$-generalized Puiseux package}.

Let $j\in\left\{ r+1,\dots,n\right\} $. A \emph{$j$-generalized
Puiseux package }is a $n$-generalized Puiseux package replacing $n$
by $j$ in Theorem \ref{thm:monomialise general}.
\end{defn}

\begin{rem}
We consider $\left(A,u\right)\to\dots\to\left(A_{i},u^{(i)}\right)\to\dots$
a $j$-generalized Puiseux package, with $j\in\left\{ r+1,\dots,n\right\} $.
We replace each ring of this sequence by its formal completion, hence
we obtain o formal framed sequence that we call a formal $j$-Puiseux
package. So Theorem \ref{thm:monomialise general} induces a formal
$n$-Puiseux package that satisfies the same conclusion as in Theorem
\ref{thm:monomialise general}.
\end{rem}

Since we want to to an induction, now we will assume until the end
of Theorem \ref{thm:monomialisation de Q3-1}, that we know how to
monomialize every complete local equicharacteristic quasi excellent
ring $G$ of dimension strictly less than $n$ equipped with a valuation
of rank $1$ centered in $G$ by a formal framed sequence. This hypothesis
is called $H_{n}$.
\begin{lem}
\label{lem: on se ramene au bon cas-2}Let $P=\sum\limits _{j\in S_{u_{n}}\left(P\right)}c_{j}u_{n}^{j}$
the $u_{n}$-expansion of an optimal immediat successor key element
of $u_{n}$. 

There exists a formal framed sequence $\left(A,u\right)\to\left(A_{l},u^{(l)}\right)$
that transforms each coefficient $c_{j}$ in a monomial in $\left(u_{1}^{(l)},\dots,u_{r}^{(l)}\right)$,
multiplied by a unit of $A_{l}$. 

Hence, after this sequence, $P$ can be written like $\sum\limits _{i=0}^{d_{0}}a_{i}b_{i}\overline{y}^{d_{0}-i}\left(w_{n}^{\overline{\alpha^{(0)}}}\right)^{i}$.
\end{lem}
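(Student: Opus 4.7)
The strategy mirrors that of Lemma \ref{lem: on se ramene au bon cas-1}, replacing the algebraic setting by its formal analogue and appealing to the induction hypothesis $H_n$ in place of the finite-type monomialization theorem. First I observe that every coefficient $c_j$ in the $u_n$-expansion of $P$ lies in the subring $k[[u_1,\dots,u_{n-1}]]$, which is a complete local equicharacteristic quasi-excellent ring of dimension $n-1<n$ equipped with the restriction of $\widehat{\nu}$. Since $\widehat{\nu}(u_j)\in\Gamma_1$ for every $j<n$, this restriction is of rank $1$, so $H_n$ applies and provides, for each $j\in S_{u_n}(P)$, a formal framed sequence in $k[[u_1,\dots,u_{n-1}]]$ that monomializes $c_j$. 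Extending each ring in this sequence by the variable $u_n$ yields a formal framed sequence of $A$ that is independent of $u_n$ and defined over $\Gamma_1$.

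Iterating this construction over the finite set $S_{u_n}(P)$, I obtain a single formal framed sequence $(A,u)\to(A_{l_0},u^{(l_0)})$, independent of $u_n$ and defined over $\Gamma_1$, after which each $c_j$ is a monomial in $u_1^{(l_0)},\dots,u_{n-1}^{(l_0)}$ multiplied by a unit of $A_{l_0}$. Next, for each $g\in\{r_{l_0}+1,\dots,n-1\}$, I apply a formal $g$-generalized Puiseux package, i.e.\ the formal analogue of Theorem \ref{thm:monomialise general} applied with $g$ in place of $n$; since $\widehat{\nu}(u_g^{(l_0)})\in\Gamma_1$, each such package is defined over $\Gamma_1$ and writes $u_g^{(l_0)}$ as a monomial in $u_1^{(l_1)},\dots,u_r^{(l_1)}$ times a unit of $A_{l_1}$. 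Substituting into the already-monomialized coefficients, I conclude that after the composite sequence $(A,u)\to(A_l,u^{(l)})$ every $c_j$ is a monomial in $u_1^{(l)},\dots,u_r^{(l)}$ times a unit of $A_l$, which is the first assertion.

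For the second assertion I use the description of optimal immediate successor key elements of $u_n$ via the minimal polynomial $\lambda_0=\sum_{q=0}^{d_0}c_qy^{d_0-q}X^{q\overline{\alpha^{(0)}}}$ discussed before Theorem \ref{thm:monomialise general}: the support $S_{u_n}(P)$ is exactly $\{i\overline{\alpha^{(0)}}\mid 0\le i\le d_0\}$, and by optimality all the terms $c_{i\overline{\alpha^{(0)}}}u_n^{i\overline{\alpha^{(0)}}}$ share the same $\nu$-value, namely $d_0\,\nu(\overline{y})$. Since each $c_{i\overline{\alpha^{(0)}}}$ is now a monomial in $u_1^{(l)},\dots,u_r^{(l)}$ times a unit, and since $\nu(\overline{y})\in\Gamma_1$ as well, one may factor $\overline{y}^{d_0-i}$ out of the $i$-th term and absorb the remaining monomial discrepancy into the unit; writing $a_i$ for that unit and $b_i$ for a lift of $c_i$ puts $P$ in the required form $\sum_{i=0}^{d_0}a_ib_i\overline{y}^{d_0-i}(w_n^{\overline{\alpha^{(0)}}})^i$.

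The main obstacle I expect is bookkeeping rather than depth: one has to check that every intermediate step genuinely produces a formal framed sequence defined on $\Gamma_1$ and independent of $u_n$, so that successive inductive monomializations and successive Puiseux packages do not interfere with one another and do not disturb the identifications of the coefficients that have already been treated. This is guaranteed because, at every stage, the parameters along which blow-ups are performed have values in $\Gamma_1$, and the chosen sequences in $k[[u_1,\dots,u_{n-1}]]$ do not touch $u_n$; the property of being an optimal immediate successor is then preserved under strict transform by exactly the same argument as in the proof of Lemma \ref{lem: on se ramene au bon cas-1}.
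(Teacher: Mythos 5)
Your proposal is correct and follows essentially the same route as the paper, which proves this lemma as a special case of Lemma \ref{lem:on se ramene au bon cas le dernier}: first monomialize the coefficients by the induction hypothesis via a formal framed sequence independent of $u_{n}$, then apply generalized $g$-Puiseux packages for $g\in\{r+1,\dots,n-1\}$ to turn them into monomials in $u_{1}^{(l)},\dots,u_{r}^{(l)}$ times units. Your additional verification of the final shape $\sum_{i=0}^{d_{0}}a_{i}b_{i}\overline{y}^{d_{0}-i}\bigl(w_{n}^{\overline{\alpha^{(0)}}}\bigr)^{i}$ via the minimal polynomial $\lambda_{0}$ is a detail the paper leaves implicit, but it is consistent with the setup preceding Theorem \ref{thm:monomialise general}.
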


\begin{proof}
We will prove a more general result in \ref{lem:on se ramene au bon cas le dernier}.
\end{proof}
\begin{thm}
\label{thm: monomialiase pc lim de u_n-1} If $u_{n}\ll_{\lim}P$,
then $P$ is monomializable.
\end{thm}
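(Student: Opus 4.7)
The plan is to mimic the proof of Theorem \ref{thm: monomialiase pc lim de u_n} in the formal setting, using hypothesis $H_n$ in place of the inductive assumption of the essentially of finite type part. Write the $u_n$-expansion $P=\sum\limits_{j=0}^{N}a_jb_ju_n^j$ with $a_j\in A^\times$ and $Q=\sum\limits_{j=0}^{N}b_ju_n^j$ a limit immediate successor of $u_n$. The first key ingredient is Theorem \ref{thm: delta et pc limite}: since $\mathrm{char}(k_\nu)=0$ and $u_n<_{\lim}Q$, we have $\delta_{u_n}(Q)=1$, so
\[
\nu(b_0)=\nu(b_1u_n)<\nu(b_ju_n^j)\quad\text{for all }j>1.
\]
Multiplying by the units $a_j$ preserves these inequalities, and a direct computation (exactly as in the proof of Theorem \ref{thm: monomialiase pc lim de u_n}) yields $\nu(b_1^j)<\nu(b_jb_0^{j-1})$ for every $j>1$.

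Next I would invoke $H_n$ on the complete local ring $k[[u_1,\dots,u_{n-1}]]$, which has dimension $n-1<n$: this produces a formal framed sequence, independent of $u_n$, after which every $a_jb_j$ is a monomial in $(u_1^{(l)},\dots,u_{r_l}^{(l)})$ times a unit. Combining Proposition~\ref{prop:existencesuitedivise} with Proposition~\ref{prop:divaleur} inside this lower-dimensional ring, I can extend the sequence (still independent of $u_n$) so that $a_1b_1\mid a_0b_0$ and $b_1^j\mid b_jb_0^{j-1}$ for all $j>1$ in the resulting ring.

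Having cleared the denominators, I then perform a formal $n$-generalized Puiseux package in the special case $\alpha=\overline{\alpha^{(0)}}=1$, with the new last variable $u_n^{(l+1)}=\frac{b_1u_n}{b_0}+1$. The divisibility relations ensure that in the $u_n^{(l+1)}$-expansion of $P$, we obtain $P=\sum b'_j(u_n^{(l+1)})^j$ with $b'_1\mid b'_j$ for every $j$. Consequently $\frac{P}{b'_1}=u_n^{(l+1)}+\varphi$ with $\varphi\in(u_1^{(l+1)},\dots,u_{n-1}^{(l+1)})$, so $(u_1^{(l+1)},\dots,u_{n-1}^{(l+1)},\frac{P}{b'_1})$ is again a regular system of parameters of the target ring. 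Replacing the last parameter gives a formal framed sequence in which $P$ is a monomial times a unit, proving that $P$ is monomializable.

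The main obstacle is the bookkeeping needed to transfer the argument from the essentially of finite type setting to the formal framed setting: one has to check that each step is available as a \emph{formal} framed sequence defined over $\Gamma_1$ (so that the transforms of key polynomials remain well-behaved with respect to the implicit prime ideal of Section \ref{sec:Id=0000E9al-implicite.}), and that the lower-dimensional appeal to $H_n$ produces a sequence of this kind. The rest is bookkeeping with valuations of monomials and the characteristic zero input $\delta_{u_n}(Q)=1$, which forces the simple change of variable that replaces $u_n$ by $\frac{P}{b'_1}$.
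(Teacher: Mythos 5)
Your proposal is correct and is essentially the paper's own argument: the paper proves this theorem by declaring it the ``same proof as Theorem \ref{thm: monomialiase pc lim de u_n}'', and you have carried out exactly that transfer, replacing the inductive hypothesis by $H_{n}$, the lower-dimensional ring by $k\left[\left[u_{1},\dots,u_{n-1}\right]\right]$, and the Puiseux package by its generalized/formal version with $\alpha=1$, followed by the same change of the last regular parameter to $\frac{P}{b'_{1}}$.
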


\begin{proof}
Same proof as Theorem \ref{thm: monomialiase pc lim de u_n}.
\end{proof}
\begin{lem}
There exists a formal framed sequence 
\[
\left(A,u\right)\to\left(A_{l},u^{(l)}\right)
\]
 such that in $A_{l}$, the strict transform of the polynomial $Q_{2}$
is a monomial.
\end{lem}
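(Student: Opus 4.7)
The plan is to dichotomize on the nature of the successor relating $Q_1=u_n$ to $Q_2$: either $Q_1<Q_2$ (the optimal immediate successor case) or $Q_1<_{\lim}Q_2$ (the limit immediate successor case). Recall that by Proposition \ref{prop: binoooomme} applied to the sequence $\mathcal{Q}$, we may assume that if $Q_1<Q_2$ then the $Q_1$-expansion of $Q_2$ has exactly two terms, while in the limit case Theorem \ref{thm: delta et pc limite} tells us that $\delta_{Q_1}(Q_2)=1$, so the two lowest-degree terms of the $Q_1$-expansion again dominate. In both situations the monomialization problem is ultimately reduced to the special form $\sum\limits_{i=0}^{d_{0}}a_{i}b_{i}\overline{y}^{d_{0}-i}w_{n}^{i\overline{\alpha^{(0)}}}$ treated by Theorem \ref{thm:monomialise general}.

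First, suppose $Q_1<Q_2$. Then $Q_2$ is an optimal immediate successor key element of $u_n$, so Lemma \ref{lem: on se ramene au bon cas-2} applies. The lemma produces a formal framed sequence $(A,u)\to(A_l,u^{(l)})$, obtained by invoking the induction hypothesis $H_n$ on the $(n-1)$-dimensional subring $k[[u_1,\dots,u_{n-1}]]$ to monomialize the coefficients $c_j$ of the $u_n$-expansion of $Q_2$, followed by Propositions \ref{prop:existencesuitedivise} and \ref{prop:divaleur} to arrange common divisibility. After this reduction, $Q_2$ takes the shape required in the statement of Theorem \ref{thm:monomialise general}, which furnishes a further formal framed sequence producing a generalized $n$-Puiseux package, at the end of which the strict transform of $Q_2$ is either $w_n^{(l)}\overline{y}^{d_0}$ (when $\lambda_0\neq 0$) or is directly a monomial in $u_1^{(l)},\dots,u_r^{(l)}$ (when $\lambda_0=0$); in either case we are done.

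Second, suppose $Q_1<_{\lim}Q_2$. Here we apply Theorem \ref{thm: monomialiase pc lim de u_n-1} directly, whose proof runs in parallel to Theorem \ref{thm: monomialiase pc lim de u_n}: from $\delta_{u_n}(Q_2)=1$ one extracts, using $H_n$ to monomialize and compare the coefficients $a_jb_j$ in a sequence independent of $u_n$, the strict inequality $\nu(a_1b_1u_n)<\nu(a_jb_ju_n^j)$ for $j>1$, hence (after applying Propositions \ref{prop:existencesuitedivise}--\ref{prop:divaleur}) the divisibility $a_1b_1\mid a_jb_j$; a single $n$-generalized Puiseux package in the degenerate case $\overline{\alpha^{(0)}}=1$ then converts $Q_2/a_1'b_1'$ into $u_n'+\varphi$ with $\varphi$ in the ideal $(u_1',\dots,u_{n-1}')$, which, after replacing the last coordinate, is a regular parameter — thus monomial.

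The main obstacle will be Lemma \ref{lem: on se ramene au bon cas-2}, which is deferred to the later Lemma \ref{lem:on se ramene au bon cas le dernier}. The delicacy is that when one performs $j$-generalized Puiseux packages for $j\in\{r+1,\dots,n-1\}$ to monomialize the auxiliary coefficients appearing in the $u_n$-expansion of $Q_2$, one has to guarantee that these sequences are independent of $u_n$, remain formal framed, and do not disturb the control on $\overline{y}$ and $\overline{z}$ that is essential for Theorem \ref{thm:monomialise general}; this requires invoking $H_n$ on the right intermediate subrings and carefully tracking which divisibility relations in $\widetilde{\mathscr{S}}_j$-type pairs have to be enforced en route. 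Once this bookkeeping is in place, the two cases combine to yield the desired formal framed sequence monomializing $Q_2$.
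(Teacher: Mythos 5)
Your proposal is correct and follows essentially the same route as the paper: the paper's own proof is exactly the dichotomy you describe, invoking Lemma \ref{lem: on se ramene au bon cas-2} together with Theorem \ref{thm:monomialise general} when $u_{n}<Q_{2}$, and Theorem \ref{thm: monomialiase pc lim de u_n-1} when $u_{n}<_{\lim}Q_{2}$. Your additional commentary on how those cited results work (and the caveat about keeping the auxiliary Puiseux packages independent of $u_{n}$) is accurate but not needed for this particular lemma.
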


\begin{proof}
If $u_{n}<Q_{2}$, we use Lemma \ref{lem: on se ramene au bon cas-2}
and Theorem \ref{thm:monomialise general} to conclude. Otherwise,
$u_{n}<_{\lim}Q_{2}$ and so we use Theorem \ref{thm: monomialiase pc lim de u_n}.
\end{proof}
We constructed a formal framed sequence that monomializes $Q_{2}$.
But we want one that monomializes all the key polynomials of $\mathcal{Q}$. 

Now we are going to show that if we constructed a formal framed sequence
$\left(A,u\right)\to\left(A_{l},u^{\left(l\right)}\right)$ that monomializes
$Q_{i}$, then we can associate another $\left(A_{l},u^{\left(l\right)}\right)\to\left(A_{s},u^{\left(s\right)}\right)$
such that in $A_{s}$, the strict transform of $Q_{i+1}$ is also
a monomial.

Let $\Delta_{l}$ be the group $\nu\left(k_{l}\left(u_{1}^{\left(l\right)},\dots,u_{n-1}^{\left(l\right)}\right)\setminus\left\{ 0\right\} \right)$
and 
\[
\alpha_{l}:=\min\left\{ h\text{ such that }h\beta_{n}^{(l)}\in\Delta_{l}\right\} .
\]

We set $X_{j}=\mathrm{in}_{\nu}\left(u_{j}^{(l)}\right)$, $W_{j}=w_{j}^{(l)}$
and $\lambda_{l}$ the minimal polynomial of $X_{n}$ over $\mathrm{gr}_{\nu}k_{l}\left(u_{1}^{\left(l\right)},\dots,u_{n-1}^{\left(l\right)}\right)$
of degree $\alpha_{l}$.

We know that $Q_{i}=\overline{\omega}w_{n}^{(l)}$ with $\overline{\omega}$
a monomial in $W_{1},\dots,W_{r}$ multiplied by a unit. We set $\omega:=\mathrm{in}_{\nu}\left(\overline{\omega}\right)$.

If $Q_{i}<_{\lim}Q_{i+1}$, we use Theorem \ref{thm: monomialiase pc lim de u_n-1}
and the proof is finished. So we assume that $Q_{i+1}$ is an optimal
immediate successor of $Q_{i}$.

We write $Q_{i+1}=\sum\limits _{j\in S_{Q_{i}}(Q_{i+1})}a_{j}Q_{i}^{j}=\sum\limits _{j=0}^{s}a_{j}Q_{i}^{j}$
the $Q_{i}$-expansion of $Q_{i+1}$ in $k_{l}\left(u_{1}^{\left(l\right)},\dots,u_{n-1}^{\left(l\right)}\right)\left(u_{n}^{\left(l\right)}\right)$
.

We have $Q_{i+1}=Q_{i}^{s}+a_{s-1}Q_{i}^{s-1}+\cdots+a_{0}$ and since
$Q_{i}=\overline{\omega}w_{n}^{(l)}$, we have
\[
\frac{Q_{i+1}}{\overline{\omega}^{s}}=\left(u_{n}^{(l)}\right)^{s}+\frac{a_{s-1}}{\overline{\omega}}\left(u_{n}^{(l)}\right)^{s-1}+\cdots+\frac{a_{0}}{\overline{\omega}^{s}}.
\]

We know that for every index $j$ such that $a_{j}\neq0$, we have
\[
\nu\left(a_{j}Q_{i}^{j}\right)=\nu_{Q_{i}}\left(Q_{i+1}\right).
\]
 So all non-zero terms of the $Q_{i}$-expansion of $Q_{i+1}$ have
same value. Then, by hypothesis $H_{n}$, all these terms are divisible
by the same power of $\overline{\omega}$ after an appropriate sequence
of blow-ups $(\ast_{i})$ independent of $u_{n}^{(l)}$.

We denote by $\widetilde{Q}_{i+1}$ the strict transform of $Q_{i+1}$
by the composition of $\left(\ast_{i}\right)$ with the sequence $\left(\ast'_{i}\right)$
that monomializes $Q_{i}$. We denote this composition by $\left(c_{i}\right)$. 

We know that $\widetilde{Q}_{i}$, the strict transform of $Q_{i}$
by $\left(c_{i}\right)$, is a regular parameter of the maximal ideal
of $A_{l}$. Indeed, by Proposition \ref{prop:chgmtvari}, we know
that each $u_{j}$ of $A$ can be written as a monomial on $w_{1}^{(l)},\dots,w_{r}^{(l)}$.
In fact, the reduced exceptional divisor of this sequence is exactly
$\mathrm{V}\left(\overline{\omega}\right)_{\mathrm{red}}$. Hence,
as we know that $Q_{i}=w_{n}^{(l)}\overline{\omega}$, we do have
that the strict transform of $Q_{i}$ is $\widetilde{Q}_{i}=w_{n}^{(l)}=u_{n}^{(l)}$.
So it is a key polynomial in the extension $k_{l}\left(u_{1}^{(l)},\dots,u_{n-1}^{(l)}\right)\left(u_{n}^{(l)}\right)$.

Let us show that $\widetilde{Q}_{i+1}=\frac{Q_{i+1}}{\overline{\omega}^{s}}$. 

We have $a_{s}=1$ and $Q_{i}^{s}=\overline{\omega}^{s}\left(u_{n}^{(l)}\right)^{s}$
and also $u_{n}^{(l)}\nmid\overline{\omega}$, so $\overline{\omega}^{s}$
divides the term $a_{s}Q_{i}^{s}$ and so all the non-zero terms of
the $Q_{i}$-expansion of $Q_{i+1}$. Furthermore, it is the biggest
power of $\overline{\omega}$ that divides each term, hence $\frac{Q_{i+1}}{\overline{\omega}^{s}}\left(u_{n}^{(l)}\right)^{s}+\frac{a_{s-1}}{\overline{\omega}}\left(u_{n}^{(l)}\right)^{s-1}+\cdots+\frac{a_{0}}{\overline{\omega}^{s}}$
is $\widetilde{Q}_{i+1}$ the strict transform of $Q_{i+1}$ by the
sequence of blow-ups, that satisfies $\widetilde{Q}_{i}\ll\widetilde{Q}_{i+1}$
by hypothesis.

Let $G$ be a complete local equicharaceristic ring of dimension strictly
less than $n$ equipped with a valuation centered in $G$.
\begin{lem}
\label{lem:on se ramene au bon cas le dernier}We assume that for
every ring $G$ as above, every element of $G$ is monomializable.

Assume that$Q_{i}<Q_{i+1}$ in $\mathcal{Q}$.

Then there exists a local framed sequence $\left(A_{l},u^{(l)}\right)\to\left(A_{e},u^{(e)}\right)$
such that in $A_{e}$, the strict transform of $Q_{i+1}$ is of the
form $\sum\limits _{q=0}^{s}\tau_{q}\eta_{q}X_{n}^{q}$

where $\tau_{q}\in R_{e}^{\times}$ and $\eta_{q}$ are monomials
in $u_{1}^{\left(e\right)},\dots,u_{r}^{\left(e\right)}$.
\end{lem}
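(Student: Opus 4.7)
The plan is to reduce this to the analogue of Lemma~\ref{lem: on se ramene au bon cas-1}, now using the inductive hypothesis $H_n$ in place of the assumption $k=k_\nu$ that was available in the essentially-of-finite-type case. Because we are working with quasi-excellent complete rings and the residue field need not be algebraically closed over $k_\nu$, the coefficients $a_j$ of the $Q_i$-expansion are arbitrary elements of $k_l[[u_1^{(l)},\dots,u_{n-1}^{(l)}]]$ rather than products of units and powers of a single monomial; this is what forces the use of $H_n$.

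First I would consider the $Q_i$-expansion $Q_{i+1}=\sum_{j=0}^{s} a_j Q_i^{j}$ and focus on its coefficients $a_j \in k_l[[u_1^{(l)},\dots,u_{n-1}^{(l)}]]$. This latter ring is complete, local, equicharacteristic zero, quasi-excellent, and of dimension $n-1<n$; equipped with the restriction of $\nu$ (which has rank at most $1$ there) it falls under the inductive hypothesis $H_n$. Consequently there exists a formal framed sequence, independent of $u_n^{(l)}$, after which each $a_j$ is a monomial in the new parameters of the first $n-1$ coordinates multiplied by a unit of the ambient ring.

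Next, for each $g\in\{r+1,\dots,n-1\}$ I would carry out a formal $g$-generalized Puiseux package in the sense of Theorem~\ref{thm:monomialise general}, again independent of $u_n^{(l)}$. Each such package converts $u_g^{(\cdot)}$ into a monomial in $u_1^{(e)},\dots,u_r^{(e)}$ times a unit while preserving the monomiality achieved in the previous step. Composing all these operations yields $(A_l,u^{(l)})\to(A_e,u^{(e)})$ in which every $a_j$ is written as $\tau_j'\eta_j'$ with $\tau_j'\in A_e^{\times}$ and $\eta_j'$ a monomial in $u_1^{(e)},\dots,u_r^{(e)}$. Finally, since $Q_i=\overline{\omega}\,u_n^{(e)}$ with $\overline{\omega}$ a monomial in $u_1^{(e)},\dots,u_r^{(e)}$ times a unit, and since all non-zero terms of the $Q_i$-expansion of $Q_{i+1}$ share the same $\nu$-value, the strict transform $\widetilde{Q}_{i+1}$ is obtained by dividing by $\overline{\omega}^{s}$; one concludes
\[
\widetilde{Q}_{i+1}=\sum_{q=0}^{s}\tau_q'\,\frac{\eta_q'}{\overline{\omega}^{s-q}}\,(u_n^{(e)})^{q}.
\]
A last application of Proposition~\ref{prop:existencesuitedivise}, still independent of $u_n^{(e)}$, clears the Laurent monomials $\eta_q'/\overline{\omega}^{s-q}$ into genuine monomials $\eta_q$ in $u_1^{(e)},\dots,u_r^{(e)}$, producing the desired form with $\tau_q\in A_e^{\times}$.

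The main obstacle is bookkeeping: one must verify that the three operations (lower-dimensional monomialization via $H_n$, the intermediate Puiseux packages, and the final divisibility step) can be performed simultaneously while remaining independent of $u_n^{(l)}$, so that the $Q_i$-expansion structure survives each step and the division by $\overline{\omega}^{s}$ makes sense at the end. Everything else follows from the same pattern as Lemma~\ref{lem: on se ramene au bon cas-1}, with $H_n$ replacing the ad hoc monomialization of the single coefficient $b_0$ used in that proof.
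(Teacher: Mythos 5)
Your proposal follows essentially the same route as the paper's proof: monomialize the coefficients $a_j$ of the $Q_i$-expansion via the inductive hypothesis in dimension $n-1$ by a sequence independent of $u_n^{(l)}$, then apply $g$-generalized Puiseux packages for $g\in\{r+1,\dots,n-1\}$ to push the $a_j$ into monomials in $u_1,\dots,u_r$ times units, and finally divide by $\overline{\omega}^{s}$ to identify the strict transform. Your extra final appeal to Proposition \ref{prop:existencesuitedivise} to clear the factors $\overline{\omega}^{s-q}$ is exactly the divisibility step the paper performs in the discussion immediately preceding the lemma (the sequence $(\ast_i)$), so the two arguments coincide.
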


\begin{proof}
By hypothesis, after a sequence of blow-ups independent of $u_{n}^{(l)}$,
we can monomialize the $a_{j}$ and assume that they are monomials
in $\left(u_{1}^{(l)},\dots,u_{n-1}^{(l)}\right)$ multiplied by units
of $A_{l}$.

For every $g\in\left\{ r+1,\dots,n-1\right\} $, we do a generalized
$g$-Puiseux package as in Theorem \ref{thm:monomialise general},
hence we have a sequence 
\[
\left(A_{l},u^{(l)}\right)\to\left(A_{t},u^{(t)}\right)
\]
 such that each $u_{g}^{(l)}$ is a monomial in$\left(u_{1}^{(t)},\dots,u_{r}^{(t)}\right)$.

In fact we can assume that the $a_{j}$ are monomials in $\left(u_{1}^{(l)},\dots,u_{r}^{(l)}\right)$
multiplied by units of $A_{l}$.

Since the strict transform 
\[
\widetilde{Q}_{i+1}=\frac{Q_{i+1}}{\overline{\omega}^{s}}=\left(u_{n}^{(l)}\right)^{s}+\frac{a_{s-1}}{\overline{\omega}}\left(u_{n}^{(l)}\right)^{s-1}+\cdots+\frac{a_{0}}{\overline{\omega}^{s}}
\]
 is an immediate successor key element of $\widetilde{Q}_{i}$, this
completes the proof.
\end{proof}
\begin{rem}
Lemma \ref{lem: on se ramene au bon cas-2} is a particular case of
Lemma \ref{lem:on se ramene au bon cas le dernier}.
\end{rem}

\begin{thm}
\label{thm:monomialisation de Q3-1}We still assume $H_{n}$.

We recall that $\mathrm{car}\left(k_{\nu}\right)=0$. If $Q_{i}$
is monomializable, then there exists a formal framed sequence 
\begin{equation}
\left(A,u\right)\overset{\pi_{0}}{\to}\left(A_{1},u^{(1)}\right)\overset{\pi_{1}}{\to}\cdots\overset{\pi_{l-1}}{\to}\left(A_{l},u^{(l)}\right)\overset{\pi_{l}}{\to}\cdots\overset{\pi_{m-1}}{\to}\left(A_{m},u^{(m)}\right)\label{eq:suitelocale2-1}
\end{equation}
that monomializes $Q_{i+1}$.
\end{thm}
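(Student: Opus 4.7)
The plan is to mirror exactly the structure of the proof of Theorem \ref{thm:monomialisation de Q3}, but to replace each of the three tools used there (the Puiseux package, the coefficient-monomialization lemma, and the limit successor monomialization theorem) by its ``generalized'' counterpart, and to use the inductive hypothesis $H_n$ to discharge every auxiliary monomialization taking place in dimension strictly less than $n$. By hypothesis, we already have a formal framed sequence
\[
\left(A,u\right)\to\cdots\to\left(A_l,u^{(l)}\right)
\]
monomializing $Q_i$, so that $Q_i=\overline{\omega}\,w_n^{(l)}$ with $\overline{\omega}$ a monomial in $w_1^{(l)},\dots,w_r^{(l)}$ times a unit, and the strict transform $\widetilde{Q}_i$ of $Q_i$ is exactly the regular parameter $u_n^{(l)}$. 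The discussion preceding Lemma \ref{lem:on se ramene au bon cas le dernier} shows that the strict transform $\widetilde{Q}_{i+1}$ is obtained by dividing the $Q_i$-expansion of $Q_{i+1}$ by the largest common power of $\overline{\omega}$ appearing in all its non-zero terms (after an auxiliary sequence $(\ast_i)$ independent of $u_n^{(l)}$ that equalizes those $\overline{\omega}$-exponents, itself provided by $H_n$ via Propositions \ref{prop:existencesuitedivise} and \ref{prop:divaleur}). Consequently $\widetilde{Q}_{i+1}$ is an immediate successor, respectively a limit immediate successor, key element of $\widetilde{Q}_i=u_n^{(l)}$, according to whether $Q_i<Q_{i+1}$ or $Q_i<_{\lim}Q_{i+1}$.

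First case: $Q_i<Q_{i+1}$. I first apply Lemma \ref{lem:on se ramene au bon cas le dernier} to the extension $\left(A_l,u^{(l)}\right)$, using $H_n$ to monomialize all the coefficients $a_j$ of the $\widetilde{Q}_i$-expansion of $\widetilde{Q}_{i+1}$ and to perform the required $g$-generalized Puiseux packages for $g\in\{r+1,\dots,n-1\}$; this produces a formal framed sequence $\left(A_l,u^{(l)}\right)\to\left(A_e,u^{(e)}\right)$ after which $\widetilde{Q}_{i+1}=\sum_{q=0}^{s}\tau_q\eta_q\bigl(u_n^{(e)}\bigr)^q$ with $\tau_q\in A_e^\times$ and $\eta_q$ monomials in $u_1^{(e)},\dots,u_r^{(e)}$. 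This places $\widetilde{Q}_{i+1}$ in the exact form $\sum_{q=0}^{d_0}a_qb_q\overline{y}^{d_0-q}w_n^{q\overline{\alpha^{(0)}}}$ required by Theorem \ref{thm:monomialise general} applied to the pair $\bigl(\widetilde{Q}_i,\widetilde{Q}_{i+1}\bigr)$ in place of $(Q_1,Q_2)$; the resulting $n$-generalized Puiseux package monomializes $\widetilde{Q}_{i+1}$.

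Second case: $Q_i<_{\lim}Q_{i+1}$. Here $\widetilde{Q}_{i+1}$ is a limit immediate successor key element of $\widetilde{Q}_i=u_n^{(l)}$, and I directly invoke Theorem \ref{thm: monomialiase pc lim de u_n-1} with $u_n$ replaced by $\widetilde{Q}_i$ and $P$ replaced by $\widetilde{Q}_{i+1}$. The hypothesis of that theorem -- that every element of a complete equicharacteristic quasi-excellent local ring of dimension $<n$ is monomializable -- is exactly $H_n$; since $\mathrm{char}(k_\nu)=0$, Theorem \ref{thm: delta et pc limite} applies inside its proof and gives $\delta_{\widetilde{Q}_i}\bigl(\widetilde{Q}_{i+1}\bigr)=1$, after which the argument of Theorem \ref{thm: monomialiase pc lim de u_n-1} (which is formally identical to that of Theorem \ref{thm: monomialiase pc lim de u_n}) produces the desired formal framed sequence.

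Composing the sequence $(A,u)\to(A_l,u^{(l)})$ that monomializes $Q_i$ with the one just constructed produces the sought sequence \eqref{eq:suitelocale2-1}. The only genuine subtlety compared with the essentially-of-finite-type case is the passage from the local framed to the formal framed setting and the fact that the residue field need not coincide with $k_\nu$; both are absorbed into the generalized Puiseux package (Theorem \ref{thm:monomialise general}), whose conclusions (4) and (5) are tailored precisely to handle the two sub-cases $\lambda_0\neq 0$ and $\lambda_0=0$. The main obstacle, already discharged by the preceding sections, is therefore not in the present argument itself but in justifying Lemma \ref{lem:on se ramene au bon cas le dernier}: coefficients of $\widetilde{Q}_{i+1}$ living in a ring of dimension strictly less than $n$ must be monomialized using $H_n$ before the Puiseux package of Theorem \ref{thm:monomialise general} can be applied, and this is exactly where the inductive hypothesis is consumed.
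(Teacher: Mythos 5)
Your proof is correct and follows essentially the same route as the paper: the same two-case split, with Lemma \ref{lem:on se ramene au bon cas le dernier} reducing the ordinary-successor case to Theorem \ref{thm:monomialise general} and Theorem \ref{thm: monomialiase pc lim de u_n-1} handling the limit case, both fed by the inductive hypothesis $H_n$. The extra detail you supply about the strict transform $\widetilde{Q}_{i+1}$ and the composition of sequences is consistent with the discussion preceding the theorem in the paper.
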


\begin{proof}
There are two cases.

The first one: $Q_{i}<Q_{i+1}$. 

Then the strict transform $\widetilde{Q}_{i+1}$ of $Q_{i+1}$ by
the sequence $\left(A,u\right)\to\left(A_{l},u^{(l)}\right)$ that
monomializes $Q_{i}$ is an immediate successor key element of $\widetilde{Q}_{i}=u_{n_{l}}^{(l)}$,
and by Lemma \ref{lem:on se ramene au bon cas le dernier} we just
saw that we can bring us to the hypothesis of Theorem \ref{thm:monomialise general}.
So we use Theorem \ref{thm:monomialise general} replacing $Q_{1}$
by $\widetilde{Q}_{i}$ and $Q_{2}$ by $\widetilde{Q}_{i+1}$.

The last one: $Q_{i}<_{\lim}Q_{i+1}$. 

We apply Theorem \ref{thm: monomialiase pc lim de u_n-1} replacing
$u_{n}$ by $\widetilde{Q}_{i}$ and $P$ by $\widetilde{Q}_{i+1}$.
\end{proof}
As in the previous part, we consider, for every integer $j$, the
countable sets 
\[
\mathscr{S}_{j}:=\left\{ \prod\limits _{i=1}^{n}\left(u_{i}^{(j)}\right)^{\alpha_{i}^{(j)}}\text{, with }\alpha_{i}^{(j)}\in\mathbb{Z}\right\} 
\]
 and 
\[
\widetilde{\mathscr{S}}_{j}:=\left\{ \left(s_{1},s_{2}\right)\in\mathscr{S}_{j}\times\mathscr{S}_{j}\text{, with }\nu\left(s_{1}\right)\leq\nu\left(s_{2}\right)\right\} 
\]
 assuming that for every $i\in\left\{ 1,\dots,n\right\} $, $u_{i}^{(0)}=u_{i}$.

The set $\widetilde{\mathscr{S}}_{j}$ is countable for every $j$,
so we can number its elements, and set $\widetilde{\mathscr{S}}_{j}:=\left\{ s_{m}^{(j)}\right\} _{m\in\mathbb{N}}$.
Now we consider the finite set 
\[
\mathscr{S}'_{j}:=\left\{ s_{m}^{(j)},\text{ }m\leq j\right\} \cup\left\{ s_{j}^{(m)},\text{ }m\leq j\right\} .
\]

Hence $\bigcup\limits _{j\in\mathbb{N}}\left(\mathscr{S}_{j}\times\mathscr{S}_{j}\right)=\bigcup\limits _{j\in\mathbb{N}}\widetilde{\mathscr{S}}_{j}=\bigcup\limits _{j\in\mathbb{N}}\mathscr{S}'_{j}$
is a countable union of finite sets.

Since we consider all the elements according uniquely to the variable
$u_{n}$, and more generally according to $u_{n}^{(i)}$, and since
we do an induction on the dimension, we have to know how to monomialize
the elements of $B_{i}:=k\left[u_{1}^{(i)},\dots,u_{n-1}^{(i)}\right]$. 
\begin{thm}
\label{thm:generalisation de la monomialisation-1}Let $A\simeq k\left[\left[u_{1},\dots,u_{n}\right]\right]$
equipped with a valuation $\nu$ centered in $A$.

We recall that $\mathrm{car}\left(k_{\nu}\right)=0$. There exists
a formal sequence 
\begin{equation}
\left(A,u\right)\overset{\pi_{0}}{\to}\cdots\overset{\pi_{s-1}}{\to}\left(A_{s},u^{(s)}\right)\overset{\pi_{s}}{\to}\cdots\label{eq:suitelocaleterminator-1}
\end{equation}

that monomializes all the key polynomials of $\mathcal{Q}$ and all
the elements of the $B_{i}$ for all $i$. Furthermore, the sequence
has the property:
\[
\forall j\in\mathbb{N}\text{ }\forall s=\left(s_{1},s_{2}\right)\in\mathscr{S}'_{j}\text{ }\exists i\in\mathbb{N}_{\geq j}\text{ such that }s_{1}\mid s_{2}\text{ in }A_{i}.
\]

In other words for every index $l$, there exists an index $p_{l}$
such that in $A_{p_{l}}$, $Q_{l}$ is a monomial in $u^{\left(p_{l}\right)}$
multiplied by a unit of $A_{p_{l}}$.
\end{thm}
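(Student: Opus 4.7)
The plan is to prove this by induction on the dimension $n$, exactly mirroring the argument that produced Theorem \ref{thm: La suite de la mort qui tue} in the essentially-of-finite-type setting, but with the formal completions inserted at each blow-up. The base case $n=1$ is trivial: every element of $k[[u_1]]$ is already a monomial in $u_1$ times a unit. For the induction step, assume the analogous monomialization statement holds for every complete equicharacteristic regular local ring of dimension strictly less than $n$ equipped with a rank-one valuation; this is precisely the hypothesis $H_n$ that feeds Theorem \ref{thm:monomialisation de Q3-1} and Theorem \ref{thm: monomialiase pc lim de u_n-1}. In particular, each $B_i \simeq k_i[[u_1^{(i)},\dots,u_{n-1}^{(i)}]]$ has dimension $n-1$, so applying the induction hypothesis inside $B_i$ gives a formal framed sequence (independent of $u_n^{(i)}$) that monomializes any prescribed finite collection of elements of $B_i$ while ensuring any prescribed finite list of divisibilities among monomials of $B_i$.

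With this in hand I would build the infinite sequence by interlacing four kinds of finite blocks, added one step at a time. At stage $m$, starting from $(A_{s_m},u^{(s_m)})$, I would successively:
(i) use Theorem \ref{thm:monomialisation de Q3-1} (and Theorem \ref{thm: monomialiase pc lim de u_n-1} in the limit-successor case) to append a formal framed sequence after which the strict transform of the next key polynomial $Q_{m+1}$ of $\mathcal{Q}$ is a monomial in the new regular system of parameters, up to a unit;
(ii) apply the induction hypothesis inside the appropriate $B_j$, for all $j\le m$, to monomialize the next generator $P_m^{(j)}$ of the $\nu$-ideals of $B_j$, noting that such a sequence can always be arranged to be independent of $u_n^{(\cdot)}$, hence leaves the previously achieved monomializations of $Q_1,\dots,Q_{m+1}$ untouched up to a unit; and
(iii) apply Proposition \ref{prop:existencesuitedivise} (whose conclusion is transferred to the already-chosen divisibility relation via Proposition \ref{prop:divaleur}) to each pair in the finite set $\mathscr{S}'_m$, obtaining $s_1\mid s_2$ for every such pair. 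Each of (i), (ii), (iii) is a \emph{finite} formal framed sequence, and each preserves the monomial form already achieved for earlier $Q_\ell$ and for the previously treated elements of the $B_j$, because Theorem \ref{thm:monomialise general} guarantees that the old parameters reappear as monomials in the new ones times units. Concatenating these blocks for $m=0,1,2,\dots$ produces the required infinite formal framed sequence.

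The verification that the three enumerated properties hold in the final sequence is then formal: every key polynomial $Q_l$ is made into a monomial during the block of type (i) at stage $m=l-1$, and remains a monomial thereafter; every element of each $B_j$ is a finite combination of the generators $P_m^{(j)}$ which are eventually all monomialized in a common $R_{p}$; and every pair $(s_1,s_2)\in \mathscr{S}'_j$ is treated by a block of type (iii) at some stage $i\ge j$, yielding $s_1\mid s_2$ in $A_i$. The main subtlety, and the step that has to be handled with care, is (i): I must verify that applying Theorem \ref{thm:monomialisation de Q3-1} to pass from the monomialization of $Q_m$ to that of $Q_{m+1}$ does not destroy the divisibilities already attained nor the monomial shape of the previously treated $P_\ell^{(j)}$. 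This follows because, by Proposition \ref{prop:chgmtvari} and point (4) of Theorem \ref{thm:monomialise general}, each old regular parameter is a monomial in the new ones up to a unit, so every monomial relation and every monomial form is preserved by the blow-up block.

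The genuine obstacle in the argument is ensuring that the induction hypothesis is applied in the correct ring at each step: the relevant $B_i$ must be a complete equicharacteristic regular local ring of dimension $n-1$ equipped with a rank-one valuation centered in it, namely the restriction of $\nu$ to the appropriate subring of $A_i$. This is guaranteed by the assumption, established earlier in Section \ref{sec:Id=0000E9al-implicite.}, that $v_i\in\{n-1,n\}$ for every stage $i$, so that $I_i\cap B_i=(0)$ and $\nu$ restricts to a centered valuation on $B_i$ with value group contained in $\Gamma_1$; the other possibility $v_i<n-1$ was already disposed of by Proposition \ref{prop:monomialisation en dehors du carre}. With this in place, the interlacing construction goes through and produces the sequence whose existence is asserted.
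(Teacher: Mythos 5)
Your proposal is correct and takes essentially the same approach as the paper, whose own proof of this theorem consists of saying that one repeats the interlacing construction of Theorem \ref{thm: La suite de la mort qui tue} and then performs an induction on the dimension $n$ and on the index $i$. Your version simply spells out the blocks (i)--(iii) and the preservation arguments (via Proposition \ref{prop:chgmtvari} and Theorem \ref{thm:monomialise general}) that the paper leaves implicit.
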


\begin{proof}
To show that we can choose the sequence (\ref{eq:suitelocaleterminator-1})
such that
\[
\forall j\in\mathbb{N}\text{ }\forall s=\left(s_{1},s_{2}\right)\in\mathscr{S}'_{j}\text{ }\exists i\in\mathbb{N}_{\geq j}\text{ such that }s_{1}\mid s_{2}\text{ in }A_{i},
\]

and that all the elements of the $B_{i}$ are monomialized, we do
the same thing than in Theorem \ref{thm: La suite de la mort qui tue}.

Then we do an induction on the dimension $n$ and on the index $i$
and we iterate the above process.
\end{proof}
\begin{cor}
\label{cor: h est monomialisable}Let $A\simeq k\left[\left[u_{1},\dots,u_{n}\right]\right]$
equipped with a valuation $\widehat{\nu}$ centered in $A$, of value
group $\Gamma$. We assume 
\[
I=\left\{ a\in A\text{ such that }\widehat{\nu}\left(a\right)\notin\Gamma_{1}\right\} =\left(h\right)\neq\left(0\right),
\]
 where $\Gamma_{1}$ is the smallest isolated subgroup of $\Gamma$.
We recall that $\mathrm{car}\left(k_{\nu}\right)=0$. 

There exists a formal framed sequence 
\[
\left(A,u\right)\to\dots\to\left(A_{l},u^{\left(l\right)}\right)\to\dots
\]
 such that in $A_{l},$ the polynomial $h$ can be written as a monomial
multiplied by a unit.
\end{cor}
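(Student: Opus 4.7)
The plan is to reduce the monomialization of $h$ to an application of Theorem~\ref{thm:generalisation de la monomialisation-1}, after checking that $h$ fits into the framework of that theorem as a key polynomial in a suitable sequence $\mathcal{Q}$. The whole argument will proceed by induction on the dimension $n$, and we assume the hypothesis $H_n$ (i.e., the statement is known in every complete equicharacteristic quasi-excellent local ring of dimension strictly less than $n$, including Theorem~\ref{thm:generalisation de la monomialisation-1} applied inside such rings).

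First I would reduce to the case where $h$ is a monic polynomial in $u_n$ over $k[[u_1,\ldots,u_{n-1}]]$. If we are in the situation where $v_i<n-1$ for some formal framed sequence starting at $(A,u)$, Proposition~\ref{prop:monomialisation en dehors du carre} together with the inductive hypothesis $H_n$ takes care of everything: any generator of $I$ lies in $A\setminus I^{(2)}$, so it can be monomialized directly. Otherwise we are in the regime $v_i\in\{n-1,n\}$ covered by Section~\ref{sec:Id=0000E9al-implicite.}; then Lemma~\ref{lem:L'id=0000E9al est de hauteur au plus 1} yields $\operatorname{ht}(I)\le 1$, so $I=(h)$ is principal, and Corollary~\ref{cor: h est unitaire} provides a formal framed sequence after which the strict transform of $h$ is a monic polynomial in $u_n$ of some degree $\delta$. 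From here on, rename this ring again $A$ and its regular system of parameters $u$, and assume $h$ is monic in $u_n$.

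Next I would invoke Proposition~\ref{prop:implicite et pc}, which tells us that $h$ is a key polynomial in the extension $k(u_1,\ldots,u_{n-1})(u_n)$, and moreover that $\epsilon(h)\notin\Gamma_1$ is maximal in $\epsilon(\Lambda)$ (since any polynomial of strictly smaller degree lies outside $I$ and thus has $\epsilon$ in $\Gamma_1$). Using Proposition~\ref{prop: l'un ou l'autre optimal} iteratively (as in the proof of Theorem~\ref{thm:il existe une suite cofinale}), I construct a finite sequence
\[
Q_1=u_n<\cdots<Q_N=h
\]
of key polynomials in which each $Q_{i+1}$ is either an optimal immediate successor or a limit immediate successor of $Q_i$. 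The process terminates at $h$ because $\epsilon(h)$ is maximal in $\epsilon(\Lambda)$, so no further strictly increasing step is possible.

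Finally I apply Theorem~\ref{thm:generalisation de la monomialisation-1} to this finite sequence $\mathcal{Q}=(Q_1,\ldots,Q_N)$. By the induction hypothesis $H_n$ (needed to monomialize all the coefficients appearing in the $Q_i$-expansion of $Q_{i+1}$ via Lemma~\ref{lem:on se ramene au bon cas le dernier}), the theorem produces a formal framed sequence $(A,u)\to\cdots\to(A_l,u^{(l)})\to\cdots$ such that for every index $i$ there is $p_i$ with $Q_i$ a monomial in $u^{(p_i)}$ times a unit of $A_{p_i}$. Taking $i=N$ gives the desired conclusion for $h$. The main obstacle in this plan is the careful bookkeeping in the inductive step: one must ensure that at each stage where an immediate or limit successor is treated, the auxiliary coefficients (the $a_j$ in the $Q_i$-expansion of $Q_{i+1}$, and in the limit case the coefficients $b_j$ of the $u_n^{(l)}$-expansion) can indeed be monomialized inside the lower-dimensional ring $k_l[[u_1^{(l)},\ldots,u_{n-1}^{(l)}]]$; this is precisely where the inductive hypothesis $H_n$ is consumed, and where the characteristic zero hypothesis enters through Theorem~\ref{thm: delta et pc limite} used in the proof of Theorem~\ref{thm: monomialiase pc lim de u_n-1}.
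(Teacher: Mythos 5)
Your proposal is correct and follows essentially the same route as the paper: the paper's proof is the single observation that the sequence $\mathcal{Q}$ was constructed (in the preceding section, ending at $h$ because $\epsilon(h)$ is maximal in $\epsilon(\Lambda)$ by Proposition \ref{prop:implicite et pc}) so that Theorem \ref{thm:generalisation de la monomialisation-1} applies directly. You have simply made explicit the surrounding setup (reduction to $h$ monic via Corollary \ref{cor: h est unitaire}, construction of the finite chain of optimal or limit immediate successors, and the role of the inductive hypothesis $H_n$), which the paper carries out just before stating the corollary.
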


\begin{proof}
The sequence $\mathcal{Q}$ has been constructed to contain $h$,
so we just have to use Theorem \ref{thm:generalisation de la monomialisation-1}.
\end{proof}
\newpage{}

\section{Reduction.}

Let $\left(R,\mathfrak{m},k\right)$ be a local quasi excellent equicharacteristic
ring and let $\nu$ be a valuation of its field of fractions, of rank
$1$, centered in $R$ and of value group $\Gamma_{1}$. We denote
by $\overline{H}$ the implicit ideal of $R$.

We are going to see that in this case, we just have to regularise
$\frac{\widehat{R}}{\overline{H}}$.

We consider $\mathcal{F}:=\left\{ f_{1},\dots,f_{s}\right\} \subseteq\mathfrak{m}$,
and assume that $f_{1}$ has minimal value.
\begin{rem}
We consider $R\to\widehat{R}\to R_{1}\to\widehat{R}_{1}$ a formal
framed blow-up and we denote by $H'$ the strict transformed of $\overline{H}$
in $R_{1}$.

Then we define $\overline{H_{1}}$ as the preimage in $\widehat{R}_{1}$
of the implicit ideal of $\frac{\widehat{R}_{1}}{H'\widehat{R}_{1}}$.

We iterate this contruction for every formal framed sequence.
\end{rem}

\begin{thm}
\label{thm: quotient reg} We recall that $\mathrm{car}\left(k_{\nu}\right)=0$.
There exists a formal framed sequence 
\[
\left(R,u,k\right)=\left(R_{0},u^{(0)},k_{0}\right)\to\cdots\to\left(R_{i},u^{(i)}=\left(u_{1}^{(i)},\dots,u_{n}^{(i)}\right),k_{i}\right)
\]
 such that:
\begin{enumerate}
\item The ring $\frac{\widehat{R_{i}}}{\overline{H_{i}}}$ is regular, 
\item For every index $j$, we have that $f_{j}\mathrm{\mod}\left(\overline{H_{i}}\right)$
is a monomial in $u^{(i)}$ multiplied by a unit of $\frac{\widehat{R_{i}}}{\overline{H_{i}}}$,
\item For every index $j$, we have $f_{1}\mathrm{mod}\left(\overline{H_{i}}\right)\mid f_{j}\mathrm{mod}\left(\overline{H_{i}}\right)$
in $\frac{\widehat{R_{i}}}{\overline{H_{i}}}$.
\end{enumerate}
\end{thm}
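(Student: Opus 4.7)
The plan is to proceed by induction on $n$, the embedded dimension of $\widehat{R}/\overline{H}$, exactly as set up in Section \ref{sec:Id=0000E9al-implicite.}. By the Cohen structure theorem, I fix an epimorphism $\Phi\colon A \simeq k[[u_1,\dots,u_n]] \twoheadrightarrow \widehat{R}/\overline{H}$ with kernel $I$, extend $\nu$ to a rank-two valuation $\widehat{\nu}$ on $A$ whose first isolated subgroup is $\Gamma_1$ and for which $I = \{a \in A : \widehat{\nu}(a) \notin \Gamma_1\}$, and lift each $f_j$ to an element $\widetilde{f_j} \in A$. Any formal framed sequence on $(A,u)$ induces a formal framed sequence on $(R,u)$ through the completion map, so it suffices to work over $A$.

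Next I split on the behaviour of the numerical character $v_i$ along formal framed sequences. If some intermediate ring has $v_i < n-1$, then Proposition \ref{prop:monomialisation en dehors du carre} combined with the induction hypothesis (applied to the complete local ring $C_i$ of dimension $v_i+1 < n$) reduces the problem to lower dimension, closing the case. Otherwise I may assume $v_i \in \{n-1,n\}$ throughout, so that Lemma \ref{lem:L'id=0000E9al est de hauteur au plus 1} applies and $\mathrm{ht}(I) \leq 1$. If $I = (0)$, the ring $\widehat{R}/\overline{H} \cong A$ is already regular and (1) is automatic. Otherwise $I = (h)$, and by Corollary \ref{cor: h est unitaire} I may arrange $h$ to be a monic polynomial in $u_n$; then Proposition \ref{prop:implicite et pc} shows $h$ is a key polynomial for $\widehat{\nu}$, and Corollary \ref{cor: h est monomialisable} produces a formal framed sequence $(A,u) \to (A_l,u^{(l)})$ monomializing $h$. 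Irreducibility of $h$ forces its strict transform to be, up to a unit of $A_l$, a single regular parameter; after renumbering, $h^{\mathrm{strict}} = u_n^{(l)} \cdot (\text{unit})$, and hence $\widehat{R_l}/\overline{H_l} \cong A_l/(u_n^{(l)}) \cong k_l[[u_1^{(l)},\dots,u_{n-1}^{(l)}]]$ is regular of dimension $n-1$, yielding (1).

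To obtain (2) and (3), I pass to the regular quotient $\widehat{R_l}/\overline{H_l}$, where the images $\overline{f_j}$ form a finite family in a complete regular local ring of dimension $n-1$. By the induction hypothesis (or directly by Theorem \ref{thm:generalisation de la monomialisation-1} applied to this smaller ring) there is a further formal framed sequence, independent of $u_n^{(l)}$, simultaneously monomializing every $\overline{f_j}$ in a regular system of parameters $u_1^{(i)},\dots,u_{n-1}^{(i)}$. Appending the countable divisibility argument of Theorem \ref{thm: La suite de la mort qui tue}, and invoking Proposition \ref{prop:divaleur} together with $\nu(f_1) \leq \nu(f_j)$, I can prolong the sequence so that $\overline{f_1} \mid \overline{f_j}$ in $\widehat{R_i}/\overline{H_i}$ for every $j$. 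Because these subsequent blow-ups are independent of $u_n^{(l)}$, the strict transform of $h$ remains equal to $u_n^{(i)}$ times a unit at every stage, so the formal transform $\overline{H_i}$ coincides with the strict transform of $(h)$ and the regularity established in Step 3 persists.

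The main obstacle I anticipate is precisely the consistency in Step 4: verifying that prolonging the lower-dimensional formal framed sequence acts trivially on $u_n^{(l)}$ in a way that preserves both the identification $\widehat{R_i}/\overline{H_i} \cong k_i[[u_1^{(i)},\dots,u_{n-1}^{(i)}]]$ and the definition of $\overline{H_i}$ as the formal transform of $\overline{H}$. This requires a careful book-keeping of how the implicit ideal evolves under formal framed blow-ups that are independent of a distinguished parameter, and is where the hypothesis that $R$ (and therefore all $R_i$) is quasi-excellent enters essentially, via Proposition \ref{prop:localise regulier} guaranteeing regularity of the localisations $\widehat{R_i}_{\overline{H_i}}$ throughout the process.
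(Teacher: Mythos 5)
Your overall architecture (Cohen structure theorem, the case split on $v_i$ via Proposition \ref{prop:monomialisation en dehors du carre}, height one for $I$ via Lemma \ref{lem:L'id=0000E9al est de hauteur au plus 1}, monomialization of $h$ via Corollary \ref{cor: h est monomialisable}, then divisibility via Propositions \ref{prop:existencesuitedivise} and \ref{prop:divaleur}) is the paper's, but there is a genuine gap at the pivot of the argument. In the case $I=(h)\neq(0)$ you conclude that after monomializing $h$ the quotient $\widehat{R_l}/\overline{H_l}$ equals $A_l/(u_n^{(l)})\cong k_l[[u_1^{(l)},\dots,u_{n-1}^{(l)}]]$ and is therefore regular. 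This identifies $\overline{H_l}$ with the \emph{strict} transform of $\overline{H}$, whereas by definition $\overline{H_l}$ is the \emph{formal} transform: the preimage in $\widehat{R_l}$ of the implicit prime ideal of the quotient by the strict transform, which can be strictly larger because a new implicit ideal may appear after completion. This is exactly why the paper does not claim regularity at this stage; it only observes that some parameter $x_p^{(\ell)}$ now has value outside $\Gamma_1$, so that by Theorem \ref{thm:decroissance de la dimension plongee} the character $e\left(A,\widehat{\nu}\right)$ strictly decreases, and then reruns the entire argument (possibly producing and monomializing a new generator of a new height-one $I$) finitely many times until it lands in the case $I=(0)$. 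Your proof as written skips this iteration, and statement (1) is not yet established where you invoke it.

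The second problem is in your Step 4. The terminal case of the reduction is $I=(0)$, where $\widehat{R}/\overline{H}\cong A$ has dimension $n$, not $n-1$; the $f_j$ are then arbitrary formal series, and monomializing them requires Weierstrass preparation with respect to $x_n$ and the full key-polynomial machinery of Theorem \ref{thm:generalisation de la monomialisation-1}, whose Puiseux packages genuinely blow up ideals containing $u_n$. So the claim that the remaining sequence can be taken independent of $u_n^{(l)}$ (which you rely on to keep $\overline{H_i}$ under control and to preserve regularity) fails precisely in the case where the monomialization of the $f_j$ actually takes place; it is also unnecessary there, since $I=(0)$ makes the quotient all of $A_i$. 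Once these two points are repaired — iterate on the decreasing character $\left(n_i,n_i-r_i,v_i\right)$ instead of asserting regularity after one pass, and treat the $f_j$ in the full $n$-dimensional ring in the case $I=(0)$ — the remaining steps (simultaneous monomialization and the divisibility $f_1\mid f_j$ from minimality of $\nu(f_1)$) go through as you describe.
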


\begin{proof}
Set $n:=e\left(R,\nu\right)$ and $u:=\left(y,x\right)$ with 
\[
y:=\left(y_{1},\dots,y_{\tilde{n}-n}\right)
\]
 and 
\[
x:=\left(x_{1},\dots,x_{n}\right)
\]
 such that the images of the $x_{j}$ in $\frac{\widehat{R}}{\overline{H}}$
induce a minimal set of generators of $\frac{\mathfrak{m}}{\overline{H}}$
and such that $y$ generates $\overline{H}$.

We do an induction on $\left(n_{i},n_{i}-r_{i},v_{i}\right)$.

We saw the existence of the surjection $\Phi$ from $A\simeq k\left[\left[x_{1},\dots,x_{n}\right]\right]$
to $\frac{\widehat{R}}{\overline{H}}$, of kernel $I=\left\{ f\in A\text{ such that }\widehat{\nu}\left(f\right)\notin\Gamma_{1}\right\} \in\mathrm{Spec}\left(A\right)$
where $\widehat{\nu}$ is defined as in section \ref{sec:Id=0000E9al-implicite.}.
We denote by $L$ the field of fractions of $A$. 

If $v_{0}<n-1$, then we do the same thing as in Proposition \ref{prop:monomialisation en dehors du carre}
and we strictly decrease $e\left(A,\widehat{\nu}\right)$.

The we can assume $v_{0}\in\left\{ n-1,n\right\} $. 

Assume $v_{0}=n-1$.

Then we know that $I=\left(h\right)$ and that there exists a formal
framed sequence $\left(A,x\right)\to\left(A_{\ell},x^{\left(\ell\right)}\right)$
that monomializes $h$ by Corollary \ref{cor: h est monomialisable}.
So one of the generators that appears in its decomposition must be
in $I_{\ell}$. Hence there exists $x_{p}^{\left(\ell\right)}$ such
that $\widehat{\nu}\left(x_{p}^{\left(\ell\right)}\right)\notin\Gamma_{1}$.
So by Theorems \ref{prop:monomialisation en dehors du carre} and
\ref{thm:decroissance de la dimension plongee}, there exists a local
framed sequence that decreases strictly $e\left(A,\widehat{\nu}\right)$,
so this case can happen a finite number of time, and we bring us at
the case $I=\left(0\right)$. It means the case where $\frac{\widehat{A}}{I}$
is regular.

Case $I=\left(0\right)$. For every $f_{j}$, we have $\widehat{\nu}\left(f_{j}\right)\in\Gamma_{1}$
. So the element $f_{j}$ is a non-zero formal series and by Weierstrass
preparation Theorem, we know that we can see it like a polynomial
in $x_{l}$ with coefficients in $k\left[\left[x_{1},\dots,x_{n-1}\right]\right]$.
We construct a sequence of key polynomials in the extension $k\left(\left(x_{1},\dots,x_{n-1}\right)\right)\left(x_{n}\right)$
as in previous section. In other words this sequence is a sequence
of optimal (possibly limit) immediate successors which is cofinal
in $\epsilon\left(\Lambda\right)$, where $\Lambda$ is the set of
key polynomials. So the element $f_{j}$ is non-degenerate with respect
of one of these polynomials that all are monomializable by the above
part. Hence there exists a local framed sequence $\left(A,x\right)\to\left(A_{i},x^{(i)}\right)$
such that in $A_{i}$, the strict transform of $f_{j}$ is a monomial
in $x^{(i)}$ multiplied by a unit of $A_{i}$.

If there exists a formal framed sequence such that $v_{i}<n-1$, then
by Proposition \ref{prop:monomialisation en dehors du carre}, we
can conclude by induction.

Iterating the case $I=\left(0\right)$, we assure the existence of
a local framed sequence such that all the strict transforms of the
$f_{j}$ are monomials multiplied by units. Doing another blow-up
if necessary, we assume that there exists of a local framed sequence
$\left(A,x\right)\to\left(A',x'\right)$ such that all the strict
transforms of the $f_{j}$ are monomials only in $x'_{1},\dots,x'_{r}$. 

By Proposition \ref{prop:existencesuitedivise}, we can assume that
for every $j$ and every $p$, we have either $f_{j}\mid f_{p}$ or
$f_{p}\mid f_{j}$.

So we have a local framed sequence 
\[
\left(A,x,k\right)\overset{\rho_{0}}{\to}\left(A_{1},x^{(1)},k_{1}\right)\overset{\rho_{1}}{\to}\cdots\overset{\rho_{i}}{\to}\left(A_{i},x^{(i)},k_{i}\right)
\]
 that monomializes the $f_{j}$ and such that for all $j$ and $q$,
we have $f_{j}\mid f_{q}$ or the converse.

By the minimality of $\nu\left(f_{1}\right)$, in $A_{i}$, we have
$f_{1}\mid f_{j}$ for every $j$.

We have also two maps
\[
\left(R,u,k\right)\to\left(\frac{\widehat{R}}{\overline{H}},x,k\right)\leftarrow\left(A,x,k\right),
\]
 and we know that $\frac{A}{I}\simeq\frac{\widehat{R}}{\overline{H}}$
since $I=\mathrm{Ker}\left(\Phi\right)$. Hence, looking at the strict
transform of $\frac{A}{I}$ at each step of the sequence $\left\{ \rho_{j}\right\} _{0\leq j\leq i}$,
we obtain a local framed sequence 
\[
\left(\frac{\widehat{R}}{\overline{H}},x,k\right)\overset{\tilde{\rho_{0}}}{\to}\left(\tilde{R}_{1},x^{(1)},k_{1}\right)\overset{\tilde{\rho_{1}}}{\to}\cdots\overset{\tilde{\rho_{i}}}{\to}\left(\tilde{R}_{i},x^{(i)},k_{i}\right).
\]

So we have the diagram:

\[
\begin{array}{ccccccc}
\left(\frac{\widehat{R}}{\overline{H}},x,k\right) & \overset{\tilde{\rho_{0}}}{\to} & \left(\tilde{R}_{1},x^{(1)},k_{1}\right) & \overset{\tilde{\rho_{1}}}{\to} & \cdots & \overset{\tilde{\rho_{i}}}{\to} & \left(\tilde{R}_{i},x^{(i)},k_{i}\right).\\
\uparrow &  & \uparrow &  & \uparrow &  & \uparrow\\
\left(A,x,k\right) & \overset{\rho_{0}}{\to} & \left(A_{1},x^{(1)},k_{1}\right) & \overset{\rho_{1}}{\to} & \cdots & \overset{\rho_{i}}{\to} & \left(A_{i},x^{(i)},k_{i}\right)
\end{array}
\]

Similarly, either $\frac{A}{I}$ is regular, or the sequence $\left\{ \rho_{j}\right\} $
can be chosen such that $e\left(R,\mu\right)$ strictly decreases.

So after a finite sequence of blow-ups, we bring us to the case where
$\frac{\widehat{R_{i}}}{\overline{H_{i}}}$ is regular. Hence we can
assume $\frac{\widehat{R_{i}}}{\overline{H_{i}}}$ regular and consider
$f_{1},\dots,f_{s}$ elements of $R\setminus\left\{ 0\right\} $ such
that $\nu\left(f_{1}\right)=\min\limits _{1\leq j\leq s}\left\{ \nu\left(f_{j}\right)\right\} $.
We know that the $f_{j}$ are all monomials in the $u^{(i)}$ and
that $f_{1}\mathrm{mod}\left(\overline{H_{i}}\right)\mid f_{j}\mathrm{mod}\left(\overline{H_{i}}\right)$.
This completes the proof.
\end{proof}
\begin{thm}
\label{thm: reduction quotient}Let $R$ be a local quasi excellent
domain and $H$ be his implicit prime ideal. We assume that $\frac{\widehat{R}}{H}$
is regular.

We recall that $\mathrm{car}\left(k_{\nu}\right)=0$. There exists
a sequence of blow-ups defined over $R$ that resolves the singularities
of R.
\end{thm}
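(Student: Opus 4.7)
The strategy is to lift the relative resolution provided by Theorem~\ref{thm: quotient reg} (monomialization \emph{modulo} the implicit ideal) to an absolute resolution of $R$, using two ingredients: the rank-$1$ hypothesis on $\nu$, which forces $R \cap H = \{0\}$ since any non-zero $f \in R$ satisfies $\nu(f) \in \Gamma_1$ while elements of $H$ have value outside $\Gamma_1$; and the regularity of $\widehat{R}_H$ from Proposition~\ref{prop:localise regulier}, which provides control on the implicit ideal near its generic point. The property $R \cap H = \{0\}$ is stable under all formal framed blow-ups along $\nu$, giving canonical injections $R_i \hookrightarrow \widehat{R_i}/\overline{H_i}$ throughout.

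For any $f \in R \setminus \{0\}$, its image $\bar f$ in $\widehat{R}/H$ is therefore non-zero. Since $\widehat{R}/H$ is, by hypothesis, a regular complete local ring of characteristic zero, it is isomorphic to a power series ring $k[[x_1, \ldots, x_d]]$ equipped with the induced rank-$1$ valuation $\widetilde{\nu}$. We may apply Theorem~\ref{thm:generalisation de la monomialisation-1} to $\widehat{R}/H$ to obtain a formal framed sequence that monomializes $\bar f$. Exactly as in the proof of Theorem~\ref{thm: quotient reg}, this sequence in $\widehat{R}/H$ lifts to a formal framed sequence $R \to R_1 \to \cdots \to R_i$ on $R$ whose induced sequence on $\widehat{R}/H$ is the chosen one, and after this sequence $\bar f = u^{(i),\alpha}\,\bar v$ in $\widehat{R_i}/\overline{H_i}$ with $\bar v$ a unit.

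The descent from $\widehat{R_i}/\overline{H_i}$ to $R_i$ runs as follows. Assuming the divisibility $u^{(i),\alpha} \mid f$ has been arranged in $\widehat{R_i}$ (see the next paragraph), faithful flatness of $R_i \hookrightarrow \widehat{R_i}$ gives $f = u^{(i),\alpha} g$ in $R_i$. Reducing modulo $\overline{H_i}$, we get $\bar g = \bar v$, a unit of $\widehat{R_i}/\overline{H_i}$. Because $\mathfrak{m}_{R_i} = R_i \cap \mathfrak{m}_{\widehat{R_i}}$ and $\mathfrak{m}_{\widehat{R_i}/\overline{H_i}} = \mathfrak{m}_{\widehat{R_i}}/\overline{H_i}$, an element of $R_i$ whose class in $\widehat{R_i}/\overline{H_i}$ is a unit is itself a unit of $R_i$; hence $g \in R_i^\times$ and $f$ is a monomial times a unit in $R_i$. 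Iterating diagonally over a countable generating family for $R$, exactly as in Theorem~\ref{thm: La suite de la mort qui tue}, produces an infinite formal framed sequence that monomializes every element of $R$, yielding $R_\nu = \varinjlim R_i$ and resolving the singularities.

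The main obstacle is the lifting step: passing from $u^{(i),\alpha} \mid f \pmod{\overline{H_i}}$ to an honest divisibility $u^{(i),\alpha} \mid f$ inside $\widehat{R_i}$. This is where the regularity of $\widehat{R_i}_{H_i}$ becomes essential: it guarantees that $\overline{H_i}\widehat{R_i}_{H_i}$ is generated by a regular sequence, so that differences lying in $\overline{H_i}$ can be absorbed by the monomial factor $u^{(i),\alpha}$ after finitely many additional blow-ups, via the divisibility mechanism already built into Theorem~\ref{thm: quotient reg}(3). Integrating these extra blow-ups into the diagonal construction of Theorem~\ref{thm: La suite de la mort qui tue} is the technical core of the argument, and it is here that the hypothesis of quasi-excellence is decisively used.
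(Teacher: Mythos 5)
Your proposal does not prove the statement, because it is aimed at the wrong target. The content of Theorem \ref{thm: reduction quotient} is that some $\widehat{R_i}$ in the blow-up sequence becomes \emph{regular}; it is not the monomialization of elements of $R$. Since $\nu$ has rank $1$ on $R$ we have $R\cap H=(0)$, so every nonzero element of $R$ avoids the implicit ideal, and monomializing images of such elements in $\widehat{R_i}/\overline{H_i}$ (and then lifting the factorization back to $R_i$, as you do) says nothing about the structure of $\overline{H_i}$ inside $\widehat{R_i}$ --- which is precisely where the singularity lives. Knowing that $\widehat{R}_H$ and $\widehat{R}/H$ are both regular does \emph{not} imply that $\widehat{R}$ is regular: the obstruction is that $H$ may require more than $g:=\mathrm{ht}(H)$ generators in $\widehat{R}$, even though it needs only $g$ after localizing at $H$. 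The paper's proof attacks exactly this. One takes generators $y_1,\dots,y_{g+s}$ of $H$ and $x_1,\dots,x_t$ lifting a regular system of parameters of $\widehat{R}/H$; if $s=0$ a dimension count ($\dim\widehat{R}=g+t$) shows $(y,x)$ is a minimal generating set of $\widehat{\mathfrak{m}}$ and $\widehat{R}$ is regular. If $s>0$, since $\dim_{\kappa(H)}H\widehat{R}_H/H^2\widehat{R}_H=g$ there is a relation $\sum a_iy_i=\sum b_{i,j}y_iy_j$ with $a_1\notin H$ of minimal value; one monomializes the classes $\overline{a_i}$ in $\widehat{R}/H$, approximates the $y_i$ by elements $z_i\in R$ modulo $\mathfrak{m}^{N+2}$, and performs explicit blow-ups along ideals $(z_1,\dots,z_{g+s},x_j)$ so that $a_1$ ends up dividing the $b_{i,j}$, making $y_1$ redundant. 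Iterating drops $\mu(H)$ to $g$ and forces regularity. None of this generator-reduction mechanism appears in your argument.

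Secondarily, even on its own terms your proposal is incomplete: the step you yourself flag as ``the main obstacle'' --- lifting the divisibility $u^{(i),\alpha}\mid f$ from $\widehat{R_i}/\overline{H_i}$ to $\widehat{R_i}$ --- is not carried out; you only assert that regularity of $\left(\widehat{R_i}\right)_{H_i}$ and ``the divisibility mechanism of Theorem \ref{thm: quotient reg}'' should handle it. As written this is a placeholder, not a proof, and since the correct proof of the theorem does not pass through this lifting at all, filling that hole would still not yield the regularity conclusion.
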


\begin{proof}
The ring $\widehat{R}_{H}$ is regular by Proposition \ref{prop:localise regulier}.
So we know that there exist elements $\left(\widetilde{y}_{1},\dots,\widetilde{y}_{g}\right)$
of $H\widehat{R}_{H}$ that form a regular system of parameters of
$\widehat{R}_{H}$. 

By definition of $H\widehat{R}_{H}$, it means that there exist $y_{1},\dots,y_{g}$
elements of $H$ and $b_{1},\dots,b_{g}$ elements of $\widehat{R}\setminus H$
such that for every index $i$, we have $\widetilde{y}_{i}=\frac{y_{i}}{b_{i}}$.

The $b_{i}$ are elements of $\widehat{R}_{H}^{\times}$, so 
\[
\left(\widetilde{y}_{1},\dots,\widetilde{y}_{g}\right)\widehat{R}_{H}=\left(\frac{y_{1}}{b_{1}},\dots,\frac{y_{g}}{b_{g}}\right)\widehat{R}_{H}=\left(y_{1},\dots,y_{g}\right)\widehat{R}_{H}.
\]

Then we have some elements $\left(y_{1},\dots,y_{g}\right)$of $H$
that form a regular system of parameters of $\widehat{R}_{H}$.

Now we consider $\left(x_{1},\dots,x_{t}\right)$ some elements of
$\widehat{R}\setminus H$ whose images $\left(\overline{x}_{1},\dots,\overline{x}_{t}\right)$
modulo $H$ form a regular system of parameters of $\frac{\widehat{R}}{H}$.

If $\left(y_{1},\dots,y_{g}\right)$ generate $H$, then $\widehat{R}$
is regular. Indeed, in this case, $\left(y_{1},\dots,y_{g},x_{1},\dots,x_{t}\right)$
generate $\widehat{\mathfrak{m}}=\mathfrak{m}\otimes_{R}\widehat{R}$,
which is the maximal ideal of $\widehat{R}$. 

So 
\[
\dim\left(\widehat{R}\right)\leq g+t.
\]

We know that 
\[
g=\dim\left(\widehat{R}_{H}\right)=\mathrm{ht}\left(H\right)
\]
 and 
\[
t=\dim\left(\frac{\widehat{R}}{H}\right)=\mathrm{ht}\left(\frac{\widehat{\mathfrak{m}}}{H}\right).
\]

Then 
\[
\begin{array}{ccc}
\dim\left(\widehat{R}\right) & = & \mathrm{ht}\left(\widehat{\mathfrak{m}}\right)\\
 & \geq & \mathrm{ht}\left(H\right)+\mathrm{ht}\left(\frac{\widehat{\mathfrak{m}}}{H}\right)\\
 & = & g+t\\
 & \geq & \dim\left(\widehat{R}\right).
\end{array}
\]

Then $\dim\left(\widehat{R}\right)=g+t$ and $\left(y_{1},\dots,y_{g},x_{1},\dots,x_{t}\right)$
is a minimal set of generators of $\widehat{\mathfrak{m}}$, and so
$\widehat{R}$ is regular.

Now we assume that $\left(y_{1},\dots,y_{g}\right)$ do not generate
$H$ in $\widehat{R}$. So let us set $\left(y_{1},\dots,y_{g},y_{g+1},\dots,y_{g+s}\right)$
some elements that generate $H$ in $\widehat{R}$.

We consider $V:=\frac{H\widehat{R}_{H}}{H^{2}\widehat{R}_{H}}$ that
is a vector space of dimension $g=\mathrm{ht}\left(H\right)$ over
the residue field of $H$ since $\widehat{R}_{H}$ is regular.

We know that $y_{1},\dots,y_{g+s}$ generate $V$ and that 
\[
g+s>\mathrm{dim}\left(V\right)=g,
\]
 so there exist elements $a_{1},\dots,a_{g+s}$ of $\widehat{R}$
such that 
\[
a_{1}y_{1}+\dots+a_{g+s}y_{g+s}\in H^{2}\widehat{R}_{H}.
\]

In other words there exist $a_{1},\dots,a_{g+s}$ in $\widehat{R}$
and $\left(b_{i,j}\right)_{1\leq i,j\leq g+s}$ in $\widehat{R}_{H}$
such that 
\[
a_{1}y_{1}+\dots+a_{g+s}y_{g+s}=\sum\limits _{1\leq i,j\leq g+s}b_{i,j}y_{i}y_{j}.
\]

We may assume 
\[
\nu\left(a_{1}\right)=\min\limits _{1\leq i\leq s}\left\{ \nu\left(a_{i}\right)\right\} 
\]
 and also that for every $i$, the element $a_{i}$ is not in $H$
or is zero.

Since the $a_{i}$ are in $\widehat{R}$, we look at them modulo $H$.
By Theorem \ref{thm: La suite de la mort qui tue}, we know that the
classes $\overline{a_{i}}$ of $a_{i}$ modulo $H$ are monomialisable
in $\frac{\widehat{R}}{H}$ and that for every $i$, we have $\overline{a_{1}}\mid\overline{a_{i}}.$

Hence after a sequence of blow-ups, we have that $\overline{a_{1}}$
is a monomial $w=\prod\limits _{i=1}^{t}x_{i}^{c_{i}}$ in $x$ multiplied
by a unit.

If we can show that $a_{1}$ divides all the $b_{i,j}$, then we could
generate $H$ in $\widehat{R}$ by $\left(y_{2},\dots,y_{g+s}\right)$.

Iterating, we could generate $H$ in $\widehat{R}$ by $g$ elements,
and it would be over.

So let us show that we can do a sequence of blow-ups such that at
the end $a_{1}$ divides all the $b_{i,j}$.

For every index $i\in\left\{ 1,\dots,g+s\right\} $, there exists
$n_{i}\in\mathbb{N}_{>1}$ such that $y_{i}\in\widehat{\mathfrak{m}}^{n_{i}-1}\setminus\widehat{\mathfrak{m}}^{n_{i}}$.
We set $N:=\max\limits _{i\in\left\{ 1,\dots,g+s\right\} }\left\{ n_{i}\right\} $,
and then for every $i\in\left\{ 1,\dots,g+s\right\} $, $y_{i}\notin\widehat{\mathfrak{m}}^{N}$.

We have a map $R\to\widehat{R}$ and we know that for every integer
$c$, we have $\widehat{\mathfrak{m}}^{c}\cap R=\mathfrak{m}^{c}$.
Hence we have an isomorphism $\frac{R}{\mathfrak{m}^{c}}\to\frac{\widehat{R}}{\widehat{\mathfrak{m}}^{c}}$.

So for all $i\in\left\{ 1,\dots,g+s\right\} $, there exists $z_{i}\in R$
whose class modulo $\mathfrak{m}^{N+2}$ is sent on $y_{i}$ by this
map. Hence $z_{i}\mathrm{\text{ }mod}\left(\mathfrak{m}^{N+2}\right)=y_{i}$.
Increasing $N$ if necessary, we may assume $\nu\left(\widehat{\mathfrak{m}}^{N}\right)>\nu\left(a_{1}\right)$. 

More precisely $y_{i}=z_{i}+h_{i}+\zeta_{i}$ where $h_{i}\in\left(z_{1},\dots,z_{g+s}\right)^{2}$
and $\zeta_{i}\in\left(x_{1},\dots,x_{t}\right)^{N}$.

After a sequence of blow-ups independent of $\left(z_{1},\dots,z_{g+s}\right)$,
we may assume that $w$, and so $a_{1}$, divides all the $\zeta_{i}$.

We do $c_{1}$ blow-ups of $\left(z_{1},\dots,z_{g+s},x_{1}\right)$.
Each $z_{i}$ is transformed in a $z_{i}'$ which is of the form $\frac{z_{i}}{x_{1}^{c_{1}}}$.

We do $c_{2}$ blow-ups of $\left(z_{1}',\dots,z_{g+s}',x_{2}\right)$.
Each $z_{i}'$ is transformed in a $z_{i}"$ which is of the form
$\frac{z_{i}'}{x_{2}^{c_{2}}}=\frac{z_{i}}{x_{1}^{c_{1}}x_{2}^{c_{2}}}$.

We iterate until doing $c_{t}$ blow-ups of 
\[
\left(z_{1}^{(t-1)},\dots,z_{g+s}^{(t-1)},x_{t}\right).
\]
 So we transformed $z_{i}$ in $z_{i}^{(t)}$ which is of the form
$\frac{z_{i}}{a_{1}}$.

Then $a_{1}$ divides all the $z_{i}^{(t)}$, and so all the $h_{i}^{(t)}$
and the $y_{i}^{(t)}$. The $b_{i,j}$ are elements of $\widehat{R}_{H}$,
so after this sequence of blow-ups, since the strict transform of
$H$ is generated by the $y_{i}^{(t)}$, we have that $a_{1}$ divides
all the $b_{i,j}$, and the proof is finished.
\end{proof}
\newpage{}

\section{Conclusion.}

We know are going to give the principal results of this part. First
we recall a fundamental result of Novacoski and Spivakovsky (\cite{NS2}).
\begin{thm}
Let $S$ be a noetherian local ring. If the local uniformization Theorem
is true for every valuation of rank $1$ centered in $S$, then it
is true for any valuation centered in $S$.
\end{thm}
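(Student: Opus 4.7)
The plan is to proceed by induction on the rank $r$ of the valuation $\nu$, the base case $r=1$ being immediate from the hypothesis. Assume $r\geq 2$ and decompose $\nu$ along the chain of isolated subgroups of its value group $\Gamma$: let $H\subsetneq\Gamma$ be the largest proper isolated subgroup, and write $\nu = \nu_{0}\circ\bar{\nu}$, where $\bar{\nu}\colon\mathrm{Frac}(S)^{\ast}\to\Gamma/H$ has rank one and $\nu_{0}$ is its residue valuation of rank $r-1$ on the residue field of $R_{\bar{\nu}}$.

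I would first uniformize $\bar{\nu}$. Its center in $S$ is a prime $\mathfrak{p}\subsetneq\mathfrak{m}$, so $\bar{\nu}$ sits as a rank one valuation centered at the maximal ideal of the noetherian local ring $S_{\mathfrak{p}}$. Interpreting the theorem's hypothesis as a stable property of the ambient category of noetherian local rings, closed under localization and quotient by primes, one obtains a local sequence of blowups $S_{\mathfrak{p}}\to S_{\mathfrak{p}}'$ producing a regular local ring dominated by $R_{\bar{\nu}}$; by clearing denominators this lifts to a birational extension $S\to S'$ essentially of finite type over $S$, in which $S'$ is regular at the transformed center $\mathfrak{p}'$ of $\bar{\nu}$.

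Next, I would apply the inductive hypothesis to the rank $r-1$ valuation $\nu_{0}$ on $\mathrm{Frac}(S'/\mathfrak{p}')$, whose center is the image of the center of $\nu$ in $S'$. This yields a sequence of blowups of $S'/\mathfrak{p}'$ producing a regular local quotient dominated by $R_{\nu_{0}}$, which lifts to a sequence $S'\to S''$. Combining the regularity of the localization of $S''$ at the transform $\mathfrak{p}''$ of $\mathfrak{p}'$, inherited from the first step, with the regularity of $S''/\mathfrak{p}''$ produced by the second step, a standard argument (expressing a regular system of parameters of $(S'')_{\mathfrak{p}''}$ together with lifts of a regular system of parameters of $S''/\mathfrak{p}''$ as a system of generators of the maximal ideal of $S''$) shows that $S''$ is itself regular at the center of $\nu$ and is dominated by $R_{\nu}$, giving the sought local uniformization.

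The main obstacle will be the bookkeeping required to invoke the rank one hypothesis on the auxiliary rings $S_{\mathfrak{p}}$ and $S'/\mathfrak{p}'$ rather than on $S$ itself, which is why the statement is naturally proved within a class of local rings stable under localization and quotient; within such a class the rank one hypothesis is inherited automatically, and the induction closes. A further technical point is ensuring that the blowup sequence lifted from $S'/\mathfrak{p}'$ preserves the regularity previously achieved at $\mathfrak{p}'$, which is handled by choosing the lifts along ideals compatible with $\mathfrak{p}'$ so that the two successive resolutions commute in the appropriate birational sense.
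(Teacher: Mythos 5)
The paper does not actually prove this statement; it is quoted from Novacoski--Spivakovsky \cite{NS2}. Your overall strategy --- decompose $\nu=\nu_{0}\circ\bar{\nu}$ with $\bar{\nu}$ the rank-one coarsening, uniformize $\bar{\nu}$ at its center $\mathfrak{p}$, then uniformize the residual valuation $\nu_{0}$ on $S'/\mathfrak{p}'$, and glue --- is indeed the standard route, and most of the intermediate steps (lifting the blow-up sequences, preservation of regularity at the transform of $\mathfrak{p}'$ because the lifted centers properly contain $\mathfrak{p}'$) can be made to work. Two small inaccuracies: the center of $\bar{\nu}$ need not be strictly contained in $\mathfrak{m}$ (take $\nu(x)=(1,0)$, $\nu(y)=(1,1)$ in $\mathbb{Z}^{2}_{\mathrm{lex}}$ on $k[x,y]_{(x,y)}$), and the hypothesis as literally stated concerns only rank-one valuations centered in $S$ itself, so your reinterpretation of it as a property of a category of local rings closed under localization and quotient is a genuine strengthening of the hypothesis --- though it is the form in which the theorem is actually proved and used.

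The real gap is the final gluing step. It is \emph{not} true that regularity of $(S'')_{\mathfrak{p}''}$ together with regularity of $S''/\mathfrak{p}''$ implies regularity of $S''$: take $A=\left(k[x,y,z]/(z^{2}-xy)\right)_{(x,y,z)}$ and $\mathfrak{p}=(x,z)$; then $A_{\mathfrak{p}}$ is a discrete valuation ring and $A/\mathfrak{p}\cong k[y]_{(y)}$ is regular, yet $A$ is singular. The point is that a regular system of parameters of $(S'')_{\mathfrak{p}''}$ generates $\mathfrak{p}''(S'')_{\mathfrak{p}''}$ but need not generate $\mathfrak{p}''$ in $S''$, so your proposed generating set of $\mathfrak{m}''$ may fail to generate it, or may generate it with too many elements. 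What one actually needs is that $\mathfrak{p}''$ be generated by $\mathrm{ht}(\mathfrak{p}'')$ elements (equivalently, by part of a regular system of parameters), and arranging this requires a further, carefully chosen sequence of blow-ups. This is precisely the content of the hardest part of Theorem \ref{thm: reduction quotient} in the present paper: there the author takes generators $y_{1},\dots,y_{g+s}$ of the implicit ideal $H$ with $g=\mathrm{ht}(H)$, exploits a relation $a_{1}y_{1}+\dots+a_{g+s}y_{g+s}\in H^{2}\widehat{R}_{H}$, monomializes the coefficients $a_{i}$ modulo $H$, and performs additional blow-ups to eliminate superfluous generators one at a time. Without an argument of this kind your induction does not close.
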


So we just have to consider valuations of rank $1$.
\begin{thm}
\label{thm:rang 1}Let $S$ be a noetherian equicharacteristic quasi
excellent singular local ring of characteristic zero. We consider
$\mu$ a valuation of rank $1$ centered in $S$. 

There exists a formal framed sequence 
\[
\left(S,u\right)\to\dots\to\left(S_{i},u^{\left(i\right)}\right)\to\dots
\]
 such that for $j$ big enough, $S_{j}$ is regular and for every
element $s$ of $S$, there exists $i$ such that in $S_{i}$, $s$
is a monomial.
\end{thm}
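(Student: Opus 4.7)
The plan is to splice together three earlier results of this part: Theorem \ref{thm: quotient reg}, which produces a formal framed sequence at whose end $\widehat{S_i}/\overline{H_i}$ is regular; Theorem \ref{thm: reduction quotient}, which upgrades regularity of the quotient to regularity of the whole ring; and Theorem \ref{thm:generalisation de la monomialisation-1}, which monomializes every element of a complete equicharacteristic regular local ring of characteristic zero. The argument runs by induction on the embedding dimension $n = e(S,\mu)$; the base case $n=1$ is immediate since a one-dimensional regular local ring is a discrete valuation ring in which every non-zero element is a monomial in the uniformizer.

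First, I would apply Theorem \ref{thm: quotient reg} to obtain a finite formal framed sequence $(S,u) \to \cdots \to (S_{i_0}, u^{(i_0)})$ such that $\widehat{S_{i_0}}/\overline{H_{i_0}}$ is regular, then append to it the blowings-up provided by Theorem \ref{thm: reduction quotient} to reach an index $i_1 \ge i_0$ at which $S_{i_1}$ itself is regular. Since framed blow-ups of regular local rings remain regular, every $S_j$ with $j \ge i_1$ will continue to be regular, which yields the first assertion of the Theorem.

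Once $S_{i_1}$ is regular, its completion is a formal power series ring $k_{i_1}[[u_1^{(i_1)},\dots,u_n^{(i_1)}]]$ on which $\widetilde{\mu}$ is a centered valuation of rank one. I would then apply Theorem \ref{thm:generalisation de la monomialisation-1} to extend the sequence so that every key polynomial of a cofinal sequence $\mathcal{Q}$ is monomialized and so that the countable divisibility conditions of Theorem \ref{thm: La suite de la mort qui tue} are fulfilled. For any $s \in S \subset \widehat{S_{i_1}}$, Remark \ref{rem: tout element est non degenere par rapport a un poly clef} gives an index for which $s$ is non-degenerate with respect to some $Q_i \in \mathcal{Q}$, and Theorem \ref{thm:on monomialise les elements non degeneres} then ensures that $s$ becomes a monomial times a unit at some further stage.

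The main obstacle is organizing the three phases into a single countable formal framed sequence while preserving the regularity once achieved; this is automatic from the stability of regularity under framed blow-ups, but it must be kept in mind because the monomialization phase of Theorem \ref{thm:generalisation de la monomialisation-1} is itself an infinite construction. A secondary point is that the induction on $n$ is precisely what supplies the hypothesis $H_n$ used throughout the monomialization phase, closing the inductive loop.
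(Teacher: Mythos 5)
Your proposal is correct and follows essentially the same route as the paper: reduce via the implicit prime ideal $H$ (with $\widehat{S}_{H}$ regular by quasi-excellence and Theorem \ref{thm: reduction quotient} handling the passage from the regular quotient to the regular ring), regularize $\widehat{S}/H$ through the Cohen presentation and Theorems \ref{thm: quotient reg} and \ref{thm:generalisation de la monomialisation-1}, and then monomialize individual elements by non-degeneracy with respect to the cofinal sequence of key polynomials. Your version is somewhat more explicit than the paper's about the final monomialization phase and about the induction on $n$ that supplies hypothesis $H_{n}$, but the decomposition and the key lemmas invoked are the same.
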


\begin{proof}
We consider $\widehat{S}$ the formal completion of $S$ and $H$
its implicit prime ideal. By Cohen structure Theorem, there exists
an epimorphism $\Phi$ from a complete regular local ring $R$ in
$\widehat{S}$. We consider $\overline{H}$ the preimage of $H$ in
$R$. We extend now $\mu$ to a valuation $\nu$ centered in $R$
by composition with a valuation centered in $\overline{H}$.

By Proposition \ref{prop:localise regulier} we know that $\widehat{S}_{H}$
is regular, and by Theorem \ref{thm: reduction quotient} it is enough
to show that $\frac{\widehat{S}}{H}$ is also regular.

We know that $\frac{\widehat{S}}{H}\simeq\frac{R}{\overline{H}}$,
so we just have to regularize $\frac{R}{\overline{H}}$. We conclude
with Theorem \ref{thm:generalisation de la monomialisation-1}.
\end{proof}
Now we prove the principal result of this part: the simultaneous embedded
local uniformization for local noetherian quasi excellent equicharacteristic
rings.
\begin{thm}
\label{thm:BAMDANSTAFACE}Let $R$ be a local noetherian quasi excellent
complete regular ring and $\nu$ be a valuation centered in $R$.

Assume that $\nu$ is of rank $1$ or $2$ but composed of a valuation
$\left(f\right)$-adic where $f$ is an irreducible element of $R$.
We assume $\mathrm{car}\left(k_{\nu}\right)=0$. 

There exists a formal framed sequence 
\[
\left(R,u\right)\to\dots\to\left(R_{l},u^{\left(l\right)}\right)\to\dots
\]
 such that for every element $g$ of $R$, there exists $i$ such
that in $R_{i}$, $g$ is a monomial.
\end{thm}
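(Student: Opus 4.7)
\emph{Strategy.} I would split the proof along the rank of $\nu$. In the rank~$1$ case there is nothing to do: a complete regular local ring is automatically noetherian, equicharacteristic and quasi-excellent, and $\mathrm{char}(k_{\nu})=0$ by hypothesis, so Theorem~\ref{thm:rang 1} applied with $S=R$ and $\mu=\nu$ produces the desired formal framed sequence.

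\emph{Setting up the rank $2$ case.} Write $\nu=\nu_{2}\circ\nu_{1}$, where $\nu_{2}$ is the $(f)$-adic valuation on $\mathrm{Frac}(R)$ and $\nu_{1}$ is the rank~$1$ valuation induced on $\mathrm{Frac}(R/(f))$, centered in $R/(f)$. Both $R/(f)$ and $\nu_{1}$ meet the hypotheses of Theorem~\ref{thm:rang 1}: $R/(f)$ is a complete local equicharacteristic quasi-excellent domain, $\nu_{1}$ has rank~$1$, and $k_{\nu_{1}}=k_{\nu}$ has characteristic zero. The plan is to (a) monomialize $f$ so its strict transform becomes a single regular parameter, (b) invoke Theorem~\ref{thm:rang 1} for $(R/(f),\nu_{1})$, and (c) lift and interleave.

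\emph{Steps (a) and (b).} Because $\nu(f)\notin\Gamma_{1}$, the element $f\in R$ plays with respect to $\nu$ the exact role played by the generator $h$ of the implicit prime ideal in Section~\ref{sec:Id=0000E9al-implicite.}. I would therefore repeat the arguments of Proposition~\ref{prop:implicite et pc} and Corollary~\ref{cor: h est monomialisable} in this rank~$2$ situation, producing a finite formal framed sequence
\[
(R,u)\to\cdots\to(R_{l},u^{(l)})
\]
with respect to $\nu$ after which $f_{l}^{\mathrm{strict}}=u^{(l)}_{n}$, a regular parameter of $R_{l}$. Then applying Theorem~\ref{thm:rang 1} to $(R_{l}/(u^{(l)}_{n}),\nu_{1})$ yields an infinite formal framed sequence
\[
(R_{l}/(u^{(l)}_{n}),\bar{u}^{(l)})\to\cdots\to(\bar{R}_{m},\bar{u}^{(m)})\to\cdots
\]
monomializing every element of $R_{l}/(u^{(l)}_{n})$.

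\emph{Step (c) and conclusion.} Each blow-up $(\bar{R}_{m},\bar{u}^{(m)})\to(\bar{R}_{m+1},\bar{u}^{(m+1)})$ is along an ideal $(\bar{u}^{(m)}_{J_{m}})$, which I would lift to the blow-up of $R_{m}$ along $(u^{(m)}_{J_{m}})$, where $u^{(m)}_{J_{m}}\subset u^{(m)}\setminus\{u^{(l)}_{n}\}$ consists of lifts of $\bar{u}^{(m)}_{J_{m}}$. Since $u^{(l)}_{n}$ never enters $J_{m}$ and $\nu(u^{(l)}_{n})\notin\Gamma_{1}$, the center of $\nu$ in the blown-up ring projects to the center of $\nu_{1}$, so the lifted sequence is a genuine formal framed sequence of $R$ for $\nu$. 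Interleaving as in Theorem~\ref{thm: La suite de la mort qui tue} produces a single infinite sequence $(R,u)\to\cdots\to(R_{m},u^{(m)})\to\cdots$. Given any $g\in R$, Krull's intersection theorem $\bigcap_{N\geq0}(f)^{N}=(0)$ lets us write $g=(u^{(l)}_{n})^{N}h$ with $u^{(l)}_{n}\nmid h$; the image $\bar{h}\in\bar{R}_{m}$ becomes a monomial in $(\bar{u}^{(m)}_{1},\dots,\bar{u}^{(m)}_{n-1})$ times a unit for some $m$, and, using that $u^{(l)}_{n}\nmid h$ to absorb the $(u^{(l)}_{n})$-adic correction into a unit of $R_{m}$, $g$ itself is a monomial in $u^{(m)}$ times a unit.

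\emph{Main obstacle.} The delicate point is step (a): adapting the key-polynomial and Puiseux-package machinery of Section~\ref{sec:Monomialisation-des-polyn=0000F4mes}, developed for a rank~$1$ valuation, to the rank~$2$ composed valuation $\nu$, so as to realize $f$ as a regular parameter. Concretely, one must verify that $f$ is a key polynomial for an appropriate simple transcendental extension, that it is reached by a cofinal sequence of (possibly limit) immediate successors as in Theorem~\ref{thm:il existe une suite cofinale}, and that the monomialization of each successor preserves the rank~$2$ structure of $\nu$. Once this is settled, the compatibility of the lifted sequence in step (c) is a routine check, because $u^{(l)}_{n}$ is, by construction, stable under every subsequent blow-up.
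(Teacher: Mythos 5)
You follow essentially the same route as the paper: split $\nu=\mu\circ\theta$ with $\theta$ the $(f)$-adic valuation, turn the strict transform of $f$ into a regular parameter, and then treat the residual rank-one valuation $\mu$ on the quotient. Two remarks on the comparison. First, the ``main obstacle'' of your last paragraph is already disposed of in the paper: Proposition \ref{prop:implicite et pc}, Theorem \ref{thm:monomialise general} and Corollary \ref{cor: h est monomialisable} are stated and proved for the composite valuation $\hat{\nu}$ on $A\simeq k\left[\left[u_{1},\dots,u_{n}\right]\right]$, with $I=\left\{ x:\hat{\nu}(x)\notin\Gamma_{1}\right\} $ playing the role of $(f)$ (principal of height one here because $R$ is regular and $f$ is irreducible), so your step (a) is a citation rather than an adaptation; the paper obtains the same thing by feeding $(R/(f),\mu)$ to Theorem \ref{thm:rang 1}, whose proof, since $R$ is already complete and regular, runs exactly the machinery you re-derive. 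Second, your closing absorption step has a small hole: from $\bar{h}=\bar{m}\,\bar{v}$ in $R_{m}/(u_{n}^{(m)})$ you get $h=mv+u_{n}^{(m)}w$, and deducing that $h$ equals $m$ times a unit requires $m\mid w$, which does not follow from $u_{n}^{(m)}\nmid h$ alone. The paper sidesteps this by monomializing $h$ itself (not its class modulo $u_{n}^{(m)}$) through a second application of Theorem \ref{thm:rang 1}, which is available because $\nu(h)\in\Gamma_{1}$; replacing your lift-from-the-quotient argument by that second application makes the two proofs coincide.
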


\begin{proof}
We consider the ring $A=\frac{R}{\left(f\right)}$. The valuation
$\nu$ is of rank $2$ composed of valuation $\left(f\right)$-adic,
so $\nu$ can be written $\mu\circ\theta$ where $\theta$ is the
valuation $\left(f\right)$-adic.

So we have a valuation $\mu$ centered in $A$ of rank $1$. By Theorem
\ref{thm:rang 1}, we can regularize $A$, and so there exists a local
framed sequence $\left(R,u\right)\to\dots\to\left(R_{i},u^{(i)}\right)$
such that in $R_{i}$, $f$ is a monomial. In $R_{i}$, we also have
that every element $g$ of $R$ can be written $g=\left(u_{n}^{\left(i\right)}\right)^{a}h$
where $u_{n}^{\left(i\right)}$ is the strict transform of $f$ and
$h$ is not divisible by $u_{n}^{\left(i\right)}$. We apply another
time Theorem \ref{thm:rang 1} to construct a local framed sequence
which monomialize $h$. This completes the proof.
\end{proof}
\begin{cor}
We keep the same notations and hypothesis as in the previous Theorem.

Then $\lim\limits _{\rightarrow}R_{i}$ is a valuation ring.
\end{cor}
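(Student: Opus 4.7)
The plan is to verify directly the valuation-ring criterion for $S := \lim\limits_{\to} R_i$. Since every formal framed blow-up is birational, all rings $R_i$ share the fraction field $K := \mathrm{Frac}(R)$, so $S \subseteq K$, and it suffices to show that for every $x \in K^{\times}$ either $x \in S$ or $x^{-1} \in S$.

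Given such $x$, I would write $x = f/g$ with $f, g \in R \setminus \{0\}$, and apply Theorem \ref{thm:BAMDANSTAFACE} simultaneously to $f$ and $g$: there is an index $i_0$ at which both are monomials in $u^{(i_0)}$ times units of $R_{i_0}$, say $f = a\, (u^{(i_0)})^{\alpha}$ and $g = b\, (u^{(i_0)})^{\beta}$ with $a, b \in R_{i_0}^{\times}$ and $\alpha, \beta \in \mathbb{N}^n$. Interchanging $f$ and $g$ if necessary, I may assume $\nu(f) \leq \nu(g)$, so that $\nu\bigl((u^{(i_0)})^{\alpha}\bigr) \leq \nu\bigl((u^{(i_0)})^{\beta}\bigr)$.

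The key input is the divisibility bookkeeping built into the construction underlying Theorem \ref{thm:BAMDANSTAFACE} (the countable-exhaustion trick of Theorem \ref{thm: La suite de la mort qui tue}, transposed to the formal framed setting of Theorem \ref{thm:generalisation de la monomialisation-1}): the pair $\bigl((u^{(i_0)})^{\alpha}, (u^{(i_0)})^{\beta}\bigr)$ belongs to some $\mathscr{S}'_{j}$ with $j \geq i_0$, and therefore there exists $l \geq j$ such that $(u^{(i_0)})^{\alpha}$ divides $(u^{(i_0)})^{\beta}$ in $R_l$. Combining this with $a, b \in R_{i_0}^{\times} \subseteq R_l^{\times}$ yields $g/f = (b/a)\,(u^{(i_0)})^{\beta - \alpha} \in R_l \subseteq S$, i.e.\ $x^{-1} \in S$, proving that $S$ is a valuation ring.

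The main obstacle, and the only step that is not a direct quotation of Theorem \ref{thm:BAMDANSTAFACE}, is ensuring that the divisibility property recorded in Theorem \ref{thm: La suite de la mort qui tue} genuinely transports to the infinite sequence produced in Theorem \ref{thm:BAMDANSTAFACE}, whose construction proceeds in two stages (first monomializing the irreducible element $f$ coming from the rank-$2$ composition, then monomializing the complementary factor $h$). This is taken care of by interleaving, at each stage, the finite batches of divisibility blow-ups supplied by Proposition \ref{prop:existencesuitedivise} with the successive monomialization steps, exactly as in the proof of Theorem \ref{thm:generalisation de la monomialisation-1}; I would make this interleaving explicit so that every pair in $\bigcup_{j} \mathscr{S}'_j$ is eventually resolved, which is what the argument above requires.
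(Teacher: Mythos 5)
Your overall strategy --- verify the dichotomy $x\in S$ or $x^{-1}\in S$ by monomializing numerator and denominator and then invoking the divisibility bookkeeping of the sets $\mathscr{S}'_{j}$ --- is the natural one, and the second and third paragraphs of your argument are sound as far as they go. The genuine gap is in your very first sentence: a formal framed blow-up is, by definition, the composition of a framed blow-up with the formal completion $\sigma\colon A'\to\widehat{A'}$, and completion is not birational. Consequently the rings $R_{i}$ of the sequence of Theorem \ref{thm:BAMDANSTAFACE} do \emph{not} all share the fraction field $\mathrm{Frac}\left(R\right)$; in general $\mathrm{Frac}\left(R_{i+1}\right)$ strictly contains $\mathrm{Frac}\left(R_{i}\right)$. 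Writing $x=f/g$ with $f,g\in R$ therefore only treats $x\in\mathrm{Frac}\left(R\right)^{\times}$, and what you actually prove is that $S\cap\mathrm{Frac}\left(R\right)$ is a valuation ring of $\mathrm{Frac}\left(R\right)$ --- not that $S=\lim\limits _{\rightarrow}R_{i}$ is a valuation ring of its own fraction field $\bigcup\limits _{i}\mathrm{Frac}\left(R_{i}\right)$.

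To close the gap you must start from $x=f/g$ with $f,g\in R_{i}$ for an arbitrary index $i$, and for that you need more than the bare statement of Theorem \ref{thm:BAMDANSTAFACE}, which only monomializes elements of $R=R_{0}$. The repair is to observe that, by the recursive nature of the construction, each $R_{i}$ is again a complete local noetherian quasi-excellent regular ring carrying a centered valuation satisfying the rank hypotheses, so the tail $\left(R_{i},u^{(i)}\right)\to\cdots$ can be arranged to be a sequence of the same type for $R_{i}$; one then interleaves these requirements for all $i$ by the same countable diagonal procedure already used in Theorems \ref{thm: La suite de la mort qui tue} and \ref{thm:generalisation de la monomialisation-1} (which do exactly this for the subrings $B_{i}$ and for the sets $\mathscr{S}'_{j}$). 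With that strengthening made explicit, your monomialization-plus-divisibility argument goes through verbatim with $R$ replaced by $R_{i}$, and the corollary follows.
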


\begin{rem}
The restriction on the rank of the valuation was setted to give an
autosufficient proof. Otherwise, there exists a countable sequence
of polynomials $\chi_{i}$ such that every $\nu$-ideal $P_{\beta}$
is generated by a subset of the $\chi_{i}$. Assume the embedded local
uniformization Theorem.

Then there exists a local (respectively formal) framed sequence $\left(R,u\right)\to\dots\to\left(R_{i},u^{\left(i\right)}\right)\to\dots$
that has following properties:
\begin{enumerate}
\item For $i$ big enough, $R_{i}$ is regular.
\item For every finite set $\left\{ f_{1},\dots,f_{s}\right\} \subseteq\mathfrak{m}$
there exists $i$ such that in $R_{i}$, every $f_{j}$ is a monomial
and $f_{1}\mid f_{j}$.
\end{enumerate}
Then for every element $g$ in $R$, there exists $i$ such that in
$R_{i}$, $g$ is a monomial.
\end{rem}

\printindex{}

\bibliographystyle{amsplain} 

\end{document}